\newtheorem{theorem}{Theorem}
\newtheorem{proposition}{Proposition}
\newtheorem{lemma}{Lemma}
\newtheorem{corollary}{Corollary}
\theoremstyle{remark}
\newtheorem{remark}{Remark}
\newtheorem{assumption}{Assumption}
\newtheorem{definition}{Definition}
\newtheorem{fact}{Fact}
\providecommand{\eref}[1]{\eqref{eq:#1}}  
\providecommand{\cref}[1]{Chapter~\ref{chap:#1}}
\providecommand{\sref}[1]{Section~\ref{sec:#1}}
\providecommand{\appref}[1]{Appendix~\ref{appendix:#1}}
\providecommand{\fref}[1]{Figure~\ref{fig:#1}}
\providecommand{\thref}[1]{Theorem~\ref{thm:#1}}
\providecommand{\defref}[1]{Definition~\ref{def:#1}}
\providecommand{\lemref}[1]{Lemma~\ref{lem:#1}}
\providecommand{\assumpref}[1]{Assumption~\ref{assump:#1}}
\providecommand{\factref}[1]{Fact~\ref{fact:#1}}
\providecommand{\propref}[1]{Proposition~\ref{prop:#1}}
\providecommand{\corref}[1]{Corollary~\ref{cor:#1}}
\providecommand{\R}{\ensuremath{\mathbb{R}}}
\providecommand{\C}{\ensuremath{\mathbb{C}}}
\providecommand{\N}{\ensuremath{\mathbb{N}}}
\providecommand{\W}{\ensuremath{\mathbb{N}_0}}
\providecommand{\bydef}{\overset{\text{def}}{=}}
\renewcommand{\dim}{N}
\providecommand{\sigpsi}{\sigma_{\psi}^2}
\renewcommand{\vec}[1]{\ensuremath{\boldsymbol{#1}}}
\providecommand{\mat}[1]{\ensuremath{\boldsymbol{#1}}}
\providecommand{\calS}{\mathcal{S}}
\providecommand{\calV}{\mathcal{V}}
\providecommand{\mA}{\mat{A}} \providecommand{\mB}{\mat{B}}
\providecommand{\mC}{\mat{C}} 
\providecommand{\mD}{\mat{D}}
\providecommand{\mF}{\mat{F}}
\providecommand{\mH}{\mat{H}}
\providecommand{\mI}{\mat{I}} \providecommand{\mJ}{\mat{J}} 
\providecommand{\mM}{\mat{M}} \providecommand{\mP}{\mat{P}} 
\providecommand{\mQ}{\mat{Q}} \providecommand{\mR}{\mat{R}}
\providecommand{\mS}{\mat{S}} \providecommand{\mU}{\mat{U}} 
\providecommand{\mW}{\mat{W}}
\providecommand{\mZ}{\mat{Z}}
\providecommand{\mSigma}{\mat{\Sigma}}
\providecommand{\mPsi}{\mat{\Psi}}
\providecommand{\mPsi}{\mat{\Psi}}
\providecommand{\mX}{\mat{X}}
\providecommand{\va}{\vec{a}} \providecommand{\vb}{\vec{b}}
 \providecommand{\ve}{\vec{e}}
\providecommand{\vg}{\vec{g}}
\providecommand{\vh}{\vec{h}} 
\providecommand{\vm}{\vec{m}}
\providecommand{\vs}{\vec{s}}
\providecommand{\vu}{\vec{u}} \providecommand{\vw}{\vec{w}}
\providecommand{\vx}{\vec{x}} \providecommand{\vy}{\vec{y}}
\providecommand{\vz}{\vec{z}} 
 \providecommand{\vzero}{\vec{0}}
\providecommand{\vv}{\vec{v}}
\providecommand{\vDelta}{\vec{\Delta}}
\providecommand{\valpha}{\vec{\alpha}}
\providecommand{\vbeta}{\vec{\beta}}
\providecommand{\vepsilon}{\vec{\epsilon}}
\providecommand{\vsigma}{\vec{\sigma}}
\providecommand{\fieldvec}{\vec{1}}
\providecommand{\vone}{\vec{1}}
\newcommand{\unif}[1]{\mathsf{Unif}(#1)} 
\newcommand{\ortho}{\mathbb{O}} 
\newcommand{\explain}[2]{\overset{\text{\tiny{#1}}}{#2}}
\newcommand{\iter}[2]{{#1}^{(#2)}} 
\newcommand{\E}{\mathbb{E}} 
\renewcommand{\P}{\mathbb{P}} 
\newcommand{\Var}{\mathrm{Var}}
\newcommand{\ip}[2]{\left\langle {#1}, {#2} \right\rangle} 
\newcommand{\gauss}[2]{\mathcal{N}\left( #1,#2 \right)} 
\newcommand{\nonlin}{f}
\newcommand{\degree}{D}
\newcommand{\fourier}[3]{\hat{\nonlin}_{#1}(#2,#3)}
\newcommand{\pweight}[1]{p_{#1}}
\newcommand{\qweight}[1]{q_{#1}}
\newcommand{\effect}[1]{\omega_{#1}}
\newcommand{\height}[1]{h_{#1}}
\newcommand{\valid}{\mathtt{VALID}}
\newcommand{\hermite}[1]{H_{#1}}
\newcommand{\trees}[2]{\mathscr{T}_{#1}(#2)}
\newcommand{\forests}[2]{\mathscr{F}_{#1}(#2)}
\newcommand{\colorings}[2]{\mathscr{C}_{#1}(#2)}
\newcommand{\leaves}[1]{\mathscr{L}(#1)}
\newcommand{\nullleaves}[1]{\mathscr{L}_0(#1)}
\newcommand{\enodes}{\mathscr{E}}
\newcommand{\gnodes}{\mathscr{G}}
\newcommand{\onodes}{\mathscr{O}}
\newcommand{\bnodes}{\mathscr{B}}
\newcommand{\eleaves}{\mathscr{L}_e}
\newcommand{\gleaves}{\mathscr{L}_g}
\newcommand{\tileaves}{\mathscr{L}_1}
\newcommand{\tiileaves}{\mathscr{L}_2}
\newcommand{\tiiileaves}{\mathscr{L}_3}
\newcommand{\nleaves}{\mathscr{L}_0}
\newcommand{\map}{\mathscr{M}}
\renewcommand{\part}[1]{\mathscr{P}(#1)}
\newcommand{\chrn}[1]{c_{#1}}
\newcommand{\wcolorings}[2]{\mathscr{WC}_{#1}(#2)}
\newcommand{\rootset}{\mathscr{R}}
\newcommand{\gibbs}{\mu_{\dim}}
\newcommand{\temp}{\beta}
\newcommand{\field}{\theta}
\newcommand{\invpoly}{\Gamma}
\newcommand{\universal}{simple }
\newcommand{\exponent}{\eta}
\newcommand{\mgnt}{\chi}
\newcommand{\vmgnt}{\vec{\mgnt}}
\newcommand{\pw}{$\mathrm{PW}_2$}
\newcommand{\myitem}[1]{%
\item[#1]\protected@edef\@currentlabel{#1}%
}
\newenvironment{fminipage}%
  {\begin{Sbox}\begin{minipage}}%
  {\end{minipage}\end{Sbox}\fbox{\TheSbox}}
\newenvironment{algbox}[0]{\vskip 0.2in
\noindent 
\begin{fminipage}{6.3in}
}{
\end{fminipage}
\vskip 0.2in
}
\DeclareFontFamily{U}{mathx}{\hyphenchar\font45}
\DeclareFontShape{U}{mathx}{m}{n}{
      <5> <6> <7> <8> <9> <10>
      <10.95> <12> <14.4> <17.28> <20.74> <24.88>
      mathx10
      }{}
\DeclareSymbolFont{mathx}{U}{mathx}{m}{n}
\DeclareMathAccent{\widecheck}{0}{mathx}{"71}
\DeclareMathAccent{\wideparen}{0}{mathx}{"75}
\providecommand{\ctr}[1]{\widecheck{#1}}
\newcommand{\Ortho}{\mathbb{O}}
\newcommand{\mLambda}{\mat{\Lambda}}
\DeclareMathOperator{\diag}{diag}
\DeclareMathOperator{\Tr}{Tr}
\DeclareMathOperator{\op}{op}
\DeclareMathOperator{\fr}{Fr}
\newcommand*{\tran}{^{\mkern-1.5mu\mathsf{T}}}
\providecommand{\vbeta}{\vec{\beta}}
\newcommand{\new}[1]{#1}
\renewcommand*\diff{\mathop{}\!\mathrm{d}}
\title{Universality of Approximate Message Passing with \\Semi-Random Matrices}
\author[]{Rishabh Dudeja\thanks{rd2714@columbia.edu}}
\author[]{Yue M. Lu\thanks{yuelu@seas.harvard.edu}}
\author[]{Subhabrata Sen\thanks{subhabratasen@fas.harvard.edu}}
\affil[]{Harvard University}
\begin{document}

\maketitle

\begin{abstract}
Approximate Message Passing (AMP) is a class of iterative algorithms that have found applications in many problems in high-dimensional statistics and machine learning. In its general form, AMP can be formulated as an iterative procedure driven by a matrix $\mM$. Theoretical analyses of AMP typically assume strong distributional properties on $\mM$---for example, $\mM$ has i.i.d. sub-Gaussian entries or is drawn from a rotational invariant ensemble. However, numerical experiments suggest that the behavior of AMP is \emph{universal}, as long as the eigenvectors of $\mM$ are generic. In this paper, we take the first step in rigorously understanding this universality phenomenon. In particular, we investigate a class of ``memory-free'' AMP algorithms (proposed in \citet{ccakmak2019memory} for mean-field Ising spin glasses), and show that their asymptotic dynamics is universal on a broad class of  ``semi-random matrices''. In addition to having the standard rotational invariant ensemble as a special case, the class of semi-random matrices that we define in this work also includes matrices constructed with very limited randomness. One such example is a randomly signed version of the Sine model, introduced in \citet{marinari1994replica} and \citet{parisi1995mean} for spin glasses with fully deterministic couplings.
\end{abstract}

\tableofcontents

\section{Introduction}

Approximate Message Passing (AMP) algorithms are low-complexity iterative algorithms that have attracted considerable attention recently in statistics and machine learning. These algorithms were originally introduced for solving the TAP equations for mean-field spin glasses \citep{bolthausen2014iterative} and in the context of compressed sensing \citep{donoho2009message}. They are also intricately connected to classical iterative inference algorithms such as belief propagation \citep{mezard2009information,kabashima2003cdma}, and expectation propagation \citep{minka2013expectation,opper2005expectation}. Since their inception, AMP algorithms have found applications in diverse situations---on the one hand, they are directly used as computationally efficient inference algorithms in compressed sensing \citep{donoho2009message} and coding theory \citep{rush2017capacity}; on the other hand, these algorithms have been used as constructive proof devices to characterize the asymptotic performance of statistical procedures such as the LASSO \citep{bayati2011dynamics}, M-estimators \citep{bean2013optimal,donoho2016high,pmlr-v125-gerbelot20a,gerbelot2020asymptotic}, maximum likelihood \citep{sur2019likelihood,sur2019modern}, and spectral methods \citep{montanari2021estimation,mondelli2021pca} in high-dimensions. 

Given a data matrix $\mM \in \R^{N \times N}$, an AMP algorithm  in its general form  consists of the following iterative updates:
\begin{equation}\label{eq:general-amp}
\begin{aligned}
\iter{\vz}{t} &= \mM F_{t}(\iter{\vz}{0}, \iter{\vz}{1}, \dotsc, \iter{\vz}{t-1})  + G_t(\iter{\vz}{0}, \iter{\vz}{1}, \dotsc, \iter{\vz}{t-1}).
\end{aligned}
\end{equation}
where $F_{t}: \R^{\dim \times t} \mapsto \R^\dim$ and $G_{t}: \R^{\dim \times t} \mapsto \R^\dim$ are well-chosen vector-valued functions. AMP algorithms are particularly attractive due to their theoretical tractability. Specifically, when the data matrix $\mM$ is drawn from a rotationally-invariant ensemble (such as the Gaussian orthogonal ensemble), and if the function $G_t$ (called the Onsager correction) is suitably chosen based on $F_t$, the joint empirical distributions of the iterates $\iter{\vz}{1}, \ldots, \iter{\vz}{t}$ can be shown to converge to a mean-zero Gaussian process as $N \to \infty$. Moreover, the covariance of this limiting Gaussian process can be explicitly computed via a deterministic recursion known as \emph{state evolution} \citep{bolthausen2014iterative,bayati2011dynamics,javanmard2013state,rangan2019vector,takeuchi2017rigorous,berthier2020state,fan2020approximate}.

Theoretical analyses of AMP algorithms typically make strong assumptions on the distribution of the matrix $\mM$---for example, one might assume that the entries of $\mM$ are i.i.d. Gaussian \citep{bolthausen2014iterative,bayati2011dynamics,javanmard2013state,berthier2020state}. Another widely used model assumes that $\mM$ is \emph{rotationally invariant}, i.e., its distribution is invariant under conjugation by any deterministic orthogonal matrix \citep{rangan2019vector,takeuchi2017rigorous,fan2020approximate}. 

While the idealistic statistical models mentioned above are convenient for mathematical analysis, they do not resemble the data matrices encountered in practice, which are often structured or exhibit strong correlations among the matrix entries. Interestingly, numerical experiments suggest that the behavior of AMP algorithms does not depend too strongly on the precise distribution of the matrix $\mM$. In fact, it has been observed that \citep{ccakmak2019memory,abbara2020universality,ma2021spectral} the theoretical characterizations obtained under idealistic statistical models remain true for many semi-random (or even deterministic) matrix ensembles. Establishing this universality phenomenon is thus of intrinsic importance, as it allows practitioners to use these theoretical characterizations of AMP with greater confidence in real-life statistical and machine learning applications.

There has been some important recent progress in understanding the universality of AMP algorithms for i.i.d matrices $\mM$. Specifically, it is now well-understood (see \citep{bayati2015universality,chen2021universality}) that the Gaussianity of the entries of $\mM$ is unnecessary---the distribution of the AMP iterates can be tracked using the same state evolution recursion as long as the entries $\mM$ are i.i.d. mean-zero, unit-variance \emph{sub-Gaussian} random variables.

Unfortunately, the existing  guarantees do not capture the full-scope of the universality phenomenon observed in practice. A striking example in this regard is the Sine model of \citet{marinari1994replica} and \citet{parisi1995mean}, an Ising Model where the coupling matrix is the Discrete Sine Transform Matrix. Using non-rigorous techniques, physicists conjecture that the behavior of this completely deterministic model should be the same as the \emph{Random Orthogonal Model} (ROM), a fully disordered Ising model whose coupling matrix is given by the rotationally invariant matrix $\mU \diag(b_{1:\dim}) \mU \tran$, where $\mU \sim \unif{\ortho(\dim)}$ is a Haar matrix and $b_{1:\dim} \explain{i.i.d.}{\sim} \unif{\{\pm1\}}$.  In the context of AMP algorithms, numerical simulations also suggest the equivalence between the Sine model and the ROM, in that they can be characterized by the same state evolution recursion (see Section \ref{sec:universality_figs} for supporting numerical evidence). More generally, numerical studies reported in the literature \citep{abbara2020universality,ccakmak2019memory} suggest that AMP algorithms exhibit universality properties as long as the eigenvectors of $\mM$ are generic.  Formalizing this conjecture remains squarely beyond existing techniques, and presents a fascinating challenge. 

In this paper, we take the first step in understanding this universality phenomenon. In particular, we investigate a sub-class of AMP algorithms that take the form 
\begin{align} \label{eq:memory-free-amp} 
    \iter{\vz}{t+1} & = \mM  \cdot \left( \nonlin_{t+1}(\iter{\vz}{t}) - \frac{\langle{\nonlin_{t+1}(\iter{\vz}{t})}\; , \; {\iter{\vz}{t}} \rangle}{\|\iter{\vz}{t}\|^2} \cdot \iter{\vz}{t} \right).
\end{align}
In the above display, the functions $\nonlin_t: \R \mapsto \R$ and act entry-wise on their arguments. We wish to understand the dynamics of the above algorithm under general assumptions on the matrix $\mM$, the (coordinate-wise) non-linearities $\nonlin_t$ and the initialization $\iter{\vz}{0}$. The algorithm in \eref{memory-free-amp} is a special case of \eref{general-amp}. The specific choice of $F_t$ made in \eqref{eq:general-amp} to obtain \eqref{eq:memory-free-amp} ensures that the iterates of the resulting algorithm converge to a Gaussian process as $\dim \rightarrow \infty$ without any Onsager correction (given by the function $G_t$ in \eqref{eq:general-amp}).  We choose to focus on \eref{memory-free-amp} due to its simple ``memory-free'' structure, namely, the iterate $\iter{\vz}{t+1}$ only depends on its immediate predecessor $\iter{\vz}{t}$. In contrast, the general AMP algorithms in \eref{general-amp} might have to maintain a long memory (that grows with $t$) to ensure that the empirical distributions of $\iter{\vz}{t}$ are asymptotically Gaussian. The AMP algorithm in \eref{memory-free-amp} was introduced in \citep{ccakmak2019memory} to approximate the magnetization of Ising spin glasses with orthogonally invariant coupling matrices. Similar memory-free variants of AMP algorithms for rectangular data matrices have been proposed under the names ``orthogonal AMP'' \citep{ma2017orthogonal} and ``vector approximate message passing'' \citep{rangan2019vector,takeuchi2017rigorous}.

\subsection{Notation}
We begin by collecting some notations that will be used throughout this paper. 

\emph{Some common sets:} $\N$ and $\R$ denote the set of positive integers and the set of real numbers respectively. $\W \explain{def}{=} \N \cup \{0\}$ is the set of non-negative integers. For each $\dim \in \N$, $[\dim]$ denotes the set $\{1, 2, 3, \dotsc, \dim\}$ and $\ortho(\dim)$ denotes the set of $\dim \times \dim$ orthogonal matrices. 

\emph{Asymptotics:} Given a sequence $a_{\dim}$ and a non-negative sequence $b_{\dim}$ indexed by $\dim \in \N$ we say $a_\dim \ll b_\dim$ or $a_{\dim} = o(b_\dim)$ if $a_\dim/b_\dim \rightarrow 0$. Similarly we say $a_{\dim} \lesssim b_{\dim}$ or $a_\dim = O(b_\dim)$ if there exist fixed constants $\alpha \geq 0$ and $N_0 \in \N$, such that $|a_\dim| \leq \alpha b_\dim$ for all $\dim \geq N_0$.

\emph{Asymptotics for random variables:} We use  $\explain{P}{\rightarrow}$ to denote convergence in probability. 

\emph{Linear Algebra:} For a vector $\vv \in \R^{\dim}$, $\|\vv\|_1, \|\vv\|, \|\vv\|_\infty$ denote the $\ell_1$, $\ell_2$ and $\ell_\infty$ norms respectively and $\|\vv\|_0$ denotes the number of non-zero coordinates (or sparsity) of $\vv$. \new{For a matrix $\mA \in \R^{\dim \times \dim}$, we denote the $(i,j)$ entry of $\mA$ using the corresponding lowercase letter $a_{ij}$. To refer to the $(i,j)$ entry of the matrix product $\mA \mB$ we use the notation $(\mA \mB)_{ij}$.}  $\|\mA\|_{\op}, \|\mA\|$ denote the operator (spectral) norm and Frobenius norm of $\mA$ respectively. On the other hand $\|\mA\|_\infty \explain{def}{=} \max_{i,j \in [\dim]} |a_{ij}|$ denotes the entry-wise $\ell_\infty$ norm.  $\vone$ denotes the vector $(1, 1, \dotsc, 1)$, $\vzero$ denotes the vector $(0, 0, \dotsc, 0)$, and $\ve_1, \ve_2, \dotsc, \ve_{\dim}$ denote the standard basis vectors in $\R^\dim$. $\mI_{\dim}$ is the $\dim \times \dim$ identity matrix.  

\emph{Gaussian Distributions and Hermite Polynomials:} The univariate Gaussian distribution on $\R$ with mean $\mu$ and variance $\sigma^2$ is denoted by $\gauss{\mu}{\sigma^2}$. The multivariate Gaussian distribution on $\R^{\dim}$ with mean vector $\vec{\mu}$ and covariance matrix $\mSigma$ is denoted by $\gauss{\vec{\mu}}{\mSigma}$. For each $i \in \W$, $H_i : \R \rightarrow \R$ is denotes the Hermite polynomial of degree $i$. The Hermite polynomials are orthogonal polynomials for the standard Gaussian measure $\gauss{0}{1}$. This means that for $Z \sim \gauss{0}{1}$, $\E H_i^2(Z) = 1$ for each $i \in \W$ and $\E[H_i(Z) H_j(Z)] = 0$ for $i, j \in \W$ and $i \neq j$ (note that we assume throughout that the Hermite polynomials are normalized to have unit norm under the standard Gaussian measure). The first few Hermite polynomials are $H_0(z) = 1, \; H_1(z) = z, \; H_2(z) = (z^2-1)/\sqrt{2}$. We refer the reader to \citet[Chapter 11]{o2014analysis} for additional background on Hermite polynomials. 

\emph{Miscellaneous:} For a finite set $A$, $\unif{A}$ denotes the uniform distribution on $A$. Hence $\unif{\{\pm 1\}}$ and $\unif{\{\pm 1\}^\dim}$ denote the uniform distributions on $\{-1,1\}$ and the $\dim$-dimensional Boolean hypercube $\{-1,1\}^\dim$, respectively. We use $\unif{\ortho(\dim)}$ to denote the Haar measure on the orthogonal group $\ortho(\dim)$. For any $i,j \in \W$, $\delta_{ij}$ denotes the Kronecker delta function, with $\delta_{ij} = 1$ if $i = j$ and $\delta_{ij} = 0$ otherwise.

\subsection{Main Result}
Our main result establishes the universality of the AMP algorithm \eref{memory-free-amp} for a wide class of \emph{semi-random matrix ensembles} $\mM$, defined below. 
\begin{definition}[Semi-random Matrix Ensemble] \label{def:matrix-ensemble-relaxed}  A semi-random matrix ensemble $\iter{\mM}{\dim} \in \R^{\dim \times \dim}$ is a sequence of random matrices of the form $\iter{\mM}{\dim} = \iter{\mS}{\dim} \iter{\mPsi}{\dim} \iter{\mS}{\dim}$ where, \looseness=-1
\begin{enumerate}
    \item $\iter{\mS}{\dim} = \diag(s_1, s_2, \dotsc, s_\dim)$ with $s_i \explain{i.i.d.}{\sim} \unif{\{\pm 1\}}$.
    \item $\iter{\mPsi}{\dim}$ is a sequence of deterministic matrices that satisfy:
    \begin{enumerate}
        \item $\|\iter{\mPsi}{\dim}\|_\infty \lesssim \dim^{-\frac{1}{2} + \epsilon}$ for all fixed $\epsilon > 0$.
        \item $\|\iter{\mPsi}{\dim}\|_{\op} \lesssim 1$.
        \item $\max_{i \neq j} \left|\big(\iter{\mPsi}{\dim}{\iter{\mPsi}{\dim}}\tran\big)_{ij} \right|  \lesssim  \dim^{-\frac{1}{2} + \epsilon}$ for all fixed $\epsilon > 0$.
        \item There is a fixed constant $\sigpsi \in (0,\infty)$ (independent of $\dim$) such that,
        \begin{align*}&\max_{i \in [\dim]} \left|\big(\iter{\mPsi}{\dim}{\iter{\mPsi}{\dim}}\tran\big)_{ii} - \sigpsi \right| \ll 1. \end{align*}
    \end{enumerate}
\end{enumerate}
\new{If $\iter{\mPsi}{\dim}$ is a sequence of \emph{random matrices} that satisfy the requirements (2a-2d) on an event with probability $1$:
\begin{subequations}\label{eq:borel-cantelli}
\begin{align}
&\P\left( \|\iter{\mPsi}{\dim}\|_\infty \lesssim \dim^{-\frac{1}{2} + \epsilon} \;\; \forall \;  \epsilon \; > 0\right) = 1,  \;\; \P\left( \max_{i \in [\dim]} \left|\big(\iter{\mPsi}{\dim}{\iter{\mPsi}{\dim}}\tran\big)_{ii} - \sigpsi \right| \ll 1 \right) = 1 , \\
&\P\left( \max_{i \neq j} \left|\big(\iter{\mPsi}{\dim}{\iter{\mPsi}{\dim}}\tran\big)_{ij} \right|  \lesssim  \dim^{-\frac{1}{2} + \epsilon} \;\; \forall \;  \epsilon \; > 0\right) = 1, \;\; \P\left( \|\iter{\mPsi}{\dim}\|_{\op} \lesssim 1 \right)  = 1,
\end{align}
\end{subequations}
we say $\iter{\mM}{\dim}$ is a semi-random ensemble with probability $1$.}
\end{definition}
\begin{remark}
\new{The notion of a semi-random matrix ensemble is defined only for a \emph{sequence of $\dim \times \dim$ matrices of increasing dimension} of the form $ \iter{\mM}{\dim} = \iter{\mS}{\dim} \iter{\mPsi}{\dim} \iter{\mS}{\dim}$.} For notational clarity, we will suppress the dependence of $\mM$ and $\mPsi$ on $N$ in our subsequent discussion. This dependence will be assumed implicitly throughout. \new{We will often use the phrase ``\emph{$\mM$ is semi-random}''  as a shorthand for ``\emph{the sequence of random matrices $\iter{\mM}{\dim}$ forms a semi-random matrix ensemble}''.}
\end{remark}
\begin{remark}
We call matrix ensembles that satisfy the above definition semi-random because the only randomness in these matrices arises from the random sign diagonal matrix $\mS$. The conditions (2a)-(2d) on $\mPsi$ are fully deterministic. These requirements ensure the entries of $\mPsi$ are delocalized, and the rows of $\mPsi$ are approximately orthogonal with almost equal norms. In \sref{matrix-ensemble-eg}, we show that these assumptions are satisfied for many matrix ensembles. 
\end{remark}
\begin{remark} \new{The conditions in \eqref{eq:borel-cantelli} pertain to tail events and can often be verified using the Borel-Cantelli lemma. See \lemref{sign-perm-inv-ensemble} and its proof in \appref{sign-perm-inv-ensemble} for an illustration.}
\end{remark}
We study the iteration \eqref{eq:memory-free-amp} under the following assumption on the initialization.
\begin{assumption}[Gaussian Initialization] \label{assump:initialization} The iteration \eref{memory-free-amp} is initalized with $\iter{\vz}{0} \sim \gauss{\vzero}{\sigma_0^2 \mI_\dim}$ for some positive constant $\sigma_0^2>0$ (independent of $\dim$). 
\end{assumption}

In order to state our main result, we need to introduce the state evolution recursion, which characterizes the dynamics of \eqref{eq:memory-free-amp}.

\paragraph{State Evolution Recursion. } Fix a $T \in \N_0$. Define the state evolution recursion associated with $T$ iterations of \eqref{eq:memory-free-amp} as:
\begin{subequations}\label{eq:SE-memory-free}
\begin{align}
    \sigma_{t+1}^2 & = \sigpsi \cdot \E[\ctr{\nonlin}_{t+1}^2(Z_t)] \\
    \rho_{{s,t+1}} & = \sigpsi \cdot \E[ \ctr{\nonlin}_{s}(Z_{s-1})\ctr{\nonlin}_{t+1}(Z_t) ] \; \forall \; s \leq t.
\end{align}
In the above display:
\begin{enumerate}
\item The recursion is initialized with $\sigma_0^2: = \sigma_0^2$, the parameter from \assumpref{initialization} and $\rho_{0,i} := 0$ for each $i \geq 1$. 
\item $\sigpsi$ is the parameter from \defref{matrix-ensemble-relaxed}.\item For each $t \in \{0, 1, \dotsc, T\}$, the vector $(Z_0, Z_1, \dotsc, Z_{t}) \sim \gauss{\vzero}{\mSigma_t}$, where $\mSigma_t \in \R^{t+1 \times t+1}$ is defined as:
\begin{align}
    \mSigma_t & \explain{def}{=} \begin{bmatrix} \sigma_0^2 & \rho_{0,1} & \rho_{0,2} & \hdots  & \rho_{0,t}  \\ \rho_{0,1} & \sigma_1^2 & \rho_{1,2} & \hdots &\rho_{1,t} \\ \vdots & \vdots & \vdots & \vdots & \vdots \\ \rho_{0,t} & \rho_{1,t} & \rho_{2,t} & \hdots & \sigma_t^2 \end{bmatrix}. \label{def:covariance_matrix} 
\end{align}
\item For each $t \in \{1, 2, \dotsc, T\}$, $\ctr{\nonlin}_t:\R \rightarrow \R$ is given by:
\begin{align} \label{eq:divergence-removal}
    \ctr{\nonlin}_t(x) = {\nonlin}_t(x) - \frac{\E[Z \nonlin_t(\sigma_{t-1}Z)]}{\sigma_{t-1}} \cdot x, \; Z \sim \gauss{0}{1}.
\end{align}
\end{enumerate}
\end{subequations}

\begin{remark}[Non-degenerate Non-linearities] Throughout this paper, we will assume that the non-linearities $\nonlin_t$ are non-degenerate in the sense that $\nonlin_t$ is not the linear function $x \mapsto \alpha x$ for any $\alpha \in \R$. Since all of our results additionally assume that $\nonlin_{1:T}$ are continuous functions, this ensures that the variance sequence $\sigma_{1:T}^2$ is strictly positive. Degenerate non-linearities are not useful for applications since if $\nonlin_t(x) = \alpha x$ for some $t \in [T]$ an inspection of \eqref{eq:memory-free-amp} shows that the corresponding iterate $\iter{\vz}{t} = \vzero$.
\end{remark}

Our results characterize the dynamics of \eqref{eq:memory-free-amp} using the following notion of convergence.
\begin{definition}[Convergence of Empirical Distributions] A collection of $k$ random vectors $(\iter{\vv}{1}, \dotsc, \iter{\vv}{k})$ in $\R^{\dim}$ converges with respect to the Wasserstein-$2$ metric to a random vector $(V_1, V_2, \dotsc, V_k) \in \R^k$ in probability as $\dim \rightarrow \infty$, if for any fixed test function $h: \R^{k} \rightarrow \R$  (independent of $\dim$) that satisfies:
\begin{align} \label{eq:P-conv}
    |h(\vx) - h(\vy)| & \leq L \|\vx - \vy\| (1 + \|\vx\| + \|\vy\|),
\end{align}
for some finite constant $L$, we have,
\begin{align} \label{eq:PL1}
    \frac{1}{\dim} \sum_{i=1}^\dim h(\iter{v}{1}_i, \iter{v}{2}_i, \dotsc, \iter{v}{k}_i) \explain{P}{\rightarrow} \E h(V_1, V_2, \dotsc, V_k).
\end{align}
We denote convergence in this sense using the notation $(\iter{\vv}{1}, \iter{\vv}{2}, \dotsc, \iter{\vv}{k}) \explain{\pw}{\longrightarrow} (V_1, V_2, \dotsc, V_k)$. 
\end{definition}

The following is our main result.

\begin{theorem}\label{thm:SE-alt} Fix a non-negative integer $T \in \W$ and functions $\nonlin_1, \nonlin_2, \dotsc, \nonlin_T : \R \rightarrow \R$. Consider the iteration \eqref{eq:memory-free-amp} initialized at $\iter{\vz}{0}$. Suppose that:
\begin{enumerate}
    \item $\iter{\vz}{0}$ satisfies \assumpref{initialization},
    \item $\mM$ is a semi-random matrix ensemble in the sense of  \defref{matrix-ensemble-relaxed},
    \item  The non-linearities $\nonlin_t$ are {continuously differentiable} Lipschitz functions.
\end{enumerate}
Then, $(\iter{\vz}{0}, \iter{\vz}{1}, \dotsc, \iter{\vz}{T}) \explain{\pw}{\longrightarrow} (Z_0, Z_1, \dotsc, Z_T) \sim \gauss{\vzero}{\mSigma_T}$, where $\mSigma_T$ is as defined in \eqref{eq:SE-memory-free}. 
\end{theorem}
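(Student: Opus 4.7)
The plan is to prove \thref{SE-alt} via a combinatorial moment method combined with a polynomial approximation argument. First, I will reduce the general continuously differentiable Lipschitz case to the case of polynomial nonlinearities. Since one iteration of \eref{memory-free-amp} is $O(1)$-Lipschitz in the previous iterate (using $\|\mM\|_{\op} \lesssim 1$ from condition (2b) together with Lipschitz continuity of $\nonlin_t$), and polynomials are dense in Lipschitz functions in the relevant $L^2$-type topology, one can replace each $\nonlin_t$ by a truncated Hermite expansion and propagate the approximation error by induction on $t$. From now on I assume each $\nonlin_t$ is a polynomial in the Hermite basis.

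Next, I will replace the data-dependent centering factor $\langle \nonlin_{t+1}(\iter{\vz}{t}),\iter{\vz}{t}\rangle/\|\iter{\vz}{t}\|^2$ by its deterministic counterpart $\E[Z\nonlin_{t+1}(\sigma_t Z)]/\sigma_t^2$ with an $o_\P(1)$ error, justified inductively from the already-established PL$_2$ convergence up to step $t$. Under this replacement the iterates become multilinear polynomials in the i.i.d.~Gaussian entries of $\iter{\vz}{0}$ and the i.i.d.~signs $s_i$, with coefficients that are products of entries of $\mPsi$. The whole point of this centering is to annihilate the $H_1$-component of each $\nonlin_t$, so that the effective nonlinearities $\ctr{\nonlin}_t$ appearing in the limiting state evolution \eref{SE-memory-free} live in the span of $\{H_k : k\neq 1\}$.

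The core technical step is then a moment calculation. For a polynomial test function $h:\R^{T+1}\to\R$ and an integer $k$, I will expand
\begin{equation*}
\E\Bigl[\Bigl(\tfrac{1}{\dim}\sum_{i=1}^{\dim} h(\iter{z}{0}_i,\dotsc,\iter{z}{T}_i)\Bigr)^{k}\Bigr]
\end{equation*}
as a sum over tuples of indices, and integrate the two sources of randomness: Wick's formula for the Gaussian entries of $\iter{\vz}{0}$, and the parity rule $\E\prod_j s_{i_j} = \prod_i \charfn\{\text{index } i \text{ appears an even number of times}\}$ for the signs. The surviving diagrams correspond to labeled trees (or forests, for higher $k$) whose edges are weighted by entries of $\mPsi$ or $\mPsi\mPsi^\tran$. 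Using the entrywise delocalization $\|\mPsi\|_\infty \lesssim \dim^{-\frac{1}{2}+\epsilon}$ together with $\|\mPsi\|_{\op} \lesssim 1$, non-tree (``cyclic'') diagrams are shown to be of lower order; the tree contributions are evaluated using $(\mPsi\mPsi^\tran)_{ii}\to\sigpsi$ and $\max_{i\ne j}|(\mPsi\mPsi^\tran)_{ij}|\to 0$ (conditions (2c)--(2d)) and produce exactly the moments of $\gauss{\vzero}{\mSigma_T}$ prescribed by \eref{SE-memory-free}.

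I expect the combinatorial step---ruling out non-tree diagrams---to be the main obstacle. Unlike the rotationally invariant setting, where Haar-measure tools supply powerful cancellations, here the only randomness beyond the Gaussian initialization is the random signs, which merely enforce parity. A cycle in a diagram corresponds to a product of off-diagonal entries of $\mPsi\mPsi^\tran$, and carefully combining the delocalization bound (2a) with the approximate row-orthogonality bound (2c) is needed to beat the combinatorial growth in the number of such diagrams; the most cumbersome sub-case is when Hermite polynomials of degree $\geq 2$ create self-loops in the diagram that the simple parity rule on signs does not kill. A secondary difficulty is propagating the PL$_2$ convergence and the centering replacement through all $T$ iterations without cumulative blowup, which I would handle by an induction maintaining uniform $L^{2q}$ moment bounds on the iterates using $\|\mM\|_\op \lesssim 1$.
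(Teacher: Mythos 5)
Your proposal shares the overall skeleton of the paper's argument (polynomial approximation of the nonlinearities, deterministic replacement of the data-dependent Onsager coefficient, and a diagram expansion exploiting the sign-parity rule together with delocalization of $\mPsi$ and approximate row-orthogonality of $\mPsi\mPsi\tran$), but it diverges on two key steps, and one of these diverging steps is underspecified enough that I would regard it as a partial gap.

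The first divergence is in how you handle concentration. You propose computing the $k$-th moment of the empirical average $\frac{1}{\dim}\sum_i h(\iter{z}{0}_i,\dotsc,\iter{z}{T}_i)$ directly via Wick plus sign-parity, letting $k=2$ supply the variance bound. The paper instead computes only the expectation combinatorially (Theorem~\ref{thm:SE-normalized}) and obtains the $O(\dim^{-1+\epsilon})$ variance bound separately via Efron--Stein (Theorem~\ref{thm:concentration}). Your route is in principle viable, but computing the $k$-th moment of the empirical average produces diagrams over $k$ distinct copies of the outer index $i$, with all the same pathologies (self-loops, removable pairs) compounded $k$-fold; the Efron--Stein route is substantially shorter because it trades that enumeration for a single perturbation estimate (\lemref{perturbation-estimate-efron-stein}) together with the $k=1$ moment calculation. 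This difference is a matter of efficiency, not correctness.

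The second divergence is more significant. You assert that the surviving tree contributions ``produce exactly the moments of $\gauss{\vzero}{\mSigma_T}$ prescribed by \eref{SE-memory-free},'' i.e.\ you plan to identify the limiting constant directly from the combinatorics. The paper deliberately avoids this: \thref{SE-normalized} shows only that the limit is a constant $c$ depending on $T, k_{0:T}, f_{1:T}$ but not on $\mPsi$ (universality), and the value of $c$ is then pinned down by specializing to the ensemble $\mQ = \mS\mU\mP\mLambda\mP\tran\mU\tran\mS$, which is simultaneously semi-random and rotationally invariant, so that the state evolution of \citet{fan2020approximate} identifies the limit. You cannot simply read off that the forest sum equals a mixed Gaussian moment: after the decomposition into simple configurations and the elimination of removable edges, the surviving terms form a nontrivial weighted sum over forests/partitions (cf.~\eqref{eq:key-formula-application}) whose evaluation is exactly what the comparison with the rotationally invariant ensemble replaces. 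If you intend to compute this sum directly, you need to explain how the forest combinatorics reproduce Wick pairings with covariances $\sigpsi\E[\ctr{\nonlin}_s(Z_{s-1})\ctr{\nonlin}_t(Z_t)]$; as written this step is a claim, not a proof. Proving universality and then comparing to a known integrable case is the cleaner route and closes that gap without a direct evaluation.

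Two smaller remarks. Your centering-replacement step (replacing $\langle\nonlin_{t+1}(\iter{\vz}{t}),\iter{\vz}{t}\rangle/\|\iter{\vz}{t}\|^2$ by its deterministic limit) is exactly the content of \lemref{equivalence} in the paper, so that part is essentially the paper's reduction of \thref{SE-alt} to \thref{SE-final}; note however that it requires the variance sequence $\sigma_t^2$ to stay bounded away from zero, which is where the paper's non-degeneracy remark enters, so this should be made explicit. Finally, your diagnosis of the hard combinatorial sub-case (degree $\geq 2$ Hermite components creating structures the parity rule alone cannot kill) is correct and matches the paper's need for the notions of removable edges (\defref{removable-pair}) and nullifying leaves (\defref{nullifying}) and the estimate in \propref{key-estimate}; just be aware that the required cancellation uses the row-normalization $(\mPsi\mPsi\tran)_{ii}=1$ in an essential way (the ``simplification'' step), not merely the delocalization and operator-norm bounds.
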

\begin{remark}
If $\mM$ is drawn from a rotationally invariant ensemble, the conclusion of the above theorem follows from the work of \citet{fan2020approximate}. As we show in \lemref{sign-perm-inv-ensemble},  rotationally invariant matrices are special cases of the semi-random ensemble in the sense that they satisfy the requirements of \defref{matrix-ensemble-relaxed} with probability $1$. \thref{SE-alt} shows that the state evolution actually holds under significantly weaker assumptions than rotational invariance. Indeed,  it has identified a much broader class of matrices $\mM$ such that the associated AMP algorithm has the same asymptotic dynamics. In this sense, this result can be interpreted as a universality theorem.  
\end{remark}

\begin{remark}
For mean-field Ising models, \citet{ccakmak2019memory} have proposed algorithms of the form \eqref{eq:memory-free-amp} to compute the magnetization vector. In this application, $\mM$ is a suitably centered resolvent of the coupling matrix for the Ising model. For rotationally invariant coupling matrices (as studied in \citet{ccakmak2019memory} and \citet{fan2021replica}), the resolvent is also rotationally invariant. Hence, the previously mentioned result of \citet{fan2020approximate} can be used to analyze the dynamics of this algorithm. In this context, our results show that the exact rotational invariance of the coupling matrix is unnecessary for the validity of the state evolution. Instead, this characterization is valid as soon as the relevant resolvent matrix is semi-random in the sense of \defref{matrix-ensemble-relaxed}. This is indeed valid for many coupling matrices---we provide some examples in Section \ref{sec:examples}. 
\end{remark}

In many applications, the non-linearities $\nonlin_t$ are chosen adaptively so that they have the following convenient property.

\begin{assumption}[Divergence-Free Non-Linearities]\label{assump:divergence-free} The functions $\nonlin_t: \R \rightarrow \R$ satisfy:
\begin{align*}
    \E[Z \nonlin_{t+1}(\sigma_t Z)] = 0. 
\end{align*}
\end{assumption}
A simple choice of non-linearities that satisfy the above divergence-free property are the non-linearities $\ctr{\nonlin}_t$ defined in \eqref{eq:divergence-removal}. For divergence-free non-linearities, the iteration \eqref{eq:memory-free-amp} can be simplified without changing its dynamics by observing that if the non-linearities have the divergence-free property, the coefficient of the correction term:
\begin{align*}
    \frac{\langle{\nonlin_{t+1}(\iter{\vz}{t})}\; , \; {\iter{\vz}{t}} \rangle}{\|\iter{\vz}{t}\|^2} \explain{(a)}{\approx} \frac{\E[(\sigma_t Z) \cdot \nonlin_{t+1}(\sigma_t Z)]}{\sigma_t^2} \explain{(b)}{=} 0,
\end{align*}
where the approximation in (a) follows from \thref{SE-alt} and the equality in (b) follows from the divergence-free property. Hence, we also have the following closely related result. 

\begin{theorem}\label{thm:SE-final} Fix a non-negative integer $T \in \W$ and functions $\nonlin_1, \nonlin_2, \dotsc, \nonlin_T : \R \rightarrow \R$. Consider the iteration:
\begin{align}\label{eq:memory-free-amp-simple}
    \iter{\vz}{t+1} & = \mM   \nonlin_{t+1}(\iter{\vz}{t}),
\end{align}
initialized at $\iter{\vz}{0}$. Suppose that in addition to all the assumptions of \thref{SE-alt}, the non-linearities $\nonlin_t$ satisfy \assumpref{divergence-free}. Then, $(\iter{\vz}{0}, \iter{\vz}{1}, \dotsc, \iter{\vz}{T}) \explain{\pw}{\longrightarrow} (Z_0, Z_1, \dotsc, Z_T) \sim \gauss{\vzero}{\mSigma_T}$, where $\mSigma_T$ is as defined in \eqref{eq:SE-memory-free}. 
\end{theorem}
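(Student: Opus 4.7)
\medskip

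\noindent\textbf{Proof proposal for Theorem~\ref{thm:SE-final}.}
The plan is to deduce the theorem from \thref{SE-alt} by a direct coupling argument: run the full memory-free iteration \eqref{eq:memory-free-amp} in parallel with the simplified iteration \eqref{eq:memory-free-amp-simple} from the same Gaussian initialization $\iter{\vz}{0}$, and show that under \assumpref{divergence-free} the two trajectories are asymptotically indistinguishable in the Wasserstein-$2$ sense. Let $\iter{\widetilde{\vz}}{t}$ denote the iterates of \eqref{eq:memory-free-amp} and $\iter{\vz}{t}$ the iterates of \eqref{eq:memory-free-amp-simple}, with $\iter{\widetilde{\vz}}{0}=\iter{\vz}{0}$. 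By \thref{SE-alt}, $(\iter{\widetilde{\vz}}{0},\dots,\iter{\widetilde{\vz}}{T})\xrightarrow{\mathrm{PW}_2}(Z_0,\dots,Z_T)\sim\gauss{\vzero}{\mSigma_T}$. Applying this convergence to the two pseudo-Lipschitz test functions $(x,y)\mapsto xy$ and $(x,y)\mapsto y^2$ yields, writing $c_t \bydef \langle \nonlin_{t+1}(\iter{\widetilde{\vz}}{t}),\iter{\widetilde{\vz}}{t}\rangle/\|\iter{\widetilde{\vz}}{t}\|^2$,
\begin{align*}
\frac{1}{\dim}\langle \nonlin_{t+1}(\iter{\widetilde{\vz}}{t}),\iter{\widetilde{\vz}}{t}\rangle \explain{P}{\rightarrow} \E[Z_t\nonlin_{t+1}(Z_t)] = \sigma_t\cdot\E[Z\nonlin_{t+1}(\sigma_t Z)] = 0, \qquad \frac{1}{\dim}\|\iter{\widetilde{\vz}}{t}\|^2\explain{P}{\rightarrow}\sigma_t^2>0,
\end{align*}
where the penultimate equality uses \assumpref{divergence-free} and the non-degeneracy remark ensures $\sigma_t^2>0$. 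Hence $c_t\explain{P}{\rightarrow}0$ for every $t\in\{0,1,\dots,T-1\}$.

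\smallskip

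\noindent The heart of the argument is an induction on $t$ showing $\dim^{-1/2}\|\iter{\vz}{t}-\iter{\widetilde{\vz}}{t}\|\explain{P}{\rightarrow}0$. The base case $t=0$ is immediate. For the step, write
\begin{align*}
\iter{\vz}{t+1}-\iter{\widetilde{\vz}}{t+1} \;=\; \mM\bigl[\nonlin_{t+1}(\iter{\vz}{t})-\nonlin_{t+1}(\iter{\widetilde{\vz}}{t})\bigr] \;+\; c_t\,\mM\,\iter{\widetilde{\vz}}{t}.
\end{align*}
Since $\mM=\mS\mPsi\mS$ with $\mS$ diagonal $\pm 1$, \defref{matrix-ensemble-relaxed}(2b) gives $\|\mM\|_{\op}\lesssim 1$. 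Combined with the Lipschitz bound on $\nonlin_{t+1}$,
\begin{align*}
\frac{1}{\sqrt{\dim}}\|\iter{\vz}{t+1}-\iter{\widetilde{\vz}}{t+1}\| \;\lesssim\; \frac{L}{\sqrt{\dim}}\|\iter{\vz}{t}-\iter{\widetilde{\vz}}{t}\| \;+\; |c_t|\cdot\frac{1}{\sqrt{\dim}}\|\iter{\widetilde{\vz}}{t}\|.
\end{align*}
By the induction hypothesis the first term vanishes in probability, and by the previous paragraph $|c_t|\explain{P}{\rightarrow}0$ while $\dim^{-1/2}\|\iter{\widetilde{\vz}}{t}\|\explain{P}{\rightarrow}\sigma_t$, so the second term does as well.

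\smallskip

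\noindent To conclude, fix a pseudo-Lipschitz test function $h:\R^{T+1}\to\R$ as in \eqref{eq:P-conv}. By Cauchy--Schwarz,
\begin{align*}
\biggl|\frac{1}{\dim}\sum_{i=1}^\dim h(\iter{z}{0}_i,\dots,\iter{z}{T}_i) - \frac{1}{\dim}\sum_{i=1}^\dim h(\iter{\widetilde{z}}{0}_i,\dots,\iter{\widetilde{z}}{T}_i)\biggr|
\;\le\; L\sqrt{\sum_{t=0}^T \frac{\|\iter{\vz}{t}-\iter{\widetilde{\vz}}{t}\|^2}{\dim}}\cdot \sqrt{\frac{1}{\dim}\sum_{i=1}^\dim(1+\|\vec{z}_i\|+\|\widetilde{\vec{z}}_i\|)^2}.
\end{align*}
The first factor tends to $0$ in probability by the induction, while the second is stochastically bounded because $\dim^{-1}\|\iter{\widetilde{\vz}}{t}\|^2\explain{P}{\rightarrow}\sigma_t^2$ (from \thref{SE-alt}) and $\dim^{-1}\|\iter{\vz}{t}\|^2\explain{P}{\rightarrow}\sigma_t^2$ follows from the triangle inequality and the coupling bound just established. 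Combining this with the convergence $\dim^{-1}\sum_i h(\iter{\widetilde{z}}{0}_i,\dots,\iter{\widetilde{z}}{T}_i)\explain{P}{\rightarrow}\E h(Z_0,\dots,Z_T)$ supplied by \thref{SE-alt} yields the desired PW$_2$ convergence.

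\smallskip

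\noindent The main obstacle is the induction step: one must simultaneously exploit the divergence-free cancellation (to kill $c_t$) and the operator-norm bound on $\mM$ (to control how the discrepancy propagates through one step of the iteration). Both ingredients are already at hand via \thref{SE-alt} and \defref{matrix-ensemble-relaxed}(2b), so no new universality machinery is needed beyond that developed for \thref{SE-alt}. The remaining work is the standard PW$_2$-approximation bookkeeping sketched above.
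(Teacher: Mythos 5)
Your coupling argument is technically sound as an implication, but it is circular within this paper: you derive \thref{SE-final} from \thref{SE-alt}, yet \thref{SE-alt} has no independent proof here---\lemref{equivalence} establishes it as a \emph{consequence} of \thref{SE-final}, and the Proof Outline section states explicitly that \thref{SE-final} is proved first and \thref{SE-alt} is then obtained from it. What you have written is essentially the converse of the paper's \lemref{equivalence} (which runs the same perturbation estimate in the opposite direction, showing that the centered nonlinearities $\ctr{\nonlin}_t$ of \eqref{eq:divergence-removal} produce asymptotically the same trajectory as the explicit projection in \eqref{eq:memory-free-amp}). Combined with \lemref{equivalence}, your argument proves the equivalence of the two theorems, but not either theorem on its own.

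The actual content of \thref{SE-final} is the universality statement: that a semi-random matrix $\mM = \mS\mPsi\mS$ produces the same state-evolution dynamics as a rotationally invariant matrix. The paper's proof is completely different and necessarily so. It proceeds by (i) normalizing via \lemref{rescaling}; (ii) reducing to polynomial nonlinearities, polynomial test functions, and a matrix with exactly unit row norms via the approximation result \propref{approx}; (iii) showing concentration of the empirical moments via Efron--Stein (\thref{concentration}); (iv) establishing that the limiting expected mixed Hermite moments are identical for \emph{all} semi-random ensembles (\thref{SE-normalized}), by a moment-method expansion over decorated forests with cancellation estimates (\propref{key-estimate}), simplification of removable edges (\propref{decomposition}), and the universal-configuration bound (\propref{universality-universal-config}); and (v) pinning down the universal limit by comparing to the orthogonally invariant ensemble $\mQ = \mS\mU\mP\mLambda\mP\tran\mU\tran\mS$, for which \citet{fan2020approximate,fan2021replica} already give the state evolution. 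None of this machinery is reached by your proposal because assuming \thref{SE-alt} sidesteps the universality step entirely, which is exactly where the difficulty lies.
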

\begin{remark}
Our choice of the random matrix ensemble $\mM$ is inspired by recent progress in free probability. Specifically, \citep{anderson2014asymptotically} established that delocalized orthogonal matrices with sign and permutation symmetries behave like Haar matrices in the sense that conjugation by these matrices also induces freeness \citep{voiculescu1991limit,voiculescu1998strengthened}. We emphasize that although our choice is motivated by these results, to the best of our knowledge, the result does not follow from existing results in the free probability literature. Here we design a new approach specifically tailored to the AMP algorithms under consideration.  
\end{remark}
\begin{remark} \new{\thref{SE-final} also holds in the situation when the iteration \eqref{eq:memory-free-amp-simple} is initialized with the deterministic initialization $\iter{\vz}{0} = c \vone$ for any $c \in \R$. To see this, consider an AMP algorithm of the form in \eqref{eq:memory-free-amp-simple} with a deterministic initialization:
\begin{align*}
   \iter{\vz}{0} & = c \vone, \quad  \iter{\vz}{t+1}  = \mM   \nonlin_{t+1}(\iter{\vz}{t}).
\end{align*}
Such an algorithm can be implemented using another AMP algorithm of the form in \eqref{eq:memory-free-amp-simple} with a random Gaussian initialization to which \thref{SE-final} applies:
\begin{align*}
   \iter{\vw}{0} & \sim \gauss{\vzero}{\mI_\dim}, \quad  \iter{\vw}{t+1}  = \mM   \nonlin_{t+1}(\iter{\vw}{t}).
\end{align*}
In order to do so, we choose the non-linearity for the first iteration $g_1$ as the constant function $g_1(w) = f_1(c) \; \forall \; w \; \in \; \R$. This choice is divergence-free in the sense of \assumpref{divergence-free} and ensures that  
$\iter{\vw}{1} = f_1(c) \cdot  \mM \vone = \iter{\vz}{1}$. For subsequent iterations, we can take $g_t = f_t \; \forall \; t \geq 2$, ensuring that $\iter{\vw}{t} = \iter{\vz}{t} \; \forall \; t \geq 1$.
}
\end{remark}
\subsection{Examples of Semi-random Matrix Ensembles} \label{sec:matrix-ensemble-eg}
We provide some examples of matrix ensembles which are semi-random in the sense of \defref{matrix-ensemble-relaxed}. These examples consist of random matrices that neither have i.i.d. entries nor are rotationally invariant. Consequently, none of the existing results on the state evolution of AMP algorithms apply to these ensembles. 

\paragraph{Example 1.} The following lemma shows that any symmetric, delocalized orthogonal matrix  conjugated by a random sign diagonal matrix is semi-random in the sense of \defref{matrix-ensemble-relaxed}.

\begin{lemma} \label{lem:eg1} Let $\mPsi \in \R^{\dim \times \dim}$ be a symmetric orthogonal matrix with $\|\mPsi\|_{\infty} \lesssim \dim^{-1/2+\epsilon}$ for any $\epsilon > 0$. Let $\mS = \diag(s_{1:\dim})$ be a uniformly random signed diagonal matrix with $s_{1:\dim} \explain{i.i.d.}{\sim} \unif{\{\pm 1\}}$. Then, $\mM = \mS \mPsi \mS$ is semi-random with constant $\sigpsi = 1$ in the sense of \defref{matrix-ensemble-relaxed}. 
\end{lemma}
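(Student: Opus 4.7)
The proof plan is essentially a direct verification of the four conditions (2a)--(2d) of \defref{matrix-ensemble-relaxed}, since the randomness in $\mM$ enters only through the sign diagonal $\mS$ and the conditions are imposed on $\mPsi$, which is deterministic here. Condition (2a) is exactly the hypothesis of the lemma. Condition (2b) follows immediately because an orthogonal matrix has all singular values equal to $1$, so $\|\mPsi\|_{\op} = 1$.

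The key observation for (2c) and (2d) is that a symmetric orthogonal matrix is an involution: by symmetry $\mPsi\tran = \mPsi$, so combined with orthogonality $\mPsi\tran \mPsi = \mI_\dim$ we get
\begin{align*}
\mPsi \mPsi\tran \;=\; \mPsi \cdot \mPsi \;=\; \mPsi\tran \mPsi \;=\; \mI_\dim.
\end{align*}
Therefore $(\mPsi\mPsi\tran)_{ij} = \delta_{ij}$. Condition (2c) is then trivial because $\max_{i\neq j}|(\mPsi\mPsi\tran)_{ij}| = 0 \lesssim \dim^{-1/2+\epsilon}$. Condition (2d) holds with $\sigpsi = 1$ since $(\mPsi\mPsi\tran)_{ii} - 1 = 0$ for every $i$, trivially giving $\max_i |(\mPsi\mPsi\tran)_{ii} - 1| \ll 1$.

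There is no real obstacle here: the lemma reduces to a one-line algebraic identity ($\mPsi^2 = \mI_\dim$) together with the two hypotheses already placed on $\mPsi$. In particular, because $\mPsi$ is deterministic, the probability-$1$ formulation \eqref{eq:borel-cantelli} is not needed, and no appeal to Borel--Cantelli is required. The substantive universality content of the paper lies in \thref{SE-alt} and \thref{SE-final}; this lemma merely certifies that a natural family of constructions (symmetric delocalized orthogonal matrices such as the Discrete Sine or Hadamard transforms, conjugated by random signs) falls within the scope of the semi-random ensemble.
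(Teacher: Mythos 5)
Your proof is correct and follows essentially the same path as the paper's: conditions (2a)--(2d) are verified directly from the hypotheses and the identity $\mPsi\mPsi\tran = \mPsi^2 = \mI_\dim$, with (2b) holding because $\|\mPsi\|_{\op} = 1$ for an orthogonal matrix. The only cosmetic difference is that the paper obtains $\|\mPsi\|_{\op} = 1$ by observing the spectrum of a symmetric orthogonal matrix lies in $\{\pm 1\}$, whereas you argue from the singular values directly; both are immediate and equivalent.
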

\begin{proof}
Observe that $\mM$ satisfies requirement (1) of \defref{matrix-ensemble-relaxed} by construction. Requirement (2a) follows from the delocalization hypothesis. Since $\mPsi^2 = \mI_{\dim}$, requirements (2c) and (2d) are also verified. Furthermore, since the spectral measure of symmetric orthogonal matrices is supported on $\{-1,1\}$, $\|\mPsi\|_{\op} = 1$, which verifies (2b).
\end{proof}

Since there are many well-known examples of deterministic, symmetric, delocalized orthogonal matrices (such as the Discrete Cosine Transform matrix, the Discrete Sine Transform matrix, and the Hadamard-Walsh Transform matrices), \lemref{eg1} shows that our results (\thref{SE-alt} and \thref{SE-final}) apply to matrices constructed with very limited randomness ($\dim$ random bits). In contrast, prior state evolution results applied exclusively to matrices constructed using $O(\dim^2)$ random variables.

\paragraph{Example 2 (Sign and Permutation Invariant Ensembles).} Next, we show that any matrix with delocalized eigenvectors and a ``sign and permutation invariance" is also semi-random in the sense of \defref{matrix-ensemble-relaxed}. 

\begin{lemma} \label{lem:sign-perm-inv-ensemble}Suppose that $\mM = \mF \mLambda \mF\tran$ where:
\begin{enumerate}
    \item $\mF$ is a random orthogonal $\dim \times \dim$ matrix which satisfies:
    \begin{description}[font=\normalfont\emph]
    \item [Delocalization:] $\|\mF\|_{\infty} \lesssim \dim^{-1/2+\epsilon}$ for any fixed $\epsilon>0$,
    \item [Invariance: ] $\mS\mF \mP \explain{d}{=} \mF$ for any signed diagonal matrix $\mS = \diag{(\vs)}, \vs \in \{\pm 1\}^{\dim}$ and any $\dim \times \dim$ permutation matrix $\mP$.
    \end{description}
    \item $\mLambda = \diag(\lambda_1, \lambda_2, \dotsc \lambda_{\dim})$ is a deterministic diagonal matrix such that:
    \begin{align} \label{eq:lambda-hypothesis}
        \|\mLambda\|_{\op} \lesssim 1, \quad {\Tr(\mLambda)}/{\dim} \lesssim \dim^{-1/2 + \epsilon} \; \forall \; \epsilon > 0,  \quad  \lim_{\dim \rightarrow \infty} {\Tr(\mLambda^2)}/{\dim} = \sigpsi. 
    \end{align}
\end{enumerate}
\new{Then, there exists a random matrix $\widetilde{\mM}$, which is semi-random with probability $1$ (cf. \defref{matrix-ensemble-relaxed}), and satisfies $\widetilde{\mM} \explain{d}{=} {\mM}$.}
\end{lemma}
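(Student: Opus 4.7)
The plan is to construct the semi-random representation by absorbing the sign-randomness in $\mF$ via the sign invariance, and then verify each condition of \defref{matrix-ensemble-relaxed} for $\mPsi := \mF \mLambda \mF\tran$ by reducing it, through the permutation invariance of $\mF$, to a concentration statement for a linear statistic of a uniform random permutation.

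Let $\mS = \diag(s_{1:\dim})$ with $s_{1:\dim} \explain{i.i.d.}{\sim} \unif{\{\pm 1\}}$, independent of $\mF$. The sign invariance gives $\mS \mF \explain{d}{=} \mF$, so
\begin{align*}
    \widetilde{\mM} := \mS \mF \mLambda \mF\tran \mS = \mS \mPsi \mS \explain{d}{=} \mF \mLambda \mF\tran = \mM,
\end{align*}
and $\mS$ is independent of $\mPsi$ by construction. It therefore suffices to verify (2a)--(2d) of \defref{matrix-ensemble-relaxed} for the random matrix $\mPsi$ almost surely. Condition (2b) is immediate since $\|\mPsi\|_{\op} = \|\mLambda\|_{\op} \lesssim 1$.

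For (2a), (2c), and (2d), the permutation invariance $\mF \mP \explain{d}{=} \mF$ yields, for $\pi$ a uniform random permutation of $[\dim]$ independent of $\mF$, the distributional identity $\mPsi \explain{d}{=} \mF (\mP \mLambda \mP\tran) \mF\tran$ with $\mP \mLambda \mP\tran$ diagonal. Thus
\begin{align*}
    \mPsi_{ij} \explain{d}{=} \sum_l \lambda_l F_{i,\pi(l)} F_{j,\pi(l)}, \qquad (\mPsi \mPsi\tran)_{ij} \explain{d}{=} \sum_l \lambda_l^2 F_{i,\pi(l)} F_{j,\pi(l)}.
\end{align*}
Conditional on $\mF$, each right-hand side is a linear statistic $Z = \sum_l c_l y_{\pi(l)}$ of a random permutation, with $c_l \in \{\lambda_l, \lambda_l^2\}$ and $y_l \in \{F_{il} F_{jl}, F_{il}^2\}$. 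On the delocalization event, $|y_l| \leq C_\epsilon^2 \dim^{-1+2\epsilon}$; by orthogonality of $\mF$, $\sum_l y_l = 0$ for $i \neq j$ and $\sum_l y_l = 1$ for $i = j$, so $\E_\pi[Z]$ equals either $0$ or $\Tr(\mLambda^k)/\dim$ for $k \in \{1,2\}$. Hoeffding's inequality for sampling without replacement then gives that $Z - \E_\pi Z$ is sub-Gaussian with proxy $\sigma^2 \lesssim \Tr(\mLambda^2)\,\|\mF\|_\infty^4 \lesssim \dim^{-1+4\epsilon}$, hence the moment bounds $\E[\mPsi_{ij}^{2p}] \lesssim (Cp)^p \dim^{-p+4p\epsilon}$ and analogous bounds for the centered entries of $\mPsi\mPsi\tran$. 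The hypotheses $\Tr(\mLambda)/\dim \lesssim \dim^{-1/2+\epsilon}$ and $\Tr(\mLambda^2)/\dim \to \sigma_\psi^2$ handle the deterministic mean parts of (2a) and (2d). A Markov inequality with $p$ chosen large depending on the target exponent $\epsilon' > 0$, a union bound over the $\Theta(\dim^2)$ entries, and the Borel--Cantelli lemma along $\dim \in \N$ yield the required almost-sure estimates.

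The main obstacle is obtaining the sharp $\dim^{-1/2+\epsilon}$ rate required by (2a) and (2c): the naive delocalization bound alone gives only $|\mPsi_{ij}| \leq \|\mLambda\|_{\op} = O(1)$. The square-root cancellation comes from the joint action of three ingredients: (i) permutation invariance of $\mF$, which turns $\mPsi_{ij}$ into a random-permutation statistic; (ii) the orthogonality constraint $\sum_l F_{il} F_{jl} = 0$, which forces $\E_\pi[Z] = 0$ in the off-diagonal case; and (iii) delocalization, which makes each summand $c_l y_{\pi(l)}$ individually small. Hoeffding's inequality for random permutations then converts this structure into the quantitative tail bound strong enough to survive the union bound over $\Theta(\dim^2)$ entries and the Borel--Cantelli summation over $\dim$.
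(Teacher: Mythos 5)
Your proof is correct and matches the paper's argument essentially step-for-step: you reduce via the sign invariance to put $\widetilde{\mM}=\mS\mPsi\mS$ in the required form, use the permutation invariance to turn each entry of $\mPsi$ and $\mPsi\mPsi\tran$ into a linear combinatorial statistic $\sum_l c_l y_{\pi(l)}$ of a uniform random permutation (conditionally on $\mF$), exploit orthogonality of $\mF$ and the trace hypotheses on $\mLambda$ for the mean, invoke a permutation concentration inequality for the fluctuation, and close with a union bound and Borel--Cantelli. The paper cites the Bercu--Delyon--Rio concentration inequality for permutation statistics (their Theorem~4.3), whose conclusion is Bernstein-type with an additive $\|\mA\|_\infty\log(1/\delta)$ term rather than purely sub-Gaussian; your appeal to ``Hoeffding's inequality for sampling without replacement'' is a slight misnomer for a weighted statistic $\sum_l c_l y_{\pi(l)}$ with non-constant $c_l$, but the Azuma-type martingale bound it amounts to does give the variance proxy $\lesssim \Tr(\mLambda^2)\|\mF\|_\infty^4$, and the extra Bernstein term $\|\mLambda\|_{\op}\|\mF\|_\infty^2\log\dim \lesssim \dim^{-1+3\epsilon}$ is harmless, so the argument goes through unchanged.
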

We provide the proof of \lemref{sign-perm-inv-ensemble} in \appref{sign-perm-inv-ensemble} using a concentration inequality for permutation statistics developed by \citet{bercu2015concentration}.

\new{Observe that \lemref{sign-perm-inv-ensemble} implies that the conclusions of \thref{SE-alt} and \thref{SE-final} also apply to AMP algorithms of the form \eqref{eq:memory-free-amp} and \eqref{eq:memory-free-amp-simple} driven by a sign and permutation invariant matrix $\mM$ (cf. \lemref{sign-perm-inv-ensemble}). In order to see this, observe that \lemref{sign-perm-inv-ensemble} guarantees the existence of a matrix $\widetilde{\mM}$ which is semi-random with probability $1$ and satisfies $\mM \explain{d} = \widetilde{\mM}$. Let $\iter{\vz}{0}, \dotsc, \iter{\vz}{T}$ denote the AMP iterates generated by matrix $\mM$ and $\iter{\widetilde{\vz}}{0}, \dotsc, \iter{\widetilde{\vz}}{T}$ denote the AMP iterates generated by matrix $\widetilde{\mM}$. Since $\widetilde{\mM}$ is semi-random with probability 1 (cf. \lemref{sign-perm-inv-ensemble}), by \thref{SE-alt} and \thref{SE-final}, $(\iter{\widetilde{\vz}}{0}, \dotsc, \iter{\widetilde{\vz}}{T}) \explain{\pw}{\longrightarrow} (Z_0, Z_1, \dotsc, Z_T) \sim \gauss{\vzero}{\mSigma_T}$, where $\mSigma_T$ is as defined in \eqref{eq:SE-memory-free}. Because $(\iter{\vz}{0}, \dotsc, \iter{\vz}{T}) \explain{d}{=}  (\iter{\widetilde{\vz}}{0}, \dotsc, \iter{\widetilde{\vz}}{T})$ (recall $\mM \explain{d} = \widetilde{\mM}$), we conclude that $(\iter{{\vz}}{0}, \dotsc, \iter{{\vz}}{T}) \explain{\pw}{\longrightarrow} (Z_0, Z_1, \dotsc, Z_T) \sim \gauss{\vzero}{\mSigma_T}$, as claimed. }

The sign and permutation invariant model in \lemref{sign-perm-inv-ensemble} is a natural \emph{generalization} of the rotationally invariant model. This follows as a rotationally invariant matrix is of the form $\mM = \mU \mLambda \mU \tran$ where $\mU \sim \unif{\Ortho(\dim)}$ and $\mLambda$ is a deterministic diagonal matrix. Since the Haar measure on $\Ortho(\dim)$ is invariant to left and right multiplication by arbitrary orthogonal matrices (and in particular sign or permutation matrices), $\mU \explain{d}{=} \mS \mU \mP$ for any diagonal sign matrix $\mS$ and any permutation matrix $\mP$. Hence, rotationally invariant matrices satisfy the assumptions of \lemref{sign-perm-inv-ensemble}. Moreover, given a deterministic diagonal matrix $\mLambda$ satisfying the hypothesis in \eqref{eq:lambda-hypothesis}, a delocalized $\dim \times \dim$ orthogonal matrix $\mH$ with $\|\mH\|_{\infty} \lesssim \dim^{-1/2+\epsilon}$ for any $\epsilon>0$, a uniformly random $\dim \times \dim$ permutation matrix $\mQ$, and a uniformly random sign diagonal matrix $\mD$, the matrix $$\mM = (\mD \mH \mP) \cdot  \mLambda \cdot (\mP\tran \mH \tran \mD)$$ satisfies the requirements of \lemref{sign-perm-inv-ensemble} by construction. Consequently, sign and permutation invariant matrices can be constructed with significantly less randomness than what is required to construct rotationally invariant matrices.  
\paragraph{} In many applications, the matrix $\mM$ used in the AMP algorithm \eqref{eq:memory-free-amp} is the resolvent of another random matrix $\mJ$, centered to have zero trace, that is,
\begin{align} \label{eq:eg-centered-resolvent}
    \mM(\lambda) = (\lambda \mI_{\dim} - \mJ )^{-1} - \frac{\Tr((\lambda \mI_{\dim} - \mJ )^{-1})}{\dim} \cdot \mI_{\dim}.
\end{align}
This is true in the case of the AMP algorithm used by \citet{ccakmak2019memory} to compute the magnetization of mean-field Ising models with rotationally invariant couplings and for Vector Approximate Message Passing (VAMP) algorithms used in compressed sensing \citep{rangan2019vector,takeuchi2017rigorous}. In these situations, the local law \citep{erdHos2017dynamical} for the random matrix $\mJ$ (if available) can be readily used to verify that $\mM$ is semi-random (\defref{matrix-ensemble-relaxed}). The following two examples show that resolvents of Wigner and sample covariance matrices satisfy the requirements of \defref{matrix-ensemble-relaxed}. 

\paragraph{Example 3 (Resolvent of Wigner Matrices).}  Let $\mJ$ be a Wigner matrix with symmetric entries, that is, $\mJ = \mW/\sqrt{\dim}$ for a symmetric matrix $\mW$ whose entries $W_{ij}$ are i.i.d. symmetric ($W_{ij} \explain{d}{=} - W_{ij})$ random variables with $\E W_{ij} = 0$, $\E W_{ij}^2 = 1 + \delta_{ij}$ and finite moments of all orders. These hypotheses are sufficient to guarantee that the spectral measure of $\mJ$ converges to  semi-circle distribution supported on $[-2,2]$ \citep{wigner1958distribution}  and the largest eigenvalue $\lambda_1(\mJ) \rightarrow 2$ \citep{bai1988necessary}. In this situation, the centered resolvent $\mM(\lambda)$ in \eqref{eq:eg-centered-resolvent} can be shown to satisfy the requirements of \defref{matrix-ensemble-relaxed} for any fixed $\lambda > 2$. Indeed, since the entries of $\mW$ are assumed to be symmetric, requirement (1) of \defref{matrix-ensemble-relaxed} holds. The remaining requirements can be verified using the local law for Wigner matrices. Optimal local laws for Wigner matrices were first obtained by \citet{erdHos2009local} and we refer the reader to the \citet[Section 18.2]{erdHos2017dynamical}  for additional historical context. In \appref{local-law}, we show how the following lemma follows as a consequence of a variant of the local law for Wigner matrices derived by \citet{benaych2016lectures}.

\begin{lemma} \label{lem:wigner} Let $\mJ$ be a $\dim \times \dim$ Wigner matrix with symmetric entries as defined above. For any $\lambda > 2$, the matrix  $\mM(\lambda)$ in \eqref{eq:eg-centered-resolvent} with this $\mJ$ satisfies the requirements of \defref{matrix-ensemble-relaxed} with constant $\sigpsi = - G_{\mathrm{sc}}^\prime(\lambda) - G_{\mathrm{sc}}^2(\lambda)$ with probability $1-o_{\dim}(1)$, where $G_{\mathrm{sc}}: (2,\infty) \rightarrow \R$ denotes the Cauchy transform of the semi-circle distribution on $[-2,2]$. 
\end{lemma}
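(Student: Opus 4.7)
The strategy splits into (i) exhibiting a decomposition $\mM(\lambda) \eqd \mS \mPsi \mS$ of the form required by Definition~\ref{def:matrix-ensemble-relaxed}, and (ii) verifying the four size/orthogonality conditions on $\mPsi$ by invoking the entrywise local law for Wigner matrices.

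For (i), introduce an independent sign diagonal $\mS = \diag(s_1, \dotsc, s_\dim)$ with $s_i \explain{iid}{\sim} \unif{\{\pm 1\}}$. Since the entries $W_{ij}$ are symmetric and independent of $\mS$, the sign flips $s_i s_j$ act on independently symmetric variables, so $\mS \mW \mS \eqd \mW$. Consequently $\mS \mJ \mS \eqd \mJ$, and applying the map $\mA \mapsto (\lambda\mI - \mA)^{-1} - \frac{1}{\dim} \Tr[(\lambda\mI - \mA)^{-1}] \mI$ gives $\mS \mM(\lambda) \mS \eqd \mM(\lambda)$. Setting $\widetilde{\mM} := \mS \mM(\lambda) \mS$ then exhibits a semi-random decomposition with $\mPsi := \mM(\lambda)$ independent of $\mS$ and $\widetilde{\mM} \eqd \mM(\lambda)$, reducing the task to verifying (2a)-(2d) for $\mPsi = \mM(\lambda)$ with probability $1-o(1)$.

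For (ii), write $G(\lambda) := (\lambda \mI - \mJ)^{-1}$ and $\gamma(\lambda) := \Tr G(\lambda)/\dim$, so $\mM(\lambda) = G(\lambda) - \gamma(\lambda)\mI$. For $\lambda > 2$, the entrywise local law ensures that, with probability $1-o(1)$ and for every fixed $\epsilon>0$,
\begin{align*}
\max_{i,j} |G_{ij}(\lambda) - \delta_{ij} G_{\mathrm{sc}}(\lambda)| \lesssim \dim^{-1/2 + \epsilon} \quad \text{and} \quad |\gamma(\lambda) - G_{\mathrm{sc}}(\lambda)| \lesssim \dim^{-1/2 + \epsilon}.
\end{align*}
Property (2a) is immediate. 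For (2b), the edge result $\lambda_1(\mJ) \to 2$ combined with $\lambda > 2$ yields $\|G(\lambda)\|_{\op} \leq (\lambda - \lambda_1(\mJ))^{-1} \lesssim 1$, hence $\|\mM(\lambda)\|_{\op} \lesssim 1$. For (2c) and (2d), use that $\mM$ is symmetric and $G^2(\lambda) = -G'(\lambda)$ to expand
\begin{align*}
(\mM^2)_{ij} = (G^2)_{ij} - 2\gamma G_{ij} + \gamma^2 \delta_{ij}.
\end{align*}
Differentiating the entrywise local law in $\lambda$ (routine outside the spectral bulk) gives
\begin{align*}
\max_{i,j} \left| (G^2)_{ij}(\lambda) - \delta_{ij}(-G_{\mathrm{sc}}'(\lambda)) \right| \lesssim \dim^{-1/2 + \epsilon}.
\end{align*}
For $i \neq j$ this produces $|(\mM^2)_{ij}| \lesssim \dim^{-1/2+\epsilon}$, verifying (2c), while on the diagonal
\begin{align*}
(\mM^2)_{ii} = -G_{\mathrm{sc}}'(\lambda) - 2 G_{\mathrm{sc}}(\lambda)^2 + G_{\mathrm{sc}}(\lambda)^2 + o(1) = -G_{\mathrm{sc}}'(\lambda) - G_{\mathrm{sc}}^2(\lambda) + o(1),
\end{align*}
which identifies $\sigpsi = -G_{\mathrm{sc}}'(\lambda) - G_{\mathrm{sc}}^2(\lambda)$ and yields (2d).

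The main obstacle is citing and unpacking the right variant of the local law, specifically one that delivers pointwise control on the entries of $G^2(\lambda)$ uniformly in $i,j$ at scale $\dim^{-1/2+\epsilon}$. The formulation of Benaych-Georges and Knowles provides precisely this, reducing the remainder to the algebraic expansions above; once the local law is in hand, the only substantive computation is the identification of $\sigpsi$ with $-G_{\mathrm{sc}}'(\lambda) - G_{\mathrm{sc}}^2(\lambda)$.
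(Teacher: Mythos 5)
Your proof follows the same two-part strategy as the paper: (i) use the entrywise symmetry of $\mW$ to establish that $\mS\mM(\lambda)\mS \eqd \mM(\lambda)$, giving a distributional copy of the required form $\mS\mPsi\mS$ with $\mPsi$ independent of $\mS$; and (ii) verify (2a)--(2d) from the entrywise local law. The algebra in part (ii) is correct: the expansion $(\mM^2)_{ij} = (G^2)_{ij} - 2\gamma G_{ij} + \gamma^2\delta_{ij}$ combined with the off-diagonal local law gives (2c), and the diagonal evaluation correctly collapses to $\sigpsi = -G_{\mathrm{sc}}'(\lambda) - G_{\mathrm{sc}}^2(\lambda)$. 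The operator norm bound via $\lambda_1(\mJ) \to 2$ is also right.

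The one place where your write-up is looser than the paper is the entrywise local law for $G^2(\lambda)$. Your closing claim that ``the formulation of Benaych-Georges and Knowles provides precisely this'' overstates what the reference gives directly: BGK Theorem~10.3 yields the entrywise resolvent law $\max_{i,j}|G_{ij}(z) - \delta_{ij}G_{\mathrm{sc}}(z)| \lesssim \dim^{-1/2+\epsilon}$ uniformly over a compact region separated from $[-2,2]$, but not a ready-made statement about $(G^2)_{ij}$. The paper therefore spends most of Appendix~B.2 converting the resolvent law to the $G^2$ law via the Helffer--Sj\"ostrand representation of $f_\lambda(x) = (\lambda - x)^{-2}$ along with the uniformity of the resolvent law on an annulus around the support. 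Your proposed shortcut---``differentiating the entrywise local law in $\lambda$''---is a legitimate and arguably cleaner alternative: since $\partial_\lambda G(\lambda) = -G^2(\lambda)$ and the resolvent local law holds uniformly on a small circle $\gamma$ centered at $\lambda$ and avoiding $[-2,2]$, the Cauchy integral formula $-(G^2)_{ij}(\lambda) - \delta_{ij}m'(\lambda) = \frac{1}{2\pi i}\oint_\gamma \frac{G_{ij}(z) - \delta_{ij}m(z)}{(z-\lambda)^2}\,\diff z$ immediately gives the entrywise $\dim^{-1/2+\epsilon}$ bound on $G^2$. You should state the argument at that level of precision rather than attribute the $G^2$ law directly to BGK, since the uniform-over-a-contour step is what actually makes the differentiation ``routine.''
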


\paragraph{Example 4 (Resolvent of Sample Covariance Matrices).} Consider the situation where $\mJ$ is a covariance matrix of the form $\mJ = \mX \tran \mX / \sqrt{M \dim}$ for a $M \times \dim$ matrix $\mX$ with a converging aspect ratio $M/N \rightarrow \phi \in (0, \infty)$ whose entries $X_{ij}$ are i.i.d. symmetric ($X_{ij} \explain{d}{=} - X_{ij}$) random variables with $\E X_{ij} = 0$, $\E X_{ij}^2 = 1$ and finite moments of all orders. These hypotheses are sufficient to guarantee that the spectral measure of $\mJ$ converges to the Marchenko-Pastur distribution \citep{marvcenko1967distribution} and $\lambda_1(\mJ)$, the largest eigenvalue of $\mJ$ satisfies $\lambda_1(\mJ) \explain{P}{\rightarrow} \lambda_+^{\mathrm{MP}}$ \citep{bai2008limit} where,
\begin{align}
    \lambda_+^{\mathrm{MP}} \explain{def}{=} \sqrt{\phi} + \frac{1}{\sqrt{\phi}} + 2.
\end{align}
For this $\mJ$, we can verify that the centered resolvent matrix $\mM(\lambda)$ defined in \eqref{eq:eg-centered-resolvent} satisfies the requirements of \defref{matrix-ensemble-relaxed} for any $\lambda > \lambda_+^{\mathrm{MP}}$. Indeed, since the entries of $\mX$ are assumed to be symmetric, $\mM(\lambda)$ satisfies requirement (1) of \defref{matrix-ensemble-relaxed}. The remaining requirements can be verified using the local law for sample covariance matrices obtained by \citet{alex2014isotropic}. In particular, we have the following result, whose proof appears in \appref{local-law-mp}. 

\begin{lemma} \label{lem:wishart} Let $\mJ$ be a $\dim \times \dim$ sample covariance matrix as defined above. For any $\lambda > \lambda_+^{\mathrm{MP}}$, the matrix  $\mM(\lambda)$ in \eqref{eq:eg-centered-resolvent} with this $\mJ$ satisfies the requirements of \defref{matrix-ensemble-relaxed} with constant $\sigpsi = - G_{\mathrm{MP}}^\prime(\lambda) - G_{\mathrm{MP}}^2(\lambda)$ with probability $1-o_{\dim}(1)$, where $G_{\mathrm{MP}}: (\lambda_+^{\mathrm{MP}},\infty) \rightarrow \R$ denotes the Cauchy transform of the Marchenko-Pastur distribution.
\end{lemma}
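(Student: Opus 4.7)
The plan is to exhibit $\mM(\lambda)$ in the required form $\mS \mPsi \mS$ by exploiting the symmetry of the entries of $\mX$, and then to verify the four analytic conditions of \defref{matrix-ensemble-relaxed} using the isotropic local law for sample covariance matrices.

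\textbf{Step 1 (Reduction to a semi-random representation).} Let $\mS = \diag(s_1, \dotsc, s_\dim)$ be a uniformly random sign matrix independent of $\mX$. Because the entries of $\mX$ are symmetric, $\mX \mS \explain{d}{=} \mX$, and therefore
\begin{align*}
\mS \mJ \mS \; = \; \frac{(\mX \mS)\tran(\mX \mS)}{\sqrt{M\dim}} \; \explain{d}{=} \; \mJ.
\end{align*}
Conjugation by $\mS$ commutes with taking resolvents and with subtracting a multiple of $\mI_\dim$, so $\widetilde{\mM}(\lambda) := \mS \mM(\lambda) \mS \explain{d}{=} \mM(\lambda)$. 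Taking $\mPsi := \mM(\lambda)$ (a function of $\mX$, independent of $\mS$), the task reduces to verifying conditions (2a)--(2d) of \defref{matrix-ensemble-relaxed} for the centered resolvent itself on an event of probability $1 - o_\dim(1)$.

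\textbf{Step 2 (Operator norm).} Write $G(\lambda) := (\lambda \mI_\dim - \mJ)^{-1}$, so $\mM(\lambda) = G(\lambda) - (\Tr G(\lambda)/\dim)\, \mI_\dim$. On the event $\{\lambda_1(\mJ) \leq (\lambda + \lambda_+^{\mathrm{MP}})/2\}$, which has probability $1 - o_\dim(1)$ by the edge result cited in the paragraph preceding the lemma, both $\|G(\lambda)\|_{\op}$ and $|\Tr G(\lambda)/\dim|$ are bounded by $2/(\lambda - \lambda_+^{\mathrm{MP}})$, establishing (2b).

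\textbf{Step 3 (Entrywise estimates via the local law).} The isotropic local law for sample covariance matrices implies that, for any fixed $\epsilon > 0$, with probability $1 - o_\dim(1)$,
\begin{align*}
\max_{i,j \in [\dim]} \bigl|G_{ij}(\lambda) - G_{\mathrm{MP}}(\lambda) \delta_{ij}\bigr| \; \lesssim \; \dim^{-\frac{1}{2} + \epsilon}.
\end{align*}
Combining the same bound at a family of perturbed spectral parameters with Cauchy's formula on a small contour contained in $(\lambda_+^{\mathrm{MP}}, \infty)$, and using $G^2 = -\partial_\lambda G$, yields
\begin{align*}
\max_{i,j \in [\dim]} \bigl|(G(\lambda)^2)_{ij} + G_{\mathrm{MP}}'(\lambda)\delta_{ij}\bigr| \; \lesssim \; \dim^{-\frac{1}{2} + \epsilon}.
\end{align*}
Centering cancels the deterministic diagonal part of $G$: $\mM_{ii}(\lambda) = (G_{ii}(\lambda) - G_{\mathrm{MP}}(\lambda)) - (\Tr G(\lambda)/\dim - G_{\mathrm{MP}}(\lambda)) = O(\dim^{-1/2+\epsilon})$, giving (2a). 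Expanding
\begin{align*}
\mM(\lambda)^2 \; = \; G(\lambda)^2 - 2 \frac{\Tr G(\lambda)}{\dim} G(\lambda) + \left(\frac{\Tr G(\lambda)}{\dim}\right)^2 \mI_\dim
\end{align*}
and substituting the estimates above gives, for $i \neq j$, $(\mM^2)_{ij} = O(\dim^{-1/2+\epsilon})$, which is (2c); on the diagonal
\begin{align*}
(\mM^2)_{ii} \; = \; -G_{\mathrm{MP}}'(\lambda) - 2 G_{\mathrm{MP}}^2(\lambda) + G_{\mathrm{MP}}^2(\lambda) + o(1) \; = \; -G_{\mathrm{MP}}'(\lambda) - G_{\mathrm{MP}}^2(\lambda) + o(1),
\end{align*}
which is (2d) with the claimed $\sigpsi$. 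A Cauchy--Schwarz argument against $d\mu_{\mathrm{MP}}$ shows $\sigpsi > 0$, as required.

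\textbf{Main obstacle.} The real technical content sits in Step 3, where one must upgrade a pointwise entrywise local law into an entrywise bound on $G(\lambda)^2$. This is handled through Cauchy's integral formula on a contour around $\lambda$ whose radius is bounded below by a positive constant, which is permissible because $\lambda > \lambda_+^{\mathrm{MP}}$ lies at macroscopic distance from the spectrum; the local law of \citep{alex2014isotropic} holds uniformly on such a contour so the transfer incurs no loss in the $\dim^{-1/2+\epsilon}$ rate. A union bound over the events appearing in Steps 2 and 3 produces a single event of probability $1 - o_\dim(1)$ on which all four conditions hold simultaneously.
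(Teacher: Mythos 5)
Your proposal is correct and follows the same overall structure as the paper's proof: the symmetry of the entries of $\mX$ gives the $\mS \mPsi \mS$ representation, the edge convergence controls the operator norm, and the isotropic local law of \citet{alex2014isotropic} handles conditions (2a), (2c), (2d) after expanding $\mM^2 = G^2 - 2(\Tr G/\dim)G + (\Tr G/\dim)^2\mI$. The one place you diverge is in transferring the entrywise local law for $G(\lambda)$ to an entrywise bound on $G(\lambda)^2$: you use the Cauchy integral representation $(G^2)_{ij}(\lambda) = -\frac{1}{2\pi i}\oint (w-\lambda)^{-2}G_{ij}(w)\,dw$ over a macroscopic contour, whereas the paper (following \citet{benaych2016lectures} as in its Wigner proof in \appref{local-law}) uses a Helffer--Sj\"ostrand representation with a smooth cutoff. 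Both are legitimate; your Cauchy-formula route is arguably the more elementary choice here since $x\mapsto(\lambda-x)^{-2}$ is holomorphic on a neighborhood of the spectrum and $\lambda$ sits at macroscopic distance from $[0,\lambda_+^{\mathrm{MP}}]$, so the full machinery of Helffer--Sj\"ostrand (designed for non-analytic test functions) is not really needed. One small point of precision: the contour is a circle in $\C$ centered at $\lambda$, not a subset of $(\lambda_+^{\mathrm{MP}},\infty)$; what matters is that its radius can be taken less than $(\lambda-\lambda_+^{\mathrm{MP}})/2$ so that it stays at a fixed positive distance from the spectrum, where the local law holds uniformly (with the lower-half-plane values obtained by conjugation symmetry).
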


\section{Applications to Mean-Field Ising Models}\label{sec:examples}

As our main application, we discuss how \thref{SE-final} can be used to obtain a characterization of the dynamics of an iterative algorithm proposed by \citet{ccakmak2019memory}  to compute the magnetization of mean-field Ising spin glass models for several random coupling matrices. 

\subsection{Background} 
\paragraph{Mean-Field Ising Spin Glasses.} The mean-field Ising spin glass model is described by the random Gibbs measure on the discrete hypercube $\{\pm 1\}^\dim$: 
\begin{subequations}\label{eq:ising-model}
\begin{align}
    \gibbs(\vsigma) &\explain{def}{=} Z^{-1} \cdot \exp\left( \frac{\temp}{2} \cdot  \vsigma\tran \mJ \vsigma  + \field \cdot \ip{\fieldvec}{\vsigma}  \right), \\
    Z &\explain{def}{=}  \sum_{\vsigma \in \{\pm1\}^\dim} \exp\left( \frac{\temp}{2} \cdot  \vsigma\tran \mJ \vsigma  + \field \cdot \ip{\fieldvec}{\vsigma}  \right).
\end{align}
\end{subequations}
In the above display, $\mJ$ is a symmetric random coupling matrix and the vector $\fieldvec = (1, 1, \dotsc, 1)$ is the external field. The parameter $\beta \geq 0$ is the inverse temperature and the parameter $\theta \geq 0$ regulates the strength of the external field.  The thermodynamic properties of this model depend on the spectral measure of the random matrix $\mJ$. Consequently, to study this model in the high-dimensional limit, it is assumed that $\xi_{\dim}$, the empirical distribution of the eigenvalues $\lambda_1(\mJ), \lambda_2(\mJ), \dotsc, \lambda_{\dim}(\mJ)$ converges in distribution to a compactly supported, limiting probability measure $\xi$:
\begin{subequations}\label{eq:weak-convergence-with-support}
\begin{align}\label{eq:ESD-convergence}
    \xi_\dim \explain{def}{=} \frac{1}{\dim} \sum_{i=1}^\dim \delta_{\lambda_i(\mJ)} \explain{d}{\rightarrow} \xi. 
\end{align}
Furthermore, this convergence is such that the largest eigenvalue of $\mJ$ converges to the rightmost edge of the support of $\xi$ and the smallest eigenvalue remains bounded from below:
\begin{align}\label{eq:support-convergence}
    \max_{i \in [\dim]}(\lambda_i(\mJ)) \rightarrow \lambda_+ \explain{def}{=} \max\{ x : x \in \mathrm{Supp}(\xi)\}< \infty, \;
   \liminf_{\dim \rightarrow \infty} \min_{i \in [\dim]}(\lambda_i(\mJ)) > -\infty.
\end{align}
\end{subequations}
\paragraph{Magnetization and the TAP Equations.} A key object of interest for this model is the magnetization, the mean vector of the Gibbs measure:
\begin{align}\label{eq:magnetization}
    \vmgnt \explain{def}{=} \sum_{\vsigma \in \{\pm 1\}^\dim} \gibbs(\vsigma) \cdot  \vsigma. 
\end{align}
At high temperatures (when $\beta$ is sufficiently small), The magnetization was conjectured to approximately satisfy a system of non-linear fixed point equations, called the Thouless-Anderson-Palmer (TAP) equations, derived by \citet{parisi1995mean} and generalized by \citet{opper2001adaptive}:
\begin{align} \label{eq:TAP} 
     \vmgnt & \approx \tanh\Big( \theta  \cdot \fieldvec + \beta \cdot \mJ \cdot \vmgnt - \beta \cdot  R(\beta-\beta q_\star) \cdot \vmgnt  \Big),
\end{align}
where:
\begin{enumerate}
    \item the function $\tanh(\cdot)$ acts entry-wise on its vector argument. 
    \item $R(\cdot)$ is the R-transform of the measure $\xi$. This $R$-transform of a probability measure is defined in terms of its Cauchy transform $G: [\lambda_+ , \infty) \rightarrow (0,\infty)$:
    \begin{align} \label{eq:cauchy-transform}
        G(z) \explain{def}{=}  \int \frac{\xi(\diff \lambda)}{z - \lambda}.
    \end{align}
    The Cauchy transform is a strictly decreasing function on $(\lambda_+, \infty)$ and hence, has a well defined inverse $G^{-1}: (0, G(\lambda_+)) \rightarrow (\lambda_+, \infty)$. The R-transform is defined on the domain $(0,G(\lambda_+))$ and is given by:
    \begin{align}\label{eq:R-transform}
        R(z) = G^{-1}(z) -1/z.
    \end{align}
       \item $q_{\star} \in [0,1)$ is the unique solution (guaranteed to exist for small $\beta$ \citep[Proposition 1.2]{fan2021replica}) to the fixed point equation (in q):
    \begin{subequations}\label{eq:q-star-sigma-star}
    \begin{align}\label{eq:q-star}
        q  & = \E\left[\tanh^2\left(\theta + \sigma_\star(q) \cdot G\right)\right], \; G \sim \gauss{0}{1},
    \end{align}
    where,
    \begin{align}\label{eq:sigma-star}
        \sigma^2_\star(q) \explain{def}{=} \beta^2 \cdot {q \cdot R^{\prime}(\beta - \beta q)}.
    \end{align}
    \end{subequations}
\end{enumerate}
This conjecture has been established recently at high-temperature (i.e. for $\beta$ sufficiently small) for rotationally invariant coupling matrices by the third author, in joint work with Yufan Li and Zhou Fan \citep{fan2022tap}. 
\paragraph{Solving the TAP equations.} The TAP equations provide a way to compute the magnetization $\vmgnt$ by solving the fixed point equation in \eref{TAP}, which avoids the evaluation of the high-dimensional integral in \eref{magnetization}. \citet{ccakmak2019memory} have proposed the following iterative scheme to compute an approximate solution for the TAP equation \eref{TAP} at high temperatures (small $\beta$):
\begin{align}\label{eq:opper-iteration} 
    \iter{\vz}{t+1} & = \mM(\lambda_\star)  \cdot g(\iter{\vz}{t}), \\
    \iter{\vz}{0} &\sim \gauss{\vzero}{\sigma^2_\star(q_{\star}) \cdot  \mI_{\dim}}. \label{eq:opper-iteration-init}
\end{align}
In the above display,
\begin{enumerate}
    \item $\mM(\lambda)$ denotes the resolvent of the interaction matrix $\mJ$ centered to have zero trace:
    \begin{subequations}\label{eq:resolvent}
    \begin{align} 
        \mM(\lambda) & = \left( \lambda \cdot  \mI_{\dim} - \mJ \right)^{-1}  - \frac{\Tr[( \lambda \cdot  \mI_{\dim} - \mJ )^{-1}]}{\dim} \cdot \mI_{\dim}, 
    \end{align}
    and the special value $\lambda_\star$ is given by:
    \begin{align}
        \lambda_\star \explain{def}{=} G^{-1}(\beta - \beta q_\star).
    \end{align}
    \end{subequations}
      \item The function $g: \R \rightarrow \R$ is defined as:
    \begin{align} \label{eq:g-func}
        g(z) & = \frac{1}{\beta - \beta q_\star} \cdot \left( \frac{\tanh(\theta +z)}{1-q_\star} - z\right).
    \end{align}
    This function acts entry-wise on its vector arguments in \eref{opper-iteration}. 
    \item $q_\star$ and $\sigma_\star^2(q_{\star})$ are as defined in \eqref{eq:q-star-sigma-star}. 
\end{enumerate}
Intuitively, this algorithm can be used to construct a solution to the TAP equations \eref{TAP} because any fixed point $\iter{\vz}{\infty}$ of iteration \eref{opper-iteration} (if it exists) satisfies:
\begin{align*}
    \iter{\vz}{\infty} =  \mM(\lambda_\star)  \cdot g(\iter{\vz}{\infty}).
\end{align*}
Recalling the definitions of $\mM(\lambda_\star)$, $g$, the relationship $R(z) = G^{-1}(z) - 1/z$ and using the approximation $\xi_{\dim} \approx \xi$, the above equation can be re-expressed as:
\begin{align*}
    \iter{\vz}{\infty}   \approx \beta \mJ \tanh(\theta \cdot \fieldvec + \iter{\vz}{\infty}) - \beta \cdot  R(\beta - \beta q_\star) \cdot \tanh(\theta \cdot \fieldvec + \iter{\vz}{\infty}). 
\end{align*}
In particular, $\iter{\vm}{\infty} \explain{def}{=} \tanh(\theta \cdot \fieldvec + \iter{\vz}{\infty})$ satisfies the TAP equations \eref{TAP} approximately:
\begin{align*}
    \iter{\vm}{\infty} & \approx \tanh\Big( \theta  \cdot \fieldvec + \beta \cdot \mJ \cdot \iter{\vm}{\infty} - \beta \cdot  R(\beta-\beta q_\star) \cdot \iter{\vm}{\infty}  \Big).
\end{align*}

\paragraph{Prior Results.} \citet{ccakmak2019memory} characterized the asymptotic dynamics of iteration \eref{opper-iteration} with non-rigorous statistical physics methods. For rotationally invariant coupling matrices $\mJ$, these predictions were rigorously established in \citet{fan2021replica}.  They used the earlier results of \citet{fan2020approximate}, which provides a characterization of a broad class of iterative algorithms involving a rotationally invariant random matrix. 
The associated state evolution recursion is obtained by instantiating that general state evolution recursion \eqref{eq:SE-memory-free} with the following parameters:
\begin{subequations} \label{eq:SE-Opper-choices}
\begin{enumerate} 
    \item The parameter $\sigma_0^2$ which determines the variance of the initialization is set as
    \begin{align}
        \sigma_0^2: = \sigma_\star^2(q_\star),
    \end{align}
    where $\sigma_\star^2(q_\star)$ is as defined in \eqref{eq:sigma-star}.
    \item The non-linearities $\nonlin_t$ are set to \begin{align}
        \nonlin_t = g \quad \forall \; t \; \geq \; 1,
    \end{align} where the function $g$ is as defined in \eqref{eq:g-func}.
    \item The parameter $\sigpsi$ corresponding to the matrix $\mM(\lambda_\star)$ is given by:
    \begin{align}
    \sigpsi \explain{def}{=} \lim_{\dim \rightarrow \infty} \frac{\Tr(\mM^2(\lambda_\star))}{\dim} \explain{(a)}{=} \frac{\beta^2 \cdot (1-q_\star)^4 \cdot \sigma_\star^2(q_\star)}{q_\star - (1-q_\star)^2 \sigma_\star^2(q_\star)}.  \label{eq:sigma-xi-final}
\end{align}
    In the above display, the equality marked (a) is readily derived by recalling the formula for $\mM(\lambda_\star)$ from \eqref{eq:resolvent}, the definitions of Cauchy and R transforms from \eqref{eq:cauchy-transform} and \eqref{eq:R-transform}  and the formula for $\sigma_\star^2(q_\star)$ from \eqref{eq:sigma-star}. 
\end{enumerate}
\end{subequations}
 Because of the special choice of the variance of the initialization in \eqref{eq:opper-iteration-init}, the recursion for $\sigma_{t+1}^2$ can be simplified significantly. Indeed, \citet[Appendix A1, Equation A7-A8]{ccakmak2019memory} have shown that the variance $\sigma_{t}^2$ remains constant in the recursion:
\begin{align}\label{eq:SE-opper-sigma-simple}
\sigma_t^2 = \sigma_\star^2(q)  \; \forall \; t \; \geq \; 0.
\end{align}
The result obtained by \citet{fan2020approximate} and \citet{fan2021replica} is quoted below.

\begin{proposition}[\citet{fan2020approximate,fan2021replica}]\label{prop:tap-se-fan} Suppose the coupling matrix $\mJ = \mU \mLambda \mU\tran$ where:
\begin{enumerate}
    \item $\mU \sim \unif{\Ortho(\dim)}$
    \item $\mLambda = \diag{(\lambda_1, \lambda_2, \dotsc, \lambda_{\dim})}$ is a deterministic diagonal matrix whose empirical spectral distribution converges to a compactly supported measure $\xi$ in the sense of \eqref{eq:weak-convergence-with-support}
\end{enumerate}
Then, for any fixed $T \in \N$, the iteration \eqref{eq:opper-iteration} satisfies $(\iter{\vz}{0}, \dotsc, \iter{\vz}{T}) \explain{\pw}{\longrightarrow} (Z_0,  \dotsc, Z_T) \sim \gauss{\vzero}{\mSigma_T}$ where $\mSigma_T$ is the covariance matrix generated by the state evolution recursion \eqref{eq:SE-memory-free} with the parameters set as given in \eqref{eq:SE-Opper-choices}.  
\end{proposition}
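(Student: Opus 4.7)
The plan is to derive \propref{tap-se-fan} by invoking \thref{SE-final}. This requires three verifications: (i) the centered resolvent $\mM(\lambda_\star)$ is a semi-random matrix ensemble with the parameter $\sigpsi$ given by \eqref{eq:sigma-xi-final}; (ii) the non-linearity $g$ is divergence-free in the sense of \assumpref{divergence-free} at the stationary variance $\sigma_\star^2(q_\star)$; and (iii) the remaining regularity hypotheses (smoothness of $g$, Gaussian initialization) hold.

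For (i): since $\mJ = \mU \mLambda \mU\tran$ with $\mU \sim \unif{\Ortho(\dim)}$, we can write $\mM(\lambda_\star) = \mU \widetilde{\mLambda} \mU\tran$, where $\widetilde{\mLambda}$ is a deterministic diagonal matrix with entries $\widetilde{\lambda}_i = (\lambda_\star - \lambda_i)^{-1} - \dim^{-1}\sum_{j=1}^{\dim} (\lambda_\star - \lambda_j)^{-1}$. Because $\lambda_\star = G^{-1}(\beta - \beta q_\star) > \lambda_+$, the edge convergence \eqref{eq:support-convergence} yields $\|\widetilde{\mLambda}\|_{\op} \lesssim 1$, and by construction $\Tr(\widetilde{\mLambda}) = 0$, so the first two conditions of \eqref{eq:lambda-hypothesis} hold trivially. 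Weak convergence of the spectral measure together with \eqref{eq:support-convergence} and dominated convergence gives $\Tr(\widetilde{\mLambda}^2)/\dim \to -G^{\prime}(\lambda_\star) - G^2(\lambda_\star)$; a short manipulation using $R(z) = G^{-1}(z) - 1/z$, the definition of $\lambda_\star$, and \eqref{eq:sigma-star} shows that this limit coincides with $\sigpsi$ as recorded in \eqref{eq:sigma-xi-final}. Finally, $\mU$ is delocalized with probability $1$ (standard concentration plus Borel--Cantelli) and sign-permutation invariant, so \lemref{sign-perm-inv-ensemble} implies that $\mM(\lambda_\star)$ is equal in distribution to a matrix that is semi-random with probability $1$.

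For (ii) and (iii): Stein's identity reduces $\E[Z g(\sigma_\star Z)] = 0$ to the condition $\E[\mathrm{sech}^2(\field + \sigma_\star Z)] = 1 - q_\star$, which follows instantly from $\mathrm{sech}^2 = 1 - \tanh^2$ and the fixed-point equation \eqref{eq:q-star} defining $q_\star$. Having divergence-freeness, a direct calculation shows $\sigpsi \cdot \E[g^2(\sigma_\star Z)] = \sigma_\star^2$ after substituting \eqref{eq:sigma-xi-final} and using $\E[Z \tanh(\field + \sigma_\star Z)] = \sigma_\star(1 - q_\star)$; by induction this recovers the invariance \eqref{eq:SE-opper-sigma-simple}. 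Continuous differentiability and global Lipschitz continuity of $g$ follow from the smoothness and bounded derivatives of $\tanh$, and the initialization satisfies \assumpref{initialization} with $\sigma_0^2 = \sigma_\star^2(q_\star)$ by \eqref{eq:opper-iteration-init}.

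With (i)--(iii) in hand, \thref{SE-final} applies directly and yields $(\iter{\vz}{0}, \dotsc, \iter{\vz}{T}) \explain{\pw}{\longrightarrow} (Z_0, \dotsc, Z_T) \sim \gauss{\vzero}{\mSigma_T}$. The main obstacle is the spectral identification in step (i): verifying that $-G^\prime(\lambda_\star) - G^2(\lambda_\star)$ equals the algebraic expression \eqref{eq:sigma-xi-final} is a computational (if routine) exercise in manipulating Cauchy and $R$-transforms, and the same identity must be shown to make the one-step variance recursion compatible with \eqref{eq:SE-opper-sigma-simple}. Everything else consists of plugging into the general framework already developed in \thref{SE-final} and \lemref{sign-perm-inv-ensemble}.
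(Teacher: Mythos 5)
Your proof is mathematically sound, but it takes a very different route from the paper: the paper gives no proof of \propref{tap-se-fan} at all, quoting it directly from \citet[Corollary 4.4]{fan2020approximate} and \citet[Appendix A]{fan2021replica} as an external black box. In fact, the proposition occupies the opposite end of the paper's logical flow from where you place it: in the proof of \thref{SE-final} (Section \ref{proof_SE-final}), the Fan--Mei result is invoked as the \emph{anchor} --- it pins down the empirical law of the iteration driven by the ``integrable'' rotationally invariant ensemble $\mQ = \mS\mU\mP\mLambda\mP\tran\mU\tran\mS$, and the comparison argument then transfers this limit to every other semi-random ensemble. Your derivation inverts this: you assume \thref{SE-final}, verify that the Haar-conjugated centered resolvent $\mM(\lambda_\star)$ is a sign-and-permutation-invariant ensemble via \lemref{sign-perm-inv-ensemble}, check divergence-freeness of $g$ through Stein's identity and \eqref{eq:q-star}, and conclude. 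Logically this is not circular --- \thref{SE-final} is established from Fan's result applied to a \emph{different} orthogonally invariant matrix, and \propref{tap-se-fan} is a separate specialization --- but you are taking a long detour through the paper's universality machinery to re-prove one special case of the very input that drives that machinery. Where your argument genuinely pays off is as a sanity check: it demonstrates that \propref{tap-se-fan} is recoverable from \thref{SE-final}, which is exactly the template the paper follows in the proof of \corref{TAP-AMP} for the non-rotationally-invariant models (signed sine, SK, Hopfield). Indeed, your verifications of $\|\widetilde\mLambda\|_{\op}\lesssim 1$, $\Tr(\widetilde\mLambda)=0$, the identity $\Tr(\widetilde\mLambda^2)/\dim \to -G'(\lambda_\star) - G^2(\lambda_\star) = \sigpsi$, the Haar delocalization, and the Stein computation $\E[Z\tanh(\theta+\sigma_\star Z)] = \sigma_\star(1-q_\star)$ are all correct and parallel the steps in that corollary's proof almost verbatim. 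So what you have written is essentially a correct proof of the rotationally invariant case of \corref{TAP-AMP}, not the paper's treatment of \propref{tap-se-fan}, which is simply ``this is Fan's theorem, see the references.''
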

\begin{remark} The result obtained by \citet{fan2020approximate} and \citet{fan2021replica} is stronger than the result stated in \propref{tap-se-fan} in some aspects. For example, these works allow for a general external field (not necessarily the vector $\vone$). \new{Moreover, they show convergence in a stronger sense than \pw{ }by obtaining almost sure convergence of empirical averages in \eqref{eq:P-conv} (instead of convergence in probability), and by allowing broader classes of test functions which satisfy a weaker analog of the requirement \eqref{eq:PL1}.} 
\end{remark}

\subsection{Consequences of \thref{SE-final}}
As a consequence of \thref{SE-final}, we can show that the characterization of the dynamics of the iteration \eqref{eq:opper-iteration} given in \propref{tap-se-fan} continues to hold for many matrices beyond the rotationally invariant family. Specifically, we introduce the following four Ising spin glass models.

\paragraph{The Signed Sine Model.} In the signed sine model, the entries of the coupling matrix $\mJ$ are given by:
\begin{align} \label{eq:semirandom-sine-model}
    J_{ij} =\frac{2 s_i s_j}{\sqrt{2\dim+1}} \cdot \sin\left( \frac{2\pi i j}{2N + 1} \frac{}{} \right), \; i \in \; [\dim], \; j \in \; [\dim]. 
\end{align}
where $\vs \sim \unif{\{\pm 1\}^{\dim}}$ is a uniformly random sign vector. Note that $\mJ = \mS \mC \mS$ where $\mS = \diag(\vs)$ and $\mC$ is the Discrete Sine Transform (DST) matrix \citep[Section 2.7]{britanak2010discrete}. Since the DST matrix $\mC$ is a symmetric orthogonal matrix, $\mJ^2 = \mI_{\dim}$ (see for e.g. \citep[Section 2.8]{britanak2010discrete}). As a consequence, the spectrum of $\mJ$ is supported on $\{\pm 1\}$. Furthermore, since $\Tr(\mJ)/\dim \lesssim \dim^{-1/2}$, the empirical spectral distribution of $\mJ$ converges to $\xi = \unif{\{\pm 1\}}$ in the sense of \eqref{eq:weak-convergence-with-support}.  Our motivation to consider this model comes from the work of \citet{marinari1994replica} and \citet{parisi1995mean} who used statistical physics techniques (high-temperature expansions) and Monte Carlo simulations to demonstrate that many thermodynamic properties of the Ising model with the deterministic coupling matrix $\mC$ (known as the Sine model) are identical to the corresponding thermodynamic properties of the Ising model where the coupling matrix is $\mU \mB \mU \tran$ where $\mU \sim \unif{\Ortho(\dim)}$ is a uniformly random orthogonal matrix and $\mB = \diag(\vb), \; \vb \sim \unif{\{\pm 1\}^{\dim}}$ is a uniformly random sign diagonal matrix (known as the Random Orthogonal Model). The model in \eqref{eq:semirandom-sine-model} can be thought of as a semi-random analog of the Sine model of \citet{marinari1994replica}: while it is not fully deterministic, it is significantly less random than the random orthogonal model.

\paragraph{Sign and Permutation Invariant Model.} In the sign and permutation invariant model, the eigenvectors of the coupling matrix $\mJ$ are assumed to be \emph{sign and permutation invariant} and \emph{delocalized}. Formally, we assume that $\mJ = \mF \mLambda \mF\tran$ where:
\begin{enumerate}
    \item $\mF$ is a random orthogonal $\dim \times \dim$ matrix which satisfies:
    \begin{description}[font=\normalfont\emph]
    \item [Delocalization:] $\|\mF\|_{\infty} \lesssim \dim^{-1/2+\epsilon}$ for any fixed $\epsilon>0$,
    \item [Invariance: ] $\mS\mF \mP \explain{d}{=} \mF$ for any signed diagonal matrix $\mS = \diag{(\vs)}, \vs \in \{\pm 1\}^{\dim}$ and any $\dim \times \dim$ permutation matrix $\mP$.
    \end{description}
    \item $\mLambda = \diag(\lambda_1, \lambda_2, \dotsc \lambda_{\dim})$ is a deterministic diagonal matrix whose spectral distribution converges to a compactly supported measure $\xi$ in the sense of \eqref{eq:weak-convergence-with-support}.
\end{enumerate}
Our motivation to study this model comes from the work of \citet{ccakmak2019memory} who observed experimentally that such matrices behave like rotationally invariant matrices with regard to the dynamics of the iteration \eqref{eq:opper-iteration}.
\paragraph{Sherrington-Kirkpatrick Model.} We also consider the Sherrington-Kirkpatrick (SK) Model \citep{sherrington1975solvable} where the coupling matrix $\mJ = \mW/\sqrt{\dim}$ for a symmetric matrix $\mW$ whose entries $W_{ij}$ are i.i.d. symmetric ($W_{ij} \explain{d}{=} - W_{ij})$ random variables with $\E W_{ij} = 0$, $\E W_{ij}^2 = 1 + \delta_{ij}$ and finite moments of all orders. These hypotheses are sufficient to guarantee that $\xi_{\dim}$, the spectral measure of $\mJ$ converges to $\xi_{\mathrm{sc}}$, the semicircle distribution supported on $[-2,2]$ in the sense of \eqref{eq:weak-convergence-with-support} \citep{wigner1958distribution,bai1988necessary}. 
\paragraph{Hopfield Model.} Finally, we consider the Hopfield Model \citep{hopfield1982neural} where the coupling matrix $\mJ$ is a covariance matrix of the form $\mJ = \mX \tran \mX / \sqrt{M \dim}$ for a $M \times \dim$ matrix $\mX$ with a converging aspect ratio $M/N \rightarrow \phi \in (0, \infty)$ whose entries $X_{ij}$ are i.i.d. symmetric ($X_{ij} \explain{d}{=} - X_{ij}$) random variables with $\E X_{ij} = 0$, $\E X_{ij}^2 = 1$ and finite moments of all orders. These hypotheses are sufficient to guarantee that $\xi_{\dim}$, the spectral measure of $\mJ$ converges to $\xi_{\mathrm{MP}}$, the Marchenko-Pastur distribution in the sense of \eqref{eq:weak-convergence-with-support} \citep{marvcenko1967distribution,bai2008limit}.

\paragraph{}Observe that the coupling matrices $\mJ$ corresponding to spin glass models introduced above are not rotationally invariant. Consequently, prior results of \citet{fan2020approximate} and \citet{fan2021replica} (quoted in \propref{tap-se-fan}) cannot be used to characterize the dynamics of the iteration \eqref{eq:opper-iteration} for these models. As a corollary of \thref{SE-final}, we obtain the following result, which shows that the asymptotic characterization of the dynamics of \eqref{eq:opper-iteration} given in \propref{tap-se-fan} continues to hold for these models. This demonstrates a universality phenomenon in the sense that a large class of coupling matrices behave like rotationally invariant coupling matrices with the same limiting spectral distribution as far as the dynamics of  \eqref{eq:opper-iteration} is concerned.

\begin{corollary} \label{cor:TAP-AMP}  For any fixed $T \in \N$, the iterations \eqref{eq:opper-iteration} corresponding to:
\begin{enumerate}
    \item the signed sine model,
    \item the sign and permutation invariant model,
    \item the Sherrington-Kirkpatrick model, and
    \item the Hopfield model 
\end{enumerate}
as defined above, satisfy $(\iter{\vz}{0}, \dotsc, \iter{\vz}{T}) \explain{\pw}{\longrightarrow} (Z_0,  \dotsc, Z_T) \sim \gauss{\vzero}{\mSigma_T}$ where $\mSigma_T$ is the covariance matrix generated by the state evolution recursion \eqref{eq:SE-memory-free} with the parameters set as given in \eqref{eq:SE-Opper-choices}.  
\end{corollary}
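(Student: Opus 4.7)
The plan is to derive \corref{TAP-AMP} as a direct application of \thref{SE-final} with the parameter choices in \eqref{eq:SE-Opper-choices}. The iteration \eqref{eq:opper-iteration} already has the simplified form $\iter{\vz}{t+1} = \mM(\lambda_\star) \nonlin_{t+1}(\iter{\vz}{t})$ of \eqref{eq:memory-free-amp-simple} with $\nonlin_t \equiv g$; the initialization meets \assumpref{initialization} with $\sigma_0^2 = \sigma_\star^2(q_\star) > 0$; and $g$ is manifestly smooth and Lipschitz since $\tanh$ has bounded derivative. So three substantive items remain: (i) the divergence-free condition, (ii) showing $\mM(\lambda_\star)$ is semi-random in each of the four models, and (iii) matching the constant $\sigpsi$ produced by the general state evolution to the one in \eqref{eq:sigma-xi-final}.

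For step (i), the recursion \eqref{eq:SE-opper-sigma-simple} shows that $\sigma_t^2 = \sigma_\star^2(q_\star)$ for all $t$, so I only need $\E[Z\, g(\sigma_\star Z)] = 0$ for $Z \sim \gauss{0}{1}$. By Stein's lemma this equals $\sigma_\star \E[g^\prime(\sigma_\star Z)]$, and a direct differentiation of \eqref{eq:g-func} shows
\begin{align*}
g^\prime(z) = \frac{1}{\beta(1-q_\star)}\left(\frac{1-\tanh^2(\theta+z)}{1-q_\star} - 1\right),
\end{align*}
so that $\E[g^\prime(\sigma_\star Z)] = 0$ is equivalent to $\E[\tanh^2(\theta + \sigma_\star Z)] = q_\star$, which is precisely the fixed-point equation \eqref{eq:q-star}. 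Hence \assumpref{divergence-free} holds.

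For step (ii), the plan is model-by-model. For the signed sine model, the identity $\mC^2 = \mI_\dim$ gives $(\lambda_\star \mI - \mC)^{-1} = (\lambda_\star \mI + \mC)/(\lambda_\star^2-1)$, so after centering, $\mM(\lambda_\star) = \mS \mPsi \mS$ with
\begin{align*}
\mPsi = \frac{1}{\lambda_\star^2-1}\bigl(\mC - (\Tr(\mC)/\dim)\,\mI_\dim\bigr).
\end{align*}
Since $\|\mC\|_\infty \lesssim \dim^{-1/2}$ (and hence $|\Tr(\mC)/\dim| \lesssim \dim^{-1/2}$) and $\|\mC\|_{\op} = 1$, the conditions (2a) and (2b) are immediate. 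Using $\mC^2 = \mI_\dim$ one computes $\mPsi^2$ explicitly and finds that its off-diagonal entries are $O(\dim^{-1})$ and its diagonal entries equal $(\lambda_\star^2-1)^{-2} + O(\dim^{-1})$, giving (2c), (2d) with $\sigpsi = (\lambda_\star^2-1)^{-2}$. For the sign and permutation invariant model, the centered resolvent is $\mM(\lambda_\star) = \mF\widetilde{\mLambda}\mF\tran$ where $\widetilde{\mLambda} = (\lambda_\star\mI - \mLambda)^{-1} - (N^{-1}\Tr(\lambda_\star\mI-\mLambda)^{-1})\mI_\dim$; the assumptions on $\mLambda$ together with $\lambda_\star > \lambda_+$ imply that $\widetilde{\mLambda}$ is diagonal, bounded in operator norm, traceless, and $\Tr(\widetilde{\mLambda}^2)/\dim$ converges by weak convergence of the spectral measure of $\mLambda$. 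Hence \lemref{sign-perm-inv-ensemble} applies and yields a semi-random matrix equal in distribution to $\mM(\lambda_\star)$. For the Sherrington-Kirkpatrick and Hopfield models, the required semi-random property of the centered resolvent is exactly what \lemref{wigner} and \lemref{wishart} provide (with $\lambda_\star > \lambda_+$ guaranteed by the small-$\beta$ existence of $q_\star$).

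For step (iii), in each case the limit $\sigpsi$ from the preceding analysis equals $-G^\prime(\lambda_\star) - G^2(\lambda_\star)$ where $G$ is the Cauchy transform of the limiting spectral measure $\xi$; for the signed sine model this is checked by direct calculation with $\xi = \unif{\{\pm1\}}$, and for the other three models it is part of the statements of \lemref{sign-perm-inv-ensemble}, \lemref{wigner}, and \lemref{wishart}. The identity (a) in \eqref{eq:sigma-xi-final} then matches this to the value used in \propref{tap-se-fan}, so that the limiting covariance $\mSigma_T$ produced by \thref{SE-final} coincides with the one claimed in the corollary. The main obstacle is really just organizational — each model requires invoking a different auxiliary lemma and threading through that $\sigpsi$ equals $-G^\prime(\lambda_\star)-G^2(\lambda_\star)$ — rather than any single hard computation; within the signed sine case the one genuine technical point is the direct verification of conditions (2c)-(2d), which reduces cleanly via $\mC^2 = \mI_\dim$.
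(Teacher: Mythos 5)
Your argument follows the same structure as the paper's proof: verify that the iteration \eqref{eq:opper-iteration} meets the hypotheses of \thref{SE-final} (bounded operator norm of $\mM(\lambda_\star)$ via $\lambda_\star > \lambda_+$, Gaussian initialization, Lipschitz and divergence-free $g$), and then verify the semi-random property of $\mM(\lambda_\star)$ model by model using \lemref{eg1}, \lemref{sign-perm-inv-ensemble}, \lemref{wigner}, and \lemref{wishart}, respectively. Your Stein's-lemma route through $g'$ is an equivalent restatement of the paper's direct integration-by-parts computation of $\E[Z g(\sigma_\star Z)]$.

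One place where your treatment is actually somewhat more careful than the paper's: for the signed sine model you verify (2a)--(2d) for the scaled, shifted matrix $\mPsi = (\lambda_\star^2-1)^{-1}\bigl(\mC - (\Tr\mC/\dim)\mI_\dim\bigr)$ directly, using $\mC^2 = \mI_\dim$, and then read off $\sigpsi = (\lambda_\star^2-1)^{-2}$ from the diagonal of $\mPsi\mPsi\tran$. The paper instead cites \lemref{eg1} directly, which strictly speaking asserts semi-randomness (with $\sigpsi = 1$) for the unscaled, uncentered matrix $\mS\mC\mS$, leaving the propagation of the scaling and the $O(\dim^{-1/2})$ diagonal shift implicit. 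Both are fine, but your version is closer to what one actually has to check. Likewise your explicit matching step (iii), confirming that the $\sigpsi$ produced by each lemma equals $-G'(\lambda_\star)-G^2(\lambda_\star)$ and hence the expression in \eqref{eq:sigma-xi-final}, is handled by the paper more briefly via the equality marked (a) in that display. One small inaccuracy to watch: \lemref{sign-perm-inv-ensemble} does not itself state that $\sigpsi = -G'(\lambda_\star)-G^2(\lambda_\star)$; it only yields $\sigpsi = \lim_\dim \Tr(\widetilde{\mLambda}^2)/\dim$, and the identification with $-G'-G^2$ follows from the weak convergence \eqref{eq:weak-convergence-with-support} exactly as you used for the signed sine case, so the phrase ``for the other three models it is part of the statements of [the lemmas]'' overstates what Lemma 2 delivers. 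This is a presentational nit, not a gap.
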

\begin{proof} The claim of the corollary is immediate from \thref{SE-final}, once all the assumptions of \thref{SE-final} have been verified. We begin by observing that the iteration \eqref{eq:opper-iteration} already satisfies many of the properties required by \thref{SE-final}:
\begin{enumerate}
    \item The matrix $\mM(\lambda_\star)$ has bounded operator norm since,
    \begin{align} \label{eq:op-norm-bound}
        \|\mM(\lambda_\star)\|_{\op}  \explain{\eqref{eq:resolvent}}{\leq} \frac{2}{G^{-1}(\beta - \beta q_\star) - \max_{i\in[\dim]}(\lambda_i(\mJ))} \explain{\eqref{eq:support-convergence}}{\rightarrow} \frac{2}{G^{-1}(\beta - \beta q_\star) - \lambda_+} < \infty,
    \end{align}
    where the final inequality follows from the fact that the inverse Cauchy transform $G^{-1}$ maps its domain $(0,G(\lambda_+))$ to the range $(\lambda_+, \infty)$.
    \item The initialization  $\iter{\vz}{0} \sim \gauss{\vzero}{\sigma_\star^2(q_\star) \mI_{\dim}}$ satisfies \assumpref{initialization}. 
    \item The non-linearity used in the iteration \eqref{eq:opper-iteration} $g(\cdot)$ satisfies is Lipschitz and divergence-free (\assumpref{divergence-free}) since for $Z \sim \gauss{0}{1}$,
    \begin{align*}
        \E[Z g(\sigma_t Z)] \explain{\eqref{eq:SE-opper-sigma-simple}}{=} \E[Z g(\sigma_{\star} Z)] &\explain{\eqref{eq:g-func}}{=} \frac{1}{\beta - \beta q_\star} \cdot \left( \frac{\E Z \tanh(\theta + \sigma_\star Z)}{1-q_\star} - \sigma_\star \right) \\&= \frac{\sigma_\star}{\beta - \beta q_\star} \cdot \left( \frac{1-\E\tanh^2(\theta + \sigma_\star Z)}{1-q_\star} - 1 \right) \explain{\eqref{eq:q-star}}{=} 0.
    \end{align*}
\end{enumerate}
In order to verify the remaining requirements on $\mM(\lambda_\star)$ required by \defref{matrix-ensemble-relaxed}, we consider each of the models individually. 
\begin{description}
    \item [Signed Sine Model.] For the signed sine model, since $\mJ^2 = \mI_{\dim}$, the resolvent $(\lambda I_{\dim} - \mJ)^{-1}$ can be expressed as a linear polynomial in $\mJ$. \footnote{More generally, if $\mJ$ has $k$ distinct eigenvalues, the resolvent $(\lambda \mI_{\dim} - \mJ)^{-1}$ can be expressed as polynomial in $\mJ$ of degree at most $k-1$. This is a consequence of the polynomial interpolation. If $\gamma_{1:k}$ denote the $k$ distinct eigenvalues of $\mJ$, then for any $\lambda \in \R$ there is a polynomial $Q_\lambda: \R \rightarrow \R$ of degree at most $k-1$ such that $Q_\lambda(\gamma_i) = (\lambda - \gamma_i)^{-1} \; \forall \; i \; \in \; [k]$. Hence the resolvent $(\lambda I_{\dim} - \mJ)^{-1}$ can be expressed as $(\lambda I_{\dim} - \mJ)^{-1} = Q_{\lambda}(\mJ)$, as claimed.} Hence, we have,
    \begin{align*}
        \mM(\lambda) & \explain{def}{=} (\lambda \mI_{\dim} - \mJ)^{-1} -  \frac{\Tr[( \lambda \cdot  \mI_{\dim} - \mJ )^{-1}]}{\dim} \cdot \mI_{\dim} = \frac{1}{\lambda^2 - 1} \cdot \left( \mJ - \frac{\Tr(\mJ)}{\dim} \mI_{\dim} \right).
    \end{align*}
    Recall that for the signed sine model, $\mJ = \mS \mC \mS$ where $\mC$ is the DST matrix and $\mS$ is a random sign diagonal matrix. Since the DST matrix is a delocalized and symmetric orthogonal matrix, $\mM(\lambda)$ is semi-random in the sense of  \defref{matrix-ensemble-relaxed} for $\lambda = \lambda_\star  > 1$ by \lemref{eg1}. 
    \item [Sign and Permutation Invariant Model.] Let $\mJ = \mF \mLambda \mF \tran$ denote the eigendecomposition of $\mJ$. Observe that:
    \begin{align*}
        \mM(\lambda) & \explain{def}{=} (\lambda \mI_{\dim} - \mJ)^{-1} -  \frac{\Tr[( \lambda \cdot  \mI_{\dim} - \mJ )^{-1}]}{\dim} \cdot \mI_{\dim} = \mF \cdot \left( \mLambda - \frac{\Tr(\mLambda)}{\dim} \mI_{\dim} \right) \cdot \mF \tran.
    \end{align*}
    Hence, $\mM(\lambda)$ inherits the sign and permutation invariance of $\mJ$. Consequently, $\mM(\lambda)$ is semi-random in the sense of \defref{matrix-ensemble-relaxed} for $\lambda = \lambda_\star > \lambda_+$ by \lemref{sign-perm-inv-ensemble}. 
    \item [SK Model.] Here, $\mM(\lambda)$ satisfies \defref{matrix-ensemble-relaxed} for $\lambda = \lambda_\star > 2$ by \lemref{wigner}. 
    \item [Hopfield Model.] Here, $\mM(\lambda)$ satisfies \defref{matrix-ensemble-relaxed} for $\lambda = \lambda_\star > \lambda_+^{\mathrm{MP}}$ by \lemref{wishart}.
\end{description}
This concludes the proof of the corollary. 
\end{proof}

\begin{remark} For the SK model with i.i.d. sub-Gaussian (but not necessarily symmetric) entries, the universality results of \citet{bayati2015universality} and \citet{chen2021universality} can be used to characterize the dynamics of a different iterative algorithm designed by \citet{bolthausen2014iterative}  to solve the TAP equation \eqref{eq:TAP}. Unlike the iteration in \eqref{eq:opper-iteration}, which involves multiplication by the centered resolvent $\mM(\lambda_{\star})$ at each step, the iteration of \citet{bolthausen2014iterative} involves multiplication by the coupling matrix $\mJ$ at each step.  
 \end{remark}

\subsection{Experimental Demonstration of Universality}
\label{sec:universality_figs} 

We end this section with an empirical demonstration of the universality phenomenon studied in \corref{TAP-AMP}.  We simulate the dynamics of \eqref{eq:opper-iteration} for three different coupling matrices $\mJ$ of dimension $\dim = 2^{15}$:
\begin{description}[font=\normalfont\emph]
    \item [Random Orthogonal Ensemble.] Here, we take $\mJ = \mU \diag(\lambda_{1:\dim}) \mU \tran$ where $\mU \sim \unif{\ortho(\dim)}$ and $\lambda_{1:\dim} \explain{i.i.d.}{\sim} \unif{\{\pm1\}}$. Since generating and manipulating Haar matrices of this dimension is prohibitive in terms of memory and run-time, we used the Householder Dice algorithm of the second author \citep{lu2021householder} to simulate the dynamics of \eqref{eq:opper-iteration} in this case. This algorithm does not require sampling the entire Haar matrix $\mU$. 
    \item [Signed Hadamard Ensemble.] Here, we take $\mJ = \diag(s_{1:\dim})\mH \diag(\lambda_{1:\dim}) \mH \tran \diag(s_{1:\dim})$ where $\mH$ is the $\dim \times \dim$ Hadamard-Walsh matrix  and $s_{1:\dim}, \lambda_{1:\dim} \explain{i.i.d.}{\sim} \unif{\{\pm1\}}$. The Hadamard-Walsh matrix is a deterministic orthogonal matrix with entries in $\{-1/ \sqrt{\dim}, 1/ \sqrt{\dim}\}$. 
    \item [Signed Sine Ensemble.] Here, we take $\mJ = \diag(s_{1:\dim})\mC \diag(s_{1:\dim})$ where $\mC$ is the $\dim \times \dim$ Discrete Sine Matrix (DST) and $s_{1:\dim}\explain{i.i.d.}{\sim} \unif{\{\pm1\}}$. The DST matrix is a deterministic, symmetric orthogonal matrix  with entries $C_{ij} = 2 \sin(2\pi ij/(2\dim + 1))/\sqrt{2\dim +1}$. 
\end{description}
\fref{univplot} shows the results of this experiment. Observe that each of the above matrices has the property that $\mJ^2 = \mI_{\dim}$. Hence $\xi_{\dim}$, the spectral distribution of $\mJ$ is supported on the set $\{-1,+1\}$. Furthermore since $\Tr(\mJ)/\dim \explain{P}{\rightarrow} 0$ for each of these matrices, $\xi_{\dim} \rightarrow \unif{\{\pm 1\}}$ in the sense of \eqref{eq:weak-convergence-with-support} for all of the above ensembles. While \propref{tap-se-fan} only applies to the Random Orthogonal Ensemble, \fref{univplot} shows that the state evolution accurately describes the dynamics for Signed Hadamard and the Signed Sine ensembles, even though these matrices are significantly less random. Similar empirical observations have been made in previous works \citep{marinari1994replica,ccakmak2019memory}. \corref{TAP-AMP} provides a theoretical explanation for this empirical phenomenon. 
\begin{figure}[t]
        \centering
         \includegraphics[width=\textwidth,trim={1.9cm 0 1.6cm 0},clip]{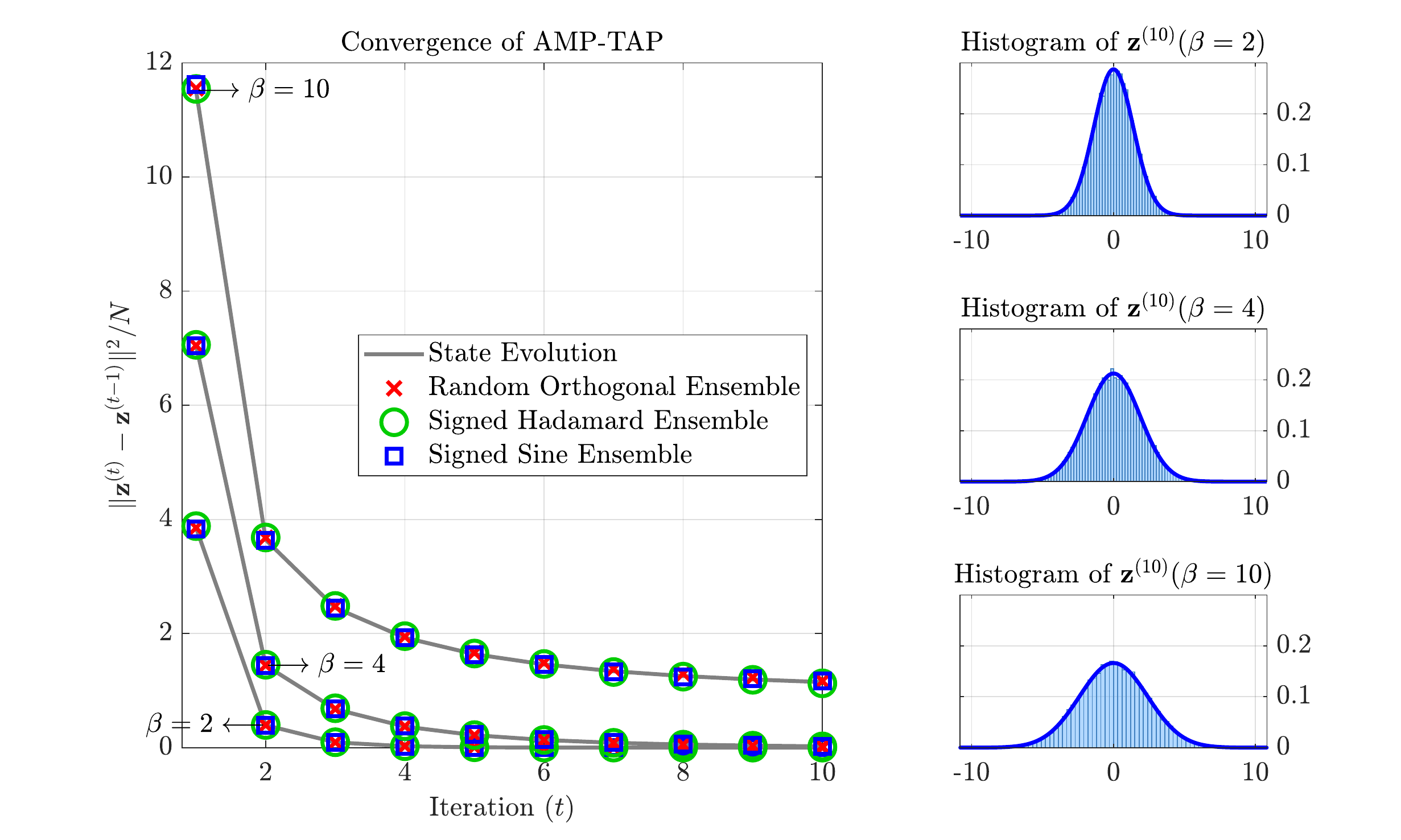}
         \caption{Left: Dynamics of $\|\iter{\vz}{t} - \iter{\vz}{t-1}\|^2/\dim$ for the \eqref{eq:opper-iteration} iteration for the Random Orthogonal Ensemble ($\times$), Signed Hadamard Ensemble ($\circ$) and Signed Sine Ensemble ($\square$) compared with predictions from the State Evolution (gray lines) at 3 inverse temperatures $\beta \in \{2, 4, 10\}$ and external field strength $\theta = 2$. Right: Histogram of the empirical distribution of the entries of $\iter{\vz}{10}$ for the Signed Sine Ensemble compared with the density function of $\gauss{0}{\sigma_{10}^2}$ (dark blue curve) for $\theta = 2$ and at 3 inverse temperatures $\beta \in \{2, 4, 10\}$.}
         \label{fig:univplot}
\end{figure}

\new{\subsection{Ising Models with Deterministic Couplings and Random External Fields} \corref{TAP-AMP} describes the dynamics of the AMP algorithm of \citet{ccakmak2019memory} in \eqref{eq:opper-iteration} for Ising models with coupling matrices constructed with very little randomness (such as the signed sine model in \eqref{eq:semirandom-sine-model}). However, it appears to fall short of describing the dynamics of \eqref{eq:opper-iteration} for Ising model with fully deterministic coupling matrices, such as the sine model considered by \citet{marinari1994replica} and \citet{parisi1995mean}, where the coupling matrix is given by:
\begin{align} \label{eq:sine-model}
    J_{ij} =\frac{2}{\sqrt{2\dim+1}} \cdot \sin\left( \frac{2\pi i j}{2N + 1} \frac{}{} \right), \; i \in \; [\dim], \; j \in \; [\dim].
\end{align}
For such deterministic coupling matrices, \emph{if the external field is not present or is random}, \corref{TAP-AMP} can still be used to analyze the dynamics of the AMP algorithm of \citet{ccakmak2019memory} via a change-of-variables argument. In order to see this, consider the mean-field Ising spin glass model with the following Gibbs measure on the discrete hypercube $\{\pm 1\}^\dim$: 
\begin{subequations}\label{eq:ising-model-general}
\begin{align}
    \gibbs(\vsigma| \mJ, \vh) &\explain{def}{=} \frac{1}{Z(\mJ, \vh)} \exp\left( \frac{\temp}{2} \cdot  \vsigma\tran \mJ \vsigma  + \field \cdot \ip{\vh}{\vsigma}  \right), \\
    Z(\mJ, \vh) &\explain{def}{=}  \sum_{\vsigma \in \{\pm1\}^\dim} \exp\left( \frac{\temp}{2} \cdot  \vsigma\tran \mJ \vsigma  + \field \cdot \ip{\vh}{\vsigma}  \right).
\end{align}
\end{subequations}
In the above display, $\mJ$ is a symmetric coupling matrix, the vector $\vh \in \{\pm 1\}^\dim$ is the external field, the parameter $\beta \geq 0$ is the inverse temperature, and the parameter $\theta \geq 0$ controls the field strength. Observe that the above definitions generalize those in \eqref{eq:ising-model}, which correspond to the special case where the external field is given by $\vh = \fieldvec$. Consider situation when the external field is random:
\begin{align*}
\vh & \sim \unif{\{\pm 1\}^\dim},
\end{align*}
but the coupling matrix is deterministic (for instance, the sine model of \eqref{eq:sine-model}). In this situation, the AMP algorithm of \citet{ccakmak2019memory} takes the form:
\begin{align}\label{eq:opper-iteration-general}
     \iter{\vz}{t+1}(\mJ, \vh) &= \frac{1}{\beta - \beta q_\star} \cdot \mM(\lambda_\star)  \cdot  \left( \frac{\tanh(\theta \vh +\iter{\vz}{t}(\mJ, \vh))}{1-q_\star} - \iter{\vz}{t}(\mJ, \vh)\right).
\end{align}
In the above display, $q_\star$ is as defined in \eqref{eq:q-star-sigma-star} and the matrix $\mM(\lambda_\star)$ is the centered resolvent of $\mJ$, as defined in \eqref{eq:resolvent}. Observe that setting $\vh = \vone$ in \eqref{eq:opper-iteration-general}, one obtains the AMP algorithm introduced in \eqref{eq:opper-iteration}. Many properties of the Ising model with a deterministic coupling matrix $\mJ$  and a random external field $\vh$ can be inferred from the corresponding properties of an Ising model with random coupling matrix $\overline{\mJ} \explain{def}{=} \diag(\vh) \mJ \diag(\vh)$ and a deterministic external field $\vone$. Indeed, introducing a change of variables $\vsigma \leftrightarrow \diag(\vh) \vsigma$ in the summation of \eqref{eq:ising-model-general} gives:
\begin{align*}
    Z(\mJ, \vh) & = Z(\diag(\vh) \cdot \mJ \cdot \diag(\vh), \vone) = Z(\overline{\mJ}, \vone)
\end{align*}
Similarly, using the fact that $\tanh(\cdot)$ is odd in \eqref{eq:opper-iteration-general} one obtains:
\begin{align}\label{eq:guage-transform}
    \iter{\vz}{t}(\mJ, \vh) & = \diag(\vh) \cdot  \iter{\vz}{t}(\diag(\vh)\mJ\diag(\vh), \vone) = \diag(\vh) \cdot  \iter{\vz}{t}(\overline{\mJ}, \vone).
\end{align}
In particular, for any \emph{even} test function $\phi: \R^{T+1} \mapsto \R$ which satisfies:
\begin{align}\label{eq:test-symmetry}
    \phi(\vz ; h) & =  \phi(-\vz ; -h) \quad \forall \; \vz\in \R^T, \;  h \in \;  \R,
\end{align}
and 
\begin{align*}
    |\phi(\vz ; h) - \phi(\vz^\prime ; h)| & \leq L \cdot \|\vz - \vz^\prime\| \cdot (1 + \|\vz\| + \|\vz^\prime\|) \quad \forall \; \vz, \vz^\prime \; \in \; \R^T, \; h \; \in \; \R
\end{align*}
for some finite constant $L$, we have:
\begin{align} \label{eq:symmetric-observables}
    \frac{1}{\dim} \sum_{i=1}^\dim \phi(\iter{z}{1}_i(\mJ, \vh), \dotsc, \iter{z}{T}_i(\mJ, \vh) ; h_i) & \explain{\eqref{eq:guage-transform}}{=}  \frac{1}{\dim} \sum_{i=1}^\dim \phi(h_i\iter{z}{1}_i(\overline{\mJ}, \vone), \dotsc, h_i\iter{z}{T}_i(\overline{\mJ}, \vone) ; h_i) \\
    & \explain{\eqref{eq:test-symmetry}}{=} \frac{1}{\dim} \sum_{i=1}^\dim \phi(\iter{z}{1}_i(\overline{\mJ}, \vone), \dotsc, \iter{z}{T}_i(\overline{\mJ}, \vone) ; 1) \\
    & \explain{P}{\rightarrow} \E[\phi(Z_1, \dotsc, Z_T; 1)].
\end{align}
In the above display, the convergence in the last step follows by appealing to \corref{TAP-AMP}. Lastly, we note that many natural quantities of interest can be computed using test functions that satisfy the symmetry requirement in \eqref{eq:guage-transform}. Examples include:
\begin{itemize}
    \item Taking $\phi(z_1, \dotsc, z_T; h) = h z_t$ in \eqref{eq:symmetric-observables} characterizes the limiting value of the overlap $\ip{\vh}{\iter{\vz}{t}}/\dim$ between the AMP iterate $\iter{\vz}{t}$ and the external field vector $\vh$. 
    \item Taking $\phi(z_1, \dotsc, z_T; h) = (z_t - z_{s})^2$ in \eqref{eq:symmetric-observables} characterizes the limiting value of  $\|\iter{\vz}{t} - \iter{\vz}{s}\|^2/\dim$, which can be used to detect the convergence/non-convergence of the AMP algorithm in \eqref{eq:opper-iteration-general}.
\end{itemize}}

\section{Related Work}
\thref{SE-final} can be thought of as an instance of the following general universality  principle.
\begin{equation}
  \tag{\#}\label{principle}
  \parbox{\dimexpr\linewidth-4em}{%
    \begin{center}
    \emph{Properties of a high-dimensional system driven by a generic random matrix $\mM$ can be accurately predicted by modeling $\mM$ as a rotationally invariant matrix with a matching spectrum.}
    \end{center}
  }
\end{equation}
Indeed, the characterization of the dynamics obtained for \eqref{eq:memory-free-amp} in \thref{SE-alt} is precisely the characterization one would ``guess'' by modeling $\mM$ as a rotationally invariant matrix and appealing to the results of \citet{fan2020approximate}. Hence, \thref{SE-alt}  can be interpreted as a formalization of the universality principle $\eqref{principle}$ in the context of the iterative algorithm \eqref{eq:memory-free-amp}. Many other instances of this universality principle have been observed and studied, which we will discuss next. 

\paragraph{Gaussian Universality.} The simplest instance of the universality principle \eqref{principle} is observed when the matrix $\mM$ is a Wigner matrix with i.i.d. entries. In this situation, $\mM$ often behaves like a Wigner matrix with i.i.d. \emph{Gaussian} entries, also known as the Gaussian Orthogonal Ensemble. Since this ensemble is rotationally invariant and has the same limiting spectral measure as Wigner matrices (the semi-circle distribution), Gaussian universality can be thought of as a special case of the general universality principle \eqref{principle}. Gaussian universality has been studied from a mathematical perspective in the context of spin-glass models \citep{carmona2006universality,chatterjee2005simple}, empirical risk minimization in statistical learning \citep{korada2011applications,panahi2017universal}, and the dynamics of AMP algorithms \citep{bayati2015universality,chen2021universality} and general first-order methods \citep{celentano2021high}. More recently, Gaussian universality has been studied in the case when matrix $\mM$ has independent rows, with possible correlations within each row. In this situation, $\mM$ often behaves like a Gaussian matrix with independent rows and matching row covariance. Examples of works that prove Gaussian universality in this situation include the work of \citet{hastie2019surprises} and \citet{mei2022generalization} for ridge regression, the work of the second author with Hong Hu \citep{hu2020universality} for empirical risk minimization with convex loss functions, and the work of \citet{montanari2022universality} for general loss functions.

\paragraph{}However, the universality principle \eqref{principle} appears to extend far beyond examples that can be explained by previously mentioned results on Gaussian universality. Indeed, it has been observed empirically that the universality principle \eqref{principle} seems to be valid even if the matrix $\mM$ has very limited randomness. We discuss some of these empirical observations next. 

\paragraph{Empirical Observations of Universality.} One instance of the universality principle \eqref{principle} appears in the context of spin glasses, where \citet{marinari1994replica} used numerical simulations and high-temperature expansions to demonstrate that many thermodynamic properties of the Sine Model, a mean-field Ising model with a \emph{deterministic} coupling matrix are nearly identical to the corresponding properties of the Random Orthogonal Model. The coupling matrices of the two models share the same limiting spectral measure. Another instance of the universality principle \eqref{principle} appears in the field of compressed sensing, where the goal is to reconstruct an unknown sparse signal vector from $n \leq \dim$ linear measurements of the form, specified using a $n \times \dim$ sensing matrix $\mA$. In this context, it has been observed that the performance of many estimators for this problem is nearly identical when $\mA : = \mA_{\mathrm{PDFT}}$ where $\mA_{\mathrm{PDFT}}$ denotes the matrix obtained by randomly sub-sampling $n$ rows of $\dim \times \dim$ DFT matrix and when $\mA := \mA_{\mathrm{Haar}}$ where $\mA_{\mathrm{Haar}}$ denotes the matrix obtained by sub-sampling $n$ rows of $\dim \times \dim$ uniformly random (or Haar distributed) orthogonal matrix. Observations of this type were first reported by \citet{donoho2009observed} and then subsequently in other works \citep{monajemi2013deterministic,abbara2020universality}. This empirical phenomenon is remarkably robust and is observed even in the presence of measurement noise \citep{oymak2014case} and for non-linear inference problems beyond compressed sensing like phase retrieval \citep{ma2021spectral,maillard2020phase}. The performance of many estimators for compressed sensing depends only on the Gram matrix $\mM = \mA \tran \mA$. Since $\mA_{\mathrm{Haar}} \tran \mA_{\mathrm{Haar}}$ is a rotationally invariant matrix with the same spectrum as $\mA_{\mathrm{PDFT}} \tran \mA_{\mathrm{PDFT}}$, this empirical observation can be viewed as an instance of the universality principle \eqref{principle}. 
\paragraph{} Though the universality principle \eqref{principle} has been observed in many different contexts, its mathematical understanding is limited. In what follows we discuss works that study this universality principle from a mathematical viewpoint in specific contexts.

\paragraph{A result from compressed sensing.} In the context of certain linear programming-based estimators for noiseless compressed sensing, \citet{donoho2010counting} provided a proof for the observed universality using results from the theory of random polytopes \citep{wendel1962problem,cover1965geometrical,winder1966partitions}. 

\paragraph{Results from free probability.} A different line of work \citep{tulino2010capacity,farrell2011limiting,anderson2014asymptotically} from free probability can also be viewed as a formalization of \eqref{principle}. In this area, a well-known result of \citet{voiculescu1991limit,voiculescu1998strengthened} shows that given two deterministic matrices $\mA, \mB$, the matrix $\mA$ is asymptotically freely independent from the rotationally invariant matrix $\mU \mB \mU \tran$ constructed using a uniformly random orthogonal matrix  $\mU \sim \unif{\ortho(\dim)}$. As a consequence, the limiting spectral measure of $\mA + \mU \mB \mU \tran$ (and more generally of arbitrary matrix polynomials in $\mA, \mU \mB \mU \tran$) can be derived from the limiting spectral measure of $\mA, \mB$ using an operation called the free additive convolution. \citet{tulino2010capacity} obtained a suprising extension of the results of \citet{voiculescu1991limit,voiculescu1998strengthened} by showing that in the situation that $\mA, \mB$ are independent diagonal matrices with i.i.d. diagonal entries, the matrix $\mA$ is also asymptotically free of the significantly less random matrix $\mF \mB \mF \tran$ where $\mF$ is the deterministic $\dim \times \dim$ Fourier matrix. As a consequence, the limiting spectral distribution $\mA + \mM$ is identical in the situation when $\mM = \mF \mB \mF \tran$ and in the situation when $\mM = \mU \mB \mU \tran$, a rotationally invariant matrix with the same spectrum as $\mF \mB \mF \tran$. Hence, the result of \citet{tulino2010capacity} can also be viewed as an instance of the universality principle \eqref{principle}. Subsequently, \citet{anderson2014asymptotically} have obtained a far-reaching generalization of the results of \citet{tulino2010capacity} by showing that conjugation by any \emph{delocalized} orthogonal matrix with certain \emph{sign and permutation symmetries} is sufficient to induce asymptotic freeness.  

\paragraph{Linearized Approximate Message Passing.} In recent joint work with M. Bakhshizadeh \citep{dudeja2020universality}, the first author formalized this universality principle  \eqref{principle} for the dynamics of \emph{linearized} AMP algorithms in the context of the phase retrieval problem. Specifically, the authors of \citep{dudeja2020universality} establish that the dynamics of linearized AMP algorithms for phase retrieval are identical if the sensing matrix $\mA : = \mA_{\mathrm{PHWT}}$ or $\mA := \mA_{\mathrm{Haar}}$, where $\mA_{\mathrm{PHWT}}$ denotes the matrix obtained by randomly sub-sampling $n$ columns of $\dim \times \dim$ Hadamard-Walsh matrix and when $\mA_{\mathrm{Haar}}$ denotes the matrix obtained by sub-sampling $n$ columns of $\dim \times \dim$ uniformly random (or Haar distributed) orthogonal matrix. Linearized AMP algorithms have the convenient property that $\iter{\vz}{t}$, which denotes the output of the algorithm at iteration $t$, is a linear transformation of the initialization $\iter{\vz}{0}$. That is, $\iter{\vz}{t} = \mR_t \iter{\vz}{0}$, for a certain random matrix $\mR_t$.  Consequently, understanding the dynamics of linearized AMP algorithms boils down to analyzing the trace and certain quadratic forms of the random matrices $\mR_t, \mR_t^2$. The authors analyze the relevant spectral properties of these matrices by leveraging and extending the proof techniques of the previously discussed work of \citet{tulino2010capacity}. The non-linear AMP algorithms studied in this paper do not have this convenient linear structure, making their analysis more challenging.

\section{Proof Outline} 
It turns out that \thref{SE-alt} and \thref{SE-final} are equivalent in the sense that any one of them implies the other. Since we will derive \thref{SE-alt} from \thref{SE-final}, we record one side of this equivalence in the following lemma.
\begin{lemma}\label{lem:equivalence} \thref{SE-final} implies \thref{SE-alt}. 
\end{lemma}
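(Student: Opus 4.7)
The plan is to deduce \thref{SE-alt} from \thref{SE-final} by coupling the iteration \eqref{eq:memory-free-amp} to an auxiliary iteration built from the divergence-free non-linearities $\ctr{\nonlin}_t$ introduced in \eqref{eq:divergence-removal}. Concretely, set $\iter{\vw}{0} \explain{def}{=} \iter{\vz}{0}$ and define
\begin{align*}
    \iter{\vw}{t+1} \explain{def}{=} \mM \, \ctr{\nonlin}_{t+1}(\iter{\vw}{t}), \qquad t = 0, 1, \dotsc, T-1.
\end{align*}
Since $\ctr{\nonlin}_{t+1}(x) = \nonlin_{t+1}(x) - \bigl(\E[Z \nonlin_{t+1}(\sigma_t Z)]/\sigma_t\bigr) \cdot x$ is an affine modification of $\nonlin_{t+1}$, it is continuously differentiable and Lipschitz, and by construction divergence-free in the sense of \assumpref{divergence-free}. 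Moreover, the state evolution recursion attached to $\{\iter{\vw}{t}\}$ is identical to \eqref{eq:SE-memory-free}, because centering an already centered function leaves it unchanged. Hence \thref{SE-final} yields $(\iter{\vw}{0}, \iter{\vw}{1}, \dotsc, \iter{\vw}{T}) \explain{\pw}{\longrightarrow} (Z_0, Z_1, \dotsc, Z_T) \sim \gauss{\vzero}{\mSigma_T}$.

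The remaining task is to show by induction on $t$ that
\begin{align*}
    \|\iter{\vz}{t} - \iter{\vw}{t}\|/\sqrt{\dim} \explain{P}{\longrightarrow} 0.
\end{align*}
The base case $t=0$ is immediate. For the inductive step, write
\begin{align*}
    \iter{\vz}{t+1} = \mM \nonlin_{t+1}(\iter{\vz}{t}) - \alpha_t \cdot \mM \iter{\vz}{t}, \qquad \alpha_t \explain{def}{=} \frac{\langle \nonlin_{t+1}(\iter{\vz}{t}) , \iter{\vz}{t}\rangle}{\|\iter{\vz}{t}\|^2},
\end{align*}
and
\begin{align*}
    \iter{\vw}{t+1} = \mM \nonlin_{t+1}(\iter{\vw}{t}) - \alpha_t^\star \cdot \mM \iter{\vw}{t}, \qquad \alpha_t^\star \explain{def}{=} \frac{\E[Z \nonlin_{t+1}(\sigma_t Z)]}{\sigma_t}, \quad Z \sim \gauss{0}{1}.
\end{align*}
Subtracting these identities, inserting the telescoping term $\pm \alpha_t \mM \iter{\vw}{t}$, and using the Lipschitz property of $\nonlin_{t+1}$ (with constant $L$) yields
\begin{align*}
    \|\iter{\vz}{t+1} - \iter{\vw}{t+1}\| \leq \|\mM\|_{\op}\, (L + |\alpha_t|) \, \|\iter{\vz}{t} - \iter{\vw}{t}\| + |\alpha_t - \alpha_t^\star| \cdot \|\mM\|_{\op} \cdot \|\iter{\vw}{t}\|.
\end{align*}
Dividing through by $\sqrt{\dim}$, the first summand vanishes by the inductive hypothesis and the bound $\|\mM\|_{\op} \lesssim 1$ from \defref{matrix-ensemble-relaxed}, provided $|\alpha_t|$ is bounded in probability.

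To control the second summand, note first that $\|\iter{\vw}{t}\|/\sqrt{\dim} \explain{P}{\to} \sigma_t$ follows from the \pw{} convergence of $\iter{\vw}{t}$ applied to the test function $h(x) = x^2$. Next, the inductive hypothesis allows us to transfer \pw{} convergence from $\iter{\vw}{t}$ to $\iter{\vz}{t}$ through a standard pseudo-Lipschitz approximation argument. Applying this transferred \pw{} convergence to the test functions $h(x) = x^2$ and $h(x) = x \nonlin_{t+1}(x)$ (which are pseudo-Lipschitz because $\nonlin_{t+1}$ is Lipschitz) gives
\begin{align*}
    \frac{\|\iter{\vz}{t}\|^2}{\dim} \explain{P}{\longrightarrow} \sigma_t^2, \qquad \frac{\langle \nonlin_{t+1}(\iter{\vz}{t}), \iter{\vz}{t}\rangle}{\dim} \explain{P}{\longrightarrow} \sigma_t \cdot \E[Z \nonlin_{t+1}(\sigma_t Z)],
\end{align*}
from which $\alpha_t \explain{P}{\to} \alpha_t^\star$ and, in particular, $|\alpha_t|$ is bounded in probability. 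The non-degeneracy assumption on $\nonlin_{t+1}$ guarantees $\sigma_t^2 > 0$, ruling out division by zero. This closes the induction.

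Once the $\ell_2$-closeness $\|\iter{\vz}{t} - \iter{\vw}{t}\|/\sqrt{\dim} \explain{P}{\to} 0$ is established for every $t \leq T$, the desired \pw{} convergence of $(\iter{\vz}{0}, \dotsc, \iter{\vz}{T})$ to $\gauss{\vzero}{\mSigma_T}$ follows from the corresponding convergence of $(\iter{\vw}{0}, \dotsc, \iter{\vw}{T})$ by the standard approximation argument for pseudo-Lipschitz test functions. The only delicate point in the plan is the simultaneous propagation of $\ell_2$-closeness and distributional convergence, which is handled cleanly once one has the operator-norm control on $\mM$ from \defref{matrix-ensemble-relaxed}; notably, no structural property of $\mM$ beyond that bound is required for this reduction.
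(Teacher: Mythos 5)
Your proposal is correct and follows essentially the same route as the paper's proof: introduce the divergence-free auxiliary iteration $\iter{\vw}{t+1} = \mM\,\ctr{\nonlin}_{t+1}(\iter{\vw}{t})$, apply \thref{SE-final} to it, show by induction that $\|\iter{\vz}{t} - \iter{\vw}{t}\|/\sqrt{\dim} \explain{P}{\to} 0$ using the operator-norm bound, the Lipschitz property of $\nonlin_{t+1}$, and the convergence $\alpha_t \explain{P}{\to} \alpha_t^\star$, and finally transfer the \pw{} convergence via the pseudo-Lipschitz test-function estimate. The paper's proof performs exactly the same decomposition and induction, so there is no substantive difference.
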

\begin{proof} See \appref{equivalence}. 
\end{proof}
As a consequence, we focus on providing a road-map to the proof of \thref{SE-final} in this section. The complete proof is deferred to Section \ref{proof_SE-final}.  
\begin{itemize}
    \item[(i)] We first establish that without loss of generality, we can normalize some of the parameters involved. In particular, we can assume $\iter{\vz}{0} \sim \gauss{0}{\mI_\dim}$, i.e., $\sigma_0^2=1$. In addition, we can assume that $\mM$ is semi-random with $\sigma_{\psi}^2=1$ (\defref{matrix-ensemble-relaxed}) and $\mathbb{E}[f_t^2(Z)]=1$ for $Z \sim \mathcal{N}(0,1)$ for all $t \geq 1$. We collect these assertions formally in \lemref{rescaling}.   
    
    \item[(ii)] Armed with this normalization, we turn to the main comparison argument in our universality proof: consider any two iteration sequences 
    \begin{align*}
    \iter{\vz}{t+1} = \mM \nonlin_{t+1}(\iter{\vz}{t}), \\
    \iter{\vw}{t+1} = \mQ \nonlin_{t+1}(\iter{\vw}{t}),
\end{align*}
started at $\iter{\vz}{0}= \iter{\vw}{0} \sim \gauss{\vzero}{\mI_\dim}$. We show that if $\mM$ and $\mQ$ both satisfy \defref{matrix-ensemble-relaxed}, for suitably ``nice" test functions $h:\mathbb{R}^{t+1}\to \mathbb{R}$, 
\begin{align}
    \frac{1}{\dim} \sum_{i=1}^\dim h(\iter{z}{0}_i, \iter{z}{1}_i, \dotsc, \iter{z}{T}_i) - \frac{1}{\dim} \sum_{i=1}^\dim h(\iter{w}{0}_i, \iter{w}{1}_i, \dotsc, \iter{w}{T}_i) \explain{P}{\rightarrow} 0. \label{eq:comparison} 
\end{align}
Indeed, this establishes that the empirical distribution of the AMP iterates is identical for \emph{any} matrix ensemble that satisfies \defref{matrix-ensemble-relaxed}.
    \item[(iii)] Finally, we choose $ \mQ = \mU \mLambda \mU\tran$, where $\mU$ is an $N \times N$ Haar matrix,  and $\mLambda = \diag(\lambda_{1:\dim})$ is a diagonal matrix with $\lambda_{1:\dim} \explain{i.i.d.}{\sim} \unif{\{\pm 1\}}$. This ensemble is semi-random in the sense of \defref{matrix-ensemble-relaxed}, and is an ``integrable" member in this class. In particular, this ensemble is \emph{orthogonally invariant}, i.e., $\mQ \stackrel{d}{=} \mat{O} \mQ \mat{O}\tran$ for any orthogonal matrix $\mat{O}$, and the empirical distribution for this iteration can be directly derived from 
    \citet{fan2020approximate} and \citet{fan2021replica}. These results show that 
\begin{align}
    \frac{1}{\dim} \sum_{i=1}^\dim h(\iter{w}{0}_i, \iter{w}{1}_i, \dotsc, \iter{w}{T}_i) \explain{P}{\rightarrow} \E h(Z_0, Z_1, \dotsc, Z_T), \label{eq:h_emp} 
\end{align}
where the law of the random vector $(Z_0, Z_1, \dotsc Z_T)$ is as defined in \thref{SE-final}. 
\end{itemize}

\noindent
Theorem~\ref{thm:SE-final} follows upon combining \eqref{eq:comparison} and \eqref{eq:h_emp}. Our main contribution in this paper is the comparison result \eqref{eq:comparison}. We turn to some key ideas underlying its proof. To this end, fix $T \geq 1$. We first approximate the Lipschitz non-linearities $\{f_t: 1\leq t \leq T\}$ and the test function $h$ by an appropriate sequence of polynomials $f_t^{(k)}: \mathbb{R}\to \mathbb{R}$ and $h^{(k)}:\mathbb{R}^{t+1} \to \mathbb{R}$. In addition, the matrix $\mM$ is perturbed slightly to construct $\hat{\mM}$ such that $\hat{\mM} = \mS \hat{\mat{\Psi}} \mS$, and $(\hat{\mPsi} \hat{\mPsi}\tran)_{11}= (\hat{\mPsi} \hat{\mPsi}\tran)_{22} = \dotsb = (\hat{\mPsi} \hat{\mPsi}\tran)_{\dim \dim} = 1$. The polynomial approximations $\{f_t^{(k)}:1\leq t \leq T\}, h^{(k)}$ and the perturbed matrix $\hat{\mM}$ are chosen so that the proxy iteration 
\begin{align*}
     \iter{\hat{\vz}}{t+1;k} = \hat{\mM} \iter{\nonlin}{k}_{t+1}(\iter{\hat{\vz}}{t;k}),
\end{align*}
initialized at $ \iter{\hat{\vz}}{0;k} = \iter{\vz}{0}$ accurately tracks the output of the original iteration. Formally, we establish that  
\begin{align}
    \lim_{k \rightarrow \infty}  \limsup_{\dim \rightarrow \infty} \E\left[ \left| \frac{1}{\dim} \sum_{i=1}^\dim h(\iter{z}{0}_i, \iter{z}{1}_i, \dotsc, \iter{z}{T}_i) - \frac{1}{\dim} \sum_{i=1}^\dim \iter{h}{k}(\iter{\hat{z}}{0;k}_i, \iter{\hat{z}}{1;k}_i, \dotsc, \iter{\hat{z}}{T;k}_i)  \right| \right] & = 0.\label{eq:approx} 
\end{align}
We establish this approximation in Proposition~\ref{prop:approx}. 
Note that given \eqref{eq:approx}, to derive the general claim \eqref{eq:comparison}, it suffices to establish \eqref{eq:comparison} under the additional assumptions that the non-linearities $\{f_t: 1\leq t \leq T\}$, the test function $h$ are polynomials, and that $(\mPsi \mPsi\tran)_{11}= (\mPsi \mPsi\tran)_{22} = \dotsb = (\mPsi \mPsi\tran)_{\dim \dim} = 1$. Given these additional assumptions, we first establish that for any matrix $\mM$ satisfying \defref{matrix-ensemble-relaxed} and $(\mPsi \mPsi\tran)_{11}= (\mPsi \mPsi\tran)_{22} = \dotsb = (\mPsi \mPsi\tran)_{\dim \dim} = 1$, 
\begin{align*}
    \Var\left[ \frac{1}{N} \sum_{i=1}^\dim h(\iter{z}{0}_i, \iter{z}{1}_i, \dotsc, \iter{z}{T}_i) \right] \to 0.  
\end{align*}
The variance bound is derived by an application of the Efron-Stein inequality \citep{boucheron2013concentration}, and is established in \thref{concentration}. In turn, this implies
\begin{align*}
    \frac{1}{N} \sum_{i=1}^\dim h(\iter{z}{0}_i, \iter{z}{1}_i, \dotsc, \iter{z}{T}_i) - \mathbb{E}\Big[ \frac{1}{N} \sum_{i=1}^\dim h(\iter{z}{0}_i, \iter{z}{1}_i, \dotsc, \iter{z}{T}_i)\Big] &\explain{P}{\rightarrow} 0, \\  \frac{1}{N} \sum_{i=1}^\dim h(\iter{w}{0}_i, \iter{w}{1}_i, \dotsc, \iter{w}{T}_i) - \mathbb{E}\Big[ \frac{1}{N} \sum_{i=1}^\dim h(\iter{w}{0}_i, \iter{w}{1}_i, \dotsc, \iter{w}{T}_i)\Big] &\explain{P}{\rightarrow} 0.
\end{align*}
At this point, to complete the proof of \eqref{eq:comparison}, it suffices to establish that the limit of the expected empirical average in the display above is equal for any two matrix ensembles $\mM$ and $\mQ$ satisfying \defref{matrix-ensemble-relaxed} and $(\mPsi \mPsi\tran)_{11}= (\mPsi \mPsi\tran)_{22} = \dotsb = (\mPsi \mPsi\tran)_{\dim \dim} = 1$:
\begin{align}
    \lim_{\dim \rightarrow \infty} \E\left[\frac{1}{\dim} \sum_{i=1}^\dim h(\iter{z}{0}_i, \iter{z}{1}_i, \dotsc, \iter{z}{T}_i) \right] & = \lim_{\dim \rightarrow \infty} \E\left[ \frac{1}{\dim} \sum_{i=1}^\dim h(\iter{w}{0}_i, \iter{w}{1}_i, \dotsc, \iter{w}{T}_i) \right]. \label{eq:comparison-expectation} 
\end{align}This is accomplished by the following theorem, which is the main technical contribution of the paper. To formally state this result, we first record the normalization assumption mentioned above for ease of reference in the subsequent discussion. 

\begin{assumption}[Normalization] \label{assump:normalization} The initialization $\iter{\vz}{0}$, the matrix ensemble $\mM = \mS \mPsi \mS$ and the non-linearities $\nonlin_t$ satisfy:
\begin{enumerate}
    \item $\iter{\vz}{0} \sim \gauss{\vzero}{\mI_\dim}$ i.e. $\sigma_0^2 = 1$.
    \item  $(\mPsi \mPsi\tran)_{11}= (\mPsi \mPsi\tran)_{22} = \dotsb = (\mPsi \mPsi\tran)_{\dim \dim} = 1$. In particular, $\sigpsi = 1$.
    \item $\E \nonlin^2_t(Z) = 1$ for $Z \sim \gauss{0}{1}$.
\end{enumerate}
\end{assumption}
To establish \eqref{eq:comparison-expectation} for a general polynomial polynomial $h$, it suffices to prove \eqref{eq:comparison-expectation} for any basis for the collection of $(T+1)$-variable polynomials. We will establish \eqref{eq:comparison-expectation} for polynomials $h$ that can be expressed as a product of the univariate Hermite polynomials $\{H_{k}: k \geq 0\}$ (\citet{o2014analysis}). The Hermite polynomials form a basis for all univariate polynomials, and thus any $(T+1)$-variable polynomial $h$ can be expressed as a linear combination of these terms. We now present a formal statement.
\begin{theorem}\label{thm:SE-normalized} Fix non-negative integers $T \in \W$, $D \in \W$ and $k_0, k_1, \dotsc, k_T \in \W$ and functions $\nonlin_1, \nonlin_2, \dotsc, \nonlin_T : \R \rightarrow \R$. There is a constant $c$ that depends only on $T, k_{0:T}, \nonlin_{1:T}$ such that if,
\begin{enumerate}
    \item $\iter{\vz}{0}$ satisfies \assumpref{initialization} and \assumpref{normalization},
    \item The matrix ensemble $\mM = \mS \mPsi\mS$ is semi-random in the sense of \defref{matrix-ensemble-relaxed} and satisfies \assumpref{normalization},
    \item  The non-linearities $\nonlin_{1:T}$ satisfy \assumpref{divergence-free}, \assumpref{normalization} and are polynomials of degree at most $\degree$,
\end{enumerate}
then, the iterates:
\begin{align*}
    \iter{\vz}{t+1} = \mM \nonlin_{t+1}(\iter{\vz}{t}),
\end{align*}
initialized at $\iter{\vz}{0}$ satisfy:
\begin{align}
   \lim_{\dim \rightarrow \infty} \E\left[ \frac{1}{\dim} \sum_{i=1}^\dim \hermite{k_0}(\iter{z}{0}_i) \cdot \hermite{k_1}(\iter{z}{1}_i) \cdot \dotsb \cdot \hermite{k_T}(\iter{z}{T}_i)  \right] & = c. \label{eq:multivariate-result}
\end{align}
{
Furthermore, for each $t \in [T]$
\begin{align} \label{eq:univariate-result}
    \lim_{\dim \rightarrow \infty} \E\left[ \frac{1}{\dim} \sum_{i=1}^\dim \hermite{k_t}(\iter{z}{t}_i)  \right] & = \begin{cases} 1 &: k_t = 0 \\ 0 &: k_t \geq 1 \end{cases}.
\end{align}}
\end{theorem}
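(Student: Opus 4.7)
The plan is to expand both iterates and Hermite test polynomials combinatorially, then extract the leading contribution in $\dim$ using the controls of \defref{matrix-ensemble-relaxed} together with \assumpref{normalization}. Since each $\nonlin_t$ is a polynomial of degree at most $\degree$, I would expand it in the Hermite basis as $\nonlin_t(z) = \sum_{k=0}^{\degree} \hat f_t(k) \hermite{k}(z)$; the combination of \assumpref{divergence-free} with $\sigma_t^2=1$ for every $t$ (which holds inductively under \assumpref{normalization} via the state evolution) forces $\hat f_t(1) = 0$, a vanishing ``linear coefficient'' that will be crucial. Iterating $\iter{\vz}{t+1} = \mM \nonlin_{t+1}(\iter{\vz}{t})$ and substituting $M_{ab}=s_a\Psi_{ab}s_b$ produces an expansion of $\iter{z}{t}_i$ as a sum over rooted labeled trees of depth $t$ with root index $i$: each internal vertex carries a coordinate index and a Hermite order, each leaf contributes $\hermite{m_\ell}(\iter{z}{0}_\ell)$, and each edge contributes $s_a \Psi_{ab} s_b$ multiplied by a Hermite coefficient $\hat f_s(\cdot)$. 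Multiplying the $T+1$ factors $\hermite{k_s}(\iter{z}{s}_i)$ and iteratively applying the Hermite product formula produces an expansion
\[
    \prod_{s=0}^T \hermite{k_s}(\iter{z}{s}_i) = \sum_{\mathscr{F}} \mathrm{wt}(\mathscr{F}),
\]
where $\mathscr{F}$ ranges over a finite class of labeled forest diagrams rooted at $i$.

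Next, I would average $(1/\dim)\sum_i \sum_{\mathscr F} \mathrm{wt}(\mathscr F)$ over the two independent sources of randomness. The signs are i.i.d.\ $\unif{\{\pm 1\}}$, so $\E[\prod_a s_a^{n_a}]=0$ unless every $n_a$ is even; this sign parity constraint eliminates most diagrams immediately. Independently, the Gaussian initialization contributes a factor $\E[\prod_{\ell:\text{leaf at }a} \hermite{m_\ell}(Z)]$ at each coordinate $a$, computable via the Hermite moment formula. What remains is a deterministic sum of products of entries $\Psi_{ab}$ along the contracted edges, weighted by the surviving Hermite coefficients and combinatorial constants.

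One then classifies the surviving diagrams according to combinatorial type. The dominant ``tree-like'' pairings, in which every surviving sign identification fuses two edges sharing both endpoints, contribute a clean factor of $(\mPsi\mPsi^\top)_{aa} = 1$ per contraction (by \assumpref{normalization}) and yield a universal value depending only on $T$, $k_{0:T}$, and $\hat f_{1:T}$. Any other diagram introduces either an off-diagonal factor $|(\mPsi\mPsi^\top)_{a a'}| = o(\dim^{-1/2+\epsilon})$ with $a\neq a'$ or a product of delocalized entries $|\Psi_{ab}| = o(\dim^{-1/2+\epsilon})$; combined with $\|\mPsi\|_{\op} \lesssim 1$ and a careful power count over the number of free coordinate sums, such diagrams contribute $o(1)$. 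The vanishing $\hat f_t(1) = 0$ suppresses self-loop diagrams at each depth that would otherwise distort the limit. Summing the surviving tree-like diagrams yields the universal constant $c$ of \eqref{eq:multivariate-result}.

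The main obstacle is the combinatorial classification together with the power-counting bound: one must enumerate the finitely many diagram types that arise, show that non-tree-like diagrams are suppressed through a careful trade-off between the number of free coordinate indices and the decay of the relevant $\mPsi$-factors, and verify that the sum over tree-like diagrams coincides with the value computed for a rotationally invariant matrix with matching spectrum. The univariate claim \eqref{eq:univariate-result} is the specialization $k_s = 0$ for $s \neq t$ of the multivariate argument: setting $\hermite{0}\equiv 1$ reduces the question to showing that the empirical distribution of $\iter{z}{t}_i$ is asymptotically $\gauss{0}{1}$ in expectation, in which case $(1/\dim)\sum_i \E \hermite{k_t}(\iter{z}{t}_i) \to \E \hermite{k_t}(Z) = \delta_{k_t,0}$.
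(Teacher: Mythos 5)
Your proposal has the same overall skeleton as the paper's argument: expand the non-linearities in the Hermite basis, unroll the iteration into a sum over rooted forest diagrams, average over the random signs and the Gaussian initialization, and then classify diagrams according to whether their surviving $\mPsi$-contractions collapse onto the diagonal $(\mPsi\mPsi^\top)_{aa}=1$ or not. The observation that divergence-freeness forces $\hat f_t(1)=0$ and hence eliminates certain self-loop diagrams is also the same mechanism the paper encodes as the \ref{forbidden-weights-rule}.

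The gap is in the phrase ``a careful power count over the number of free coordinate sums.'' This is exactly where the real work of the theorem lives, and a naive power count does not close it. The paper shows concretely (see the discussion around \fref{nullifying-example}) that after taking expectations there remain \emph{relevant} configurations whose naive estimate, using only $\|\mPsi\|_\infty \lesssim \dim^{-1/2+\epsilon}$ and the number of free indices, yields a bound like $\dim^{1+\epsilon}$ — i.e.\ diverging, not $o(1)$. To control these, two non-trivial ideas are required that your proposal elides. First, a diagram can simultaneously contain ``removable'' contractions (two leaf-edges fused at both endpoints, yielding $(\mPsi\mPsi^\top)_{aa}=1$) and dangerous off-diagonal structure; one cannot just split the diagram set into ``tree-like'' and ``other,'' but must iteratively peel off removable contractions first — this is the content of the decomposition step (\propref{decomposition}), which rewrites every relevant configuration as a signed linear combination of \emph{simple} configurations with no removable pair. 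Second, even after this reduction, bounding a simple configuration requires identifying ``nullifying'' leaf pairs, applying Mobius inversion to extract the cancellation $\bigl|(\mPsi\mPsi^\top)_{ab}\bigr| \lesssim \dim^{-1/2+\epsilon}$ across distinct indices, and then proving a delicate combinatorial inequality ($\eta(F,\pi)\ge (|\mathscr R(F)|-|\mathscr R_0(F)|)/4$, \propref{universal-config-estimate}) built from a vertex/leaf classification and an injective map between deficient and compensating vertices. None of this follows from delocalization plus counting free sums; the proof of that inequality occupies \sref{proof-universal-config-estimate}.

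A secondary issue is the univariate claim \eqref{eq:univariate-result}. You propose to reduce it to ``the empirical distribution of $\iter{z}{t}_i$ is asymptotically $\gauss{0}{1}$,'' but that is precisely what the theorem is being used to establish, so the argument as phrased is circular. The paper instead uses a structural fact: when all but one $k_s$ vanish, every relevant configuration has exactly one non-trivial root, the decomposition preserves this property, hence every resulting simple configuration has a non-empty edge set, and by \propref{universality-universal-config} each such configuration contributes $0$ in the limit. That combinatorial observation, not a bootstrap from Gaussianity, is what forces the univariate limit to be $\delta_{k_t,0}$.
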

Note that \eqref{eq:comparison-expectation} follows directly from Theorem \ref{thm:SE-normalized} as the constant $c$ is independent of the matrix  ensemble $\mathbf{M}$ (as long as the assumptions of Theorem \ref{thm:SE-normalized} are satisfied). The reader might naturally wonder why we work with the Hermite polynomials instead of the usual monomials. This choice provides us a simple way to exploit the divergence-free assumption on the non-linearities $\nonlin_{1:T}$ (\assumpref{divergence-free}). Specifically, for any fixed $t\geq 0$, consider the expansion of the non-linearity $\nonlin_{t}$ in the \emph{orthonormal basis} of Hermite polynomials:
\begin{align*}
    \nonlin_{t}(z) & = \sum_{i=0}^D \vartheta_i \cdot \hermite{i}(z), \quad \vartheta_i \explain{def}{=} \E[\hermite{i}(Z) \nonlin_t(Z)], \quad Z \sim \gauss{0}{1}.
\end{align*}
Under the assumptions of \thref{SE-normalized}, $\iter{\vz}{t} \explain{\pw}{\longrightarrow} \gauss{0}{1}$. Consequently, the divergence-free assumption is equivalent to the condition that the first Hermite coefficient $\vartheta_1 \explain{def}{=} \E[Z \nonlin_t(Z)]$ vanishes, which is easy to exploit in our combinatorial analysis. Another convenient by-product of using the Hermite basis is that it leads to a simple formula for the limit value in \eqref{eq:univariate-result} and streamlines our analysis. In random matrix theory, \citet{sodin2014several} has similarly observed that using orthogonal polynomials for the correct limiting distribution ensures that the resulting combinatorial analysis is better conditioned while implementing the moment method.  
\subsection{Proof ideas for \thref{SE-normalized}}
We start with a warm-up example to illustrate some of the ideas involved in the general proof. We study the empirical distribution of the single iterate $\iter{\vz}{2}$ using the test function $h(x)=x$ (the Hermite polynomial of degree $1$). Consider the simple situation where the non-linearities $f_1(x)= f_2(x) = x^2/\sqrt{3}$. Note that $f_1$ and $f_2$ are divergence free (cf. \assumpref{divergence-free}) since they are even functions. Furthermore, for $Z\sim \gauss{0}{1}$,  $\E[f_1^2(Z)] = \E[f_2^2(Z)] =1$. Our goal is to establish that 
\begin{align}\label{eq:simple-example}
   \lim_{\dim \rightarrow \infty} \E\left[ \frac{1}{N} \sum_{i=1}^{N} z_i^{(2)} \right] & = 0. 
\end{align}
\paragraph{Step 1: Unrolling.} Our first step is to ``unroll" the iterations and express the LHS of \eqref{eq:simple-example} as a polynomial in the initialization $\iter{\vz}{0}$. Specifically, using that $\iter{\vz}{2} = \mM \nonlin_2(\iter{\vz}{1})$, we have
\begin{align}
    \frac{1}{N} \sum_{i=1}^{N} z_i^{(2)} = \frac{1}{N} \sum_{i=1}^{N} \sum_{j=1}^{N} m_{ij} \frac{(z_i^{(1)})^2}{\sqrt{3}}. \nonumber 
\end{align}
The RHS of the above display is a polynomial in $\iter{\vz}{1}$. In order to obtain a polynomial of $\iter{\vz}{0}$, we further unroll $\iter{\vz}{1}$ using the fact that $\iter{\vz}{1} = \mM f_1(\iter{\vz}{0})$:
\begin{align}
    \frac{1}{N} \sum_{i=1}^{N} z_i^{(2)} & = \frac{1}{\sqrt{3}N} \sum_{i,j=1}^{N} m_{ij} \cdot \left( \sum_{j_1=1}^{N} m_{ij_1} \frac{(z_{j_1}^{(0)})^2}{\sqrt{3}} \right)^2 \nonumber \\
    &= \frac{1}{3\sqrt{3}N} \sum_{i,j,j_1,j_2=1}^{N} m_{ij} m_{i j_1} m_{i j_2} (z_{j_1}^{(0)})^2 (z_{j_2}^{(0)})^2. \label{eq:after-unrolling}
\end{align}
\paragraph{Step 2: Computing Expectations.} Next, we compute the expectation of  \eqref{eq:after-unrolling} with respect to the randomness in the initialization $\iter{\vz}{0} \sim \gauss{\vzero}{\mI_\dim}$ and the random signs $\mS = \diag(s_{1:\dim})$ used to generate $\mM = \mS \mPsi \mS$. Evaluating the expectation with respect to the $\iter{\vz}{0}$ variables (conditioned on $\mS$) and  using the fact that $\E Z^4 = 3, \E Z^2 =1$ for $Z \sim \gauss{0}{1}$, we obtain, 
\begin{align}
  \mathbb{E}\left[\frac{1}{N} \sum_{i=1}^{N} z_i^{(2)} \Bigg| \mS \right]& = \frac{1}{\sqrt{3}N} \sum_{i,j,j_1} m_{ij} m_{ij_1}^2 + \frac{1}{3 \sqrt{3} N} \sum_{i,j} \sum_{j_1 \neq j_2} m_{ij} m_{ij_1} m_{ij_2}. \nonumber \\
   &= \frac{1}{\sqrt{3}N}\sum_{i,j,j_1} s_i s_j \psi_{ij} \psi_{i,j_1}^2 + \frac{1}{3 \sqrt{3} N} \sum_{i,j} \sum_{j_1 \neq j_2}  s_i s_j s_{j_1} s_{j_2} \psi_{ij} \psi_{ij_1} \psi_{ij_2}. \nonumber
\end{align}
We now evaluate the expectation with respect to the $\mS$.  Starting with the first term, we observe that $\E[s_i s_j] = 0$ unless $i=j$, in which case $\E[s_i^2] = 1$. Similarly, for the second term, $\E[ s_i s_j s_{j_1} s_{j_2}] = 0$ unless $i=j_1$, $j=j_2$ or $i=j_2$, $j=j_1$, in which case $\E[ s_{j_1}^2 s_{j_2}^2] = 1$. Hence, 
\begin{align}
     \mathbb{E}\left[\frac{1}{N} \sum_{i=1}^{N} z_i^{(2)} \right] & = \frac{1}{\sqrt{3}N}\sum_{i,j}  \psi_{ii} \psi_{ij}^2 + \frac{2}{3 \sqrt{3} N}  \sum_{i\neq j}  \psi_{ii} \psi_{ij}^2 \explain{def}{=} \invpoly(\mPsi). \label{eq:warmup-after-E}
\end{align}
\paragraph{Step 3: Combinatorial Estimates.} Observe that at the end of Step 2, we have expressed the empirical first Hermite moment of $\iter{\vz}{2}$ as a polynomial $\invpoly(\mPsi)$. In the next step, we leverage combinatorial estimates to argue that for any $\mPsi$ that satisfies the assumptions of \thref{SE-normalized}, $\lim_{\dim \rightarrow \infty} \invpoly(\mPsi)$ exists and is universal (that is, identical for any $\mPsi$). In the specific case of \eqref{eq:warmup-after-E}, this step is particularly easy. Indeed by the triangle inequality and the fact that $\|\mPsi\|_{\infty} \lesssim \dim^{-1/2 + \epsilon}$, we have 
\begin{align}
   \left| \mathbb{E}\left[\frac{1}{N} \sum_{i=1}^{N} z_i^{(2)} \right] \right|= \frac{1}{\sqrt{3}N}\sum_{i,j}  |\psi_{ii}| |\psi_{ij}|^2 + \frac{2}{3 \sqrt{3} N}  \sum_{i\neq j}  |\psi_{ii}| |\psi_{ij}|^2 \lesssim \frac{1}{N} \cdot N^2 \cdot  N^{-3/2 + 3 \epsilon}  = N^{-1/2 + 3 \epsilon},  \label{eq:crude-bound-warmup}
\end{align}
which converges to zero for $\epsilon>0$ small enough and thus proves \eqref{eq:simple-example}. The example above illustrates \emph{some} of the key ideas involved in the proof of \thref{SE-normalized}. Unfortunately, this simple case does not capture \emph{all} the intricacies involved in the general proof. In the general case, we can still execute Step 1 and Step 2 to express the empirical mixed Hermite moments of the iterates $\iter{\vz}{0:T}$ (cf. \eqref{eq:multivariate-result}) as a polynomial in $\mPsi$. However, the resulting polynomials are more complicated and the crude bound in \eqref{eq:crude-bound-warmup} based on triangle inequality and $\|\mPsi\|_\infty \lesssim \dim^{-1/2+\epsilon}$ is no longer adequate (see Fig.~\ref{fig:nullifying-example} for a concrete counterexample).  Handling the general case requires the following two additional ideas.
\paragraph{Idea 1: Cancellations.} This idea leverages the property $\|\mPsi \mPsi \tran - \mI_{\dim} \|_\infty \lesssim \dim^{-1/2+\epsilon}$ to improve upon the crude bound in \eqref{eq:crude-bound-warmup}. In order to illustrate this technique, consider the polynomial $\invpoly_1(\mPsi)$ defined as:
    \begin{align*}
        \invpoly_1(\mPsi) \explain{def}{=} \frac{1}{\dim}\sum_{i_1 \neq i_2} \sum_{j,k,\ell}  \psi_{i_1 j} \psi_{i_1 k} \psi_{i_1 \ell} \psi_{i_2 j} \psi_{i_2 k} \psi_{i_2 \ell}.
    \end{align*}
    Observe that the naive estimate based on the triangle inequality and $\|\mPsi\|_\infty \lesssim \dim^{-1/2+\epsilon}$ yields:
    \begin{align*}
        |\invpoly_1(\mPsi)| \explain{(a)}{\leq} \frac{1}{\dim}\sum_{i_1 \neq i_2} \sum_{j,k,\ell}  |\psi_{i_1 j}| |\psi_{i_1 k}| |\psi_{i_1 \ell}|  |\psi_{i_2 j}| |\psi_{i_2 k}| |\psi_{i_2 \ell}| \lesssim \frac{1}{\dim} \cdot \dim^5 \cdot \dim^{-3 + 6\epsilon} = \dim^{1+6\epsilon}.
    \end{align*}
    Thus the above estimate fails to show the existence and universality of $\lim_{\dim \rightarrow \infty} \invpoly_1(\mPsi)$. The deficiency in the above estimate lies in the use of the triangle inequality in step (a). A better estimate is obtained by leveraging cancellations that arise from the constraint $\|\mPsi \mPsi \tran - \mI_{\dim} \|_\infty \lesssim \dim^{-1/2+\epsilon}$. Indeed we have,
    \begin{align*}
        |\invpoly_1(\mPsi)| & = \frac{1}{\dim} \left| \sum_{i_1 \neq i_2} \left( \sum_{j=1}^\dim \psi_{i_1 j} \psi_{i_2 j} \right) \cdot \left( \sum_{k=1}^\dim \psi_{i_1 k} \psi_{i_2 k} \right) \cdot \left( \sum_{\ell=1}^\dim \psi_{i_1 \ell} \psi_{i_2 \ell} \right) \right| \\
        & =  \frac{1}{\dim} \left| \sum_{i_1 \neq i_2} (\mPsi \mPsi\tran)_{i_1,i_2}^3 \right| \leq \frac{1}{\dim}  \sum_{i_1 \neq i_2} |(\mPsi \mPsi\tran)_{i_1,i_2}|^3  \lesssim \frac{1}{\dim} \cdot \dim^2 \cdot \dim^{-3/2 + 3 \epsilon} = \dim^{-1/2 + \epsilon},
    \end{align*}
    where step (b) follows from $\|\mPsi \mPsi \tran - \mI_{\dim} \|_\infty \lesssim \dim^{-1/2+\epsilon}$. Hence, this estimate shows that $\lim_{\dim \rightarrow \infty} \invpoly_1(\mPsi)$ exists and is universal for any $\mPsi$ that satisfies  $\|\mPsi \mPsi \tran - \mI_{\dim} \|_\infty \lesssim \dim^{-1/2+\epsilon}$.  
\paragraph{Idea 2: Simplifications.} Here, the idea is that some polynomials can be \emph{simplified} by leveraging the constraint $(\mPsi \mPsi\tran)_{11}= (\mPsi \mPsi\tran)_{22} = \dotsb = (\mPsi \mPsi\tran)_{\dim \dim} = 1$. By simplifying the polynomial before applying the cancellation technique described above, one extracts the maximum benefit out of the cancellations described above. This idea can be illustrated using the following polynomial:
    \begin{align*}
        \invpoly_2(\mPsi) \explain{def}{=} \frac{1}{\dim}\sum_{i_1 \neq i_2} \sum_{j,k,\ell} \sum_{m_1,m_2,m_3}  \psi_{i_1 j} \psi_{i_1 k} \psi_{i_1 \ell} \psi_{i_2 j} \psi_{i_2 k} \psi_{i_2 \ell} \psi_{j m_1}^2 \psi_{k m_2}^2 \psi_{\ell m_3}^2.   
    \end{align*}
    This polynomial can be simplified by evaluating the sum over $m_1,m_2,m_3$ as follows:
    \begin{align*}
        \invpoly_2(\mPsi) &\explain{}{=} \frac{1}{\dim}\sum_{i_1 \neq i_2} \sum_{j,k,\ell}  \psi_{i_1 j} \psi_{i_1 k} \psi_{i_1 \ell} \psi_{i_2 j} \psi_{i_2 k} \psi_{i_2 \ell} \cdot \left( \sum_{m_1}\psi_{j m_1}^2 \right) \cdot \left( \sum_{m_2} \psi_{k m_2}^2 \right) \cdot \left( \sum_{m_3} \psi_{\ell m_3}^2 \right) \\
        & = \frac{1}{\dim}\sum_{i_1 \neq i_2} \sum_{j,k,\ell}  \psi_{i_1 j} \psi_{i_1 k} \psi_{i_1 \ell} \psi_{i_2 j} \psi_{i_2 k} \psi_{i_2 \ell} \cdot (\mPsi \mPsi \tran)_{jj} \cdot (\mPsi \mPsi \tran)_{kk} \cdot (\mPsi \mPsi \tran)_{\ell \ell} \\&\explain{(c)}{=} \frac{1}{\dim}\sum_{i_1 \neq i_2} \sum_{j,k,\ell}  \psi_{i_1 j} \psi_{i_1 k} \psi_{i_1 \ell} \psi_{i_2 j} \psi_{i_2 k} \psi_{i_2 \ell} = \invpoly_1(\mPsi),
    \end{align*}
    where step (c) uses the property $(\mPsi \mPsi\tran)_{11}= (\mPsi \mPsi\tran)_{22} = \dotsb = (\mPsi \mPsi\tran)_{\dim \dim} = 1$. The simplification above reveals that $\invpoly_2(\mPsi) = \invpoly_1(\mPsi)$ and hence $\lim_{\dim \rightarrow \infty} \invpoly_2(\mPsi)$ also exists and is universal. 
 
\subsection{Outline for the remaining paper} 
In light of the discussion in the previous section, the rest of the paper is organized as follows:
\begin{enumerate}
    \item Section \ref{sec:se-normalized_proof} collects the main technical ingredients required for the proof of \thref{SE-normalized}, and completes the proof, assuming these intermediate results. Specifically:
    \begin{enumerate}
        \item \lemref{unrolling_1} and \lemref{key-formula} expand the mixed Hermite moments in terms of the initialization $\iter{\vz}{0}$ (Step 1) and evaluate the expectation to identify the terms which have a non-zero contribution in the limit (Step 2). These lemmas are proved in Section \ref{sec:unrolling_proof}. Together, these results express the expected mixed Hermite moments of the iterates as a polynomial in the matrix $\mPsi$.
        \item Proposition \ref{prop:key-estimate} provides an estimate on the magnitude of a  class of simple polynomials. This result uses the idea of cancellations described previously (Idea 1) to improve on the naive estimates based on the triangle inequality. We prove Proposition \ref{prop:key-estimate} in Section \ref{sec:proof-key-estimate}.
        \item Before appealing to the improved estimate obtained in \propref{key-estimate}, we first simplify the  polynomials obtained by the expansion of the mixed moments (Idea 2) in \propref{decomposition}.  The proof of this result appears in \sref{proof-decomposition}.
        \item \propref{universal-config-estimate} leverages the improved estimate of \propref{key-estimate} to show that the behavior of the resulting simplified polynomials is universal. This result is proved in \sref{proof-universal-config-estimate}.
    \end{enumerate}
    \item Finally, Section \ref{proof_SE-final} explains how the main results of this paper (\thref{SE-alt} and \thref{SE-final}) follow from \thref{SE-normalized}. 
\end{enumerate}

\section{Proof of \thref{SE-normalized}}
\label{sec:se-normalized_proof} 

This section is devoted to the proof of \thref{SE-normalized}. We begin by introducing the key ingredients involved in the proof of this result. We defer the proof of these intermediate results to later sections. 

\subsection{Key Results}

\subsubsection{Unrolling the AMP Iterations.} 
We begin by expressing the observable of interest:
\begin{align} \label{eq:key-observable}
     \frac{1}{\dim} \sum_{i=1}^\dim \hermite{k_0}(\iter{z}{0}_i) \cdot \hermite{k_1}(\iter{z}{1}_i) \cdot \dotsb \cdot \hermite{k_T}(\iter{z}{T}_i) 
\end{align}
as a polynomial of the initialization $\iter{\vz}{0}$. This involves ``unrolling'' the AMP iterations to express each $\iter{\vz}{t}$ as a polynomial of $\iter{\vz}{0}$. The resulting polynomial takes the form of a combinatorial sum over colorings of decorated forests, which we introduce below.  

\begin{definition}[Decorated Trees and Forests] \label{def:decorated-forests} A decorated forest $F$ is given by a tuple $(V,E,\pweight{},\qweight{})$ where:
\begin{enumerate}
    \item $V = \{1, 2, 3, \dotsc, |V|\}$ is the set of vertices.
    \item $E$ is the set of directed edges.
\end{enumerate}
The sets $(V,E)$ are such that the directed graph given by $(V,E)$ is a directed forest. We define the following notions:
\begin{enumerate}
     \item If $u \rightarrow v \in E$, we say $u$ is the parent of $v$ and $v$ is a child of $u$. Each vertex in a directed forest has at most one parent. 
    \item A vertex with no parent is called a root vertex. The set of all root vertices in the forest $F$ is denoted by $\rootset(F)$. We number the roots as $1, 2, \dotsc, $. Hence, $\rootset(F) = \{1, 2, 3, \dotsc, |\rootset(F)|\}$.
    \item A root vertex with no children is called a trivial root. The set of all trivial roots in the forest is denoted by $\rootset_0(F)$.
    \item For every vertex $u$ we define $\chrn{u}(F)$ as the number of children of $u$.
    \item A non-root vertex with no children is called a leaf. The set of all leaves is denoted by  $\leaves{F}$. 
    \item A pair of non-root vertices $u, v \in V \backslash \rootset(F)$ are siblings if they have the same parent.
\end{enumerate}
The forest is decorated with 3 functions 
\begin{align*}\height{} :& V \rightarrow \W = \{0,1, 2, 3, \dotsc \}, \\ \pweight{} :& V \backslash \rootset(F) \rightarrow \N = \{1, 2, 3, \dotsc \}, \\ \qweight{} :& V \rightarrow \W = \{0, 1, 2, 3, \dotsc \},
\end{align*}
such that:
\begin{enumerate}
    \item  The height function $\height{}$ has the following properties:
    \begin{enumerate}
        \item for any $u \rightarrow v \in E$, $\height{v} = \height{u} - 1$. 
        \item If $\height{u} = 0$, then $u$ has no children $(\chrn{u}(F)=0)$. 
        \item For every vertex $u$ with no children and $\qweight{u} \geq 1$, we have $\height{u} = 0$.
    \end{enumerate}
    \item The function $\qweight{}$ satisfies: for any vertex $u \in V \backslash (\leaves{F} \cup \rootset_0(F))$, we have,
    \begin{align}\label{eq:conservation-eq}
        \qweight{u} = \sum_{v\in V: u \rightarrow v } \pweight{v}.
    \end{align}
\end{enumerate}
If a decorated forest has exactly one root vertex, it is called a decorated tree. See \fref{decorated-tree} for an example of a small decorated tree. 
\end{definition}
\begin{figure}
\centering
\includegraphics[width=0.6\textwidth]{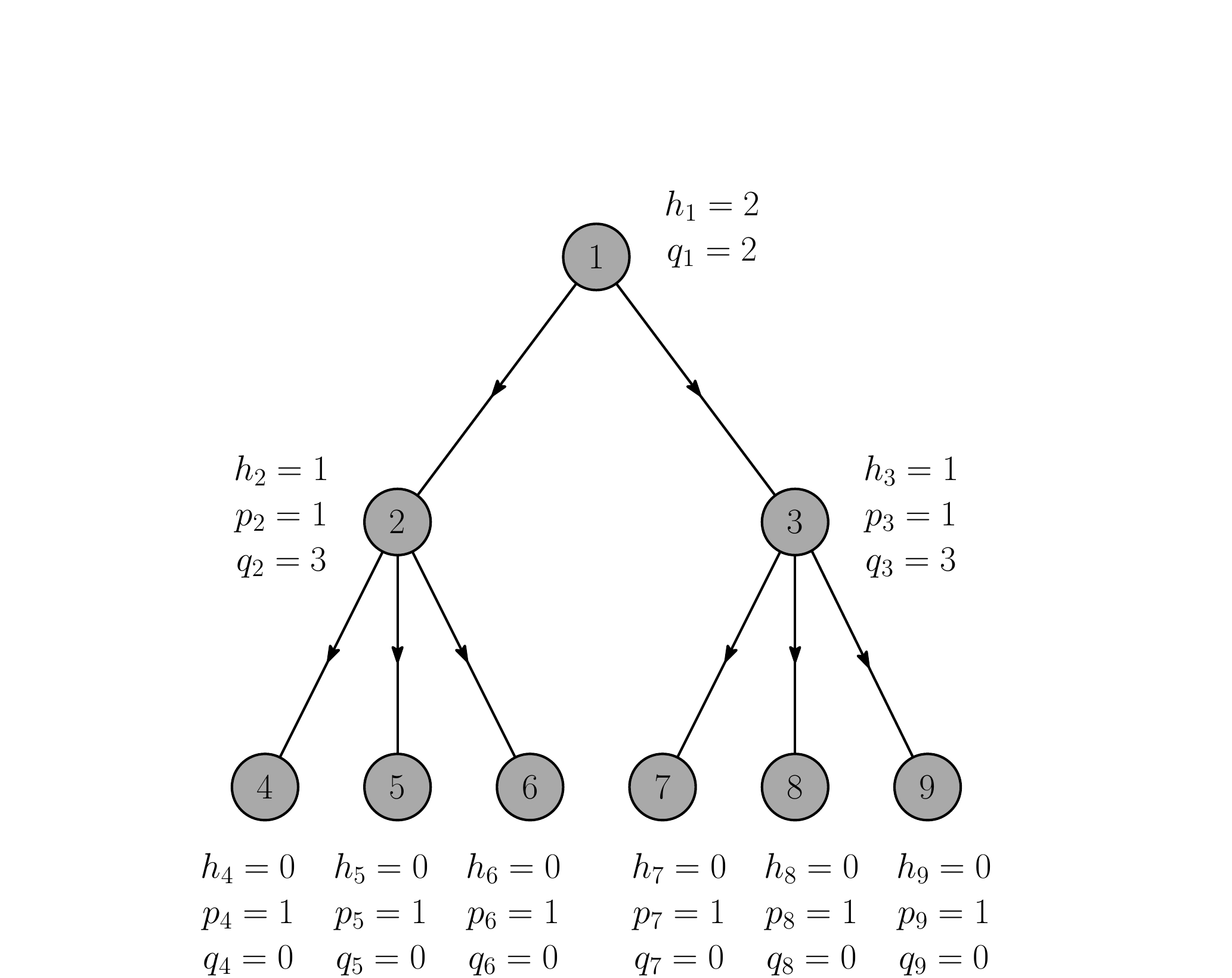}
\caption{An example of a decorated forest (\defref{decorated-forests}) with one root (decorated tree).}  
\label{fig:decorated-tree}
\end{figure}

Next, we introduce the notion of a coloring of a decorated forest. 

\begin{definition}[Coloring of Decorated Trees and Forests] \label{def:coloring}
A coloring of a decorated forest $F$ with vertex set $V$ is a map $\ell : V \rightarrow [\dim]$. The set of all colorings of a forest $F$ with vertex set $V$ is denoted by $[\dim]^{V}$.
\end{definition}

The colored decorated trees that appear in the polynomial expansion of the observable \eqref{eq:key-observable} have additional constraints, which we collect in the definition below. 

\begin{definition}[Valid Decorated Colorings of Decorated Forests] \label{def:valid-tree}  A decorated forest $F = (V, E, \height{}, \pweight{}, \qweight{})$  and a coloring $\ell : V \rightarrow [\dim]$ are valid if:
\begin{enumerate}
    \item For vertices $u, v$ that are siblings in the forest, we have $\ell_u \neq \ell_v$.
    \item $\ell_u = \ell_v$ for every $u, v \in \rootset(F)$.
\end{enumerate}
We denote valid decorated colored forests by defining the indicator function $\valid$ such that $\valid(F,\ell) = 1$ iff $(F,\ell)$ is a valid colored decorated forest and $\valid(F,\ell) = 0$ otherwise. See \fref{colored-tree} for an example of a valid coloring of the decorated tree from \fref{decorated-tree}.
\end{definition}
\begin{figure}
\centering
\includegraphics[width=0.5\textwidth]{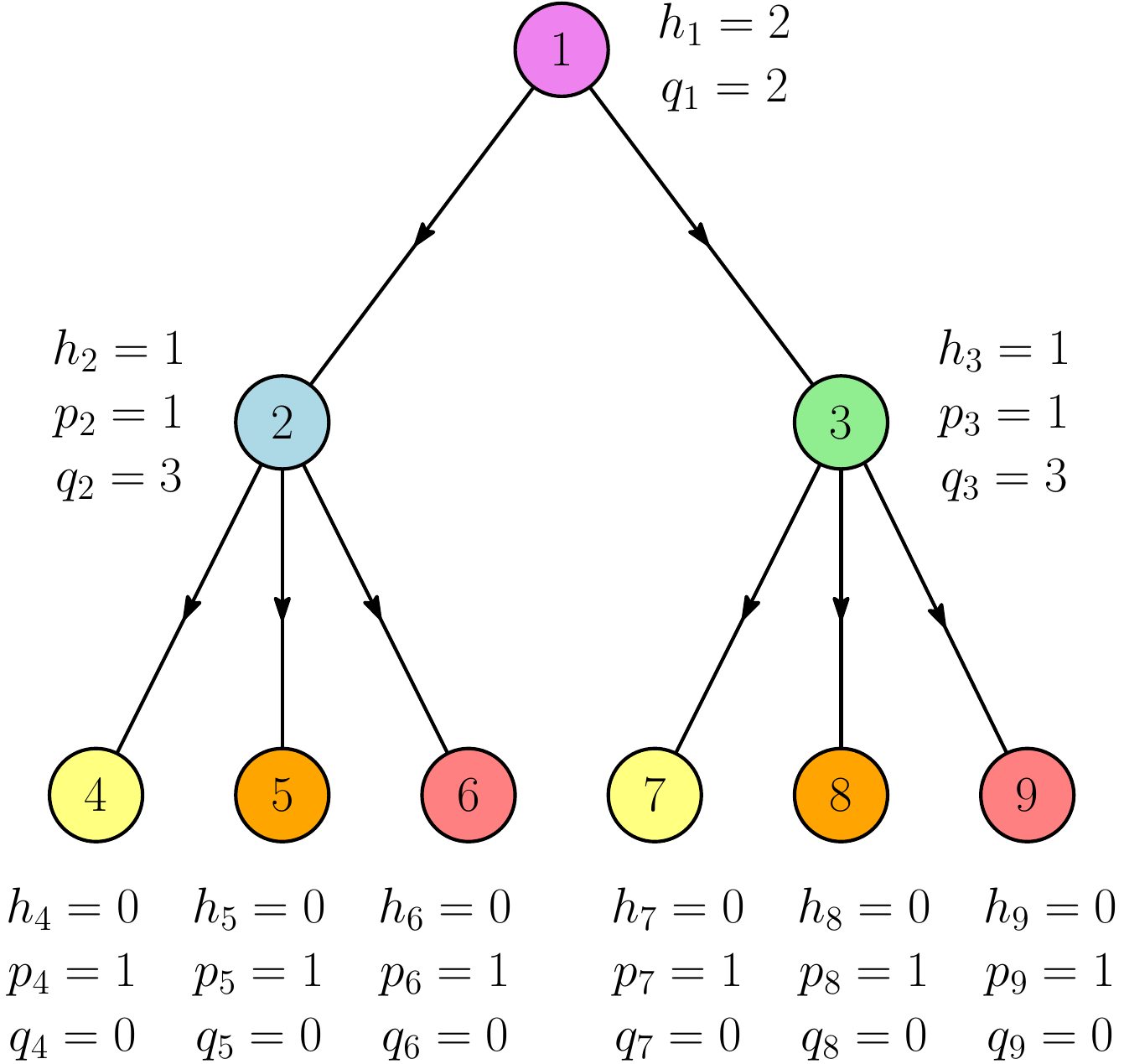}
\caption{An example of a valid coloring (\defref{valid-tree}) of the decorated tree (\defref{decorated-forests}) from \fref{decorated-tree}.}  
\label{fig:colored-tree}
\end{figure}

Armed with these  definitions, we can express the observable \eqref{eq:key-observable} as a polynomial in the initialization $\iter{\vz}{0}$. 

\begin{lemma}
\label{lem:unrolling_1} 
For any $T \in \W$ and $k_0, k_1, \dotsc, k_T \in \N_0$, we have:
\begin{subequations}\label{eq:polynomial-expansion-near-final}
\begin{align} 
     &\frac{1}{\dim} \sum_{i=1}^\dim \hermite{k_0}(\iter{z}{0}_i) \cdot \hermite{k_1}(\iter{z}{1}_i) \cdot \dotsb \cdot \hermite{k_T}(\iter{z}{T}_i)  = \nonumber\\ &  \frac{1}{\dim}\sum_{\substack{F \in \forests{T}{k_0, k_1, \dotsc, k_T}\\ F = (V,E, \height{}, \pweight{}, \qweight{})}} \sum_{\substack{\ell \in [\dim]^V}} \alpha(F) \cdot \valid(F,\ell) \cdot \left( \prod_{u \rightarrow v} m_{\ell_u \ell_v}^{\pweight{v}}\right) \cdot \left(\prod_{\substack{w \in \leaves{F} \cup \rootset_0(F)\\ \qweight{w} \geq 1}} \hermite{\qweight{w}}(\iter{z}{0}_{\ell_w}) \right). \label{eq:polynomial-expansion-near-final-main}
\end{align}
In the above display $\mM = (m_{ij})$, and for $F = (V, E, \height{}, \pweight{}, \qweight{})$ we define the coefficient:
\begin{align}
    \alpha(F) &\bydef \left( \prod_{\substack{u \in V \backslash (\leaves{F} \cup \rootset_0(F))}} \frac{\sqrt{\qweight{u}!}}{\chrn{u}(F)!} \cdot \prod_{v: u \rightarrow v} \frac{1}{\sqrt{\pweight{v}!}} \right) \cdot \left( \prod_{w \in V \backslash \rootset(F)} \fourier{\height{w}+1}{\pweight{w}}{\qweight{w}}\right), \\
        \fourier{t}{p}{q} &\explain{def}{=} \E[ \hermite{p} \circ \nonlin_t(Z) \cdot  \hermite{q}(Z)], \; Z \sim \gauss{0}{1}. \label{eq:alpha_defn} 
\end{align}
Furthermore, $\forests{T}{k_0, k_1, \dotsc, k_T}$ denotes the set of all decorated forests with $T+1$ roots, $\rootset(F) = \{1, 2, \dotsc, T+1\}$ and {$\height{1} = 0, \height{2} = 1, \dotsc \height{T+1} = T$} and $\qweight{1} = k_0, \qweight{2} = k_1, \dotsc, \qweight{T+1} = k_{T}$.
\end{subequations}
\end{lemma}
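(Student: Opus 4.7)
The plan is to unroll the recursion one step at a time, using two elementary polynomial identities. The first is a Hermite multinomial identity, which I would derive by substituting $y = \sum_j a_j Y_j$ into the generating function $\sum_k \hermite{k}(y) t^k/\sqrt{k!} = e^{yt - t^2/2}$ and, whenever $\sum_j a_j^2 = 1$, factoring $e^{-t^2/2} = \prod_j e^{-a_j^2 t^2/2}$: as a polynomial identity in $Y_1,\dotsc,Y_\dim$,
\begin{align*}
\hermite{k}\Big(\sum_{j=1}^\dim a_j Y_j\Big) \;=\; \sum_{\substack{p_1,\dotsc,p_\dim \geq 0 \\ p_1+\cdots+p_\dim = k}} \frac{\sqrt{k!}}{\prod_j \sqrt{p_j!}} \Big(\prod_{j=1}^\dim a_j^{p_j}\Big) \prod_{j=1}^\dim \hermite{p_j}(Y_j).
\end{align*}
Under \assumpref{normalization}, $(\mM \mM\tran)_{ii} = (\mPsi \mPsi\tran)_{ii} = 1$, so $\sum_j m_{ij}^2 = 1$ and the identity applies with $a_j = m_{ij}$ and $Y_j = \nonlin_{t+1}(\iter{z}{t}_j)$. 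The second identity, $\hermite{p}(\nonlin_t(z)) = \sum_{q \geq 0} \fourier{t}{p}{q} \hermite{q}(z)$, is a finite expansion (since $\nonlin_t$ is a polynomial) and is just the definition of $\fourier{t}{p}{q}$ in \eqref{eq:alpha_defn}.

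I would then prove by induction on $t$ the following \emph{single-tree expansion}: for each $t \in \{0,1,\dotsc,T\}$, $i \in [\dim]$, and $k \in \W$,
\begin{align*}
\hermite{k}(\iter{z}{t}_i) \;=\; \sum_{\substack{\tau \in \trees{t}{k} \\ \tau = (V,E,\height{},\pweight{},\qweight{})}} \sum_{\substack{\ell \in [\dim]^V \\ \ell_{\mathrm{root}(\tau)} = i}} \alpha(\tau)\,\valid(\tau,\ell) \Big(\prod_{u\to v} m_{\ell_u\ell_v}^{\pweight{v}}\Big) \Big(\prod_{\substack{w \in \leaves{\tau} \cup \rootset_0(\tau) \\ \qweight{w}\geq 1}} \hermite{\qweight{w}}(\iter{z}{0}_{\ell_w})\Big),
\end{align*}
where $\trees{t}{k}$ denotes single-root decorated trees whose root has height $t$ and weight $k$. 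The base case $t = 0$ is immediate, since $\trees{0}{k}$ contains only the single-vertex trivial-root tree. For the inductive step, I substitute $\iter{z}{t+1}_i = \sum_j m_{ij}\nonlin_{t+1}(\iter{z}{t}_j)$ into $\hermite{k}(\iter{z}{t+1}_i)$, apply the multinomial identity, expand each $\hermite{p_j}(\nonlin_{t+1}(\iter{z}{t}_j))$ via $\fourier{t+1}{p_j}{q_j}$, and invoke the induction hypothesis on each $\hermite{q_j}(\iter{z}{t}_j)$. Attaching each resulting subtree to a fresh root at height $t+1$ by an edge of weight $p_j$ converts the old root of the subtree (at height $t$) into a non-root child; the extra $\fourier{t+1}{p_j}{q_j}$ factor placed on this child is exactly the one demanded by the second product in $\alpha$ (which ranges over non-root vertices).

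The key bookkeeping step is reconciling the \emph{unordered} index set $J = \{j: p_j \geq 1\}$ in the multinomial identity with the \emph{ordered} child vertices of the tree formulation: since children of a vertex are distinguishable labeled members of $V = \{1,\dotsc,|V|\}$, each unordered $J$ of size $r$ corresponds to $r!$ distinct trees, so the coefficient $\sqrt{k!}/\prod_{j\in J}\sqrt{p_j!}$ in the multinomial must be divided by $r! = \chrn{u}(\tau)!$ (with $u$ the root), reproducing the factor $\sqrt{\qweight{u}!}/\big(\chrn{u}(\tau)!\prod_{v:u\to v}\sqrt{\pweight{v}!}\big)$ in $\alpha(\tau)$; the restriction $p_j \geq 1$ corresponds to ``only genuine children appear,'' and the distinctness of the $j$'s matches the sibling constraint of $\valid(\tau,\ell)$. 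Finally, to obtain the forest formula for $\dim^{-1}\sum_i\prod_{t=0}^T \hermite{k_t}(\iter{z}{t}_i)$, I multiply the $T+1$ single-tree expansions (all with common root color $i$), sum over $i \in [\dim]$, and observe that a product of trees rooted at a common color is exactly a forest $F \in \forests{T}{k_0,\dotsc,k_T}$ whose coloring satisfies the second validity clause in \defref{valid-tree}, with $\alpha(F) = \prod_t \alpha(\tau_t)$ factoring vertex-by-vertex. I do not expect a deep obstacle: the work is entirely in the combinatorial matching of the multinomial/Hermite coefficients with $\alpha(\tau)$.
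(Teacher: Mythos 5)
Your proposal is correct and follows essentially the same route as the paper: the Hermite multinomial identity you derive from the generating function is exactly the paper's \factref{hermite-property} (note $\sqrt{\binom{q}{\va}} = \sqrt{q!}/\prod_j\sqrt{a_j!}$), the single-tree induction on $t$ matches the paper's recursive unrolling culminating in \eqref{eq:polynomial-expansion-preliminary}, and the "divide by $\chrn{u}!$" bookkeeping that converts unordered supports $J$ into labeled children with distinct colors is precisely the step the paper performs when passing from the ordered sum $\ell_2 < \dotsb < \ell_{\chrn{1}+1}$ to the unordered sum $\ell_2 \neq \dotsb \neq \ell_{\chrn{1}+1}$ with an extra $1/\chrn{1}!$. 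The final multiplication of $T+1$ single-tree expansions into a forest, with the common-root-color constraint encoding the second validity clause, is also the paper's argument, so no gap.
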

\noindent 
We defer the proof of Lemma \ref{lem:unrolling_1} to \sref{proof-unrolling-formula}. 
\subsubsection{Partitions on a Decorated Tree}
Our next step is to evaluate the expectation of the formula \eqref{eq:polynomial-expansion-near-final-main} with respect to the randomness in the initialization $\iter{\vz}{0} \sim \gauss{\vzero}{\mI_{\dim}}$ and the uniformly random signed diagonal matrix $\mS$ used to generate the matrix $\mM = \mS \mPsi \mS$. Observe that in order to evaluate the expectation of the RHS of \eqref{eq:polynomial-expansion-near-final-main}, the repetition pattern of the coloring $\ell \in [\dim]^V$ is important---for two vertices $u,v$ that have the same color $\ell_{u} = \ell_{v}$, the corresponding Gaussian and sign variables are identical $\iter{z}{0}_{\ell_u} = \iter{z}{0}_{\ell_v}, s_{\ell_u} = s_{\ell_v}$. On the other hand, for vertices $u, v$ with different colors $\ell_{u} \neq \ell_{v}$,   the corresponding Gaussian and sign variables are independent. The repetition pattern of a coloring in $\ell \in [\dim]^V$ can be encoded by a partition of the vertex set $V$. This motivates the following definitions.

\begin{definition}[Partitions and Configurations] Given a decorated forest $F=(V, E, \height{}, \pweight{}, \qweight{})$, a partition $\pi$ of the vertex set $V$ is a collection of disjoint subsets (called blocks) $\{B_1,B_2, \dotsc ,B_{s}\}$ such that
\begin{align*}
    \bigcup_{j=1}^s B_j = V, \; B_j \cap B_k = \emptyset \; \forall \; j \neq k.
\end{align*}
We define $|\pi|$ to be the number of blocks in $\pi$. For every $v \in V$, we use $\pi(v)$ to denote the unique block $j \in   [|\pi|]$ such that $v \in B_j$. The set of all partitions of $V$ is denoted by $\part{V}$. A configuration is a pair $(F,\pi)$ consisting of a decorated forest $F$ and a partition $\pi$ of its vertices. 
\end{definition}

\begin{definition}[Colorings consistent with a partition] Let $\pi$ be a partition of the vertex set $V$ of a decorated forest $F=(V, E, \height{}, \pweight{}, \qweight{})$. A coloring consistent with $\pi$ is a function $\ell: V \rightarrow [\dim]$ such that, 
\begin{align*}
    \ell_u = \ell_v & \Leftrightarrow \pi(u) = \pi(v).
\end{align*}
The set of all colorings that are consistent with a partition $\pi$ is denoted by $\colorings{}{\pi}$.
\end{definition}
Note that whether a pair $(F,\ell)$ is valid colored forest or not depends only on the $(F,\pi)$ where $\pi$ is a partition corresponding to $\ell$. Hence we introduce the following definition.

\begin{definition}[Valid Configurations] A decorated forest $F = (V, E, \height{}, \pweight{}, \qweight{})$  and a partition $\pi \in \part{V}$ form a valid configuration if:
\begin{enumerate}
    \item For vertices $u, v$ that are siblings in the forest, we have $\pi(u) \neq \pi(v)$.
    \item For every $u,v \in \rootset(F)$, $\pi(u) = \pi(v)$.
\end{enumerate}
We denote valid configurations by defining the indicator function $\valid$ such that $\valid(F,\pi) = 1$ iff $(F,\pi)$ is a valid configuration and $\valid(F,\pi) = 0$ otherwise.
\end{definition}
Observe that as a consequence of the above definitions, \eqref{eq:polynomial-expansion-near-final-main} can be rearranged to the following form:
\begin{align} \label{eq:rearrangement}
    &\frac{1}{\dim} \sum_{\ell=1}^\dim \hermite{k_0}(\iter{z}{0}_i) \cdot \hermite{k_1}(\iter{z}{1}_i) \cdot \dotsb \cdot \hermite{k_T}(\iter{z}{T}_i)  = \nonumber\\ &  \frac{1}{\dim}\sum_{\substack{F \in \forests{T}{k_0, k_1, \dotsc, k_T}\\ F = (V,E, \height{}, \pweight{}, \qweight{})}} \sum_{\pi \in \part{V}} \sum_{\substack{\ell \in \colorings{}{\pi}}} \alpha(F) \cdot \valid(F,\pi) \cdot \left( \prod_{u \rightarrow v} m_{\ell_u \ell_v}^{\pweight{v}}\right) \cdot \left(\prod_{\substack{w \in \leaves{F} \cup \rootset_0(F)\\ \qweight{w} \geq 1}} \hermite{\qweight{w}}(\iter{z}{0}_{\ell_w}) \right).
\end{align}
\subsubsection{The Expectation Formula}
The expansion in \eqref{eq:rearrangement} facilitates the evaluation of the expectation of the polynomial expansion 
 \eqref{eq:polynomial-expansion-near-final-main} obtained in \lemref{unrolling_1}. The following lemma identifies the expected value, and identifies the configurations $(F,\pi)$ which have a non-zero contribution to the expectation.

\begin{lemma}[The Expectation Formula]  \label{lem:key-formula}For any $T \in \W$ and any $k_0, k_2, k_3, \dotsc, k_T \in \W$, we have,
\begin{subequations}\label{eq:key-formula}
\begin{align} 
  &\E\left[\frac{1}{\dim} \sum_{\ell=1}^\dim \hermite{k_0}(\iter{z}{0}_i) \cdot \hermite{k_1}(\iter{z}{1}_i) \cdot \dotsb \cdot \hermite{k_T}(\iter{z}{T}_i) \right]    = \nonumber \\&\hspace{2cm} \sum_{\substack{F \in \forests{T}{k_0, k_1, \dotsc, k_T}\\ F = (V,E, \height{}, \pweight{}, \qweight{})}} \sum_{\pi \in \part{V}} \sum_{\substack{\ell \in \colorings{}{\pi}}} \alpha(F) \cdot \valid(F,\pi) \cdot \beta_1(F,\pi) \cdot \beta_2(F,\pi) \cdot \gamma(\mPsi; F,\ell).
\end{align}
For a decorated forest $F =  (V,E, \height{}, \pweight{}, \qweight{})$ and a partition $\pi = \{B_1, B_2, \dotsc, B_{\pi}\}$ recall $\alpha(F)$ from \eqref{eq:alpha_defn}. Further, we define:
\begin{align}
    \beta_1(F,\pi) &\explain{def}{=} \prod_{j=1}^{|\pi|} \E \left[ \prod_{\substack{v \in (\leaves{F} \cup \rootset_0(F)) \cap B_j\\ \qweight{v} \geq 1}} \hermite{\qweight{v}}(Z) \right], \\
    \beta_2(F,\pi) & \explain{def}{=}   \prod_{j=1}^{|\pi|} \E\left[ \left( \prod_{u \in  (B_j \cap \rootset(F))\backslash \rootset_0(F)} S^{\qweight{u}} \right)  \cdot \left( \prod_{v \in B_j \cap \leaves{F}} S^{\pweight{v}} \right) \cdot \left( \prod_{\substack{u \in B_j \backslash (\leaves{F} \cup \rootset(F))}} S^{\pweight{u} + \qweight{u}} \right)\right], \\
    \gamma(\mPsi; F,\ell) & \explain{def}{=}  \frac{1}{\dim} \prod_{\substack{e \in E \\ e = u \rightarrow v}} \psi_{\ell_u\ell_v}^{\pweight{v}},
\end{align}
where $Z \sim \gauss{0}{1}$ and $S \sim \unif{\{\pm 1\}}$. Furthermore, $\alpha(F) \cdot \valid(F,\pi) \cdot \beta_1(F,\pi) \cdot \beta_2(F,\pi) = 0$, unless $(F,\pi)$ form a relevant configuration, defined below. 
\end{subequations}
\end{lemma}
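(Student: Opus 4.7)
\textbf{Proof plan for \lemref{key-formula}.} The plan is to take the expectation of the expansion \eqref{eq:rearrangement} using the independence of the Gaussian initialization $\iter{\vz}{0}$ and the sign diagonal $\mS$, and to factor the result block-by-block over the partition $\pi$. First, I would substitute $m_{ij} = s_i s_j \psi_{ij}$ into the edge product, splitting
\[
\prod_{u \to v \in E} m_{\ell_u \ell_v}^{\pweight{v}} = \Big(\prod_{u \to v \in E} s_{\ell_u}^{\pweight{v}} s_{\ell_v}^{\pweight{v}}\Big) \cdot \Big(\prod_{u \to v \in E} \psi_{\ell_u \ell_v}^{\pweight{v}}\Big),
\]
where the second factor contributes $\dim \cdot \gamma(\mPsi; F, \ell)$ to the target identity. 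The key observation is that any $\ell \in \colorings{}{\pi}$ is constant on each block of $\pi$, so the random variables $\{\iter{z}{0}_{\ell_v}\}_{v\in V}$ and $\{s_{\ell_v}\}_{v\in V}$ take only $|\pi|$ distinct values and are independent across distinct blocks. Hence both the Gaussian and the sign expectations factor as a product over the blocks $B_1, \dotsc, B_{|\pi|}$.

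For the Gaussian part, within a block $B_j$ only vertices $v \in (\leaves{F} \cup \rootset_0(F)) \cap B_j$ with $\qweight{v} \geq 1$ contribute a factor $\hermite{\qweight{v}}$ evaluated at the common Gaussian of that block, yielding $\beta_1(F, \pi)$. The heart of the argument is the sign expectation. For each vertex $v$, I would tally the exponent of $s_{\ell_v}$ produced by the edge product: the incoming edge from its parent (if any) contributes $\pweight{v}$, and the outgoing edges to its children contribute $\sum_{w : v \to w} \pweight{w}$. The conservation relation \eqref{eq:conservation-eq} identifies this latter sum with $\qweight{v}$ for every $v \in V \setminus (\leaves{F} \cup \rootset_0(F))$. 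A short case analysis then gives the per-vertex exponent: $\qweight{v}$ for non-trivial roots (no parent, has children), $\pweight{v}$ for leaves (has parent, no children), $\pweight{v} + \qweight{v}$ for internal non-root vertices (has parent, has children), and $0$ for trivial roots (no parent, no children). Summing these exponents over $v \in B_j$ and taking expectation with respect to one Bernoulli sign per block reproduces exactly $\beta_2(F, \pi)$.

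Combining the factored Gaussian and sign expectations with $\alpha(F)$, $\valid(F, \pi)$, and $\gamma(\mPsi; F, \ell)$ gives the claimed identity. The ``unless $(F, \pi)$ is relevant'' assertion is then an immediate consequence of the vanishing conditions of the individual factors: $\valid(F, \pi) = 0$ unless $\pi$ respects sibling distinctness and root coincidence; $\beta_2(F, \pi) = 0$ whenever any block carries an odd total sign exponent (since $\E[S^k] = 0$ for odd $k$ when $S \sim \unif{\{\pm 1\}}$); and $\beta_1(F, \pi) = 0$ under the corresponding Hermite-orthogonality parity constraints within a block. These three conditions together are what the forthcoming definition of ``relevant configuration'' will record. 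The only real technical care is in the bookkeeping at the degenerate vertices---trivial roots (absent from both edge-sums) and leaves (absent from the child-sum)---where a clean invocation of the conservation law \eqref{eq:conservation-eq} is what makes the per-vertex exponent formulas come out matching the exponents appearing in the statement of $\beta_2(F, \pi)$.
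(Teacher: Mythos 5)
Your proof follows the same strategy as the paper: substitute $m_{ij}=s_i\psi_{ij}s_j$ into the expansion \eqref{eq:rearrangement}, factor the Gaussian and sign expectations block-by-block over $\pi$, and use the conservation relation \eqref{eq:conservation-eq} to convert the outgoing-edge contribution $\sum_{w:\,v\to w}\pweight{w}$ into $\qweight{v}$. Your per-vertex exponent tally (0 for trivial roots, $\qweight{v}$ for non-trivial roots, $\pweight{v}$ for leaves, $\pweight{v}+\qweight{v}$ for internal non-root vertices) reproduces exactly the reorganization the paper carries out, and the identification of $\beta_1$, $\beta_2$, $\gamma$ is correct.

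However, your accounting of the vanishing conditions is incomplete, and this matters for the final claim of the lemma. You attribute vanishing to three factors --- $\valid(F,\pi)$, $\beta_1(F,\pi)$, $\beta_2(F,\pi)$ --- and conjecture that ``these three conditions together'' define a relevant configuration. But the coefficient $\alpha(F)$ can also vanish, independently of $\pi$, and this is what produces the Forbidden Weights Rule. Specifically, $\alpha(F)$ contains the factors $\fourier{\height{u}+1}{\pweight{u}}{\qweight{u}} = \E[\hermite{\pweight{u}}\circ\nonlin_{\height{u}+1}(Z)\cdot\hermite{\qweight{u}}(Z)]$. Under \assumpref{divergence-free} one has $\fourier{t}{1}{1}=\E[\nonlin_t(Z)\,Z]=0$, and under \assumpref{normalization} one has $\fourier{t}{2}{0}=\E[(\nonlin_t^2(Z)-1)/\sqrt{2}]=0$. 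Hence $\alpha(F)=0$ whenever some non-root vertex $u$ has $(\pweight{u},\qweight{u})=(1,1)$ or $(2,0)$; without this observation, the final assertion that the product vanishes ``unless $(F,\pi)$ is relevant'' cannot be justified, because the Forbidden Weights Rule is part of \defref{relevant-config}. Adding this case completes the argument.
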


\begin{definition}[Relevant Configuration] \label{def:relevant-config}A decorated forest $F = (V, E, \height{}, \pweight{}, \qweight{})$ and a partition $\pi = \{B_1, B_2, \dotsc, B_{|\pi|}\}$ form a relevant configuration if they satisfy the following properties:
\begin{description}
    \myitem{\texttt{Root Rule}}\label{root-rule}: For any two root vertices $u, v \in \rootset(F)$, we have $\pi(u) = \pi(v)$.
    \myitem{\texttt{Sibling Rule}}\label{sibling-rule}: For vertices $u, v \in V \backslash \rootset(F)$ that are siblings in the forest, we have $\pi(u) \neq \pi(v)$.
    \myitem{\texttt{Forbidden Weights Rule}}\label{forbidden-weights-rule}: There are no vertices $u \in V \backslash \rootset(F)$ such that $\pweight{u} = 1, \qweight{u} = 1$ or $\pweight{u} = 2, \qweight{u} = 0$.
    \myitem{\texttt{Leaf Rule}}\label{leaf-rule}: There are no leaf vertices $v \in \leaves{F}$ with $q_v \geq 1$ and $|B_{\pi(v)}| = 1$.
    \myitem{\texttt{Trivial Root Rule}}\label{trivial-root-rule}: There are no trivial roots $u \in \rootset_0({F})$ with $q_u \geq 1$ and $|B_{\pi(u)}| = 1$.
    \myitem{\texttt{Parity Rule}}\label{parity-rule}: There is no block $B$ in the partition $\pi$ such that the sum:
    \begin{align*}
       \left(\sum_{(B \cap \rootset(F)) \backslash \rootset_0(F)} \qweight{u}  \right)+ \left(\sum_{u \in B \backslash ( \leaves{F} \cup \rootset(F))} \pweight{u} + \qweight{u} \right) + \left( \sum_{v \in B \cap \leaves{F}} \pweight{v} \right) 
    \end{align*}
    has odd parity.
\end{description}
Notice that the configuration shown in \fref{colored-tree} satisfies all the requirements of a relevant configuration.
\end{definition}
\noindent 
We prove Lemma~\ref{lem:key-formula} in Section \ref{sec:unrolling_proof}. 
\subsubsection{Estimates on Polynomials Associated with a Configuration}
As a consequence of \lemref{key-formula}, one can see that the critical objects of interest are the polynomials $\invpoly(\mPsi; F, \pi)$ defined as follows:
\begin{align} \label{eq:inv-poly}
    \invpoly(\mPsi; F, \pi) \explain{def}{=} \sum_{\substack{\ell \in \colorings{}{\pi}}}  \gamma(\mPsi; F,\ell) = \sum_{\substack{\ell \in \colorings{}{\pi}}} \frac{1}{\dim} \prod_{\substack{e \in E \\ e = u \rightarrow v}} \psi_{\ell_u\ell_v}^{\pweight{v}}.
\end{align}
Indeed, in light of \lemref{key-formula}, we have,
\begin{align}
\label{eq:expectation_final}
    &\E\left[\frac{1}{\dim} \sum_{\ell=1}^\dim \hermite{k_0}(\iter{z}{0}_i) \cdot \hermite{k_1}(\iter{z}{1}_i) \cdot \dotsb \cdot \hermite{k_T}(\iter{z}{T}_i) \right]    \\ \nonumber \\&= \sum_{\substack{F \in \forests{T}{k_0, k_1, \dotsc, k_T}\\ F = (V,E, \height{}, \pweight{}, \qweight{})}} \sum_{\pi \in \part{V}}  \alpha(F) \cdot \valid(F,\pi) \cdot \beta_1(F,\pi) \cdot \beta_2(F,\pi) \cdot \invpoly(\mPsi; F,\pi).
\end{align}
An inspection of the proof of \lemref{unrolling_1} shows that because the non-linearities are assumed to be polynomials of bounded degree (independent of $\dim$), the number of configurations $(F,\pi)$ with a non-zero contribution to \eqref{eq:expectation_final} can be bounded by a finite constant independent of $\dim$. That is,
\begin{align*}
    | \{(F,\pi) : F = (V, E, \height{}, \pweight{}, \qweight{}) \in  \forests{T}{k_0, k_1, \dotsc, k_T}, \; \pi \in \part{V}, \;   \alpha(F)  \neq 0\}| \lesssim 1.
\end{align*}
Consequently, \thref{SE-normalized} follows if we show that for any relevant configuration (\defref{relevant-config}), $\lim_{\dim \rightarrow \infty} \invpoly(\mPsi; F,\pi)$ exists and is identical for any $\mPsi$ that satisfies the assumptions of \thref{SE-normalized}. A simpler preliminary task is to conclude that $\invpoly(\mPsi; F,\pi) \lesssim 1$. An initial naive estimate on $|\invpoly(\mPsi; F,\pi)|$ can be obtained as follows:
\begin{align} \label{eq:naive-estimate}
    |\invpoly(\mPsi; F,\pi)| & \explain{(a)}{\leq} \sum_{\substack{\ell \in \colorings{}{\pi}}}  |\gamma(\mPsi; F,\ell)| \leq |\colorings{}{\pi}| \cdot \dim^{-1} \cdot \max_{\ell \in \colorings{}{\pi}} \gamma(\mPsi; F,\ell)   \explain{(b)}{\leq} \dim^{|\pi|-1-\frac{1}{2} \sum_{v \in V \backslash \rootset(F)} \pweight{v} + \epsilon}.
\end{align}
In the above display, the step (a) follows from triangle inequality and (b) follows from the fact that $|\colorings{}{\pi}| \asymp \dim^{\pi}$ and $\|\mPsi\|_{\infty} \lesssim \dim^{-1/2 + \epsilon}$. However, for many relevant configurations the naive estimate in \eqref{eq:naive-estimate} is insufficient to obtain the conclusion that $|\invpoly(\mPsi; F,\pi)| \lesssim 1$. \fref{nullifying-example} presents a simple example to this end. 
\begin{figure}[ht]
\centering
\includegraphics[width=0.5\textwidth]{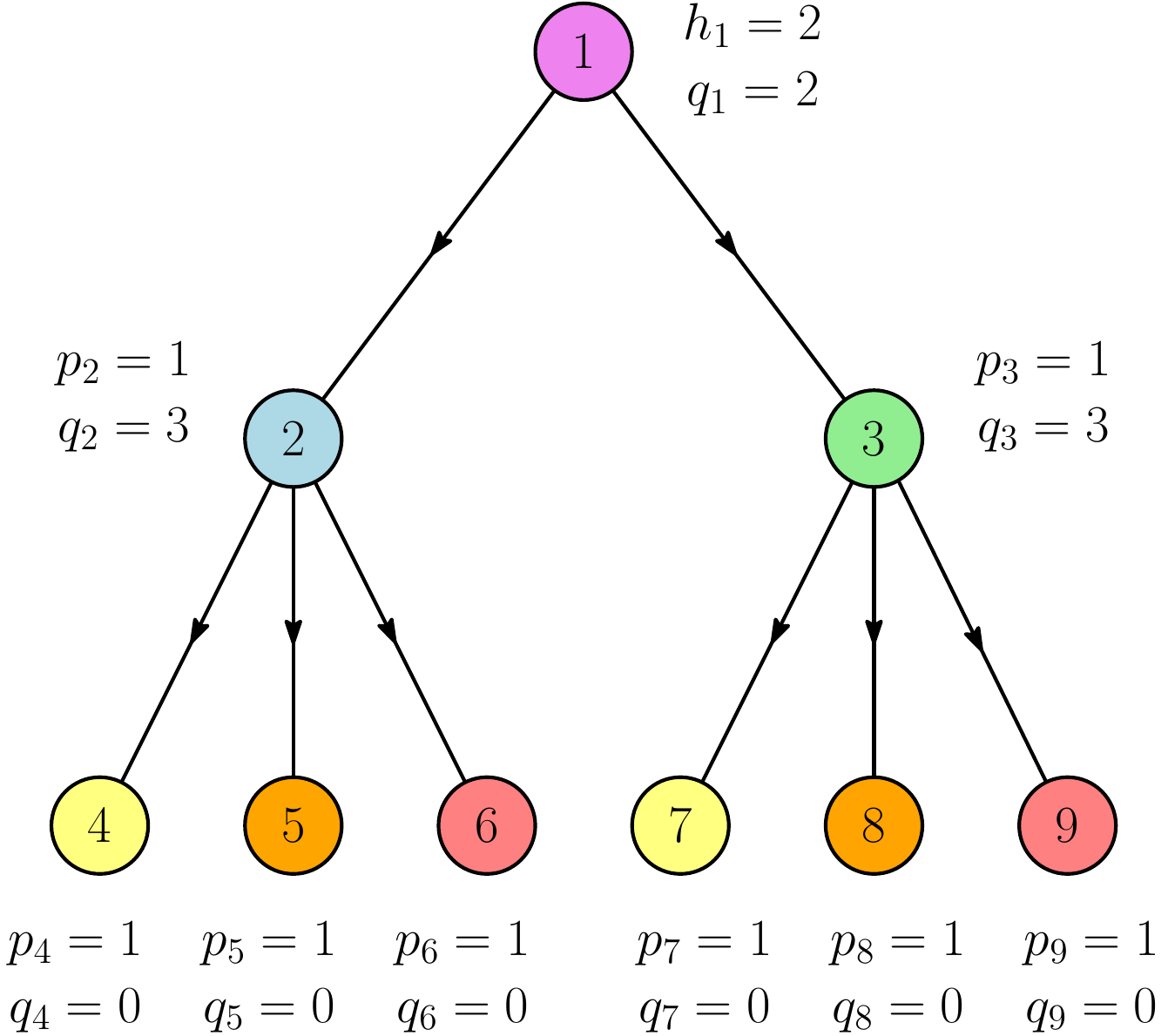}
\caption{A Relevant Configuration for which the naive estimate \eqref{eq:naive-estimate} fails. The colors of the vertices encode the partition $\pi$, vertices with the same color form a block in the partition $\pi$.}  
\label{fig:nullifying-example}
\end{figure}
For the relevant configuration in \fref{nullifying-example}, $|\pi| = 6$ and, $$\sum_{v \in V \backslash \rootset(F)} \pweight{v} = |V| - 1 = 8.$$
Consequently, the naive estimate \eqref{eq:naive-estimate} for this configuration yields the inadequate bound $|\invpoly(\mPsi; F,\pi)| \lesssim \dim^{1+\epsilon}$. The key deficiency of the naive estimate \eqref{eq:naive-estimate} is the use of the triangle inequality in step (a). Many decorated forests have certain structures  which  can be leveraged to improve the naive estimate \eqref{eq:naive-estimate}. We introduce here a special class of such structures, which we call nullifying leaves and edges.
\begin{definition}[Nullifying Leaves and Edges]\label{def:nullifying}A pair of edges $u \rightarrow v$ and $u^\prime \rightarrow v^\prime$ is a pair of nullifying edges for a configuration $(F,\pi)$ with $F = (V, E, \height{}, \pweight{}, \qweight{})$ and $\pi = \{B_1, B_2, \dotsc, B_{|\pi|}\}$ if:
\begin{enumerate}
    \item $v,v^\prime \in \leaves{F}$, $v \neq v^\prime$,
    \item $\pweight{v} = \pweight{v^\prime} = 1$,
    \item $B_{\pi(v)} = B_{\pi(v^\prime)} = \{v,v^\prime\}$, 
    \item $\pi(u) \neq \pi(u^\prime)$.
\end{enumerate}
In this situation, $v, v^\prime$ are referred to as a pair of nullifying leaves and the set of all nullifying leaves of a configuration $(T,\pi)$ is denoted by $\nullleaves{T,\pi}$. Note that $|\nullleaves{T,\pi}|$ is always even (since nullifying leaves occur in pairs) and the number of pairs of nullifying edges in a configuration is given by $|\nullleaves{T,\pi}|/2$.
\end{definition}
Note that in the presence of nullifying edges $u \rightarrow v, u^\prime \rightarrow v^\prime$, summing over the possible colors for $v,v^\prime$ in \eqref{eq:inv-poly} yields the expression:
\begin{align*}
    \Bigg|\sum_{\substack{\ell_v, \ell_{v^\prime} \in [\dim] \\ \ell_{v} = \ell_{v^\prime}}}  \psi_{\ell_u\ell_v} \psi_{\ell_{u^\prime} \ell_{v^\prime}} \Bigg|  = |(\mPsi \mPsi \tran)_{\ell_{u} \ell_{u^\prime}}| \explain{(c)}{\lesssim} \dim^{-1/2+\epsilon},
\end{align*}
where the estimate (c) follows from the assumption $\|\mPsi \mPsi \tran - \mI_{\dim} \|_{\infty} \lesssim \dim^{-1/2+\epsilon}$ and the fact that $\ell_{u} \neq \ell_{u^\prime}$ for a pair of nullifying edges. The above estimate improves upon the naive estimate obtained by the triangle inequality and the assumption $\|\mPsi\|_{\infty} \lesssim \dim^{-1/2+\epsilon}$:
\begin{align*}
    \Bigg|\sum_{\substack{\ell_v, \ell_{v^\prime} \in [\dim] \\ \ell_{v} = \ell_{v^\prime}}}  \psi_{\ell_u\ell_v} \psi_{\ell_{u^\prime} \ell_{v^\prime}} \Bigg| \leq \sum_{\substack{\ell_v, \ell_{v^\prime} \in [\dim] \\ \ell_{v} = \ell_{v^\prime}}} | \psi_{\ell_u\ell_v}| |\psi_{\ell_{u^\prime} \ell_{v^\prime}} | \lesssim \dim \cdot \dim^{-1 + \epsilon} \lesssim \dim^{\epsilon}.
\end{align*}
If a configuration $(F,\pi)$ has $|\nullleaves{F,\pi}|$ nullifying leaves (or $|\nullleaves{F,\pi}|/2$ pairs of nullifying edges), this intuition suggests that one can improve upon the naive estimate in \eqref{eq:naive-estimate} by a factor of $\dim^{|\nullleaves{F,\pi}|/4}$. The following proposition shows that this is indeed correct.
\begin{proposition} \label{prop:key-estimate} Consider a configuration $(F,\pi)$ with $F = (V, E, \height{}, \pweight{}, \qweight{})$ and $\pi = \{B_1, B_2, \dotsc, B_{|\pi|}\}$. For any ${\mPsi} \in \R^{\dim \times \dim}$ such that,
\begin{align*}
   \|{\mPsi}{}\|_\infty &\lesssim \dim^{-\frac{1}{2} + \epsilon}, \;    \|{\mPsi}{} {{\mPsi}{}}\tran- \mI_\dim\|_\infty  \lesssim \dim^{-\frac{1}{2} + \epsilon} \; \forall \; \epsilon > 0,
\end{align*}
we have,
\begin{align*}
   |\invpoly({\mPsi}{}; F, \pi)| \explain{def}{=}  \left|   \sum_{\ell \in \colorings{}{\pi}} \frac{1}{\dim} \prod_{\substack{e \in E \\ e = u \rightarrow v}} \psi_{\ell_u\ell_v}^{\pweight{v}}  \right| & \lesssim \dim^{-\exponent(F,\pi) + \epsilon} \; \forall \; \epsilon > 0,
\end{align*}
where,
\begin{align*}
    \exponent(F,\pi) \explain{def}{=}  \frac{|\nullleaves{F,\pi}|}{4} + 1 - |\pi| + \frac{1}{2} \sum_{v \in V \backslash \rootset(F)}  \pweight{v}.
\end{align*}
\end{proposition}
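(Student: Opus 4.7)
The plan is to factor the sum over $\colorings{}{\pi}$ by treating the nullifying leaf blocks separately. Since each nullifying block $B_{j_i} = \{v_i, v_i'\}$ has color $c_{j_i}$ appearing in exactly two $\psi$-factors, namely $\psi_{c_{\pi(u_i)}c_{j_i}}\psi_{c_{\pi(u_i')}c_{j_i}}$ (using that nullifying leaves are childless with $\pweight{v_i}=\pweight{v_i'}=1$), the identity $\sum_{c\in[\dim]}\psi_{ac}\psi_{bc} = (\mPsi\mPsi\tran)_{ab}$ collapses each pair into a single matrix entry. Crucially, the defining condition $\pi(u_i)\neq\pi(u_i')$ of a nullifying pair forces this entry to be off-diagonal, so the hypothesis $\|\mPsi\mPsi\tran - \mI_\dim\|_\infty \lesssim \dim^{-1/2+\epsilon}$ yields a bound of $\dim^{-1/2+\epsilon}$ per pair, an improvement of $\dim^{1/2}$ over the naive estimate that simply sums $\dim$ choices of the common color against a product of two independent entries of $\psi$.

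Concretely, I would first reindex the restricted coloring sum as a sum over distinct block colors $c_1,\ldots,c_{|\pi|}\in[\dim]$, then relax distinctness on the $m := |\nullleaves{F,\pi}|/2$ nullifying block colors $c_{j_1},\ldots,c_{j_m}$, and account for the discrepancy through inclusion--exclusion over forbidden coincidences $c_{j_i}=c_{j_{i'}}$ or $c_{j_i}=c_k$ (with a non-null block index $k$). In the main term the $c_{j_i}$-sums decouple---even when several nullifying pairs share a common parent vertex $u$, the color $c_{j_i}$ appears only in its own two factors---yielding the product $\prod_{i=1}^m (\mPsi\mPsi\tran)_{c_{\pi(u_i)}c_{\pi(u_i')}}$. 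The remaining non-null block colors are distinct, so each matrix entry has distinct indices and is bounded by $\dim^{-1/2+\epsilon}$; each of the $\sum_{v\in V\setminus\rootset(F)}p_v-|\nullleaves{F,\pi}|$ surviving non-null $\psi$-factors is $\dim^{-1/2+\epsilon}$; and there are at most $\dim^{|\pi|-m}$ non-null block colorings. Combining with the global $\dim^{-1}$ prefactor, an elementary arithmetic check produces exactly $\dim^{-\exponent(F,\pi)+\epsilon'}$.

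The main obstacle is controlling the correction terms from the inclusion--exclusion. A coincidence $c_{j_i}=c_{j_{i'}}$ merges two decoupled $(\mPsi\mPsi\tran)$-entries into a single quartic sum $\sum_A \psi_{a_1A}\psi_{a_2A}\psi_{a_3A}\psi_{a_4A}$; by Cauchy--Schwarz this is bounded by $\|\mPsi\|_\infty^2\sqrt{(\mPsi\mPsi\tran)_{a_1a_1}(\mPsi\mPsi\tran)_{a_2a_2}}\lesssim\dim^{-1+\epsilon}$, and combined with the loss of one free coloring this yields a correction that is $\dim^{-1/2+o(1)}$ smaller than the main term. A coincidence $c_{j_i}=c_k$ with a non-null block replaces one $(\mPsi\mPsi\tran)$-entry by a pair of individual $\psi$-entries, losing no free coloring while gaining a further factor of $\dim^{-1/2+\epsilon}$. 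The bookkeeping to verify that each term in the full inclusion--exclusion expansion is at least $\dim^{-1/2+o(1)}$ smaller than the main term---especially in configurations where several nullifying pairs share parent vertices---is the most delicate part of the argument, but proceeds via a direct accounting once one observes that each additional equality reduces the free summation dimension by one while reducing the matrix-entry scaling by at most $\dim^{1/2}$.
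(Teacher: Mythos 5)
Your approach is essentially the paper's: both isolate the nullifying-block colors, relax their mutual and null-vs-non-null distinctness constraints by inclusion--exclusion over partitions, apply the off-diagonal bound $\max_{i\neq j}|(\mPsi\mPsi\tran)_{ij}|\lesssim\dim^{-1/2+\epsilon}$ to the main (finest-partition) term, and control everything else with $\|\mPsi\|_\infty\lesssim\dim^{-1/2+\epsilon}$. The paper packages this as a single M\"obius-inversion lemma (\lemref{mobius}) that gives a \emph{uniform} bound of $k^{2k}\max(\sqrt{\dim}U_\infty,\min(\bar U,\dim U_\infty))^k$ for every term of the expansion, rather than itemizing the coincidence types as you do; otherwise the strategy, the splitting $\gamma = \widetilde\gamma\cdot\prod(\psi\psi)$, and the final arithmetic all agree.

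One concrete error in your accounting: a coincidence $c_{j_i}=c_{j_{i'}}$ is \emph{not} $\dim^{-1/2+o(1)}$ smaller than the main term. Your quartic bound $\bigl|\sum_A\psi_{a_1A}\psi_{b_1A}\psi_{a_2A}\psi_{b_2A}\bigr|\lesssim\dim^{-1+\epsilon}$ is right, but the corresponding two factors of the main term contribute $(\mPsi\mPsi\tran)_{a_1b_1}(\mPsi\mPsi\tran)_{a_2b_2}\lesssim\dim^{-1+2\epsilon}$, i.e.\ the \emph{same} order. There is no ``loss of one free coloring'' to gain from: once you organize the argument as you do, the nullifying-block color is summed \emph{inside} the $(\mPsi\mPsi\tran)$-entry (or the quartic sum), so it is not a free factor of $\dim$ to be counted. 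The same-order conclusion is visible in the paper's uniform bound, where both $\bar U$ and $\sqrt{\dim}U_\infty$ scale like $\dim^{-1/2+O(\epsilon)}$. This does not break your proof --- there are $O(1)$ inclusion--exclusion terms and each is bounded by $\dim^{-\exponent(F,\pi)+O(\epsilon)}$, which is all that is claimed --- but the ``strictly smaller correction'' justification you flag as the delicate step is not the right explanation; the correct statement is simply that every term, main or correction, obeys the same $\dim^{-\exponent+O(\epsilon)}$ bound.
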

We prove this result in 
\sref{proof-key-estimate}. 
Returning to the example in \fref{nullifying-example}, the configuration depicted in the figure has $6$ nullifying leaves or $3$ pairs of nullifying edges (these are $\{2 \rightarrow 4, 3 \rightarrow 7\}$, $\{2 \rightarrow 5, 3 \rightarrow 8\}$, and $\{2 \rightarrow 6, 3 \rightarrow 9\}$). Consequently, \propref{key-estimate} yields $|\invpoly(\mPsi; F,\pi)| \lesssim \dim^{-1/2 + \epsilon}$. In particular, for this configuration, $\lim_{\dim \rightarrow \infty} \invpoly(\mPsi; F,\pi) = 0$ for any $\mPsi$ that satisfies the assumptions of \thref{SE-normalized}. 
\subsubsection{Removable Edges and Decomposition into \universal configurations}

Unfortunately, a direct application of the estimate in \propref{key-estimate} is not sufficient to conclude the universality of $\lim_{\dim \rightarrow \infty} \invpoly(\mPsi; F,\pi)$ for every relevant configuration. An example of such a configuration is presented in \fref{removable-example}. This configuration has one pair of nullifying edges $\{2 \rightarrow 5, 3 \rightarrow 8\}$, $|\pi| = 10$ and
\begin{align*}
    \sum_{v \in V \backslash \rootset(F)} \pweight{v} = |V| - 1 = 16.
\end{align*}
Hence, \propref{key-estimate} yields the inadequate bound $|\invpoly(\mPsi; F,\pi) | \lesssim \dim^{1/2+\epsilon}$ for the polynomial $\invpoly(\mPsi; F,\pi)$ corresponding to this configuration. The failure of the estimate in \propref{key-estimate} is due to the presence of structures known as removable edges, which we introduce next. 
\begin{figure}[ht]
\centering
\includegraphics[width=0.5\textwidth]{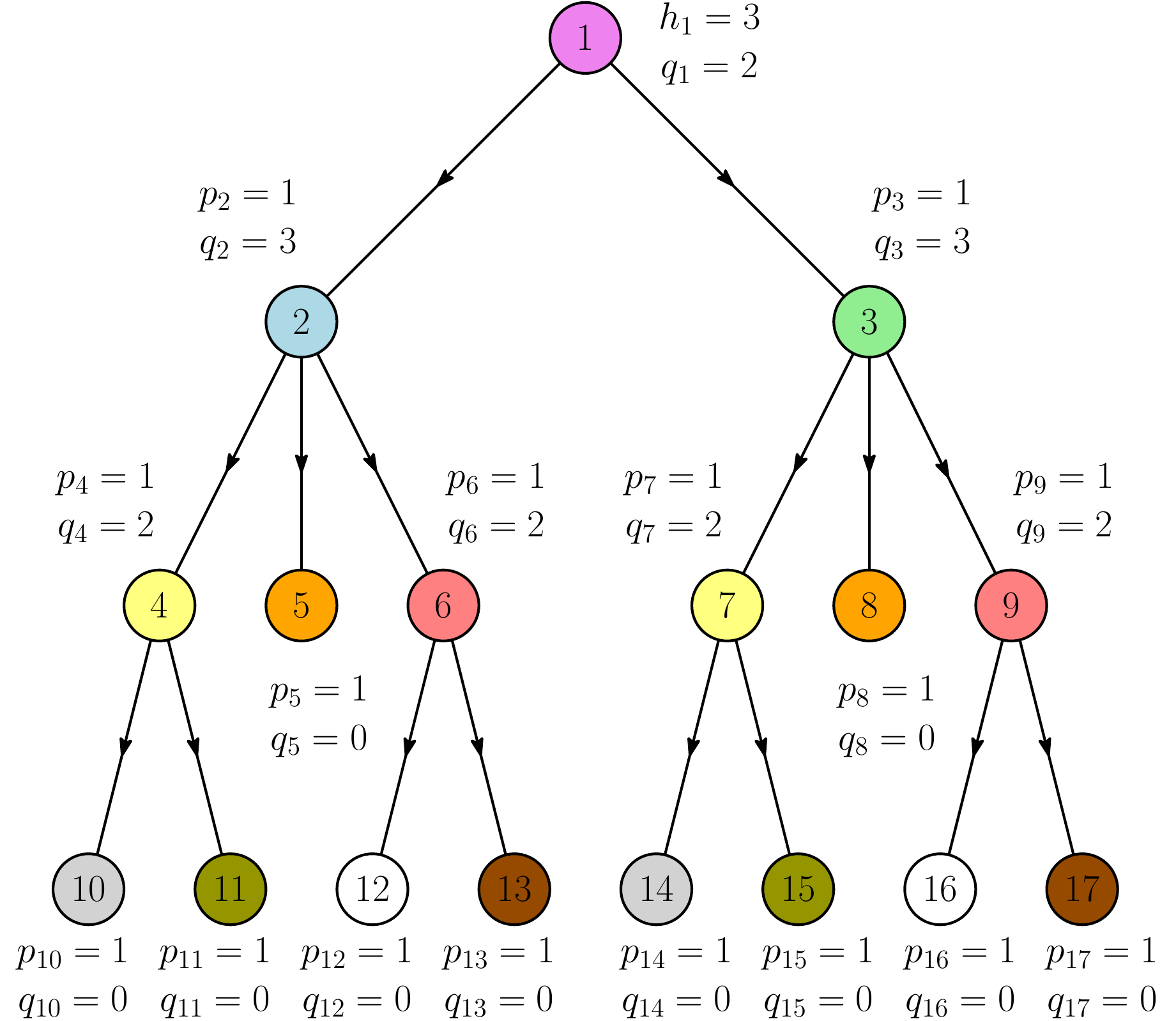}
\caption{A relevant configuration for which the improved estimate in \propref{key-estimate} fails. The colors of the vertices encode the partition $\pi$, vertices with the same color form a block in the partition $\pi$.}  
\label{fig:removable-example}
\end{figure}
\begin{definition} \label{def:removable-pair}  A pair of edges $u \rightarrow v, u^\prime \rightarrow v^\prime$ is called a removable pair of edges for configuration $(F,\pi)$ with $F = (V,E, \height{}, \pweight{}, \qweight{})$  and $\pi=\{B_1, B_2, \dotsc, B_{|\pi|}\}$ if:
\begin{enumerate}
    \item $v,v^\prime \in \leaves{F}$, $v \neq v^\prime$.
    \item $\pweight{v} = \pweight{v^\prime} = 1$.
    \item $B_{\pi(v)} = B_{\pi(v^\prime)} = \{v,v^\prime\}$.
    \item $\pi(u) = \pi(u^\prime)$.
\end{enumerate}
\end{definition}
The configuration in \fref{removable-example} has four pairs of removable edges $\{4 \rightarrow 10, 7 \rightarrow 14\}$, $\{4 \rightarrow 11, 7 \rightarrow 15\}$, $\{6 \rightarrow 12, 9 \rightarrow 16\}$, and  $\{6 \rightarrow 13, 9 \rightarrow 17\}$.  If these edges were absent, this configuration would have been identical to the one depicted in \fref{nullifying-example}, where the estimate given in \propref{key-estimate} was adequate. When a removable edge $u \rightarrow v, u^\prime \rightarrow v^\prime$ is present in a configuration $(F,\pi)$, the configuration $(F,\pi)$ can be simplified while keeping the polynomial $\invpoly(\mPsi; F,\pi)$ (cf. \eqref{eq:inv-poly}) unchanged. This follows since evaluating $\invpoly(\mPsi; F,\pi)$ (cf. \eqref{eq:inv-poly}) involves summing over the possible colors for $v,v^\prime$, which yields an expression of the form:
\begin{align*}
   \sum_{\substack{\ell_v, \ell_{v^\prime} \in [\dim] \\ \ell_{v} = \ell_{v^\prime}}}  \psi_{\ell_u\ell_v} \psi_{\ell_{u^\prime} \ell_{v^\prime}}   = (\mPsi \mPsi \tran)_{\ell_{u} \ell_{u^\prime}} \explain{(d)}{=} 1,
\end{align*}
where the equality (d) follows from the assumption $(\mPsi \mPsi \tran)_{ii} = 1$ for every $i \in [\dim]$ and the fact that $\ell_{u} = \ell_{u}^\prime$ for a pair of removable edges $u \rightarrow v, u^\prime \rightarrow v^\prime$. As a consequence of this simplification, the vertices $v,v^\prime$ and the corresponding edges $u \rightarrow v, u^\prime \rightarrow v^\prime$ can be deleted from the configuration $(F,\pi)$, thus simplifying its structure. By eliminating every pair of removable edges in a relevant configuration $(F,\pi)$ one can express the corresponding polynomial $\invpoly(\mPsi; F, \pi)$ as a linear combination of polynomials associated with \emph{\universal configurations}, which we introduce next.   

\begin{definition}\label{def:universal-configuration} A decorated forest $F = (V, E, \height{}, \pweight{}, \qweight{})$ and a partition $\pi = \{B_1, B_2, \dotsc, B_{|\pi|}\}$ of $V$ form a \universal configuration if:
\begin{description}
    \myitem{\texttt{Root Property}}\label{root-property}: For any two root vertices $u, v \in \rootset(F)$, we have $\pi(u) = \pi(v)$.
   \myitem{\texttt{Singleton Leaf Property}}\label{singleton-leaf-property}: Each leaf $v \in \leaves{F}$ with $|B_{\pi(v)}| = 1$ satisfies $\pweight{v} \geq 4$.
   \myitem{\texttt{Paired Leaf Property}}\label{paired-leaf-property}: Any pair of leaves $v, v^\prime \in \leaves{F}$ with $\pweight{v} = \pweight{v^\prime} = 1$ and $B_{\pi(v)} = B_{\pi(v^\prime)} = \{v,v^\prime\}$ satisfies $\pi(u) \neq \pi(u^\prime)$, where $u,u^\prime$ are the parents of $v,v^\prime$ respectively. 
    \myitem{\texttt{Forbidden Weights Property}}\label{forbidden-weights-property}: There are no vertices $u \in V \backslash \rootset(F)$ such that $|B_{\pi(u)}| = 1, \pweight{u} = 1, \qweight{u} = 1$ or $|B_{\pi(u)}| = 1, \pweight{u} = 2, \qweight{u} = 0$.
    \myitem{\texttt{Parity Property}}\label{parity-property}: There is no block $B$ in the partition $\pi$ such that the sum:
    \begin{align*}
       \left(\sum_{r \in (B \cap \rootset(F)) \backslash \rootset_0(F)} \qweight{r}  \right)+ \left(\sum_{u \in B \backslash ( \leaves{F} \cup \rootset(F))} \pweight{u} + \qweight{u} \right) + \left( \sum_{v \in B \cap \leaves{F}} \pweight{v} \right) 
    \end{align*}
    has odd parity.
\end{description}
\end{definition}

Observe that the \ref{paired-leaf-property} in \defref{universal-configuration} ensures that \universal configurations have no removable edges. The following is the formal statement of our decomposition result.

\begin{proposition}[Decomposition]\label{prop:decomposition} Let $(F,\pi)$ be a relevant configuration. Then there exists $L \in \N$ (independent of $\dim$), \universal configurations $\{(F_i, \pi_i): i \in [L]\}$ and weights $\va \in \{-1,1\}^{L}$ such that,
\begin{align*}
    \sum_{\ell \in \colorings{}{\pi}} \gamma(\mPsi; F,\ell) & = \sum_{i=1}^L  \sum_{\ell \in \colorings{}{\pi_i}} a_i \cdot \gamma(\mPsi; F_i,\ell),
\end{align*}
for any $\mPsi \in \R^{\dim \times \dim}$ such that $(\mPsi \mPsi\tran)_{jj} = 1$ for all $j \in [\dim]$. Furthermore, if the relevant configuration $(F, \pi)$ has exactly one non-trivial root $(|\rootset(F) \backslash \rootset_0(F)| = 1)$, then each  \universal configuration $(F_i,\pi_i), \; i \in [L]$ also has exactly one non-trivial root $(|\rootset(F_i) \backslash \rootset_0({F_i})| = 1)$.
\end{proposition}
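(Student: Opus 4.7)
The plan is to argue by induction on the number of removable pairs of edges in the configuration $(F, \pi)$. In the base case, when $(F, \pi)$ has no removable pair, I claim $(F, \pi)$ is already a \universal configuration. The \ref{root-property} matches the \ref{root-rule}, the \ref{parity-property} matches the \ref{parity-rule}, the \ref{paired-leaf-property} is precisely the absence of removable pairs, and the \ref{forbidden-weights-property} is a weakening of the \ref{forbidden-weights-rule}. The \ref{singleton-leaf-property} follows by combining three rules of a relevant configuration: a singleton-block leaf $v$ satisfies $q_v = 0$ by the \ref{leaf-rule}, then $p_v \neq 2$ by the \ref{forbidden-weights-rule}, and finally $p_v$ is even by the \ref{parity-rule}, forcing $p_v \in \{4, 6, 8, \dotsc\}$. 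One may then take $L = 1$, $a_1 = 1$, $(F_1, \pi_1) = (F, \pi)$.

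For the inductive step, fix a removable pair $u \to v$, $u' \to v'$ in $(F, \pi)$. Let $F'$ denote $F$ with the leaves $v, v'$ and the edges $u \to v$, $u' \to v'$ removed (with $q_u$ and $q_{u'}$ each decremented by $1$ to preserve the conservation equation \eqref{eq:conservation-eq}), let $\pi' = \pi \setminus \{\{v, v'\}\}$, and for each $B \in \pi'$ let $\pi^{(B)}$ denote $\pi$ with the block $\{v, v'\}$ absorbed into $B$. Summing $\gamma$ over the common color $c = \ell_v = \ell_{v'}$ and using that $\ell_u = \ell_{u'}$ (since $\pi(u) = \pi(u')$) together with $(\mPsi \mPsi\tran)_{ii} = 1$, one has $\sum_c \psi_{\ell_u c} \psi_{\ell_{u'} c} = (\mPsi \mPsi\tran)_{\ell_u \ell_{u'}} = 1$. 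Applying inclusion-exclusion to the constraint that $c$ be distinct from every other block color in $\pi'$ yields
\begin{align*}
\sum_{\ell \in \colorings{}{\pi}} \gamma(\mPsi; F, \ell) \;=\; \sum_{\ell' \in \colorings{}{\pi'}} \gamma(\mPsi; F', \ell') \;-\; \sum_{B \in \pi'} \sum_{\ell \in \colorings{}{\pi^{(B)}}} \gamma(\mPsi; F, \ell).
\end{align*}
Each configuration on the right has strictly fewer removable pairs than $(F, \pi)$: the block $\{v, v'\}$ is either deleted outright in $(F', \pi')$ or merged into a larger block in $(F, \pi^{(B)})$, and merging cannot create new doubleton blocks. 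By the inductive hypothesis each term decomposes into a signed sum of \universal configurations, and combining yields the desired expression for $(F, \pi)$ with coefficients in $\{-1, 1\}$. The depth of the induction is bounded by the initial number of removable pairs, which is $N$-independent, so the final count $L$ is also independent of $N$.

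For the additional claim about preserving the unique non-trivial root, the only way the root structure can be affected by simplification is if the unique non-trivial root $r$ itself plays the role of $u$ or $u'$ and loses its last child. However, if $r$ were to have $v$ as its only child (so that $p_v = 1$ and $q_r = 1$), then the partner $u'$ in the removable pair would need to be a non-root vertex in the same (root) block as $r$, yet every non-root vertex lies in the subtree of some non-trivial root, and in this setting that subtree consists only of $\{r, v\}$---leaving no candidate for $u'$. Hence this case cannot arise, $r$ retains at least one child after every simplification, and non-triviality is preserved.

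The main obstacle is that the intermediate configurations $(F, \pi^{(B)})$ may fail to be relevant in the sense of \defref{relevant-config}---most notably, the \ref{sibling-rule} may be violated when $B$ contains a sibling of $v$ or $v'$. The induction must therefore be stated for a broader class of configurations that is closed under the simplification step and still admits reduction to \universal configurations at the base. Together with the bookkeeping required to verify that $F'$ remains a valid decorated forest when $u$ or $u'$ becomes a new leaf (checking compatibility of the updated $q$-values with the height constraints of \defref{decorated-forests}), this constitutes the main technical work.
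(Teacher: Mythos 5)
Your approach mirrors the paper's: the paper also introduces a class of configurations weaker than relevant ones (``semi-\universal configurations,'' satisfying all properties of \defref{universal-configuration} except the \ref{paired-leaf-property}), proves a removal lemma producing exactly your identity, and iterates it; your inclusion-exclusion identity is the paper's equation \eqref{eq:intermediate-decomp}, and your derivation of the \ref{singleton-leaf-property} from the \ref{leaf-rule}, \ref{forbidden-weights-rule}, and \ref{parity-rule} is the paper's argument verbatim. Your treatment of the non-trivial-root claim is also sound, though the paper's is cleaner (at most one of $u,u'$ can be a root, the non-root one keeps its parent edge, and a forest with an edge has a non-trivial root).

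There is, however, a genuine gap in your induction measure. You induct on the number of removable pairs and assert that each configuration on the right-hand side has strictly fewer. Your justification only addresses the merge case $(F,\pi^{(B)})$ (``merging cannot create new doubleton blocks''); it does not address whether the deletion producing $(F',\pi')$ can create new removable pairs. It can: if $u$ and $u'$ each have $v$ (resp.\ $v'$) as their unique child (so $q_u = q_{u'} = 1$), both are non-root, $p_u = p_{u'} = 1$, $B_{\pi(u)} = \{u,u'\}$ is a doubleton, and the parents of $u,u'$ lie in the same block, then after deleting $\{v,v'\}$ the pair $\{u,u'\}$ becomes a fresh removable pair, and the count does \emph{not} drop. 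This pattern cannot occur in a relevant configuration because the \ref{forbidden-weights-rule} forbids $p=1,q=1$ regardless of block size---so your first step is fine---but you correctly observe that the intermediate configurations land only in the broader (semi-\universal) class, where the \ref{forbidden-weights-property} permits $p=1,q=1$ whenever $|B_{\pi(u)}|\geq 2$. A vertex with $p_u=1$, $q_u=2$ in the original relevant configuration can, after one decrement, become a $p=1,q=1$ vertex in a later semi-\universal configuration, so the problematic pattern is reachable. The paper sidesteps this by inducting on $|\pi|$: every term in the decomposition has exactly $|\pi|-1$ blocks, and any configuration with a removable pair has $|\pi|\geq 2$, so the process terminates in at most $|\pi_0|$ steps. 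Replacing your measure with $|\pi|$ (or showing that your measure decreases after a bounded number of steps) repairs the argument; as written, termination is not established.
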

\noindent 
The proof of this result is deferred to \sref{proof-decomposition}.

\subsubsection{Universality for \universal configurations}

It turns out that the presence of removable edges (cf. \defref{removable-pair}) is the only barrier that can cause the improved estimate of \propref{key-estimate} to fail. Since \universal configurations do not have removable edges due to the \ref{paired-leaf-property}, we obtain the following result by applying \propref{key-estimate} to such configurations. 

\begin{proposition}[Universality for \universal configurations]  \label{prop:universality-universal-config} Consider a \universal configuration $(F,\pi)$ with $F = (V,E, \height{}, \pweight{}, \qweight{})$ and $\pi=\{B_1, B_2, \dotsc, B_{|\pi|}\}$. For any ${\mPsi}{} \in \R^{\dim \times \dim}$ such that,
\begin{align*}
   \|{\mPsi}\|_\infty &\lesssim \dim^{-\frac{1}{2} + \epsilon}, \;    \|{\mPsi}{{\mPsi}}\tran- \mI_\dim\|_\infty  \lesssim \dim^{-\frac{1}{2} + \epsilon} \; \forall \; \epsilon > 0,
\end{align*}
we have,
\begin{align*}
   \lim_{\dim \rightarrow\infty} \invpoly({\mPsi}; F, \pi) \explain{def}{=}  \lim_{\dim \rightarrow\infty}   \sum_{\ell \in \colorings{}{\pi}} \frac{1}{\dim} \prod_{\substack{e \in E \\ e = u \rightarrow v}} \psi_{\ell_u\ell_v}^{\pweight{v}}  & = \begin{cases} 1 &: E = \emptyset \\ 0 &: E \neq \emptyset \end{cases}.
\end{align*}
\end{proposition}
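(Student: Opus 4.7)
The plan is to split on whether $E = \emptyset$. If $E = \emptyset$, then $F$ has no non-root vertices (every non-root appears as the child in some edge), so $V = \rootset(F)$; the \ref{root-property} then forces $\pi$ to be the single-block partition of $V$. Consequently $|\colorings{}{\pi}| = \dim$, the product inside $\gamma(\mPsi;F,\ell)$ is empty and equals $1$, and $\invpoly(\mPsi; F, \pi) = \dim \cdot \dim^{-1} = 1$ exactly, as claimed.

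For $E \neq \emptyset$, I would invoke \propref{key-estimate} and reduce the problem to proving that the exponent
$$\exponent(F,\pi) = \frac{|\nullleaves{F,\pi}|}{4} + 1 - |\pi| + \frac{1}{2}\sum_{v \in V \backslash \rootset(F)} \pweight{v}$$
is strictly positive. Since $|\nullleaves{F,\pi}|$ is always even, $\exponent(F,\pi)$ lies in $\tfrac{1}{2}\Z$, so it suffices to establish $\exponent(F,\pi) \geq \tfrac{1}{2}$. My plan is to prove this by an amortized credit argument. Assign each non-root vertex $v$ a credit $c(v) = \pweight{v}/2$, plus a bonus of $1/4$ if $v \in \nullleaves{F,\pi}$; the total is then exactly $W/2 + |\nullleaves{F,\pi}|/4$ where $W = \sum_{v \in V \backslash \rootset(F)} \pweight{v}$. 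By the \ref{root-property} all $T+1$ roots inhabit a single block $B_R$ that contributes $1$ to $|\pi|$; it is enough to verify that every other block accumulates credit at least $1$, while the aggregate surplus reaches at least $1/2$.

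The easy cases deliver surplus $\geq 1/2$ on their own: a singleton leaf carries $\pweight{v} \geq 4$ by the \ref{singleton-leaf-property}, so $c(v) \geq 2$; a nullifying pair block collects $3/4 + 3/4 = 3/2$; a block of size $\geq 3$ collects $\geq 3/2$ since each vertex contributes at least $1/2$. The main obstacle is a singleton block $\{v\}$ where $v$ is an internal non-root with $\pweight{v} = 1$, so that $c(v) = 1/2$ falls short of $1$. The \ref{forbidden-weights-property} excludes $\qweight{v} = 1$, and the \ref{parity-property} forces $\pweight{v} + \qweight{v}$ to be even, so $\qweight{v}$ is odd and at least $3$; hence the children of $v$ have p-weights summing to at least $3$, contributing $\geq 3/2$ in credit dispersed across other blocks. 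The deficit at $v$ is then charged against this surplus by descending through the $\pweight{} = 1$ internal descendants of $v$: the \ref{paired-leaf-property} (forbidding two sibling leaves with $\pweight{} = 1$ from alone forming a pair block) and the \ref{parity-property} (forbidding odd-parity blocks) jointly prevent the descendants from being packed into tight-credit configurations indefinitely, so the chain must terminate at a block with genuine surplus (a nullifying pair, a block of size $\geq 3$, or a singleton leaf with $\pweight{} \geq 4$). Summing the block-by-block accounts yields $\exponent(F,\pi) \geq \tfrac{1}{2} > 0$, whence \propref{key-estimate} gives $\invpoly(\mPsi; F, \pi) \to 0$.
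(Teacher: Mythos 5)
Your $E = \emptyset$ case is correct and matches the paper. For $E \neq \emptyset$, your plan---reduce to $\exponent(F,\pi) > 0$, sharpen to $\exponent(F,\pi) \geq 1/2$ using $\exponent(F,\pi) \in \tfrac{1}{2}\Z$, and invoke \propref{key-estimate}---is exactly the paper's strategy (the paper proves $\exponent(F,\pi) \geq (|\rootset(F)|-|\rootset_0(F)|)/4$ in \propref{universal-config-estimate} and specializes to $\geq 1/4$ when there is a non-trivial root). Your credit $c(v) = \pweight{v}/2 + \tfrac{1}{4}\cdot\mathbf{1}[v \in \nullleaves{F,\pi}]$ with a demand of one unit per non-root block is an equivalent rewriting of the paper's $\effect{v}$ accounting in \eqref{eq:effect-def}, and your identification of the dangerous block (a singleton $\{v\}$ with $v$ internal, non-root, $\pweight{v}=1$, hence $\qweight{v}\geq 3$ by \ref{forbidden-weights-property} and \ref{parity-property}) is correct.

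The gap is that the combinatorial estimate is asserted, not proved. The sentence ``the chain must terminate at a block with genuine surplus'' is precisely the claim that the paper spends Sections~\ref{sec:universal-forest-classification}--\ref{sec:universal-forest-final-proof} establishing. Three things are missing from your proposal. First, the descent you describe---``through the $\pweight{}=1$ internal descendants of $v$''---is not the right object: the relevant path starts at a child $w$ of $v$ with $\pweight{w}\geq 2$ (which must exist since $\qweight{v}\geq 3$) and descends through singleton internal vertices with $\pweight{}=2$ and exactly one child, which are exactly balanced (credit $1$, demand $1$) and so cannot absorb the deficit; this is the content of Lemma~\ref{lem:structural-prop}. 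Second, the charge must be injective: two distinct deficit singletons could otherwise charge the same terminal surplus vertex, and nothing in your proposal rules this out; the paper constructs the injective map $\map$ in Lemma~\ref{lem:map-construction} for exactly this reason. Third, the supply of nullifying pairs you rely on for surplus is itself constrained: size-$2$ blocks pairing a $\pweight{}=1$ leaf with a $\pweight{}=1$ internal vertex (the paper's type-2/type-3 leaves) have credit exactly $1$ with no surplus yet consume a leaf that could otherwise have been part of a nullifying pair, so the degree-sum accounting of Lemma~\ref{lem:exponent-lb} has to balance these too. Without these ingredients the credit argument does not close, and the proposal has a genuine gap at the step the paper works hardest on.
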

The above result shows that for \universal configurations, $\lim_{\dim \rightarrow \infty}  \invpoly({\mPsi}; F, \pi)$ exists and is universal (identical for any $\mPsi$ that satisfies the assumptions of \thref{SE-normalized}). 
The proof of this result relies on graph-theoretic arguments to show that any \universal configuration with at least one edge has enough nullifying leaves (cf. \defref{nullifying}) to ensure that the improved estimate of \propref{key-estimate} implies that $\lim_{\dim \rightarrow \infty}  \invpoly({\mPsi}{}; F, \pi) = 0$. The complete proof appears in \sref{proof-universal-config-estimate}.

\subsection{Proof of \thref{SE-normalized}}
We have now introduced all the key ideas involved in the proof of \thref{SE-normalized}. We end this section by presenting the proof of \thref{SE-normalized} assuming \lemref{unrolling_1}, \lemref{key-formula}, \propref{key-estimate}, \propref{decomposition}, and \propref{universality-universal-config}.

\begin{proof}[Proof of  \thref{SE-normalized}] 
Recall the expression for the expectation of the key observable from \eqref{eq:expectation_final}. Since \lemref{key-formula} guarantees that $ \alpha(F) \cdot \valid(F,\pi) \cdot \beta_1(F,\pi) \cdot \beta_2(F,\pi) = 0$ for all non-relevant configurations, one need to compute $\lim_{\dim \rightarrow \infty} \invpoly(\mPsi; F,\pi)$ only for relevant configurations. Let $(F,\pi)$ be any relevant configuration. By \propref{decomposition}, there exists $L \in \N$ (independent of $\dim$), \universal configurations $\{(F_i, \pi_i): i \in [L]\}$ and weights $\va \in \{-1,1\}^{L}$ such that,
\begin{align*}
     \invpoly(\mPsi; F, \pi) & = \sum_{i=1}^L  a_i \cdot  \invpoly(\mPsi; F_i, \pi_i)
\end{align*}
for any $\mPsi \in \R^{\dim \times \dim}$ such that $(\mPsi \mPsi\tran)_{jj} = 1$ for all $j \in [\dim]$. Let $E_i$ denote the edge set of $F_i$. Hence, by \propref{universality-universal-config},
\begin{align*}
    \lim_{\dim \rightarrow \infty}   \invpoly(\mPsi; F, \pi)  & = \sum_{i=1}^L   a_i \cdot \lim_{\dim \rightarrow \infty} \invpoly(\mPsi; F_i, \pi_i) = \sum_{i=1}^L   a_i \cdot \vone_{E_i = \emptyset} \explain{def}{=} \kappa(F,\pi).
\end{align*}
Hence, 
\begin{align} \label{eq:key-formula-application}
     &\E\left[\frac{1}{\dim} \sum_{\ell=1}^\dim \hermite{k_0}(\iter{z}{0}_i) \cdot \hermite{k_1}(\iter{z}{1}_i) \cdot \dotsb \cdot \hermite{k_T}(\iter{z}{T}_i) \right]     \\ \nonumber& \hspace{2cm} =\sum_{\substack{F \in \forests{T}{k_0, k_1, \dotsc, k_T}\\ F = (V,E, \height{}, \pweight{}, \qweight{})}} \sum_{\pi \in \part{V}}  \alpha(F) \cdot \valid(F,\pi) \cdot \beta_1(F,\pi) \cdot \beta_2(F,\pi) \cdot \kappa(F,\pi) \explain{def}{=} c,
\end{align}
which proves the first claim of \thref{SE-normalized}. To prove the second claim of \thref{SE-normalized}, consider the expectation:
\begin{align*}
    \E\left[\frac{1}{\dim} \sum_{\ell=1}^\dim\hermite{k_t}(\iter{z}{t}_i) \right].
\end{align*}
If $k_t = 0$, since $\hermite{0}$ is the constant polynomial $1$, the above expectation is precisely $1$, as claimed. On the other hand, if $k_t \geq 1$, then \eqref{eq:key-formula-application} specializes to:
\begin{align} \label{eq:key-formula-application-2}
    &\E\left[\frac{1}{\dim} \sum_{\ell=1}^\dim\hermite{k_t}(\iter{z}{t}_i) \right]    \\ & \hspace{1.5cm} =\sum_{\substack{F \in \forests{T}{0, 0, \dotsc, k_t, \dotsc, 0}\\ F = (V,E, \height{}, \pweight{}, \qweight{})}} \sum_{\pi \in \part{V}}  \alpha(F) \cdot \valid(F,\pi) \cdot \beta_1(F,\pi) \cdot \beta_2(F,\pi) \cdot \kappa(F,\pi). \nonumber 
\end{align}
Recalling the definition of the set $\forests{T}{0, 0, \dotsc, k_t, \dotsc, 0}$ from \lemref{unrolling_1} observe that any configuration $(F,\pi)$ appearing in the above formula has $T+1$ roots, denoted by $1, 2, \dotsc, T+1$ with $\height{t+1} = t \geq 1$,  $\qweight{r} = 0 \; \forall \; r \neq (t+1)$ and $\qweight{t+1} = k_t \geq 1$. Note that requirement (1c) on the height function $h$ in the definition of decorated forests (\defref{decorated-forests}) guarantees that the root $t+1$ has at least one child and hence, is a non-trivial root.  One the other hand, the conservation equation \eqref{eq:conservation-eq}  in definition of decorated forests (\defref{decorated-forests}) ensures that all other roots $r \neq t+1$ cannot have any children and must be trivial roots (recall $\pweight{v} \geq 1 \quad \forall \; v \; \in \; V \backslash \rootset(F)$).  Consequently, all configurations $(F,\pi)$ appearing in \eqref{eq:key-formula-application-2} have exactly one trivial root. Furthermore, \propref{decomposition} guarantees that the \universal configurations that arise from decomposing such relevant configurations also have exactly one non-trivial root and hence, the edge set of these \universal configurations cannot be empty. Hence as a consequence of the \propref{universality-universal-config}, $\kappa(F,\pi) = 0$ and we obtain,
\begin{align*}
    \lim_{\dim \rightarrow \infty} \E\left[\frac{1}{\dim} \sum_{\ell=1}^\dim\hermite{k_t}(\iter{z}{t}_i) \right] = 0,
\end{align*}
when $k_t \geq 1$, as claimed. This concludes the proof of \thref{SE-normalized}.
\end{proof}
\section{Proof of \lemref{unrolling_1} and \lemref{key-formula}}
\label{sec:unrolling_proof} 
\subsection{Proof of \lemref{unrolling_1}} \label{sec:proof-unrolling-formula}
This section unrolls the AMP iterations to prove \lemref{unrolling_1}. 
\begin{proof}[Proof of \lemref{unrolling_1} ]
\noindent First, we expand:
\begin{align*}
     \hermite{\qweight{1}}(\iter{z}{t}_{\ell_1}),
\end{align*}
as a polynomial of the initialization $\iter{\vz}{0}$ for arbitrary $q_1 \in \N$ and $\ell_1 \in [N]$. The expansion relies crucially on the following property of Hermite polynomials.
\begin{fact}\label{fact:hermite-property} Let $\vu \in \R^\dim$ be such that $\|\vu\| = 1$. We have,
\begin{align*}
    \hermite{q}(\ip{\vu}{\vx}) & = \sum_{\substack{\va \in \W^\dim \\ \|\va\|_1 = q}} \sqrt{\binom{q}{\va}} \cdot \vu^{\va} \cdot \hermite{\va}(\vx).
\end{align*}
In the display above,
\begin{align*}
    \binom{q}{\va} \explain{def}{=} \frac{q!}{a_1 ! a_2! \dotsb a_\dim!}, \; \vu^{\va} \explain{def}{=} \prod_{i=1}^\dim u_i^{a_{i}}, \; \hermite{\va}(\vx) \explain{def}{=} \prod_{i=1}^\dim  \hermite{a_i}(x_i).
\end{align*}
\end{fact}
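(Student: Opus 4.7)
The plan is to prove the identity via the exponential generating function for the (normalized) Hermite polynomials. Recall that
\[
\sum_{q=0}^\infty \frac{t^q}{\sqrt{q!}}\,\hermite{q}(z) \;=\; \exp\!\left(tz - \tfrac{t^2}{2}\right),
\]
which follows from the standard generating function for the physicists'/probabilists' Hermite polynomials after accounting for the unit-norm normalization declared in the Notation subsection.

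First, I would apply this generating function with argument $z = \inprod{\vu,\vx}$, yielding
\[
\sum_{q=0}^\infty \frac{t^q}{\sqrt{q!}}\,\hermite{q}(\inprod{\vu,\vx}) \;=\; \exp\!\left(t\inprod{\vu,\vx} - \tfrac{t^2}{2}\right).
\]
The key step is to use the hypothesis $\|\vu\|=1$ to write $t^2/2 = \sum_{i=1}^N (tu_i)^2/2$, which lets me factor the exponential across coordinates:
\[
\exp\!\left(t\inprod{\vu,\vx}-\tfrac{t^2}{2}\right) \;=\; \prod_{i=1}^N \exp\!\left(tu_i x_i - \tfrac{(tu_i)^2}{2}\right) \;=\; \prod_{i=1}^N \sum_{a_i=0}^\infty \frac{(tu_i)^{a_i}}{\sqrt{a_i!}}\,\hermite{a_i}(x_i),
\]
using the generating function again, this time one variable at a time. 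Expanding the product, I would then regroup terms by the total degree $q = \|\va\|_1$ to obtain
\[
\prod_{i=1}^N \sum_{a_i=0}^\infty \frac{(tu_i)^{a_i}}{\sqrt{a_i!}}\,\hermite{a_i}(x_i) \;=\; \sum_{q=0}^\infty t^q \sum_{\substack{\va\in\W^N \\ \|\va\|_1=q}} \frac{\vu^{\va}}{\sqrt{a_1!\cdots a_N!}}\,\hermite{\va}(\vx).
\]

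Finally, matching coefficients of $t^q$ on both sides and multiplying through by $\sqrt{q!}$ gives
\[
\hermite{q}(\inprod{\vu,\vx}) \;=\; \sum_{\substack{\va\in\W^N \\ \|\va\|_1=q}} \sqrt{\frac{q!}{a_1!\cdots a_N!}}\,\vu^{\va}\,\hermite{\va}(\vx) \;=\; \sum_{\substack{\va\in\W^N \\ \|\va\|_1=q}} \sqrt{\binom{q}{\va}}\,\vu^{\va}\,\hermite{\va}(\vx),
\]
which is precisely the claimed identity. The proof is essentially a formal manipulation; there is no real obstacle, but the one spot that requires the unit-norm hypothesis is the factorization of the Gaussian exponent $\exp(-t^2/2) = \prod_i \exp(-(tu_i)^2/2)$, and the interchange of sum and product in the power series expansion is justified by absolute convergence for every fixed $\vx$ and $|t|$ in any bounded range, which suffices for matching polynomial coefficients.
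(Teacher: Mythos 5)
Your proof is correct and follows exactly the same route as the paper: apply the Hermite generating function to $\langle \vu, \vx\rangle$, use $\|\vu\|=1$ to factor the Gaussian exponent $e^{-t^2/2}$ across coordinates, expand each factor with the generating function, and match coefficients of $t^q$. There is nothing to add.
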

The property stated above is easily derived using the well known generating formula for Hermite polynomials. We completeness, we provide a proof in \appref{hermite-property}. 

Let $\vm_i$ denote the $i$th row of $\mM$. Using \factref{hermite-property}, we have,
\begin{align*}
    \hermite{\qweight{1}}(\iter{z}{t}_{\ell_1}) & = \hermite{\qweight{1}}\left(\ip{\vm_{\ell_1}}{\nonlin_t(\iter{\vz}{t-1})}\right) \explain{(a)}{=} \sum_{\substack{\va \in \W^\dim \\ \|\va\|_1 = \qweight{1}}} \sqrt{\binom{\qweight{1}}{\va}} \cdot \vm_{\ell_1}^{\va} \cdot \hermite{\va}(\nonlin_t(\iter{\vz}{t-1})).
\end{align*}
In step (a) we used the fact that $\|\vm_{\ell_1}\| = 1$. This is guaranteed by \defref{matrix-ensemble-relaxed} and \assumpref{normalization} since $\|\vm_{\ell_1}\|^2 = (\mM \mM\tran)_{\ell_1 \ell_1} = (\mPsi \mPsi\tran)_{\ell_1 \ell_1} = 1$. We rewrite the above expression as follows. We pick a vector $\va \in \W^n$ with $\|\va\|_1 = \qweight{1}$ in the following steps:
\begin{enumerate}
    \item We first decide the value of $\chrn{1} \explain{def}{=} \|\va\|_0$. Note that $1 \leq \chrn{1} \leq \qweight{1}$. 
     \item We pick labels $\ell_{2}, \ell_3, \dotsc, \ell_{\chrn{1} + 1} \in [\dim]$ with $\ell_2 < \ell_3 < \ell_4 \dotsb < \ell_{\chrn{1}+1}$. These are the locations of the non-zero coordinates of $\va$.
    \item Next we pick a solution $p_2, p_3, \dotsc, p_{\chrn{1}+1} \in \N$, a solution to the integral equation $p_2 + p_3 + \dotsb + p_{\chrn{1}+1} = \qweight{1}$. These are the values of the non-zero coordinates of $\va$. 
    \item We then obtain the vector $\va$ by setting $a_{\ell_i} = p_i$ for all $2 \leq i \leq \chrn{1}+1$ and setting all other coordinates of $\va$ to 0. 
\end{enumerate}
Using this construction we can write,
\begin{align*}
     \hermite{q_1}(\iter{z}{t}_{\ell_1}) & = \sum_{\chrn{1} = 1}^{q_1} \sum_{\substack{p_2, \dotsc, p_{\chrn{1}+1} \in \N \\ p_2+ \dotsb + p_{\chrn{1}+1} = q_1}} \sum_{\substack{\ell_2, \dotsc, \ell_{\chrn{1}+1 \in [\dim]}\\ \ell_2< \dotsb< \ell_{\chrn{1}+1}}} \sqrt{\binom{q_1}{p_2, \dotsc, p_{\chrn{1}+1}}} \cdot \left(\prod_{j=2}^{\chrn{1}+1}  m_{\ell_1\ell_j}^{p_j} \right)  \cdot \left( \prod_{j=2}^{\chrn{1}+1} \hermite{p_{j}} \circ \nonlin_t(\iter{z}{t-1}_{\ell_j}) \right) \\
     & = \sum_{\chrn{1} = 1}^{q_1} \sum_{\substack{p_2, \dotsc, p_{\chrn{1}+1} \in \N \\ p_2+ \dotsb + p_{\chrn{1}+1} = q_1}} \sum_{\substack{\ell_2 \dotsc, \ell_{\chrn{1}+1 \in [\dim]}\\ \ell_2 \neq \ell_3 \neq \dotsb \neq \ell_{\chrn{1}+1}}} \frac{1}{\chrn{1}!} \sqrt{\binom{q_1}{p_2, \dotsc, p_{\chrn{1}+1}}} \cdot \left(\prod_{j=2}^{\chrn{1}+1}  m_{\ell_1\ell_j}^{p_j} \right)  \cdot \left( \prod_{j=2}^{\chrn{1}+1} \hermite{p_{j}} \circ \nonlin_t(\iter{z}{t-1}_{\ell_j}) \right)
\end{align*}
For any $s \in \W$ we can expand the polynomial $\hermite{s} \circ f$ in the Hermite basis:
\begin{align*}
    \hermite{p} \circ \nonlin_t(x) = \sum_{q=0}^{pD} \fourier{t}{p}{q} \hermite{q}(x), 
\end{align*}
where,
\begin{align*}
    \fourier{t}{p}{q} \explain{def}{=} \E[ \hermite{p} \circ \nonlin_t(Z) \cdot  \hermite{q}(Z)].
\end{align*}
Hence,
\begin{align}
     &\hermite{\qweight{1}}(\iter{z}{t}_{\ell_1}) \nonumber \\&= \sum_{\chrn{1} = 1}^{q_1} \sum_{\substack{p_2, \dotsc, p_{\chrn{1}+1} \in \N \\ p_2+ \dotsb + p_{\chrn{1}+1} = q_1}} \sum_{\substack{\ell_2 \dotsc, \ell_{\chrn{1}+1 \in [\dim]}\\ \ell_2 \neq \ell_3 \neq \dotsb \neq \ell_{\chrn{1}+1}}} \frac{1}{\chrn{1}!} \sqrt{\binom{q_1}{p_2, \dotsc, p_{\chrn{1}+1}}} \cdot \left(\prod_{j=2}^{\chrn{1}+1}  m_{\ell_1\ell_j}^{p_j} \right)  \cdot \left( \prod_{j=2}^{\chrn{1}+1} \sum_{q_j=0}^{D q_1} \fourier{t}{p_{j}}{q_j} \cdot  \hermite{q_j}(\iter{z}{t-1}_{\ell_j}) \right) \nonumber \\
     & = \sum_{\chrn{1} = 1}^{q_1} \sum_{\substack{p_2, \dotsc, p_{\chrn{1}+1} \in \N \\ p_2+ \dotsb + p_{\chrn{1}+1} = q_1}} \sum_{\qweight{2}, \dotsc, \qweight{\chrn{1}+1} \in  [0:Dq_0]} \sum_{\substack{\ell_2 \dotsc, \ell_{\chrn{1}+1 \in [\dim]}\\ \ell_2 \neq \ell_3 \neq \dotsb \neq \ell_{\chrn{1}+1}}} \frac{1}{\chrn{1}!} \sqrt{\binom{q_1}{p_2, \dotsc, p_{\chrn{1}+1}}} \cdot \left(\prod_{j=2}^{\chrn{1}+1} \fourier{t}{p_j}{q_j} \cdot m_{\ell_1\ell_j}^{p_j} \cdot  \hermite{q_j}(\iter{z}{t-1}_{\ell_j})\right) \label{eq:one-step-unrolling}
\end{align}
Next, we express each $\hermite{q_j}(\iter{z}{t-1}_{\ell_j})$ in the above formula as a polynomial in $\iter{\vz}{t-2}$ by recursively applying the above formula. We continue this process till we obtain a polynomial in the initialization $\iter{\vz}{0}$. Recalling the definition of decorated forests (\defref{decorated-forests}), colorings of a decorated forest (\defref{coloring}) and the notion of valid colored decorated forest (\defref{valid-tree}), we obtain the following formula for the expansion of $\hermite{k}(\iter{z}{t}_{i})$ in terms of the initialization:
\begin{subequations}\label{eq:polynomial-expansion-preliminary}
\begin{align} 
     \hermite{k}(\iter{z}{t}_{i}) & =  \sum_{\substack{T \in \trees{t}{k}\\ T = (V,E, \height{}, \pweight{}, \qweight{})}} \sum_{\substack{\ell \in [\dim]^V\\ \ell_1 = i}} \alpha(T) \cdot \valid(T,\ell) \cdot \left( \prod_{u \rightarrow v} m_{\ell_u \ell_v}^{\pweight{v}}\right) \cdot \left(\prod_{\substack{w \in \leaves{T}\\ \qweight{w} \geq 1}} \hermite{\qweight{w}}(\iter{z}{0}_{\ell_w}) \right).
\end{align}
In the above display, $\trees{t}{k}$ denotes the set of all decorated trees (decorated forests with exactly one root denoted by $1$, see \defref{decorated-forests}) with $\height{1} = t$ and $\qweight{1} = k$. The coefficient $\alpha(T)$ is as defined in \lemref{unrolling_1}.
\end{subequations}
At this point, we draw the reader's attention to some seemingly arbitrary aspects of the definition of decorated forests (\defref{decorated-forests}) and valid colored decorated forests  (\defref{valid-tree}) that play an important role in ensuring that the above formula is correct:
\begin{enumerate}
    \item In \defref{decorated-forests}, the height function $h$ keeps track of the extent to which the iterations have been unrolled: property (a) of $h$ captures the fact that each step of unrolling expresses the coordinates  of $\iter{\vz}{t}$ as a polynomial in $\iter{\vz}{t-1}$, property (b) captures the fact that the unrolling process stops once a polynomial in $\iter{\vz}{0}$ is obtained and property (c) ensures that the unrolling process continues till every non-trivial polynomial in the iterates has been expressed in terms of the initialization $\iter{\vz}{0}$. 
    \item In \defref{valid-tree}, the second requirement (no two siblings have the same color) captures the $\ell_2 \neq \ell_3 \neq \dotsb$ constraint that appears in \eref{one-step-unrolling}.
\end{enumerate}
Next, for any $T \in \W$ and $k_0, k_1, \dotsc, k_T \in \N$, consider:
\begin{align*} 
     \frac{1}{\dim} \sum_{i=1}^\dim \hermite{k_0}(\iter{z}{0}_i) \cdot \hermite{k_1}(\iter{z}{1}_i) \cdot \dotsb \cdot \hermite{k_T}(\iter{z}{T}_i).
\end{align*}
Using the formula in \eref{polynomial-expansion-preliminary} to expand $\hermite{k_0}(\iter{z}{0}_i)$, $\hermite{k_1}(\iter{z}{1}_i),\dotsc ,\hermite{k_T}(\iter{z}{T}_i)$, we immediately obtain the claim of the lemma
\begin{subequations} \label{eq:unrolling-formula-repeat}
\begin{align} 
     &\frac{1}{\dim} \sum_{i=1}^\dim \hermite{k_0}(\iter{z}{0}_i) \cdot \hermite{k_1}(\iter{z}{1}_i) \cdot \dotsb \cdot \hermite{k_T}(\iter{z}{T}_i)  = \nonumber\\ & \hspace{2.8cm} \frac{1}{\dim}\sum_{\substack{F \in \forests{T}{k_0, k_1, \dotsc, k_T}\\ F = (V,E, \height{}, \pweight{}, \qweight{})}} \sum_{\substack{\ell \in [\dim]^V}} \alpha(F) \cdot \valid(F,\ell) \cdot \left( \prod_{u \rightarrow v} m_{\ell_u \ell_v}^{\pweight{v}}\right) \cdot \left(\prod_{\substack{w \in \leaves{F} \cup \rootset_0(F)\\ \qweight{w} \geq 1}} \hermite{\qweight{w}}(\iter{z}{0}_{\ell_w}) \right).
\end{align}
where, for $F = (V, E, \height{}, \pweight{}, \qweight{})$ we define,
\begin{align}
    \alpha(F) \bydef \left( \prod_{\substack{u \in V \backslash (\leaves{F} \cup \rootset_0(F))}} \frac{\sqrt{\qweight{u}!}}{\chrn{u}(F)!} \cdot \prod_{v: u \rightarrow v} \frac{1}{\sqrt{\pweight{v}!}} \right) \cdot \left( \prod_{w \in V \backslash \rootset(F)} \fourier{\height{w}+1}{\pweight{w}}{\qweight{w}}\right).
\end{align}
\end{subequations}

\end{proof}

We conclude this section with the following remark.
\begin{remark} We draw the reader's attention to the following subtle aspects of the definition of decorated forests (\defref{decorated-forests}) and the formula in \lemref{unrolling_1} (reproduced in \eqref{eq:unrolling-formula-repeat}) which are useful to keep in mind. 
\begin{enumerate}
    \item For a decorated forest $F = (V, E, \height{}, \pweight{}, \qweight{})$ the function $\pweight{}$ takes values in $\N$ and hence $\pweight{u} \geq 1$. In contrast, $\qweight{}$ takes values in $\W$ and hence $\qweight{u} \geq 0$. 
    \item The function $\pweight{}$ is not defined for root vertices. 
    \item Trivial roots, like leaves, have no children. However, a trivial root is not a leaf since the definition of a leaf requires the vertex to be a non-root. Trivial roots behave like leaves in some ways and not in others, and hence have to be treated separately from leaves and non-trivial roots:
    \begin{enumerate}
        \item Like leaves but unlike non-trivial roots, a trivial root $u \in \rootset_0(F)$:  (i) contributes a factor $\hermite{\qweight{u}}(\iter{z}{0}_{\ell_u})$ in the polynomial expansion given in \eqref{eq:unrolling-formula-repeat}, (ii) does not contribute to the combinatorial factor which forms the first term in the definition of $\alpha(F)$ in \eqref{eq:unrolling-formula-repeat}, (iii) does not satisfy the conservation property given in \eref{conservation-eq} (cf. \defref{decorated-forests}).
        \item Like non-trivial roots but unlike leaves, $\pweight{u}$ is not defined for a  trivial root $u \in \rootset_0(F)$. Hence, it does not contribute a factor of $\fourier{\height{u}+1}{\pweight{u}}{\qweight{u}}$ in the Definition $\alpha(F)$ in \eqref{eq:unrolling-formula-repeat}.
    \end{enumerate}
    \item A leaf vertex $u \in \leaves{F}$ need not have $\height{u} = 0$. It is possible a leaf $u \in \leaves{F}$ has $\height{u} \geq 1$, if $\qweight{u} = 0$. 
    \item If a vertex $u$ has $\height{u} = 0$ then it must be that $u$ has no children and hence $u \in \rootset_0(F) \cup \leaves{F}$.
\end{enumerate}
\end{remark}
\subsection{Proof of \lemref{key-formula}}
This section is devoted to the proof of the expectation formula (\lemref{key-formula}). 
\begin{proof}[Proof of \lemref{key-formula}]
Since $\mM$ is semi-random, recall from \defref{matrix-ensemble-relaxed} that $\mM = \mS \mPsi \mS$, and hence, $m_{ij} = s_i \psi_{ij} s_j$. Comparing \eref{rearrangement} and \eqref{eq:key-formula}, we see that the formula claimed in the lemma follows if we show:
\begin{align}
    &\E\Big[\prod_{\substack{v \in \leaves{F} \cup \rootset_0(F)\\ \qweight{v} \geq 1}} \hermite{\qweight{v}}(\iter{z}{0}_{\ell_v}) \Big] = \prod_{j=1}^{|\pi|} \E \Big[ \prod_{\substack{v \in (\leaves{F} \cup \rootset_0(F))\cap B_j\\ \qweight{v} \geq 1}} \hermite{\qweight{v}}(Z) \Big], \label{eq:key-formula-intermediate-eq1}\\
    &\E\left[ \prod_{u \rightarrow v} s_{\ell_u}^{\pweight{v}} s_{\ell_v}^{\pweight{v}} \right]  = \prod_{j=1}^{|\pi|} \E\left[ \left( \prod_{r \in (\rootset(F) \cap B_j) \backslash \rootset_0(F)} S^{\qweight{r}} \right)  \cdot \left( \prod_{v \in B_j \cap \leaves{F}} S^{\pweight{v}} \right) \cdot \left( \prod_{\substack{u \in B_j \backslash (\leaves{F} \cup \rootset(F))}} S^{\pweight{u} + \qweight{u}} \right)\right]. \label{eq:key-formula-intermediate-eq2}
\end{align}
In order to obtain formula \eref{key-formula-intermediate-eq1}, we noted that $\iter{\vz}{0} \sim \gauss{\vzero}{\mI_\dim}$ (cf. \assumpref{initialization} and \assumpref{normalization}) and grouped the leaves assigned the same color in order to factorize the expectation over the independent coordinates of $\iter{\vz}{0}$. In order to obtain formula \eref{key-formula-intermediate-eq2} we observe that,
\begin{align*}
    \prod_{u \rightarrow v} s_{\ell_u}^{\pweight{v}} s_{\ell_v}^{\pweight{v}} &\explain{(a)}{=} \left( \prod_{r \in \rootset(F)\backslash \rootset_0(F)}\prod_{v: r \rightarrow v} s_{\ell_r}^{\pweight{v}} \right) \cdot \left( \prod_{\substack{u \in V \backslash (\rootset(F) \cup \leaves{F}) }} s_{\ell_u}^{\pweight{u}} \prod_{v: u \rightarrow v} s_{\ell_u}^{\pweight{v}}\right) \cdot \left( \prod_{u \in \leaves{F}} s_{\ell_u}^{\pweight{u}} \right) \\
    & \explain{(b)}{=}  \left( \prod_{r \in \rootset(F) \backslash \rootset_0(F)} s_{\ell_r}^{\qweight{r}} \right) \cdot \left( \prod_{\substack{u \in V \backslash( \rootset(F) \cup \leaves{F})}} s_{\ell_u}^{\pweight{u}+\qweight{u}} \right) \cdot \left( \prod_{u \in \leaves{F}} s_{\ell_u}^{\pweight{u}} \right),
\end{align*}
In the above display, in step (a), we reorganized the product over edges in order to collect the signs variables $s_{\ell_u}$ corresponding to the same node $u$ together. 
In step (b), we used the fact (cf. \defref{decorated-forests}) that for any node $u \in V \backslash (\rootset_0(F) \cup \leaves{F})$, 
$\qweight{u}  = \sum_{v: u \rightarrow v} \pweight{v}$.

Formula \eref{key-formula-intermediate-eq2} now follows by grouping the nodes assigned the same color in order to factorize the expectation over the independent coordinates of $s_{1:\dim}$. Observe that:
\begin{enumerate}
    \item $\alpha(F) = 0$ if there is a vertex $u \in V \backslash \rootset(F)$ such that $\pweight{u} = 1, \qweight{u} = 1$ or $\pweight{u} = 2, \qweight{u} = 0$. This follows from the assumption that $\E Z \nonlin_{t}(Z) = 0$ (\assumpref{divergence-free}) and $\E \nonlin^2_t(Z) = 1$ (\assumpref{normalization}). Hence if $\pweight{u} = 1, \qweight{u} = 1$ or $\pweight{u} = {2}, \qweight{u} = 0$, we have $\fourier{\height{u}}{\pweight{u}}{\qweight{u}} = 0$ and $\alpha(F) = 0$. 
    \item $\beta_1(T,\pi) = 0$ if there is a vertex $v \in \leaves{F} \cup \rootset_0(F)$ with $q_v \geq 1$ and $|B_{\pi(v)}| = 1$. This holds as $\E \hermite{q}(Z) = 0$ for any $q \geq 1$. 
    \item $\beta_2(T,\pi) = 0$ if there is a block $B$ in the partition $\pi$ such that the sum:
    \begin{align*}
        \left(\sum_{(B \cap \rootset(F)) \backslash \rootset_0(F)} \qweight{u}  \right)+ \left(\sum_{u \in B \backslash ( \leaves{F} \cup \rootset(F))} \pweight{u} + \qweight{u} \right) + \left( \sum_{v \in B \cap \leaves{F}} \pweight{v} \right) 
    \end{align*}
    has odd parity. This follows from the formula for $\beta_2(T,\pi)$ and the fact that $\E S^{w} = 0$ when $S \sim \unif{\{\pm 1\}}$ and $w$ is an odd number.
\end{enumerate}
As a consequence, unless $(F,\pi)$ is a relevant configuration (in the sense of \defref{relevant-config}), we have $\alpha(F) \cdot \valid(F,\pi) \cdot \beta_1(F,\pi) \cdot \beta_2(F,\pi) = 0$. Indeed, the \ref{root-rule} and \ref{sibling-rule} ensure $\valid(F,\pi) \neq 0$, \ref{forbidden-weights-rule} ensures $\alpha(F) \neq 0$, \ref{leaf-rule} and \ref{trivial-root-rule} ensure $\beta_1(F,\pi) \neq 0$ and the \ref{parity-rule} ensures $\beta_2(F,\pi) \neq 0$. This concludes the proof of this lemma.
\end{proof}

\section{Proof of \propref{key-estimate}} \label{sec:proof-key-estimate}
We begin by proving the following useful lemma. 

\begin{lemma}\label{lem:mobius} Let $\iter{\vu}{1}, \iter{\vu}{2}, \dotsc, \iter{\vu}{k}$ be a collection of $k$ vectors in $\R^\dim$. We have,
\begin{align*}
    \left|\sum_{\substack{\ell_{1:k} \in [\dim] \\ \ell_i \neq \ell_j \; \forall \; i \neq j}} \prod_{i=1}^k \iter{u}{i}_{\ell_i} \right| & \leq k^{2k} \cdot \max(\sqrt{\dim} U_\infty, \min(\overline{U},\dim U_\infty))^k,
\end{align*}
where,
\begin{align*}
    \overline{U} \explain{def}{=} \max_{i \in [k]} \left| \sum_{j=1}^\dim \iter{u}{i}_j \right|, \; U_\infty \explain{def}{=} \max_{i \in [k]} \|\iter{\vu}{i}\|_\infty
\end{align*}
\end{lemma}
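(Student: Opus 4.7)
The plan is to express the constrained sum over distinct indices as a signed combination of unrestricted sums via Möbius inversion on the partition lattice of $[k]$, and then to bound each unrestricted sum by separating singleton and non-singleton blocks. For any partition $\pi \in \part{[k]}$, I would set
$$T_\pi \explain{def}{=} \sum_{\substack{\ell_{1:k} \in [\dim]\\ \ell_i = \ell_j \text{ whenever } i \sim_\pi j}} \prod_{i=1}^k \iter{u}{i}_{\ell_i}.$$
Since the color assigned to each block of $\pi$ may range freely over $[\dim]$, this factorizes as $T_\pi = \prod_{B \in \pi} \bigl(\sum_{m=1}^\dim \prod_{i \in B} \iter{u}{i}_m\bigr)$. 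Writing $\hat{0}$ for the finest partition (into singletons), the distinct-index sum on the left-hand side of the lemma equals $\sum_{\pi \in \part{[k]}} \mu(\hat{0}, \pi)\, T_\pi$ by the standard Möbius-inversion formula on the partition lattice.

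Next I would estimate each factor $\sum_{m=1}^\dim \prod_{i \in B} \iter{u}{i}_m$ according to the size of the block. For a singleton $B = \{i\}$, the factor is $\sum_m \iter{u}{i}_m$, which has magnitude at most $\overline{U}$ and is also at most $\dim U_\infty$ by the triangle inequality; hence at most $V \explain{def}{=} \min(\overline{U}, \dim U_\infty)$. For a block $B$ with $|B| \geq 2$, picking any two distinct elements $i_1, i_2 \in B$ and applying Cauchy--Schwarz gives
$$\left|\sum_{m=1}^\dim \prod_{i \in B} \iter{u}{i}_m \right| \leq U_\infty^{|B|-2} \left|\sum_{m=1}^\dim \iter{u}{i_1}_m \iter{u}{i_2}_m \right| \leq U_\infty^{|B|-2} \cdot \dim U_\infty^2 = \dim U_\infty^{|B|} \leq W^{|B|},$$
where $W \explain{def}{=} \sqrt{\dim}\, U_\infty$ and the final inequality uses $\dim \leq \dim^{|B|/2}$ for $|B| \geq 2$. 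Consequently $|T_\pi| \leq V^{s(\pi)}\, W^{k - s(\pi)} \leq \max(V, W)^k$, where $s(\pi)$ denotes the number of singleton blocks of $\pi$.

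Combining the two ingredients, the quantity of interest is at most $\sum_{\pi \in \part{[k]}} |\mu(\hat{0},\pi)| \cdot \max(V, W)^k$. Using the classical formula $|\mu(\hat{0}, \pi)| = \prod_{B \in \pi} (|B|-1)! \leq (k-1)!$ together with the Bell-number bound $|\part{[k]}| = B_k \leq k^k$, this yields the claimed estimate $k^{2k} \max(V, W)^k$. There is no real obstacle here: the main content is in setting up the correct Möbius-inversion identity, after which every remaining step is an elementary norm inequality. The constants can be sharpened if desired, but the stated $k^{2k}$ prefactor is ample for the combinatorial applications of \propref{key-estimate} that follow.
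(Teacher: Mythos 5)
Your proposal is essentially the same argument as the paper's: Möbius inversion on the partition lattice of $[k]$, factorization of each unrestricted sum over blocks, a finer bound for singleton blocks, and the observation that a block of size $\geq 2$ absorbs a $\sqrt{\dim}$ per element. One small slip: the displayed inequality $\bigl|\sum_{m}\prod_{i\in B}\iter{u}{i}_m\bigr|\leq U_\infty^{|B|-2}\bigl|\sum_m \iter{u}{i_1}_m\iter{u}{i_2}_m\bigr|$ is not valid as written (cancellations in the two-index sum can make its right-hand side smaller than the left, e.g.\ $\dim=2$, $\iter{\vu}{1}=(1,1)$, $\iter{\vu}{2}=\iter{\vu}{3}=(1,-1)$); you should first apply the triangle inequality to bring absolute values inside the sum, after which the bound $\dim U_\infty^{|B|}$ follows immediately and Cauchy--Schwarz is not needed. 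With that repair the proof is correct and matches the paper's route.
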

\begin{proof} For any partition $\pi$ of the set $[k]$, we define,
\begin{align*}
    \wcolorings{}{\pi} \explain{def}{=} \{(\ell_1, \ell_2, \dotsc, \ell_k): \pi(i) = \pi(j) \implies \ell_i = \ell_j\}.
\end{align*}
The Mobius Inversion formula (see for e.g. \citep[Lemma 5]{tulino2010capacity}) states that there are coefficients $\mu(\pi)$ such that for any $\dim \in \N$ and any function $f: [\dim]^k \rightarrow \R$, we have,
\begin{align*}
    \sum_{\substack{\ell_{1:k} \in [\dim] \\ \ell_i \neq \ell_j \; \forall \; i \neq j}} f(\ell_1, \ell_2, \dotsc, \ell_k)  &= \sum_{\pi \in \part{[k]}} \mu(\pi) \sum_{\ell_{1:k} \in \wcolorings{}{\pi}}  f(\ell_1, \ell_2, \dotsc, \ell_k).
\end{align*}
Crucially, the same coefficients $\mu(\pi)$ work for any $f$ and any $\dim$. An explicit formula for $\mu(\pi)$ is available and in particular, $|\mu(\pi)|  \leq k^k$ (see for e.g. \citep[Section 5.1.4]{anderson2014asymptotically}). We will apply this formula to the function:
\begin{align*}
    f(\ell_1, \ell_2, \dotsc, \ell_k)  \explain{def}{=} \prod_{i=1}^k \iter{u}{i}_{\ell_i}.
\end{align*}
For a partition $\pi = \{B_1, B_2, \dotsc, B_{\pi}\}$, observe that we can simplify:
\begin{align*}
    \sum_{\ell_{1:k} \in \wcolorings{}{\pi}}  f(\ell_1, \ell_2, \dotsc, \ell_k) & = \sum_{a_1, a_2 \dotsc, a_{|\pi|} \in [\dim]} \prod_{i=1}^{|\pi|} \prod_{j \in B_i} \iter{u}{j}_{a_i} = \prod_{i=1}^{|\pi|} \left( \sum_{a=1}^\dim \prod_{j \in B_i} \iter{u}{j}_{a} \right).
\end{align*}
For any block $B_i$, we have,
\begin{align*}
    \left| \sum_{a=1}^\dim \prod_{j \in B_i} \iter{u}{j}_{a} \right| & \leq \dim U_\infty^{|B_i|}. 
\end{align*}
On the other hand, if a block $B_i$ has cardinality $1$, that is $B_i = \{j\}$ for some $j$, we can also obtain the following estimate:
\begin{align*}
    \left| \sum_{a=1}^\dim \prod_{j \in B_i} \iter{u}{j}_{a} \right| & = \left|\sum_{a=1}^\dim \iter{u}{j}_{a} \right|  \leq \overline{U}. 
\end{align*}
Let $|\pi|_1$ denote the number of blocks in $\pi$ with cardinality $1$. Hence, we have obtained the upper bound,
\begin{align*}
    \left| \sum_{\ell_{1:k} \in \wcolorings{}{\pi}}  f(\ell_1, \ell_2, \dotsc, \ell_k) \right| & \leq \min(\overline{U},\dim U_\infty)^{|\pi|_1} \cdot \dim^{|\pi| - |\pi|_1} \cdot U_\infty^{k - |\pi|_1}.
\end{align*}
Observe that since any block with cardinality more than $1$, has cardinality at least $2$, $$|\pi| \leq |\pi|_1 + (k-|\pi|_1)/2 \leq (k+|\pi|_1)/2.$$ Hence,
\begin{align*}
  \left|  \sum_{\ell_{1:k} \in \wcolorings{}{\pi}}  f(\ell_1, \ell_2, \dotsc, \ell_k) \right| & \leq \min(\overline{U},\dim U_\infty)^{|\pi|_1} \cdot (\sqrt{\dim} U_\infty)^{k-|\pi|_1} \\
  & \leq \max(\sqrt{\dim} U_\infty, \min(\overline{U},\dim U_\infty))^k.
\end{align*}
By the Mobius Inversion formula,
\begin{align*}
    \left|\sum_{\substack{\ell_{1:k} \in [\dim] \\ \ell_i \neq \ell_j \; \forall \; i \neq j}} \prod_{i=1}^k \iter{u}{i}_{\ell_i} \right| & \leq |\part{[k]}| \cdot \left( \max_{\pi \in \part{k}} |\mu(\pi)| \right) \cdot  \max(\sqrt{\dim} U_\infty, \min(\overline{U},\dim U_\infty))^k \\
    & \leq k^{2k} \cdot \max(\sqrt{\dim} U_\infty, \min(\overline{U},\dim U_\infty))^k,
\end{align*}
as claimed. 
\end{proof}
We now present the proof of \propref{key-estimate}.
\begin{proof}[Proof of \propref{key-estimate}]
Note that without loss of generality, we can assume that,
\begin{align*}
    \nullleaves{F,\pi} & = \{v_1, v_1^\prime, v_2, v_2^\prime, \dotsc, v_k, v_k^\prime\}. 
\end{align*}
where $v_i, v_i^\prime$ form a pair of nullifying leaves in the sense of \defref{nullifying}. Let $u_i$ and $u_i^\prime$ denote the parents of $v_i$ and $v_i^\prime$ respectively. Note that the edges $e_i \bydef u_i \rightarrow v_i, \; e_i^\prime \bydef u_i^\prime \rightarrow v_i^\prime$ form a pair of nullifying edges in the sense of \defref{nullifying}. Let $a_i$ denote the color assigned to block $B_i$ by $\ell$. Then we can write,
\begin{align*}
    \gamma(\mPsi; F, \ell)  &= \frac{1}{\dim} \prod_{\substack{e\in E\\ e = u \rightarrow v}} \psi_{a_{\pi(u)},a_{\pi(v)}}^{\pweight{v}}, \\
    \sum_{\ell \in \colorings{}{\pi}} \gamma(\mPsi; F, \ell) & = \sum_{\substack{a_1, a_2, \dotsc, a_{|\pi|}\\ a_i \neq a_j \forall i \neq j}} \frac{1}{\dim} \prod_{\substack{e\in E\\ e = u \rightarrow v}} \psi_{a_{\pi(u)},a_{\pi(v)}}^{\pweight{v}}.
\end{align*}
Without loss of generality let us further assume that $\{v_i, v_i^\prime\}$ occupy the last $k$ blocks. that is, $B_{i+|\pi|-k} = \{v_i, v_i^\prime \}$ for each $i \in [k]$. By the definition of nullifying edges (\defref{nullifying}), each of the colors $a_{|\pi| - k + i} \; i \in [k]$ appear exactly twice in $\gamma(\mPsi; F, \ell)$ with the edges $u_i \rightarrow v_i$ and $u_i^\prime \rightarrow v_i^\prime$. Hence, we can isolate their occurrences as follows:
\begin{align*}
     \gamma(\mPsi; F, \ell) & = \widetilde{\gamma}(\mPsi; F, a_{1}, a_{2}, \dotsc, a_{|\pi|-k}) \cdot \prod_{i=1}^k \left(\psi_{a_{\pi(u_i)}, a_{i+|\pi|-k}} \psi_{a_{\pi(u_i^\prime)}, a_{i+|\pi|-k}} \right),
\end{align*}
where we defined,
\begin{align*}
    \widetilde{\gamma}(\mPsi; F, a_{1}, a_{2}, \dotsc, a_{|\pi|-k}) &\bydef \frac{1}{\dim}\prod_{\substack{e \in E \backslash \{e_{1:k}, e^\prime_{1:k}\}\\ e = u \rightarrow v}}\psi_{a_{\pi(u)},a_{\pi(v)}}^{\pweight{v}}.
\end{align*}
By defining the indices $b_i = a_{i+|\pi| - k}$ for $i \in [k]$, we can obtain, 
\begin{align*}
     \left|\sum_{\ell \in \colorings{}{\pi}} \gamma(\mPsi; F, \ell) \right| &= \left|\sum_{\substack{a_1,  \dotsc, a_{|\pi| - k} \in [\dim] \\ a_i \neq a_j \forall i \neq j}} \widetilde{\gamma}(\mPsi; F, a_1, \dotsc, a_{|\pi| - k}) \cdot \sum_{\substack{b_{1:k} \in [\dim] \backslash \{a_{1}, \dotsc, a_{|\pi| - k}\}\\ b_i \neq b_j \forall i \neq j}} \prod_{i=1}^k \left(\psi_{a_{\pi(u_i)}, b_i} \psi_{a_{\pi(u_i^\prime)}, b_i} \right) \right| \\
     & \leq \sum_{\substack{a_1,  \dotsc, a_{|\pi| - k} \in [\dim] \\ a_i \neq a_j \forall i \neq j}} |\widetilde{\gamma}(\mPsi; F, a_1, \dotsc, a_{|\pi| - k})| \cdot \left| \sum_{\substack{b_{1:k} \in [\dim] \backslash \{a_{1}, \dotsc, a_{|\pi| - k}\}\\ b_i \neq b_j \forall i \neq j}} \prod_{i=1}^k \left(\psi_{a_{\pi(u_i)}, b_i} \psi_{a_{\pi(u_i^\prime)}, b_i} \right) \right|.
\end{align*}
Defining,
\begin{align*}
    \widetilde{\lambda}(\mPsi; F, a_1, \dotsc, a_{|\pi| - k}) \bydef \sum_{\substack{b_{1:k} \in [\dim] \backslash \{a_{1}, \dotsc, a_{|\pi| - k}\}\\ b_i \neq b_j \forall i \neq j}} \prod_{i=1}^k \left(\psi_{a_{\pi(u_i)}, b_i} \psi_{a_{\pi(u_i^\prime)}, b_i} \right),
\end{align*}
we can rewrite the bound obtained above as,
\begin{align}
     &\left|\sum_{\ell \in \colorings{}{\pi}} \gamma(\mPsi; F, \ell) \right|  \leq \sum_{\substack{a_1,  \dotsc, a_{|\pi| - k} \in [\dim] \\ a_i \neq a_j \forall i \neq j}} |\widetilde{\gamma}(\mPsi; F, a_1, \dotsc, a_{|\pi| - k})| \cdot |\widetilde{\lambda}(\mPsi; F, a_1, \dotsc, a_{|\pi| - k})|  \nonumber \\ 
     & \leq \dim^{|\pi| - k} \cdot \left( \max_{\substack{a_1, \dotsc, a_{|\pi|-k} \in [\dim] \\ a_i \neq a_j \forall i \neq j}} |\widetilde{\gamma}(\mPsi; F, a_1, \dotsc, a_{|\pi| - k})| \right) \cdot \left( \max_{\substack{a_1, \dotsc, a_{|\pi|-k} \in [\dim] \\ a_i \neq a_j \forall i \neq j}} |\widetilde{\lambda}(\mPsi; F, a_1, \dotsc, a_{|\pi| - k})| \right).  \label{eq:universal-forest-intermediate-estimate}
\end{align}
Next, we bound $|\widetilde{\gamma}(\mPsi; F, a_1, \dotsc, a_{|\pi| - k})|$ and $|\widetilde{\lambda}(\mPsi; F, a_1, \dotsc, a_{|\pi| - k})|$. The following bound on $|\widetilde{\gamma}|$  will be sufficient for our purposes:
\begin{align}\label{eq:gamma-tilde-bound}
    \left| \widetilde{\gamma}(\mPsi; F, a_1, \dotsc, a_{|\pi| - k})\right| & \bydef \left|  \frac{1}{\dim}\prod_{\substack{e \in E \backslash \{e_{1:k}, e^\prime_{1:k}\}\\ e = u \rightarrow v}}\psi_{a_{\pi(u)},a_{\pi(v)}}^{\pweight{v}} \right|  \leq \|\mPsi\|_\infty^{\alpha} \cdot \dim^{-1},
\end{align}
where,
   $ \alpha \bydef \sum_{v \in V \backslash (\rootset(F) \cup \nullleaves{F,\pi})} \pweight{v}$.
In order to control $|\widetilde{\lambda}(\mPsi; F, a_1, \dotsc, a_{|\pi| - k})|$ we will leverage the constraint $ \|\mPsi\mPsi\tran - \mI_\dim\|_\infty  \lesssim \dim^{-\frac{1}{2} + \epsilon}$ by applying \lemref{mobius} for a suitable choice of vectors $\iter{\vu}{1:k}\in \R^{\dim -(|\pi| -k)}$. We will index the entries of these vectors using the set $[\dim] \backslash \{a_1, a_2, \dotsc, a_{|\pi| - k}\}$. The entries of these vectors are defined as follows:
\begin{align*}
    \iter{u}{i}_j \bydef \psi_{a_{\pi(u_i)},j} \cdot \psi_{a_{\pi(u_i^\prime)},j} \; \forall \; j \; \in \; [\dim] \backslash \{a_1, a_2, \dotsc, a_{|\pi| - k}\}.
\end{align*}
In order to apply \lemref{mobius}, we need to bound:
\begin{align*}
    U_\infty &\bydef \max_{i \in [k]} \|\iter{\vu}{i}\|_\infty \leq \|\mPsi\|^2_\infty, 
\end{align*}
and,
\begin{align*}
    \overline{U} &\bydef \max_{i \in [k]} \left| \sum_{j\in [\dim] \backslash \{a_1, a_2, \dotsc, a_{|\pi| - k}\}} \iter{u}{i}_j\right| \\
    & = \max_{i \in [k]} \left| \sum_{j \in [\dim] \backslash \{a_1, a_2, \dotsc, a_{|\pi| - k}\}} \psi_{a_{\pi(u_i)},j} \cdot \psi_{a_{\pi(u_i^\prime)},j}\right| \\
    & \leq \max_{i \in [k]} \left(\left| \sum_{j =1}^\dim \psi_{a_{\pi(u_i)},j} \cdot \psi_{a_{\pi(u_i^\prime)},j}\right|  + \sum_{j \in \{a_1, a_2, \dotsc, a_{|\pi| - k}\}} |\psi_{a_{\pi(u_i)},j}| \cdot |\psi_{a_{\pi(u_i^\prime)},j}| \right) \\
    & \explain{}{\leq} \max_{i \in [k]} \left| (\mPsi\mPsi\tran)_{a_{\pi(u_i)},a_{\pi(u_i^\prime)}} \right| + (|\pi|-k)\|\mPsi\|_\infty^2 \\
    & \explain{(a)}{\leq} \max_{i \neq j} |(\mPsi\mPsi\tran)_{ij}| + (|\pi|-k)\|\mPsi\|_\infty^2\\
    & \leq \|\mPsi\mPsi\tran - \mI_\dim \|_\infty + (|\pi|-k)\|\mPsi\|_\infty^2.
\end{align*}
In the above display, the step (a) relies on the following fact: as $u_i \rightarrow v_i$ and $u_i^\prime \rightarrow v_i^\prime$ is a pair of nullifying edges (see \defref{nullifying}), $\pi(u_i) \neq \pi(u_i^\prime) \implies a_{\pi(u_i)} \neq a_{\pi(u_i^\prime)}$. Hence, by \lemref{mobius},
\begin{align}\label{eq:lambda-tilde-bound}
    |\widetilde{\lambda}(\mPsi; F, a_1, \dotsc, a_{|\pi| - k})| & \leq k^{2k} \cdot \max\left(\sqrt{\dim} \|\mPsi\|_\infty^2, \min\left(\|\mPsi\mPsi\tran - \mI_\dim\|_\infty+ (|\pi|-k)\|\mPsi\|_\infty^2, \dim \|\mPsi\|_\infty^2\right)\right)^k.
\end{align}
Plugging the bounds on $|\widetilde{\gamma}|, |\widetilde{\lambda}|$ obtained in \eref{gamma-tilde-bound} and \eref{lambda-tilde-bound} into \eref{universal-forest-intermediate-estimate}, we obtain,
\begin{align*}
     &\left|\sum_{\ell \in \colorings{}{\pi}} \gamma(\mPsi; F, \ell) \right|  \leq \\&\hspace{1cm}\dim^{|\pi| - k-1} \cdot \|\mPsi\|_\infty^\alpha \cdot \left(k^{2k} \cdot \max\left(\sqrt{\dim} \|\mPsi\|_\infty^2, \min\left(\|\mPsi\mPsi\tran - \mI_\dim\|_\infty+ (|\pi|-k)\|\mPsi\|_\infty^2, \dim \|\mPsi\|_\infty^2\right)\right)^k\right).
\end{align*}
Using the assumptions $\|\mPsi\|_\infty \lesssim \dim^{-\frac{1}{2} + \epsilon}$ and $\|\mPsi\mPsi\tran - \mI_\dim\|_\infty \lesssim \dim^{-\frac{1}{2} + \epsilon}$, we obtain the estimate,
\begin{align*}
    \left|\sum_{\ell \in \colorings{}{\pi}} \gamma(\mPsi; F, \ell) \right| & \lesssim \dim^{(|\pi| - k - 1) - \frac{\alpha}{2} - \frac{k}{2} + (2k+\alpha) \epsilon} \lesssim \dim^{-\exponent(F,\pi) + (2k + \alpha) \epsilon},
\end{align*}
where we defined $\exponent(F,\pi) = \alpha/2 + k/2 + 1 - (|\pi|-k)$. We can rewrite $\exponent(F,\pi)$ as follows:
\begin{align*}
    \exponent(F,\pi) & \explain{def}{=} 1 + \frac{\alpha}{2} - (|\pi| - k) + \frac{k}{2} \\& \explain{(c)}{=}   1 + \frac{\alpha}{2} - |\pi| + \frac{|\nullleaves{F,\pi}|}{2} + \frac{|\nullleaves{F,\pi}|}{4} \\
    &\explain{(d)}{=} 1 + \left(\frac{1}{2} \sum_{v \in V \backslash (\rootset(F) \cup \nullleaves{F,\pi})} \pweight{v} \right) + \frac{|\nullleaves{F,\pi}|}{2} - |\pi| + \frac{|\nullleaves{F,\pi}|}{4} \\
    & \explain{(e)}{=} 1+ \left(\frac{1}{2} \sum_{v \in V \backslash  \rootset(F)} \pweight{v} \right) - |\pi| + \frac{|\nullleaves{F,\pi}|}{4}.
\end{align*}
In the above display equality (c) follows because $\nullleaves{F,\pi} = 2k$, equality (d) follows from the definition of $\alpha$ in \eref{gamma-tilde-bound} and, equality (e) follows by observing that for any nullifying leaf $v \in \nullleaves{F,\pi}$, we have $\pweight{v} = 1$ (cf. \defref{nullifying}). Furthermore, since $\epsilon > 0$ was arbitrary replacing $\epsilon$ by $\epsilon/(2k+\alpha)$ completes the proof.
\end{proof}
\section{Proof of \propref{decomposition}}
\label{sec:proof-decomposition}
This section is devoted to the proof of \propref{decomposition}, which shows that any relevant configuration can be expressed as a linear combination of \universal configurations. We begin by showing that relevant configurations are ``almost'' \universal in the sense that they satisfy all properties of a \universal configuration (cf. \defref{universal-configuration}) with the sole exception of \ref{paired-leaf-property}. We call such configurations semi-\universal configurations.

\begin{definition} \label{def:semi-universal-config} A decorated forest $F$ and  a partition $\pi$ of its vertices form a semi-\universal configuration if they satisfy:
\begin{description}
    \myitem{\ref{root-property}}
    \myitem{\texttt{Sibling Property}}\label{sibling-property}: There are no siblings $u,v \in V \backslash \rootset(F)$ such that $B_{\pi(u)} = B_{\pi(v)} = \{u,v\}$.
    \myitem{\ref{singleton-leaf-property}}
    \myitem{\ref{forbidden-weights-property}}
    \myitem{\ref{parity-property}}
\end{description}
\end{definition}
\begin{lemma} Any relevant configuration $(F,\pi)$ is semi-\universal.
\end{lemma}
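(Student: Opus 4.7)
The plan is to verify each of the five defining properties of a semi-\universal configuration (\defref{semi-universal-config}) directly from the corresponding rules of a relevant configuration (\defref{relevant-config}). Three of these verifications are essentially immediate: the \ref{root-property} is literally the \ref{root-rule}; the \ref{parity-property} is literally the \ref{parity-rule}; and the \ref{forbidden-weights-property} is a strict weakening of the \ref{forbidden-weights-rule} (the latter imposes the forbidden weight condition on \emph{all} non-root vertices, while the former only on singleton-block non-root vertices). For the \ref{sibling-property}, observe that if $u,v\in V\backslash \rootset(F)$ were siblings with $B_{\pi(u)} = B_{\pi(v)} = \{u,v\}$, then in particular $\pi(u)=\pi(v)$, contradicting the \ref{sibling-rule}.

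The only slightly substantive point is the \ref{singleton-leaf-property}, which asserts that any leaf $v\in\leaves{F}$ with $|B_{\pi(v)}|=1$ must satisfy $\pweight{v}\geq 4$. Here the plan is to combine three of the rules in \defref{relevant-config}. First, the \ref{leaf-rule} forces $\qweight{v}=0$, since otherwise we would have a leaf with $\qweight{v}\geq 1$ and singleton block. Second, since $\qweight{v}=0$, the \ref{forbidden-weights-rule} excludes $\pweight{v}=2$; trivially it also excludes $\pweight{v}=0$ because $\pweight{} : V\backslash\rootset(F)\rightarrow \N$. Third, applying the \ref{parity-rule} to the singleton block $B=\{v\}$, for which $v\in B\cap\leaves{F}$, the parity sum reduces to $\pweight{v}$, which must therefore be even. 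An even positive integer not equal to $2$ is at least $4$, which yields $\pweight{v}\geq 4$ as required.

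I do not anticipate any real obstacle; the lemma is essentially a bookkeeping exercise confirming that \defref{relevant-config} is strong enough to imply \defref{semi-universal-config}. The only mild subtlety is remembering that the \ref{leaf-rule} only pertains to leaves (not trivial roots) and that $\pweight{}$ is undefined on roots, so the parity argument for the \ref{singleton-leaf-property} is applied only to the block of a genuine leaf.
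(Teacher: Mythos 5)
Your proof is correct and follows essentially the same route as the paper's: verify each of the five properties, with the singleton leaf property deduced from the leaf rule (giving $\qweight{v}=0$), the forbidden weights rule (excluding $\pweight{v}=2$), and the parity rule applied to the singleton block (excluding odd $\pweight{v}$). The only cosmetic difference is that you phrase the parity constraint as "$\pweight{v}$ must be even" where the paper writes "$\pweight{v}\neq 1,3$"; both yield $\pweight{v}\geq 4$.
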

\begin{proof}
Recall the definition of relevant configurations from \defref{relevant-config}. We verify each of the requirements for a \universal configuration (\defref{semi-universal-config}):
\begin{enumerate}
    \item The \ref{root-property} is identical to the \ref{root-rule}. 
    \item The \ref{sibling-property} is implied by the \ref{sibling-rule}.
    \item Let $v \in \leaves{F}$ be such that $|B_{\pi(v)}| = 1$. By the \ref{leaf-rule}, we must have $\qweight{v} = 0$. By the \ref{parity-rule} $\pweight{v} \neq 1, 3$ and by the \ref{forbidden-weights-rule}, $\pweight{v} \neq 2$. Hence, $\pweight{v} \geq 4$, which verifies the \ref{singleton-leaf-property}. 
    \item The \ref{forbidden-weights-rule} implies the \ref{forbidden-weights-property}. 
    \item The \ref{parity-rule} is identical to the \ref{parity-property}. 
\end{enumerate}
\end{proof}

The above lemma shows that in order to convert a relevant configuration into a \universal configuration, one needs to eliminate all removable pairs of edges. The following lemma shows how this can be done: if a semi-\universal configuration $(F,\pi)$ has a pair of removable edges $u \rightarrow v$ and $u^\prime \rightarrow v^\prime$, then the structure of this configuration can be simplified by removing the block $\{v,v^\prime\}$ from $\pi$. 

\begin{lemma}(Removal Step) Let $(F,\pi)$ be a semi-\universal configuration with at least one pair of removable edges. Then there exist semi-\universal configurations $\{(\iter{F}{i}, \iter{\pi}{i}): i \in \{0, 1, 2, \dotsc, |\pi| - 1\}\}$ with $|\iter{\pi}{i}| = |\pi| - 1$ such that, 
\begin{align*}
    \sum_{\ell \in \colorings{}{\pi}} \gamma(\mPsi; F,\ell) =\sum_{\ell \in \colorings{}{\pi_0}} \gamma(\mPsi; F_0,\ell) -  \sum_{i=1}^{|\pi| - 1}   \sum_{\ell \in \colorings{}{\iter{\pi}{i}}} \gamma(\mPsi; \iter{F}{i},\ell),
\end{align*}
for any matrix $\mPsi \in \R^{\dim \times \dim}$ with $(\mPsi\mPsi\tran)_{ii} = 1$ for any $i \in [\dim]$. Furthermore, if the semi-\universal configuration has exactly one non-trivial root $(|\rootset(F) \backslash \rootset_0(F)| = 1)$, then each of semi-\universal configurations $\iter{F}{i}$ also has exactly one non-trivial root $(|\rootset(\iter{F}{i}) \backslash \rootset_0(\iter{F}{i})| = 1)$.   
\end{lemma}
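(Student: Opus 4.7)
The plan is to sum over the color of the removable block $\{v, v'\}$ and invoke the identity $(\mPsi\mPsi\tran)_{aa} = 1$, where $a$ denotes the common color of the block $\pi(u) = \pi(u')$. Enumerating the colorings $\ell \in \colorings{}{\pi}$ by the distinct color choices $(a_1, \dotsc, a_{|\pi|-1}, b)$ for the $|\pi|$ blocks (with $b$ the color of $\{v, v'\}$), the factor contributed by the two removable edges is $\psi_{a,b}^2$, and I would write
$$\sum_{b \notin \{a_1, \dotsc, a_{|\pi|-1}\}} \psi_{a, b}^2 \;=\; (\mPsi\mPsi\tran)_{aa} \;-\; \sum_{i=1}^{|\pi|-1} \psi_{a, a_i}^2 \;=\; 1 \;-\; \sum_{i=1}^{|\pi|-1} \psi_{a, a_i}^2.$$
The leading ``$1$'' will correspond to the configuration $(F_0, \pi_0)$ obtained by deleting $v, v'$ together with the edges $u \to v, u' \to v'$ from $F$ (updating the $q$-values of $u$ and $u'$ by subtracting $p_v = p_{v'} = 1$ from the conservation law, or resetting them to $0$ if the vertex degenerates into a leaf so as to respect property 1c of \defref{decorated-forests}), and removing the block $\{v, v'\}$ from $\pi$. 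For each $i \in \{1, \dotsc, |\pi|-1\}$, the correction term $-\psi_{a, a_i}^2$ will correspond to $(F^{(i)}, \pi^{(i)})$ where $F^{(i)} = F$ is unchanged and $\pi^{(i)}$ is obtained from $\pi$ by merging $\{v, v'\}$ into $B_i$.

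The bulk of the work is to verify that each resulting configuration is again semi-\universal in the sense of \defref{semi-universal-config}. For $(F^{(i)}, \pi^{(i)})$ with $i \geq 1$, the forest is unchanged and the enlarged block $B_i \cup \{v, v'\}$ has at least three elements, so no sibling pair inside it can form a two-element block; the parity contribution of the new block changes by $p_v + p_{v'} = 2$, which is even; and the singleton blocks of $\pi^{(i)}$ form a subset of those of $\pi$, so the forbidden-weights, singleton-leaf and root properties are inherited. For $(F_0, \pi_0)$, removing the block $\{v, v'\}$ (whose parity sum equals $2$) is parity-neutral, while on the block containing $u$ and $u'$ the parity contributions of $u$ and $u'$ each change by exactly $-1$ (whether they stay as non-leaves with $q$ decremented, or turn into leaves/trivial roots with $q$ reset to $0$), for a net even change of $-2$. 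The remaining semi-\universal properties of $\pi_0$ are again inherited from $\pi$ since no new singleton blocks are created.

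The hard part will be preserving the count $|\rootset(F) \setminus \rootset_0(F)|$ of non-trivial roots when passing from $F$ to $F_0$ (the case $F^{(i)} = F$ being automatic since the forest is unchanged). The worry is that if $u$ is a non-trivial root whose only child is $v$, then deleting $v$ turns $u$ into a trivial root, dropping the count. My plan here is a purely structural observation: if $u$ is assumed to be the \emph{unique} non-trivial root of $F$ and $v$ is $u$'s only child, then the set of non-root vertices of $F$ reachable from any root is exactly $\{v\}$ (trivial roots have no descendants, and $u$'s subtree equals $\{u, v\}$). In particular, there is no candidate non-root vertex $u' \neq u$ in the root block $\pi(u)$ that could serve as the parent of $v'$, which contradicts the assumed existence of the removable pair. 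Hence in the unique non-trivial root setting, $u$ is forced to have at least one other child and so remains non-trivial in $F_0$; the analogous argument applies symmetrically to $u'$, and the non-trivial root count is preserved.
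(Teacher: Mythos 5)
Your proposal matches the paper's proof in essentially every respect: the algebraic identity $\sum_{b}\psi_{a,b}^2 = 1 - \sum_i \psi_{a,a_i}^2$, the splitting into $(F_0,\pi_0)$ (delete the block $\{v,v'\}$ and decrement $q_u, q_{u'}$) and $(F^{(i)},\pi^{(i)}) = (F,\text{merge})$, and the verification of the semi-simple properties (size-$\geq 3$ merged block cannot violate the sibling property; parity change is $-2$ on the $\{u,u'\}$ block whether or not $u,u'$ degenerate to leaves/trivial roots; singleton-block properties inherited because no new singleton blocks appear and $u,u'$ sit in a block of size $\geq 2$). The one place you deviate from the paper is the preservation of the unique non-trivial root: the paper shows $F_0$ has at least one surviving edge (because by the sibling property $u_\star \neq u_\star'$, and since at most one of them can be a root, the other one's edge to its parent is retained), while you argue by contradiction that the unique non-trivial root $u_\star$ cannot have $v_\star$ as its only child, since then $v_\star$ would be the only non-root vertex of $F$ and there could be no distinct leaf $v_\star'$ to complete the removable pair. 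Both arguments are valid and prove the same fact; yours is slightly more direct in the degenerate case, the paper's is slightly shorter globally.

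One small phrasing slip: you say "there is no candidate \emph{non-root} vertex $u' \neq u$ in $\pi(u)$ that could serve as the parent of $v'$", but $u'$ could \emph{a priori} be a root (trivial roots also live in the root block $B_1$). You do cover this parenthetically ("trivial roots have no descendants"), but the cleaner contradiction is at the level of $v'$ rather than $u'$: $v'$ must be a non-root vertex distinct from $v_\star$, yet $v_\star$ is the only non-root vertex. Similarly, the parenthetical "resetting them to $0$ if the vertex degenerates into a leaf so as to respect property 1c" is unnecessary: decrementing $q_{u_\star}$ by $1$ already gives $0$ when $v_\star$ was the only child (since then $q_{u_\star} = p_{v_\star} = 1$ by conservation), so there is no case distinction, and property 1c never bites. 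These are presentational, not gaps.
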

\begin{proof}
Let $e_\star = u_\star \rightarrow v_\star$ and $e_\star^\prime = u^\prime_\star \rightarrow v^\prime_\star$ be the pair of removable edges. Since $(F,\pi)$ is semi-\universal it satisfies the \ref{sibling-property} and hence $u_\star \neq u_\star^\prime$. Without loss of generality, we assume that $\pi = \{B_1, B_2, \dotsc, B_{|\pi|}\}$ with,\begin{align*}
    B_1 & = \{v_\star,  v_\star^\prime \}, \; \{u_\star, u_\star^\prime\} \subset B_2.
\end{align*}
In particular, note that $|\pi| \geq 2$. Let $k_j$ denote the value of $\ell$ on any node in $B_j$. We can then write $\gamma(\mPsi; F,\ell)$ in terms of $k$ as:
\begin{align*}
    \gamma(\mPsi; F,\ell) \explain{def}{=} \frac{1}{\dim} \prod_{\substack{e \in E \\ e = u \rightarrow v}} (\psi)_{\ell_u\ell_v}^{p_{v}} = \frac{1}{\dim} \prod_{\substack{e \in E \\ e = u \rightarrow v}} \psi_{k_{\pi(u)}k_{\pi(v)}}^{p_{v}}.
\end{align*}
Note that in the product above the index $k_1$ appears exactly twice with the two edges $u_\star \rightarrow v_\star$ and $u_\star^\prime \rightarrow v_\star^\prime$ since $B_1 = \{v_\star, v_\star^\prime\}$ and $v_\star, v_\star^\prime$ are leaves. We extract these two terms:
\begin{align*}
    \gamma(F,\ell) & = \widetilde{\gamma}(F ; (k_2, k_3, \dots , k_{|\pi|})) \cdot \psi_{k_{2} k_1}^2, \\ \widetilde{\gamma}(F ; (k_2, k_3, \dots , k_{|\pi|})) &\explain{def}{=}  \frac{1}{\dim} \prod_{\substack{e \in E \\ e \neq e_\star, e \neq e_\star^\prime}} \psi_{k_{\pi(u)}k_{\pi(v)}}^{p_{{v}}}.
\end{align*}
Now we can compute,
\begin{align*}
     \sum_{ \substack{\ell \in \colorings{}{\pi}} }  \gamma(F,\ell) & =  \sum_{\substack{k_1, k_2, \dotsc, k_{|\pi|}\\ k_i \neq k_j \forall i \neq j}}  \widetilde{\gamma}(F ; (k_2, k_3, \dots , k_{|\pi|})) \cdot \psi_{k_{2} k_1} \psi_{k_{2} k_1}  \\
     & = \sum_{\substack{k_2, \dotsc, k_{|\pi|}\\ k_\alpha \neq k_\beta \forall \alpha \neq \beta}}  \widetilde{\gamma}(F ; (k_2, k_3, \dots , k_{|\pi|})) \cdot \left(\sum_{k_1 \in [\dim]\backslash\{k_2, k_3, \dotsc, k_{|\pi| -1}\}} \psi_{k_{2} k_1}^2 \right) \\
     & = \sum_{\substack{k_2, \dotsc, k_{|\pi|}\\ k_\alpha \neq k_\beta \forall \alpha \neq \beta}}  \widetilde{\gamma}(F ; (k_2, k_3, \dots , k_{|\pi|})) \cdot \left(\sum_{k_1 \in [\dim]} \psi_{k_{2} k_1}^2- \sum_{i=2}^{|\pi|} \psi_{k_2 k_i}^2 \right) \\
     & = \sum_{\substack{k_2, \dotsc, k_{|\pi|}\\ k_\alpha \neq k_\beta \forall \alpha \neq \beta}}  \widetilde{\gamma}(F ; (k_2, k_3, \dots , k_{|\pi|})) \cdot \left(\underbrace{(\mPsi\mPsi\tran)_{k_2,k_2}}_{=1}- \sum_{i=2}^{|\pi|} \psi_{k_2 k_i}^2 \right)
\end{align*}
Hence,
\begin{align} \label{eq:intermediate-decomp}
     \sum_{ \substack{\ell \in \colorings{}{\pi}} }  \gamma(F,\ell) & = \sum_{\substack{k_2, \dotsc, k_{|\pi|}\\ k_\alpha \neq k_\beta \forall \alpha \neq \beta}}  \widetilde{\gamma}(F ; (k_2, k_3, \dots , k_{|\pi|})) - \sum_{i=2}^{|\pi|} \sum_{\substack{k_2, \dotsc, k_{|\pi|}\\ k_\alpha \neq k_\beta \forall \alpha \neq \beta}}  \widetilde{\gamma}(F ; (k_2, k_3, \dots , k_{|\pi|})) \cdot \psi_{k_2 k_i}^2.
\end{align}
Next we define the configurations $(\iter{F}{i}, \iter{\pi}{i})$ for $i = 0, 1, \dotsc, |\pi| - 1$. Suppose that the original decorated forest was given by $F = (V, E, \height{}, \pweight{}, \qweight{})$. Then,
\begin{enumerate}
    \item We define the decorated forest $\iter{F}{0} = (\iter{V}{0}, \iter{E}{0}, \iter{\height{}}{0}, \iter{\pweight{}}{0}, \iter{\qweight{}}{0})$ as follows:
    \begin{enumerate}
        \item We set $\iter{V}{0} = V \backslash \{v_\star, v_\star^\prime\}$.
        \item We set $\iter{E}{0} = E \backslash \{e_\star, e_\star^\prime\}$.
    \end{enumerate}
    This defines a directed graph $(\iter{V}{0}, \iter{E}{0})$. It is straightforward to check that since $F$ was a directed forest, $\iter{F}{0}$ is also a directed forest with root set $\rootset(\iter{F}{0}) = \rootset(F)$. Next we define the functions $\iter{h}{0}, \iter{\pweight{}}{0}, \iter{\qweight{}}{0}$:
    \begin{enumerate}
        \setcounter{enumii}{2}
        \item We set $\iter{\height{}}{0}_v = \height{v}$ for any $v \in \iter{V}{0}$. 
        \item We set $\iter{\pweight{}}{0}_v = \pweight{v}$ for any $v \in \iter{V}{0} \backslash \rootset(\iter{F}{0})$. 
        \item We set  $\iter{\qweight{}}{0}_v = \qweight{v}$ for any $v \in \iter{V}{0} \backslash \{u_\star, u_\star^\prime\}$. and $\iter{\qweight{}}{0}_{u_\star} = \qweight{u_\star} - 1$ and $\iter{\qweight{}}{0}_{u_\star^\prime} = \qweight{u_\star^\prime} - 1$ \footnote{This ensures $\iter{F}{0}$ satisfies the conservation equation \eref{conservation-eq}}.
    \end{enumerate}
    It is straightforward to check that $\iter{F}{0}$ is a valid decorated forest (in the sense of \defref{decorated-forests}). We set $\iter{\pi}{0} = \{B_2, B_3, \dotsc, B_{|\pi|}\}$.
    \item For $i \geq 1$, we set $\iter{F}{i} = F$ and $\iter{\pi}{i} = \{B_2, B_3, \dotsc, B_{i}, B_{i+1} \cup\{v_\star, v_\star^\prime\}, B_{i+2}, \dotsc, B_{|\pi|}\}$.
\end{enumerate}
Observe that \eref{intermediate-decomp} can be re-expressed as:
\begin{align*}
    \sum_{\ell \in \colorings{}{\pi}} \gamma(\mPsi; F,\ell) =\sum_{\ell \in \colorings{}{\pi_0}} \gamma(\mPsi; F_0,\ell) -  \sum_{i=1}^{|\pi| - 1}   \sum_{\ell \in \colorings{}{\iter{\pi}{i}}} \gamma(\mPsi; \iter{F}{i},\ell).
\end{align*}
In order to complete the proof of the claim, we need to verify that the resulting configurations $(\iter{F}{i}, \iter{\pi}{i})$ are semi-\universal. Suppose the configurations are specified by \begin{align*}\iter{F}{i} &= (\iter{V}{i}, \iter{E}{i}, \iter{\height{}}{i}, \iter{\pweight{}}{i}, \iter{\qweight{}}{i}), \\
\iter{\pi}{i} &= \{\iter{B}{i}_1, \iter{B}{i}_2, \dotsc, \iter{B}{i}_{|\iter{\pi}{i}|}\}.
\end{align*} 
We begin by making the following observations for any $i \geq 0$:
\begin{description}
\item [Observation 1: ] For any vertex $v \in \iter{V}{i}$, either $\iter{B}{i}_{\iter{\pi}{i}(v)} = B_{\pi(v)}$ (that is, the block of $v$ is unchanged) or $|\iter{B}{i}_{\iter{\pi}{i}(v)}| \geq 3$ (that is, the block of $v$ has cardinality at least $3$). 
\item [Observation 2: ] It is possible that after the deletion of edges $u_\star \rightarrow v_\star$ and $u_\star^\prime \rightarrow v_\star^\prime$, $u_\star$ or $u_\star^\prime$ become leaves in $\iter{F}{0}$. Hence, $\leaves{\iter{F}{0}} \subset \leaves{F} \cup \{u_\star, u_\star^\prime\}$. However since $\iter{F}{i} = F$ for $i \geq 1$,  $\leaves{\iter{F}{i}} = \leaves{F}$ for $i \geq 1$.
\item [Observation 3: ] $u_\star$ is a leaf in $\iter{F}{0}$ iff $\qweight{u_\star} = 1$. Furthermore, in this situation $\iter{\qweight{}}{0}_{u_\star} = 0$. The same observation holds for $u_\star^\prime$.   
\item [Observation 4: ] It is possible that after the deletion of edges $u_\star \rightarrow v_\star$ and $u_\star^\prime \rightarrow v_\star^\prime$, $u_\star$ or $u_\star^\prime$ become trivial roots in $\iter{F}{0}$. Hence, $\rootset_0{(\iter{F}{0})} \subset \rootset({F}_0) \cup \{u_\star, u_\star^\prime\}$. However since $\iter{F}{i} = F$ for $i \geq 1$, $\rootset_0{(\iter{F}{i})} = \rootset_0({F})$ for $i \geq 1$.
\item [Observation 5: ] $u_\star$ is a trivial root $\iter{F}{0}$ iff $u_\star \in \rootset(F)$ and $\qweight{u_\star} = 1$. Furthermore, in this situation $\iter{\qweight{}}{0}_{u_\star} = 0$. The same observation holds for $u_\star^\prime$.  
\end{description}

In order to check $\{(\iter{F}{i}, \iter{\pi}{i})\}$ is a collection of semi-\universal configurations, we check each of the requirements of \defref{semi-universal-config}:
\begin{enumerate}
    \item Since $\rootset(\iter{F}{i}) = \rootset(F)$ for $i \geq 0$ and the partitions $\iter{\pi}{i}$ are contructed by either deleting a block of $\pi$ (consisting only of leaves $\{v_\star, v_\star^\prime\}$) entirely or by merging two blocks of $\pi$, $(\iter{F}{i}, \iter{\pi}{i})$ automatically satisfy the \ref{root-property} for $i \geq 0$.  
    \item In order to verify the \ref{sibling-property}, for the sake of contradiction assume that  $\iter{\pi}{i}$ has a block of the form $\{u,v\}$ where $\{u,v\}$ are siblings in $\iter{F}{i}$. Observe $\{u,v\}$ are also siblings in $F$. Furthermore, $\{u,v\}$ is also a block of $\pi$ (Observation 2). This contradicts the \ref{sibling-property} of $(F,\pi)$. 
    \item In order to verify the \ref{singleton-leaf-property}, for the sake of contradiction, assume that there is a leaf $v \in \leaves{\iter{F}{i}}$ with $|\iter{B}{i}_{\iter{\pi}{i}(v)}| = 1$ and $\iter{\pweight{}}{i}_v \in \{1,2,3\}$. By Observation 1, $|B_{\pi(v)}| = 1$. Observe that $v \not\in \{u_\star, u_\star^\prime, v_\star, v_\star^\prime\}$ since these vertices belong to blocks of size at least $2$ in $\pi$. Hence $v \in \leaves{F}$ (see Observation 2), $\pweight{v} = \iter{\pweight{}}{i}_v \in \{1,2,3\}$ and $|B_{\pi(v)}| = 1$ (see Observation 1). The existence of such a $v$ contradicts the \ref{singleton-leaf-property} of $(F,\pi)$. 
    \item In order to verify the \ref{forbidden-weights-property}, for the sake of contradiction suppose that there is a $v \in \iter{V}{i} \backslash \rootset(\iter{F}{i})$ such that $|\iter{B}{i}_{\iter{\pi}{i}(v)}| = 1$ and either $(\iter{\pweight{}}{i}_v = 2, \iter{\qweight{}}{i}_v = 0)$ or $(\iter{\pweight{}}{i}_v = 1, \iter{\qweight{}}{i}_v = 1)$.  By Observation 1, $|B_{{\pi}(v)}| = 1$ and hence $v \not\in \{u_\star, u_\star^\prime, v_\star, v_\star^\prime\}$. This means that $\pweight{v} = \iter{\pweight{}}{i}_{v}$ and $\qweight{v} = \iter{\qweight{}}{i}_{v}$. This leads to a contradiction of the \ref{forbidden-weights-property} for $(F,\pi)$.
    \item Finally, we verify the \ref{parity-property}. For the configuration $(\iter{F}{i}, \iter{\pi}{i})$ for $i \geq 1$, we observe that $\iter{F}{i} = F$ and $\iter{\pi}{i}$ is obtained by merging some blocks of $\pi$. Since the \ref{parity-property} is not disturbed by merging some blocks, $(\iter{F}{i}, \iter{\pi}{i})$ also satisfies the \ref{parity-property}. For the configuration $(\iter{F}{0}, \iter{\pi}{0})$, recall that $\iter{\pi}{0} = \{\iter{B}{0}_{1}, \iter{B}{0}_2, \dots, \iter{B}{0}_{|\iter{\pi}{0}|}\}$ where $\iter{B}{0}_i = B_{i+1}$. Observe that for any $i \geq 2$, since $u_\star, u_\star^\prime \not\in \iter{B}{0}_i$,
    \begin{align*}
         \iter{\pweight{}}{0}_u = \pweight{u}, \; \iter{\qweight{}}{0}_u &= \qweight{u}, \; \forall \; u \; \in \; \iter{B}{0}_i = B_{i+1}. 
    \end{align*}
    Furthermore, recalling Observation 2 and 4 we have,
    \begin{subequations} \label{eq:set-relations}
    \begin{align}
        (\iter{B}{0}_i \cap \rootset(\iter{F}{0})) \backslash \rootset_0(\iter{F}{0})  &= ({B}_{i+1} \cap \rootset({F})) \backslash \rootset_0({F}), \\
        \iter{B}{0}_i \cap \leaves{\iter{F}{0}}   &= {B}_{i+1} \cap \leaves{F}, \\
        \iter{B}{0}_i \backslash (\leaves{\iter{F}{0}} \cup \rootset(\iter{F}{0})) & = {B}_{i+1} \backslash (\leaves{{F}} \cup \rootset({F})). 
    \end{align}
    \end{subequations}
    Hence,
    \begin{subequations} \label{eq:verify-parity-property}
    \begin{align}
        &\sum_{r \in (\iter{B}{0}_i \cap \rootset(\iter{F}{0})) \backslash \rootset_0(\iter{F}{0}) } \qweight{r} + \sum_{u \in\iter{B}{0}_i \backslash (\leaves{\iter{F}{0}} \cup \rootset(\iter{F}{0}))} (\iter{\pweight{}}{0}_u + \iter{\qweight{}}{0}_u) +  \sum_{v \in \iter{B}{0}_i \cap \leaves{\iter{F}{0}}} \iter{\pweight{}}{0}_v \nonumber\\& \hspace{2cm} = \sum_{r \in ({B}_{i+1} \cap \rootset({F})) \backslash \rootset_0({F})} \qweight{r} + \sum_{u \in {B}_{i+1} \backslash (\leaves{{F}} \cup \rootset({F})) } (\pweight{u} + \qweight{u}) + \sum_{v \in {B}_{i+1} \cap \leaves{F} } \pweight{v}.
    \end{align}
    The RHS of the above display is even because $(F,\pi)$ satisfies the \ref{parity-property}. Hence we have verified the parity property for all blocks of the configuration $(\iter{F}{0}, \iter{\pi}{0})$ except $\iter{B}{0}_1$. Finally, we verify the \ref{parity-property} for this block. Recall that $\{u_\star, u_\star^\prime\} \subset \iter{B}{0}_1$ and $\iter{\qweight{}}{0}_{u_\star} = \qweight{u_\star} - 1$, $\iter{\qweight{}}{0}_{u_\star^\prime} = \qweight{u_\star^\prime} - 1$. If $u_\star, u_\star^\prime \not\in \leaves{\iter{F}{0}} \cup \rootset_0(\iter{F}{0})$, then the set equalities \eqref{eq:set-relations} continue to hold for $i=1$ and we have,
    \begin{align} \label{eq:parity-property-special-block}
         &\sum_{r \in (\iter{B}{0}_1 \cap \rootset(\iter{F}{0})) \backslash \rootset_0(\iter{F}{0}) } \qweight{r} + \sum_{u \in\iter{B}{0}_1 \backslash (\leaves{\iter{F}{0}} \cup \rootset(\iter{F}{0}))} (\iter{\pweight{}}{0}_u + \iter{\qweight{}}{0}_u) +  \sum_{v \in \iter{B}{0}_1 \cap \leaves{\iter{F}{0}}} \iter{\pweight{}}{0}_v \nonumber\\& \hspace{2cm} = \sum_{r \in ({B}_{2} \cap \rootset({F})) \backslash \rootset_0({F})} \qweight{r} + \sum_{u \in {B}_{2} \backslash (\leaves{{F}} \cup \rootset({F})) } (\pweight{u} + \qweight{u}) + \sum_{v \in {B}_{2} \cap \leaves{F} } \pweight{v} - 2.
    \end{align}
    \end{subequations}
    Infact, as a consequence of Observation 3 and 5, \eref{parity-property-special-block} holds even if $u_\star \in \leaves{\iter{F}{0}}\cup \rootset_0(\iter{F}{0})$ or $u_\star^\prime \in \leaves{\iter{F}{0}}\cup \rootset_0(\iter{F}{0})$ since $\iter{\qweight{}}{0}_{u_\star} = 0$ or $\iter{\qweight{}}{0}_{u_\star^\prime} = 0$ in this situation.  The RHS of \eqref{eq:parity-property-special-block} is even because $(F,\pi)$ satisfies the \ref{parity-property}. Hence we have verified the parity property for all blocks of the configuration $(\iter{F}{0}, \iter{\pi}{0})$.
\end{enumerate}
Finally, consider the situation when the semi-\universal configuration $F$ has exactly one non-trivial root $(|\rootset(F) \backslash \rootset_0(F)| = 1)$. Without loss of generality, we can assume that:
\begin{align*}
    \rootset(F) &= \{1, 2, 3, \dotsc, r\}, \\
    \rootset_0(F) &= \{1, 2, 3, \dotsc, r-1\}.
\end{align*}
Recall that for each $i \in \{1, 2, \dotsc, |\pi|-1\}$ since $\iter{F}{i} = F$, hence, $\iter{F}{i}$ also has exactly one non-trivial root. On the other hand, recall that $\iter{F}{0}$ was obtained by removing exactly the one pair of removable edges $u_\star \rightarrow v_\star$ and $u_\star^\prime \rightarrow v_\star^\prime$ along with the corresponding leaves $\{v_\star, v_\star^\prime\}$. This means that:
\begin{align*}
    \rootset(\iter{F}{0}) &= \rootset(F) = \{1, 2, \dotsc, r\}, \\
    \rootset_0(\iter{F}{0}) &\subset \rootset_0(F) = \{1, 2, \dotsc, r-1\}.
\end{align*}
Specifically, this implies that $\iter{F}{0}$ has at most one non-trivial root. Furthermore, recall that $u_\star \neq u_\star^\prime$ (due to the \ref{sibling-property}) and both $u_\star, u_\star^\prime$ have at least one child $(v_\star, v_\star^\prime)$. In particular, this means that at most one of $u_\star, u_\star^\prime$ can be a root of $F$, since $F$ had only one non-trivial root. Since $\rootset(F) = \rootset(\iter{F}{0})$ at most one of $u_\star, u_\star^\prime$ can be roots in $\iter{F}{0}$. As a consequence $\iter{F}{0}$ also has at least one edge, the edge connecting the non-root vertex among $\{u_\star, u_\star^\prime\}$ to its parent.  This means that $\iter{F}{0}$ has at least one non-trivial root, since a forest with no non-trivial roots has no edges. Hence we have shown that $\iter{F}{0}$ has exactly one non-trivial root, as claimed. This completes the proof of this lemma.  
\end{proof}

\begin{figure}[htb!]
\begin{algbox}
\textbf{Decomposition Algorithm} 

\vspace{1mm}

\textit{Input}: $(F_0,\pi_0)$, a relevant configuration. \\
\textit{Output}: $\calS$: A collection of \universal configurations, and a map $a: \calS \rightarrow \{\pm 1\}$. \\
\textit{Initialization} : $\iter{\calS}{0}:= \{(F_0,\pi_0)\}$, $\iter{a}{0}(F_0,\pi_0) := 1$.
\begin{itemize}
\item For $t \in \{1, 2, 3, \dotsc \}$
\begin{itemize}
    \item $\iter{\calV}{t-1} := \{(F,\pi) \in \iter{\calS}{t-1}: (F,\pi)\text{ has at least one removable edge.} \}$, $v_{t-1}: = |\iter{\calV}{t-1}|$. 
    \item If $v_{t-1} = 0$, end for loop. Otherwise,
    \begin{itemize}
        \item Let $\iter{\calV}{t-1} = \{(F_1,\pi_1), (F_2,\pi_2), \dotsc, (F_{v_{t-1}}, \pi_{v_{t-1}})\}$ be any enumeration of $\iter{\calV}{t-1}$. 
        \item For each $i \in [v_{t-1}]$, decompose configuration $(F_i,\pi_i)$ using \lemref{removal} to obtain configurations $\{(\iter{F_i}{j},\iter{\pi_i}{j}): j \in \{0, 1, \dotsc, |\pi_i|-1\}\}$.
        \item Update:
        \begin{subequations} \label{eq:update-eq-decomposition-algo}
        \begin{align}
            \iter{\calS}{t} &:= (\iter{\calS}{t-1} \backslash \iter{\calV}{t-1}) \cup \left( \bigcup_{i=1}^{v_{t-1}}\{(\iter{F_i}{j},\iter{\pi_i}{j}): j \in \{0, 1, \dotsc, |\pi_i|-1\}\} \right)  \\
            \iter{a}{t}(F,\pi) &:= \iter{a}{t-1}(F,\pi) \; \forall \; (F,\pi) \; \in \; \iter{\calS}{t-1} \backslash \iter{\calV}{t-1}, \\
            \iter{a}{t}(\iter{F}{0}_i,\iter{\pi}{0}_i) &:= \iter{a}{t-1}(F_i,\pi_i) \; \forall \; i \; \in \; [v_{t-1}], \\
            \iter{a}{t}(\iter{F}{j}_i,\iter{\pi}{j}_i) &:= -\iter{a}{t-1}(F_i,\pi_i) \; \forall \; j \; \in \; [|\pi_i|-1], \; \forall \; i \; \in \; [v_{t-1}].
        \end{align}
        \end{subequations}
    \end{itemize}
\end{itemize}
\item \textit{Return} $\calS := \iter{\calS}{t-1}$, $a := \iter{a}{t-1}$.
\end{itemize}
\vspace{1mm}
\end{algbox}
\caption{Decomposing a relevant configuration into a collection of \universal configurations} \label{fig:decomposition-algorithm}
\end{figure}

Now we are ready to prove \propref{decomposition}. The basic idea is that in order to decompose a relevant configuration into a linear combination of \universal configurations, we will repeatedly apply \lemref{removal} to eliminate all pairs of removable edges. This leads to the algorithm shown in \fref{decomposition-algorithm}. \propref{decomposition} follows from the analysis of this algorithm presented in the following lemma. 

\begin{lemma}\label{lem:removal} The algorithm in \fref{decomposition-algorithm} when run on a relevant configuration $(F_0,\pi_0)$ terminates in $t \leq |\pi_0|$ steps and returns a collection $\calS$ of \universal configurations of size $|\calS| \leq |\pi_0|!$ and a map $a: \calS \rightarrow \{\pm 1\}$ such that,
\begin{align*}
    \sum_{\ell \in \colorings{}{\pi_0}} \gamma(\mPsi; F_0, \ell) & = \sum_{(F,\pi) \in \calS} a(F,\pi) \cdot \sum_{\ell \in \colorings{}{\pi}}\gamma(\mPsi ; F, \ell),
\end{align*}
for any matrix $\mPsi \in \R^{\dim \times \dim}$ with $(\mPsi\mPsi\tran)_{ii} = 1$ for all $i \in [\dim]$. Furthermore, if the relevant configuration $(F_0, \pi_0)$ has exactly one non-trivial root $(|\rootset(F_0) \backslash \rootset_0(F_0)| = 1)$, then each of \universal configurations $(F,\pi) \in \calS$ also has exactly one non-trivial root $(|\rootset(F) \backslash \rootset_0({F})| = 1)$. 
\end{lemma}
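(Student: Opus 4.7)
My plan is to prove Lemma~\ref{lem:removal} by induction on the iteration count $t$ of the Decomposition Algorithm, showing simultaneously that (i) the decomposition identity holds at every step, (ii) every configuration maintained in $\iter{\calS}{t}$ is semi-\universal, and (iii) the sum of block counts across all configurations in $\iter{\calV}{t}$ decreases strictly. The base case $t=0$ is immediate: $\iter{\calS}{0} = \{(F_0,\pi_0)\}$ is a single relevant (hence semi-\universal) configuration, and the identity $\sum_{\ell} \gamma(\mPsi; F_0,\ell) = 1 \cdot \sum_{\ell} \gamma(\mPsi; F_0,\ell)$ is trivial. For the inductive step, I apply the Removal Step lemma to each $(F_i,\pi_i) \in \iter{\calV}{t-1}$ to obtain semi-\universal configurations $\{(\iter{F_i}{j}, \iter{\pi_i}{j})\}_{j=0}^{|\pi_i|-1}$ satisfying
\begin{align*}
\sum_{\ell \in \colorings{}{\pi_i}} \gamma(\mPsi;F_i,\ell) = \sum_{\ell \in \colorings{}{\iter{\pi_i}{0}}} \gamma(\mPsi;\iter{F_i}{0},\ell) - \sum_{j=1}^{|\pi_i|-1}\sum_{\ell \in \colorings{}{\iter{\pi_i}{j}}} \gamma(\mPsi;\iter{F_i}{j},\ell).
\end{align*}
Substituting this identity into the inductive hypothesis and comparing with the update rule \eqref{eq:update-eq-decomposition-algo} for $\iter{a}{t}$ shows that the signs introduced by the algorithm match exactly what is required, which carries the decomposition identity forward to iteration $t$.

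For termination, I note that the Removal Step produces configurations each of whose partition has exactly $|\pi_i|-1$ blocks, strictly smaller than $|\pi_i|$. A configuration with $|\pi|=1$ has no removable pair (both endpoints of any removable pair, together with their parents, must lie in at least two distinct blocks of $\pi$), so such configurations cannot enter $\iter{\calV}{t}$. Since every application of the Removal Step strictly reduces the block count of the processed configuration, and new configurations inherit smaller block counts, the algorithm halts in at most $|\pi_0|$ iterations. To bound $|\calS|$, observe that at iteration $t$ each configuration with $|\pi|$ blocks spawns at most $|\pi_0|-t+1$ replacements; multiplying across the at most $|\pi_0|$ iterations yields the crude bound $|\calS| \leq |\pi_0|!$. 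The key subtlety, which I consider the main content of the argument, is then to conclude that the terminal configurations are actually \universal rather than merely semi-\universal: by the halting condition no $(F,\pi) \in \calS$ has a removable pair, but a semi-\universal configuration that satisfies \defref{semi-universal-config} and additionally has no pair of removable edges automatically satisfies the \ref{paired-leaf-property}. Indeed, the \ref{sibling-property} rules out any sibling pair occupying a block of size two by itself, so any singleton-leaf block violating \ref{paired-leaf-property} would, by definition, constitute a removable pair, contradicting termination.

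Finally, the preservation of the "exactly one non-trivial root" property is immediate from the corresponding clause of the Removal Step lemma: this property is inherited by every configuration in $\iter{\calS}{t}$ from its parent in $\iter{\calS}{t-1}$, and hence from $(F_0,\pi_0)$. The expected main obstacle is bookkeeping — carefully tracking how the inductive hypothesis interacts with the sign updates in \eqref{eq:update-eq-decomposition-algo} and verifying that the unprocessed configurations in $\iter{\calS}{t-1}\setminus\iter{\calV}{t-1}$ contribute correctly to the formula — but no new conceptual ingredient is needed beyond repeated application of the Removal Step lemma.
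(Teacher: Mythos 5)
Your proof is correct and matches the paper's own argument essentially step for step: induct with the Removal Step to preserve semi-universality and carry the decomposition identity forward, use the per-iteration drop in block count to bound termination by $|\pi_0|$ and the cardinality by $|\pi_0|!$, and observe that a semi-\universal configuration with no removable pair automatically satisfies the \ref{paired-leaf-property} and is therefore \universal. The one slip is item (iii) of your proposed induction invariant — it is the \emph{maximum} block count over $\iter{\calV}{t}$ that decreases by one each iteration, not the sum, which can grow since a single configuration is replaced by several; your subsequent termination argument already uses the correct per-configuration quantity, so this does not affect the proof.
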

\begin{proof}
The proof follows from the following sequence of arguments:
\begin{enumerate}
    \item First, we observe that for any $t\geq 0$, $\iter{\calS}{t}$ is a collection of semi-\universal configurations. This is true when $t=0$ since relevant configurations are semi-\universal. Furthermore, since \lemref{removal} is guaranteed to decompose a semi-\universal configuration into other semi-\universal configurations, the claim holds for $t\geq 1$ by induction. 
    \item Second, consider the situation when $(F_0, \pi_0)$ is a relevant configuration with exactly one non-trivial root. Since \lemref{removal} is guaranteed to decompose a semi-\universal configuration with exactly one non-trivial root into other semi-\universal configurations with exactly one non-trivial root, $\calS_t$ consists of configurations with exactly one non-trivial root for $t\geq 1$ by induction.
    \item Third, we observe that for any $t \geq 0$, we have,
    \begin{align} \label{eq:decomp-property-step-t}
    \sum_{\ell \in \colorings{}{\pi_0}} \gamma(\mPsi; F_0, \ell) & = \sum_{(F,\pi) \in \iter{\calS}{t}} \iter{a}{t}(F,\pi) \cdot \sum_{\ell \in \colorings{}{\pi}}\gamma(\mPsi ; F, \ell).
    \end{align}
    This is clearly true when $t=0$. For $t\geq 1$, this can be shown by induction. Suppose that \label{eq:decomp-property} holds at iteration $t$. Let $\iter{\calV}{t} = \{(F_1,\pi_1), (F_2,\pi_2), \dotsc, (F_{v_{t-1}}, \pi_{v_{t}})\}$. Then, by  \lemref{removal},
    \begin{align*}
          \sum_{\ell \in \colorings{}{\pi_i}} \gamma(\mPsi; F_i, \ell) & = \sum_{\ell \in \colorings{}{\iter{\pi}{0}_i}} \gamma(\mPsi; \iter{F}{0}_i, \ell) - \sum_{j=1}^{|\pi_i| - 1}\sum_{\ell \in \colorings{}{\iter{\pi}{j}_i}} \gamma(\mPsi; \iter{F}{j}_i, \ell).
    \end{align*}
    Substituting this in \eref{decomp-property-step-t} and using the update equations \eref{update-eq-decomposition-algo} yields:
    \begin{align*}
       \sum_{\ell \in \colorings{}{\pi_0}} \gamma(\mPsi; F_0, \ell) & = \sum_{(F,\pi) \in \iter{\calS}{t+1}} \iter{a}{t+1}(F,\pi) \cdot \sum_{\ell \in \colorings{}{\pi}}\gamma(\mPsi ; F, \ell) ,
    \end{align*}
    as desired.
    \item Fourth, we observe that if the algorithm terminates at iteration $t$, $\iter{\cal{S}}{t-1}$ consists of semi-\universal configurations with no removable edges. By comparing \defref{universal-configuration}, \defref{semi-universal-config} and \defref{removable-pair}, we immediately conclude that if the algorithm terminates at iteration $t$, $\iter{\calS}{t-1}$ is a collection of \universal configurations. 
    \item Next, we bound the maximum number of iterations the algorithm runs for. To this end, for $t\geq 0$, we define:
    \begin{align*}
        w_t \explain{def}{=} \max \{ |\pi| : (F,\pi) \in \iter{\calV}{t} \}.
    \end{align*}
    Observe that at each iteration, each configuration $(F,\pi) \in \iter{\calV}{t}$ is removed from $\iter{\calS}{t}$ and replaced with several configurations $(F^\prime, \pi^\prime)$ with $|\pi^\prime| = |\pi| - 1$ by application of \lemref{removal}. Hence $w_{t+1} \leq w_t - 1$. Furthermore $w_0 = |\pi_0|$. Hence, $w_t \leq |\pi_0| - t$. Additionally if $\iter{\calV}{t}\neq \varnothing$, then $w_t \geq 2$. This is because any configuration with a removable pair of edges $u \rightarrow v, u^\prime \rightarrow v^\prime$ has at least 2 blocks (one block containing $\{v,v^\prime\}$ and another one containing $\{u,u^\prime\}$). Hence if the algorithm does not terminate at iteration $t$, then $w_{t-1} \geq 2$. Hence the algorithm must terminate by iteration $t \leq |\pi|_0$. 
    \item Finally we bound the cardinality $|\iter{\calS}{t-1}|$ at termination. Using the update equation \eref{update-eq-decomposition-algo}, we obtain the recursion,
    \begin{align*}
        |\iter{\calS}{t}| & \leq |\iter{\calS}{t-1}| - |\iter{\calV}{t-1}| + \sum_{(F,\pi) \in \iter{\calV}{t-1}} |\pi| \\
        & \explain{(a)}{\leq} |\iter{\calS}{t-1}|  + (w_{t-1}-1) \cdot  |\iter{\calV}{t-1}| \\
        & \leq w_{t-1} \cdot |\iter{\calS}{t-1}| \\
        & \leq (|\pi_0| - t + 1) |\iter{\calS}{t-1}| \\
        & \leq |\pi_0| \cdot (|\pi_0| - 1) \cdot \dotsb \cdot (|\pi_0| - t + 1).
    \end{align*}
    In the above display, we used the definition of $w_{t-1}$ in step (a). Since $t \leq |\pi_0|$, we have that $|\calS| \leq |\pi_0|!$ at termination. 
\end{enumerate}
This concludes the proof of this result.

\end{proof}

\section{Proof of \propref{universality-universal-config}}
\label{sec:proof-universal-config-estimate}
In this section, we present the proof of \propref{universality-universal-config}. Consider the following claim regarding the structure of \universal configurations.
\begin{proposition}  \label{prop:universal-config-estimate} For a \universal configuration $(F,\pi)$  with $F = (V, E, \height{}, \pweight{}, \qweight{})$ and $\pi = \{B_1, B_2, \dotsc, B_{|\pi|}\}$, we have,
\begin{align*}
     \exponent(F,\pi) \explain{def}{=}  \frac{|\nullleaves{F,\pi}|}{4} + 1 - |\pi| + \frac{1}{2} \sum_{v \in V \backslash \rootset(F)}  \pweight{v} \geq \frac{|\rootset(F)| - |\rootset_0(F)|}{4}.
\end{align*}
\end{proposition}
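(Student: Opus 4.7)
The plan is a structural analysis of simple configurations, combining a block-by-block decomposition of $\exponent$ with a discharging argument. First I dispose of the trivial case $E = \emptyset$: then every vertex is a trivial root, so by the \ref{root-property} they all lie in a single block, giving $|\pi| = 1$, $|\nullleaves{F,\pi}| = 0$, and $\sum_{v \in V \setminus \rootset(F)} \pweight{v} = 0$. Consequently $\exponent(F,\pi) = 0 = (|\rootset(F)| - |\rootset_0(F)|)/4$, as required.

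For the main case $E \neq \emptyset$, I rewrite
\begin{align*}
\exponent(F,\pi) = 1 + \sum_{B \in \pi} \lambda_B, \quad \text{where}\quad \lambda_B := \frac{|\nullleaves{F,\pi} \cap B|}{4} + \frac{1}{2} \sum_{v \in B \setminus \rootset(F)} \pweight{v} - 1,
\end{align*}
and lower-bound each $\lambda_B$ by classifying its block. Let $B_\star$ denote the root block (containing all of $\rootset(F)$ by the \ref{root-property}). A \emph{nullifying pair} $\{v,v'\}$---necessarily with $\pweight{v}=\pweight{v'}=1$ by \defref{nullifying}---gives $\lambda_B = 1/2$ exactly. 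A \emph{singleton leaf} block $\{v\}$ gives $\lambda_B = \pweight{v}/2 - 1 \geq 1$ by the \ref{singleton-leaf-property}. A \emph{singleton internal} block $\{u\}$ gives $\lambda_B = \pweight{u}/2 - 1$, where the \ref{forbidden-weights-property} and \ref{parity-property} rule out $(\pweight{u},\qweight{u}) \in \{(1,1),(2,0)\}$ and force $\pweight{u}+\qweight{u}$ even. Any other non-root block has $|B|\geq 2$ and is not a nullifying pair, hence $\lambda_B \geq |B|/2 - 1 \geq 0$. Thus the only sources of negative contribution are the root block $B_\star$ with $\lambda_{B_\star} \geq -1$ (absorbed by the leading $+1$ in $\exponent$) and each singleton internal block $\{u\}$ with $\pweight{u} = 1$, contributing $-1/2$. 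For the latter, parity and Forbidden Weights force $\qweight{u}$ odd and at least $3$, so by the conservation equation $\qweight{u} = \sum_{v:\,u\to v} \pweight{v} \geq 3$ the children of $u$ contribute at least $3/2$ to $\sum_B \lambda_B$, providing ample surplus to absorb $u$'s deficit.

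The heart of the argument, and its main difficulty, is formalizing the above surplus-deficit accounting into a per-non-trivial-root decomposition of $\exponent$. The obstacle is that several blocks straddle multiple non-trivial subtrees---most notably nullifying pairs (whose two leaves can lie in distinct subtrees) and the root block itself. I will introduce a fractional attribution scheme in which each spanning block distributes its $\lambda_B$ equally across the non-trivial subtrees it meets: each nullifying pair credits $1/4$ to each of the two subtrees containing its leaves, and the root block's $-1$ is divided evenly among the non-trivial roots, covered by the leading $+1$. The global inequality $\exponent(F,\pi) \geq (|\rootset(F)|-|\rootset_0(F)|)/4$ will then reduce to the local claim that each non-trivial subtree receives net credit at least $1/4$. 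I will verify this local claim by induction on the height function $\height{}$, with base cases furnished by singleton leaves (credit $\geq 1$) and nullifying pairs (fractional credit $1/4$ per side), and the inductive step using the conservation equation to propagate the inequality from a vertex's children up to the vertex itself---so that a singleton-internal deficit of $-1/2$ is always offset by the surplus $\qweight{u}/2 \geq 3/2$ generated by its descendants.
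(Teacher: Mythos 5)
Your block-level rewrite $\exponent(F,\pi) = 1 + \sum_{B\in\pi}\lambda_B$ is correct and so is the classification of which blocks can be negative (the root block, with $\lambda_{B_\star}\geq -1$, and singleton internal blocks $\{u\}$ with $\pweight{u}=1$, giving $\lambda_{\{u\}}=-1/2$). But the proposal stops exactly where the proof begins. The assertion that a deficit vertex $u$ with $\qweight{u}\geq 3$ is ``offset by the surplus $\qweight{u}/2 \geq 3/2$ generated by its descendants'' double-counts: the children's $\pweight{v}/2$ contributions are already consumed in $\lambda_{B_{\pi(v)}}$---they are what lift each child's block from $-1$ up to its stated bound---and those child blocks may themselves be deficit blocks or merely break even ($\lambda_B = 0$). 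Concretely, a child $v$ of $u$ with $\pweight{v}=1$ in another singleton block contributes $+1/2$ toward its own block's $-1$, leaving a residual deficit of $-1/2$; a child with $\pweight{v}=2$ in a singleton block contributes exactly $0$; and a child that is a leaf paired with an internal $p=1$ vertex (a ``type-2 leaf'' in the paper's taxonomy) sits in a block with $\lambda_B=0$ that straddles two parts of the tree and transmits no surplus at all. Your sketch does not engage with any of these cascades, and the ``fractional attribution'' and ``induction on the height function'' are stated as plans rather than verified.

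The paper's actual argument is substantially different in structure. Instead of a block-local induction, it: (i) rewrites $\exponent = \sum_{v}\effect{v} + |\nullleaves{F,\pi}|/4$ per vertex rather than per block; (ii) uses the degree-sum identity for the forest to prove the global counting bound $|\nullleaves{F,\pi}| \geq |\rootset(F)| - |\rootset_0(F)| + |\onodes_{\geq 2}| + |\bnodes_2| + 2|\bnodes_{\geq 3}| - (|\eleaves|+|\gleaves|+|\tileaves|+|\tiileaves|+|\tiiileaves|)$ (\lemref{exponent-lb}); (iii) proves a path lemma (\lemref{structural-prop}) showing that from any non-root vertex with $\pweight{}\geq 2$ one can walk down a chain of $\pweight{}=2$ singleton one-child vertices to a vertex of genuine surplus; and (iv) assembles from this an \emph{injective} map $\map$ (\lemref{map-construction}) pairing each deficit vertex in $\bnodes_1\cup\bnodes_2\cup\tiileaves\cup\tiiileaves$ with a distinct surplus vertex, so that the weighted sum closes. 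Steps (ii)--(iv) are the content your proposal defers. Without the degree-sum counting there is no global lower bound on the number of nullifying pairs to supply surplus, and without the injectivity of $\map$ there is no guarantee that two deficit vertices do not try to claim the same surplus. A purely top-down per-subtree discharging scheme would have to re-derive both facts, and nothing in your outline does so.
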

We prove \propref{universality-universal-config} from the above claim and \propref{key-estimate}.
\begin{proof}[Proof of \propref{universality-universal-config}]
If a \universal configuration $(F,\pi)$ has a non-empty edge set, then it must have at least one non-trivial root. Hence, $|\rootset(F) \backslash \rootset_0(F)| \geq 1$ and \propref{universal-config-estimate} guarantees that $\exponent(F,\pi) \geq 1/4$. Then, by \propref{key-estimate},  $|\invpoly(\mPsi; F,\pi)| \lesssim \dim^{-1/4 + \epsilon}$. In particular, $\lim_{\dim \rightarrow \infty } \invpoly(\mPsi; F,\pi) = 0$, as claimed in \propref{universality-universal-config}. On the other hand, if the \universal configuration $(F,\pi)$ has no edges, the vertex set of $F$ is simply the set of roots $V = \{1, 2, 3, \dotsc, |\rootset(F)|\}$.  By the \ref{root-property}, $\pi$ is the trivial partition consisting of a single block $\pi = \{\{1, 2, 3, \dotsc, |\rootset(F)|\}\}$. In this case,
\begin{align*}
    \gamma(\mPsi; F,\ell) & \explain{def}{=}  \frac{1}{\dim} \prod_{\substack{e \in E \\ e = u \rightarrow v}} \psi_{\ell_u\ell_v}^{\pweight{v}} = \frac{1}{\dim}, 
\end{align*}
and,
\begin{align*}
    \invpoly(\mPsi; F,\pi) = \sum_{\ell \in \colorings{}{\pi}}   \gamma(\mPsi; F,\ell) & = \sum_{\ell=1}^\dim \frac{1}{\dim} = 1,
\end{align*}
which completes the proof of \propref{universality-universal-config}.
\end{proof} 

The remainder of the section is devoted to the proof of \propref{universal-config-estimate}. Since $(F,\pi)$ satisfy the \ref{root-property}, all roots of $F$ lie in a single block of $\pi$. Without loss of generality, throughout this section, we will assume that this block is $B_1$: $\rootset(F) \subset B_1$. We can rewrite $\exponent(F,\pi)$ as follows:
\begin{align}
     \exponent(F,\pi) &= 1+ \left(\frac{1}{2} \sum_{v \in V \backslash  \rootset(F)} \pweight{v} \right) - |\pi| + \frac{|\nullleaves{F,\pi}|}{4}  \nonumber \\
     & \explain{(a)}{=} \left( \frac{1}{2} \sum_{v \in B_1 \backslash \rootset(F)} \pweight{v}  \right) + \sum_{i=2}^{|\pi|} \left( - 1 + \frac{1}{2}\sum_{v \in B_i}  \pweight{v} \right) + \frac{|\nullleaves{F,\pi}|}{4}  \nonumber \\
     & \explain{(b)}{=} \sum_{v \in V} \effect{v} + \frac{|\nullleaves{F,\pi}|}{4}. \label{eq:exponent-new-formula}
\end{align}
In the above display, in step (a) we regrouped the sum over vertices $v \in V \backslash \rootset(F)$ by the blocks of the partition $\pi$. In order to obtain (b), we define the function $\effect{v}$ for each $v \in V$ as follows:
\begin{align}\label{eq:effect-def}
    \effect{v} \explain{def}{=} \begin{cases} \frac{\pweight{v}}{2} - \frac{1}{|B_{\pi(v)}|} &: v \in V \backslash B_1  \\ \frac{\pweight{v}}{2} &: v \in B_1 \backslash \rootset(F) \\ 0 &: v \in \rootset(F)  \end{cases}
\end{align}
The advantage of the formula \eqref{eq:exponent-new-formula} is that it expresses the exponent $\exponent(F,\pi)$ in terms of local contributions of each node:
\begin{enumerate}
    \item Nullifying leaves and vertices with $\effect{v} > 0$ have a positive effect on $\exponent(F,\pi)$.
    \item Vertices with $\effect{v}<0$ have a negative contribution to $\exponent(F,\pi)$. 
\end{enumerate}
The proof of \propref{universal-config-estimate} uses structural properties of \universal configurations to argue that there are enough nullifying leaves and vertices with $\effect{v} > 0$ to cancel out the negative effect of vertices with $\effect{v}<0$. The proof is organized as follows:
\begin{enumerate}
    \item In \sref{universal-forest-classification}, we develop a classification for vertices and leaves of a \universal configuration. This will set up the necessary terminology to state various structural properties of \universal configurations.
    \item In \sref{universal-forest-structural-prop}, we collect some structural properties of \universal configurations. 
    \item Finally, in \sref{universal-forest-final-proof}, we provide a proof of \propref{universal-config-estimate} using the structural properties derived in the previous section. 
\end{enumerate}
\subsection{Classification of Vertices}\label{sec:universal-forest-classification} Recall the definition $\effect{v}$ from \eqref{eq:effect-def}. 
Observe that since $\pweight{v} \in \{1, 2, 3, \dotsc, \}$ and $|B_{\pi(v)}| \in \{1, 2, 3, \dotsc, \}$, the possible values for $\effect{v}$ in increasing order are:
\begin{align} \label{eq:effect-possibilities}
    \effect{v} \in \left\{ - \frac{1}{2}, 0, \frac{1}{6}, \frac{1}{4}, \frac{3}{10}, \dotsc \right\}
\end{align}
We introduce the following classification of vertices of a \universal configuration based on the value of $\effect{v}$.

\begin{definition}[Vertex Classification]\label{def:vertex-classification} For a \universal configuration $(F,\pi)$ with $F = (V, E, \height{}, \pweight{}, \qweight{})$, $\pi=\{B_1, B_2, \dotsc, B_{|\pi|}\}$, and $\rootset(F) \subset B_1$ we define the following categories of vertices:
\begin{description}
\item [Excellent vertex:] An excellent vertex is a non-root vertex $v \in V \backslash \rootset(F)$ with $\effect{v} \geq 1/4$. We denote the set of all excellent vertices by $\enodes$. 
\item [Good Vertex:] A good vertex is a non-root vertex $v \in V \backslash B_1$ with $\effect{v} = 1/6$, $\pweight{v} = 1$ and $|B_{\pi(v)}| = 3$. We denote the set of all good vertices by $\gnodes$.
\item [Ok vertex:] A ok vertex is a vertex $v \in V$ with $\effect{v} = 0$. We denote the set of all ok vertices by $\onodes$. Ok vertices must be exactly one of the following mutually exclusive types:
\begin{enumerate}
    \item A root vertex $v \in \rootset(F)$.
    \item A non-root vertex $v \in V \backslash B_1$ with $\pweight{v} = 2, |B_{\pi(v)}|=1$. 
    \item A non-root vertex $v \in V \backslash B_1$ with $\pweight{v} = 1, |B_{\pi(v)}|=2$.
\end{enumerate}
\item [Bad vertex:] A bad vertex is a non-root vertex $v \in V \backslash B_1$ with $\effect{v} = -1/2$ and $(\pweight{v} = 1, |B_{\pi(v)}| = 1)$. We denote the set of all bad vertices by $\bnodes$.
\end{description}
\end{definition}

\begin{lemma}\label{lem:vertex-classification} Let $(F,\pi)$ be a \universal configuration with $F = (V, E, \height{}, \pweight{}, \qweight{})$ and $\pi = \{B_1, \dotsc, B_{|\pi|}\}$ such that $\rootset(F) \subset B_1$. Then, the sets $\enodes, \gnodes, \onodes, \bnodes$ form a partition of $V$. 
\end{lemma}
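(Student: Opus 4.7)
The plan is to verify disjointness and covering separately. For disjointness, I would note that the four classes are distinguished by the value of $\omega_v$ alone: excellent vertices have $\omega_v \geq 1/4$, good vertices have $\omega_v = 1/6$, ok vertices have $\omega_v = 0$, and bad vertices have $\omega_v = -1/2$. Since these four ranges are pairwise disjoint, no vertex can belong to more than one category.

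For covering, I would split into three cases based on where $v$ sits. First, if $v \in \rootset(F)$, then by \eqref{eq:effect-def} we have $\omega_v = 0$, placing $v$ in the ok class (type 1). Second, if $v \in B_1 \setminus \rootset(F)$, then $\omega_v = \pweight{v}/2 \geq 1/2$, so $v$ is excellent. Third, if $v \in V \setminus B_1$, then $\omega_v = \pweight{v}/2 - 1/|B_{\pi(v)}|$, and I would carry out a finite case split on the positive integer pair $(\pweight{v}, |B_{\pi(v)}|)$.

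The main observation for the third case is that the only values of $\omega_v$ strictly smaller than $1/4$ attainable as $\pweight{v}/2 - 1/|B_{\pi(v)}|$ with positive integers $\pweight{v}, |B_{\pi(v)}|$ are $-1/2$, $0$, and $1/6$, arising from $(\pweight{v}, |B_{\pi(v)}|) = (1,1)$, $(\pweight{v}, |B_{\pi(v)}|) \in \{(1,2),(2,1)\}$, and $(\pweight{v}, |B_{\pi(v)}|) = (1,3)$ respectively; every other positive integer pair yields $\omega_v \geq 1/4$. Matching each case against \defref{vertex-classification} places $(1,1)$ into bad, $(1,2)$ and $(2,1)$ into ok (types 3 and 2), $(1,3)$ into good, and all remaining pairs into excellent. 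In particular, the auxiliary constraints in the definition of a good vertex ($\pweight{v} = 1$, $|B_{\pi(v)}| = 3$) come for free from $\omega_v = 1/6$.

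I expect no genuine obstacle here: the whole argument is an arithmetic enumeration driven by the formula \eqref{eq:effect-def} and the list of possible $\omega_v$-values in \eqref{eq:effect-possibilities}. Notably, none of the distinguished properties of \universal configurations (the \ref{forbidden-weights-property}, \ref{singleton-leaf-property}, or \ref{paired-leaf-property}) are needed at this stage; those properties only restrict which of the four categories can actually be populated, and they will be invoked later to derive the structural estimates leading to \propref{universal-config-estimate}.
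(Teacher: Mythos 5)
Your proof is correct and follows essentially the same route as the paper: a direct arithmetic case analysis driven by \eqref{eq:effect-def} and \eqref{eq:effect-possibilities}, with the side constraints in the definitions of good and bad vertices recovered automatically from the value of $\effect{v}$ together with $v \in V \setminus B_1$. The only cosmetic difference is that the paper case-splits first on the value of $\effect{v}$ and then determines the location, whereas you case-split first on the location and then enumerate the admissible $(\pweight{v}, |B_{\pi(v)}|)$ pairs; your closing observation that the special properties of \universal configurations are not actually used here is also accurate.
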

\begin{proof}
Recalling \eqref{eq:effect-possibilities},  we consider the various possibilities for $\effect{v}$ for a vertex $v$:
\begin{enumerate}
   \item $\effect{v} \geq 1/4$: Observe that in this situation $v \notin \rootset(F)$ (since otherwise $\effect{v} = 0)$. Hence, this vertex is an excellent vertex.  
   \item $\effect{v} = 1/6$: Observe that in this situation $v \notin \rootset(F)$ (since otherwise $\effect{v}=0$) and $v \notin B_1 \backslash \rootset(F)$ (since otherwise $\effect{v} \geq 1/2$). Hence $v \in V \backslash B_1$. In this situation $\effect{v}=1/6$ iff $\pweight{v} = 1$ and $|B_{\pi(v)}| = 3$. Hence, this vertex is a good vertex. 
   \item $\effect{v} = 0:$ Such a vertex satisfies the definition of an ok vertex. Furthermore,  in this situation, we can consider the following exhaustive subcases:
   \begin{enumerate}
       \item $v \in \rootset(F)$.
       \item $v \in B_1 \backslash \rootset(F)$: This is not possible since in this case $\effect{v} \geq 1/2$.
       \item $v \in V \backslash B_1$: In this case $\effect{v} = 0$ iff $(\pweight{v}=2, |B_{\pi(v)}| = 1)$ or $(\pweight{v}=1, |B_{\pi(v)}| = 2)$.
   \end{enumerate}
   These are precisely the 3 possibilities for ok vertices mentioned in \defref{vertex-classification}.
   \item $\effect{v} = -1/2$: Observe that $v \not\in B_1$ (since otherwise, $\effect{v} \geq 0$). In this situation $\effect{v} = -1/2$ happens iff $\pweight{v} = 1$ and $|B_{\pi(v)}|=1$. This vertex satisfies the definition of a bad node. 
\end{enumerate}
The above possibilities are disjoint and exhaustive and hence $\enodes, \gnodes, \onodes, \bnodes$ form a partition of $V$. 
\end{proof}
Similarly, we develop a classification of the leaves of a configuration. 
\begin{definition}[Leaf Classification]\label{def:leaf-classification} For a \universal configuration $(F,\pi)$ with $F = (V, E, \height{}, \pweight{}, \qweight{})$, $\pi = \{B_1, B_2, \dotsc, B_{|\pi|}\}$, and $\rootset(F) \subset B_1$, we define the following categories of leaves:
\begin{description}
\item [Excellent leaves:] An excellent leaf is a leaf vertex $v \in \leaves{F}$ with $\effect{v} \geq 1/4$. The set of all excellent leaves is denoted by $\eleaves$.
\item [Good leaves:] A good leaf is a leaf vertex $v \in \leaves{F} \backslash B_1$ with $p_v =1, |B_{\pi(v)}| = 3$. The set of all good leaves is denoted by $\gleaves$. 
\item [Type-1 leaves:] A type-1 leaf is a leaf vertex $v \in \leaves{F} \backslash B_1$ with $\pweight{v} = 1$ and $B_{\pi(v)} = \{v,v^\prime\}$ where $v^\prime \in V \backslash B_1$ is a non-root vertex with $\pweight{v^\prime} \geq 2$. The set of all type-1 leaves is denoted by $\tileaves$.
\item [Type-2 leaves:] A type-2 leaf is a leaf vertex $v \in \leaves{F} \backslash B_1$ with $\pweight{v} = 1$ and $B_{\pi(v)} = \{v,v^\prime\}$ where $v^\prime \in V \backslash B_1$ is a non-root vertex with exactly one child ($\chrn{v^\prime} = 1$) and satisfies $\pweight{v^\prime} = 1$. The set of all type-2 leaves is denoted by $\tiileaves$.
\item [Type-3 leaves] A type-3 leaf is a leaf vertex $v \in \leaves{F} \backslash B_1$ with $\pweight{v} = 1$ and $B_{\pi(v)} = \{v,v^\prime\}$ where $v^\prime \in V \backslash B_1$ is a non-root vertex with two or more children ($\chrn{v^\prime} \geq 2$) and satisfies $\pweight{v^\prime} = 1$. The set of all type-3 leaves is denoted by $\tiiileaves$.
\item [Nullifying leaves:] See \defref{nullifying}. The set of all nullifying leaves is denoted by $\nleaves{}$. 
\end{description}
\end{definition}
\begin{lemma}\label{lem:leaf-classification} For a \universal configuration, the sets $\eleaves, \gleaves, \tileaves, \tiileaves, \tiiileaves, \nleaves$ form a partition of $\leaves{F}$, the set of all leaves of $F$. 
\end{lemma}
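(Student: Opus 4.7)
The plan is to classify each leaf $v \in \leaves{F}$ by first reading off the value of $\effect{v}$ from \eqref{eq:effect-def} and the list \eqref{eq:effect-possibilities}, and then refining using the defining properties of a \universal configuration (\defref{universal-configuration}). First I would observe that $v$ lies in $\eleaves$ automatically whenever $\effect{v} \geq 1/4$, and also whenever $v \in B_1 \backslash \rootset(F)$, since in that case $\effect{v} = \pweight{v}/2 \geq 1/2$. Hence it suffices to classify leaves $v \in \leaves{F} \backslash B_1$ with $\effect{v} \in \{-1/2,\,0,\,1/6\}$.

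The \ref{singleton-leaf-property} eliminates the two low-effect subcases that involve singleton blocks: for any leaf $v$ with $|B_{\pi(v)}|=1$, that property forces $\pweight{v} \geq 4$ and therefore $\effect{v} \geq 2$, again landing in $\eleaves$. This rules out both $(\pweight{v},|B_{\pi(v)}|)=(1,1)$, which would give $\effect{v}=-1/2$, and $(\pweight{v},|B_{\pi(v)}|)=(2,1)$, which would give $\effect{v}=0$. What remains are exactly the cases $\effect{v}=1/6$ with $(\pweight{v},|B_{\pi(v)}|)=(1,3)$, which matches the definition of $\gleaves$, and $\effect{v}=0$ with $(\pweight{v},|B_{\pi(v)}|)=(1,2)$.

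For this last case I would write $B_{\pi(v)}=\{v,v'\}$. The partner $v'$ must be a non-root vertex: by the \ref{root-property} all roots lie in $B_1$, whereas $v \notin B_1$ forces $v' \notin B_1$. Then I split on $(\pweight{v'},\chrn{v'}(F))$. If $\pweight{v'} \geq 2$, then $v \in \tileaves$ by definition, regardless of whether $v'$ is itself a leaf. If $\pweight{v'}=1$ and $v'$ is a leaf, the \ref{paired-leaf-property} guarantees that the parents $u,u'$ of $v,v'$ satisfy $\pi(u) \neq \pi(u')$, so the pair meets \defref{nullifying} and $v \in \nleaves$. Otherwise $\pweight{v'}=1$ and $v'$ is a non-leaf, and $v \in \tiileaves$ or $v \in \tiiileaves$ according to whether $\chrn{v'}(F)=1$ or $\chrn{v'}(F) \geq 2$.

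The remaining task is to check pairwise disjointness, which is direct bookkeeping. The sets $\eleaves$, $\gleaves$, and the group $\{\tileaves,\tiileaves,\tiiileaves,\nleaves\}$ are separated by the value of $|B_{\pi(v)}|$: leaves with $|B_{\pi(v)}|=1$ are in $\eleaves$ (by \ref{singleton-leaf-property}), those with $|B_{\pi(v)}|=3$ are in $\gleaves$, and those with $|B_{\pi(v)}|=2$ fall among the other four. Within the size-$2$ group, membership is determined by whether $\pweight{v'} \geq 2$ (giving $\tileaves$) or $\pweight{v'}=1$; in the latter case it is further determined by whether $v'$ is a leaf ($\nleaves$), has exactly one child ($\tiileaves$), or has at least two children ($\tiiileaves$). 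These alternatives are mutually exclusive, so the six sets partition $\leaves{F}$. No step appears to present a serious obstacle, since all of the structural input has already been compressed into \ref{singleton-leaf-property} and \ref{paired-leaf-property}.
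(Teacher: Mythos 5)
Your exhaustiveness argument is correct and follows essentially the same route as the paper: the paper first classifies $v$ via \lemref{vertex-classification} and then refines, but since that lemma is itself just a case split on $\effect{v}$, your direct computation is an equivalent presentation, and you invoke \ref{singleton-leaf-property} and \ref{paired-leaf-property} at exactly the same junctures.

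The disjointness paragraph, however, contains a false claim. It is not true that the six classes are ``separated by the value of $|B_{\pi(v)}|$,'' nor that every leaf with $|B_{\pi(v)}|=3$ lies in $\gleaves$. A leaf $v \notin B_1$ with $\pweight{v}=2$ and $|B_{\pi(v)}|=3$ has $\effect{v}=2/3 \geq 1/4$ and therefore lies in $\eleaves$ (which is defined by $\effect{v}\geq 1/4$, not by $|B_{\pi(v)}|=1$), and it cannot be good since $\gleaves$ requires $\pweight{v}=1$. Likewise a leaf $v \notin B_1$ with $\pweight{v}\geq 2$ and $|B_{\pi(v)}|=2$ lies in $\eleaves$, not among $\tileaves\cup\tiileaves\cup\tiiileaves\cup\nleaves$. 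The correct discriminant is $\effect{v}$ itself: $\eleaves$ is precisely $\effect{v}\geq 1/4$, while $\gleaves$ and the group $\tileaves\cup\tiileaves\cup\tiiileaves\cup\nleaves$ both force $\pweight{v}=1$ and $v\notin B_1$ with $|B_{\pi(v)}|=3$ and $|B_{\pi(v)}|=2$ respectively, giving $\effect{v}=1/6$ and $\effect{v}=0$, both below $1/4$; within the $|B_{\pi(v)}|=2$ group the four classes are distinguished by disjoint conditions on the partner $v'$. That said, this slip does not affect the conclusion: your exhaustiveness argument already runs through mutually exclusive cases, so disjointness is a consequence of that structure (exactly as in the paper), and the extra paragraph should either be dropped or rephrased in terms of $\effect{v}$.
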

\begin{proof}
Recall that the sets of excellent, good, ok and bad nodes form a partition of $V$. Hence we consider the following mutually exclusive and exhaustive cases for a leaf $v \in \leaves{F}$:
\begin{enumerate}
    \item $v$ is an excellent vertex: such leaves satisfy the definition of excellent leaves.
    \item $v$ is a good vertex: Such leaves satisfy the definition of good leaves.
    \item $v$ is an ok vertex: Recall the 3 different categories of ok vertices from \defref{vertex-classification}. Since $v$ is a leaf, $v \not\in \rootset(F)$. This means that $v \in V \backslash B_1$ and either $\pweight{v} = 2, |B_{\pi(v)}| = 1$ or $\pweight{v} = 1, |B_{\pi(v)}| = 2$. However, since $(F,\pi)$ is a \universal configuration, by the \ref{singleton-leaf-property}, any leaf with $|B_{\pi(v)}| = 1$ must have $\pweight{v} \geq 4$. Hence it must be that $\pweight{v} = 1, |B_{\pi(v)}| = 2$. Let $B_{\pi(v)} = \{v,v^\prime\}$. Observe that since $B_{\pi(v)} = B_{\pi(v^\prime)} \neq B_1, \; v^\prime \in V \backslash B_1$. In particular $v^\prime$ is a non-root vertex. We consider the following mutually exclusive and exhaustive sub-cases (depending on the classification of $v^\prime$). 
    \begin{enumerate}
        \item $v^\prime$ is an excellent node. Since $v^\prime \in V \backslash B_1$ and $|B_{\pi(v^\prime)}| = 2$ and $\effect{v^\prime} \geq 1/4$ we must have $\pweight{v^\prime}  = 2(\effect{v^\prime} + 1/|B_{\pi(v^\prime)}|) \geq 3/2$. Since $\pweight{v^\prime} \in \N$, it must be that $\pweight{v^\prime}\geq 2$. Hence, in this $v$ satisfies the definition of a type-1 leaf. 
        \item $v^\prime$ cannot be a good node since $|B_{\pi(v^\prime)}| = 2$. 
        \item $v^\prime$ is a ok node. Since $v^\prime \in V \backslash B_1$, it is not a root vertex. Furthermore, since $|B_{\pi(v^\prime)}| = 2$, we must have $\pweight{v^\prime} = 1$. We consider the two further mutually exclusive and exhaustive sub-cases depending on whether $v^\prime$ is a leaf or not:
        \begin{enumerate}
            \item $v^\prime$ is a leaf: Let $u,u^\prime$ denote the parents of $v,v^\prime$ respectively. By the \ref{paired-leaf-property}, we must have $\pi(u) \neq \pi(u^\prime)$. Hence $u \rightarrow v$ and $u^\prime \rightarrow v^\prime$ are a pair of nullifying edges (recall \defref{nullifying}) and $v$ becomes a nullifying leaf in this case.
            \item $v^\prime$ is not a leaf: Since $v^\prime$ is not a root either, it must be that $\chrn{v^\prime} \geq 1$. If $\chrn{v^\prime} = 1$, then $v$ satisfies all the requirements of a type-2 leaf in this case. If $\chrn{v^\prime} \geq 2$, then $v$ satisfies all the requirements of a type-3 leaf. 
        \end{enumerate}
        \item $v^\prime$ is a bad node: this is not possible since $|B_{\pi(v^\prime)}| = 2$. 
    \end{enumerate}
    \item $v$ is a bad node: This is not possible since bad nodes have $\pweight{v} = 1, |B_{\pi(v)}| = 1$. However by the \ref{singleton-leaf-property}, any leaf with $|B_{\pi(v)}| = 1$ must have $\pweight{v} \geq 4$.  
\end{enumerate}
Since the above cases were mutually exclusive and exhaustive, the sets $\eleaves, \gleaves, \tileaves, \tiileaves, \nleaves$ form a partition of the set of all leaves of $F$. 
\end{proof}

\subsection{Structural Properties of \universal forests} \label{sec:universal-forest-structural-prop}
In this subsection, we collect some useful structural properties of \universal forests. The following lemma provides a lower bound on $\exponent(F,\pi)$ in a \universal configuration.

\begin{lemma}\label{lem:exponent-lb} For a \universal configuration $(F,\pi)$, we have,
\begin{align*}
    \exponent(F,\pi) & \geq \frac{|\rootset(F)| - |\rootset_0(F)|}{4} + \frac{\onodes_{\geq 2}}{4} - \frac{(|\eleaves| + |\gleaves| + |\tileaves| + |\tiileaves| + |\tiiileaves|+ |\bnodes_{2}|)}{4} - \frac{|\bnodes_1|}{2} + \sum_{v \in \onodes \cup \gnodes \cup \enodes} \effect{v},
\end{align*}
where,
\begin{align*}
     \onodes_{\geq 2} \bydef \{v \in \onodes: \chrn{v} \geq 2\}, \bnodes_1 &\bydef \{v \in \bnodes: \chrn{v} = 1\},  \bnodes_{2} \bydef \{v \in \bnodes: \chrn{v} = 2\}.
\end{align*}
\end{lemma}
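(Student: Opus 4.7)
The plan is to start from the rewritten formula \eqref{eq:exponent-new-formula} for $\exponent(F,\pi)$ and use the vertex and leaf classifications (\lemref{vertex-classification} and \lemref{leaf-classification}) to reduce the statement to a purely combinatorial inequality about the underlying rooted forest. First I would split $\sum_{v \in V} \effect{v}$ via \lemref{vertex-classification}; since $\effect{v} = -1/2$ on $\bnodes$ by construction, this yields
\[
\exponent(F,\pi) \;=\; \sum_{v \in \enodes \cup \gnodes \cup \onodes} \effect{v} \;-\; \frac{|\bnodes|}{2} \;+\; \frac{|\nullleaves{F,\pi}|}{4}.
\]
Subtracting the common $\sum_{\enodes \cup \gnodes \cup \onodes} \effect{v}$ term from both sides of the claimed inequality, multiplying by $4$, and using $\bnodes = \bnodes_1 \cup \bnodes_2 \cup \bnodes_{\geq 3}$ (a bad vertex cannot be a leaf, since the \ref{singleton-leaf-property} forbids $\pweight{v}=1$ at a singleton leaf), the target reduces to
\[
|\nullleaves{F,\pi}| + |\eleaves| + |\gleaves| + |\tileaves| + |\tiileaves| + |\tiiileaves| \;\geq\; |\rootset(F)| - |\rootset_0(F)| + |\onodes_{\geq 2}| + |\bnodes_2| + 2|\bnodes_{\geq 3}|.
\]
By \lemref{leaf-classification} the left-hand side equals $|\leaves{F}|$, so it remains to prove the purely combinatorial bound $|\leaves{F}| \geq |\rootset(F)| - |\rootset_0(F)| + |\onodes_{\geq 2}| + |\bnodes_2| + 2|\bnodes_{\geq 3}|$.

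Next I would invoke a standard identity for rooted forests. Counting edges two ways gives $\sum_{v \in V} \chrn{v}(F) = |V| - |\rootset(F)|$, so writing $n_k(F)$ for the number of vertices with exactly $k$ children we have
\[
|\leaves{F}| + |\rootset_0(F)| \;=\; n_0(F) \;=\; |\rootset(F)| + \sum_{k \geq 2}(k-1)\, n_k(F).
\]
Substituting this identity reduces the goal to the much cleaner inequality
\[
\sum_{k \geq 2}(k-1)\, n_k(F) \;\geq\; |\onodes_{\geq 2}| + |\bnodes_2| + 2|\bnodes_{\geq 3}|.
\]
I would now split each $n_k(F)$ across the four vertex classes as $n_k^{\enodes} + n_k^{\gnodes} + n_k^{\onodes} + n_k^{\bnodes}$ and verify the inequality coefficient-by-coefficient in $k$: for $k=2$ the left contributes $n_2^{\enodes} + n_2^{\gnodes} + n_2^{\onodes} + n_2^{\bnodes}$ while the right contributes $n_2^{\onodes} + n_2^{\bnodes}$; for $k=3$ the left has coefficient $2$ and the right has $n_3^{\onodes} + 2 n_3^{\bnodes}$; for $k \geq 4$ the left-hand coefficient $k-1 \geq 3$ dominates the right-hand coefficients $1$ on $\onodes$ and $2$ on $\bnodes$. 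All other vertex-class contributions on the left are nonnegative, so each case is immediate.

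The substantive part of the argument is the classification bookkeeping already built into \lemref{vertex-classification} and \lemref{leaf-classification}; once those partitions are invoked correctly, the remaining steps are a direct algebraic manipulation followed by a term-by-term comparison. The main thing to be careful about is matching the six leaf types back to the three vertex classes that produce leaves (excellent, good, ok, with the ok-leaves further refined into $\tileaves, \tiileaves, \tiiileaves$ and $\nleaves$ according to the child/parent structure dictated by the \ref{paired-leaf-property}), and verifying that bad vertices contribute only as internal nodes. No structural analysis beyond the classifications and the edge-counting identity is needed.
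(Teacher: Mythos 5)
Your proposal is correct and follows essentially the same route as the paper's proof: both reduce the claim, via the degree-sum identity $|\leaves{F}| = |\rootset(F)| - |\rootset_0(F)| + \sum_{k\geq 2}(k-1)n_k(F)$ and the partitions furnished by Lemmas~\ref{lem:vertex-classification} and~\ref{lem:leaf-classification}, to the combinatorial bound $\sum_{k\geq 2}(k-1)n_k(F) \geq |\onodes_{\geq 2}| + |\bnodes_2| + 2|\bnodes_{\geq 3}|$, which the paper asserts and you verify term by term. The only difference is one of presentation (you isolate the combinatorial inequality before plugging back into $\exponent(F,\pi)$, while the paper establishes the leaf-count bound first and then substitutes), and your brief justification of $\bnodes_0 = \emptyset$ focuses on the leaf case — which suffices since bad vertices are non-roots by definition and hence cannot be trivial roots either.
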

\begin{proof}
Recall that for any undirected graph, the sum of degrees of the vertices is twice the number of edges. We can apply this to the forest $F$ by viewing it as an undirected graph. Let $\mathsf{degree}(v)$ denote the degree of a vertex $v$ in the forest $F$ (when viewed as an undirected graph). Let $\chrn{v}$ denote the number of children of $v$ (when $F$ is viewed as a directed forest). Observe that for any $v \not\in \rootset(F)$, $\mathsf{degree}(v) = \chrn{v} + 1$ since $v$ has $\chrn{v}$ children and 1 parent. On the other hand, for any root vertex $r\in \rootset(F)$, $\mathsf{degree}(r) = \chrn{r}$ since a root node has no parent. Hence by the degree-sum formula,
\begin{align*}
    2 |E| & = -|\rootset(F)| + \sum_{v\in V} (\chrn{v}+1),
\end{align*}
where the term $-|\rootset(F)|$ ``corrects'' for the exceptional nature of root vertices. Recall that in a forest $|E| = |V| - |\rootset(F)|$. Hence,
\begin{align*}
    |V| & = |\rootset(F)| +  \sum_{v\in V} \chrn{v}. 
\end{align*}
Next, we categorize vertices according to the number of children they have. For each $i \in \W$, we define:
\begin{align*}
    V_i &\explain{def}{=} \{v\in V: \chrn{v} = i\}, \\
    \onodes_i &\bydef \{v \in \onodes: \chrn{v} = i\},  \onodes_{\geq i} \bydef \{v \in \onodes: \chrn{v} \geq i\}, \\
    \bnodes_i &\bydef \{v \in \bnodes: \chrn{v} = i\},  \bnodes_{\geq i} \bydef \{v \in \bnodes: \chrn{v} \geq i\}.
\end{align*}
Hence,
\begin{align*}
    \sum_{i=0}^\infty |V_i| = |\rootset(F)| + \sum_{i=0}^\infty i |V_i|.
\end{align*}
Recall that $\rootset_0(F)$ is the set of all trivial roots (i.e. roots with no children). Since $V_0 = \leaves{F} \cup \rootset_0(F)$ and $\leaves{F} \cap \rootset_0(F) = \emptyset$, we have $|V_0| = |\leaves{F}| + |\rootset_0(F)|$. Using this  and rearranging the expression in the previous display gives:
\begin{align*}
    |\leaves{F}|& = |\rootset(F)| - |\rootset_0(F)| + \sum_{i=2}^\infty (i-1) |V_i| \\
    & \geq |\rootset(F)| - |\rootset_0(F)| + |\onodes_{\geq 2}| + |\bnodes_{2}| + 2|\bnodes_{\geq 3}|.
\end{align*}
Appealing to \lemref{leaf-classification}, 
\begin{align*}
    |\leaves{F}|& = |\eleaves| + |\gleaves| + |\tileaves| + |\tiileaves| + |\tiiileaves| + |\nleaves|. 
\end{align*}
Hence,
\begin{align} \label{eq:nleaves-lb}
    |\nleaves| & \geq |\rootset(F)| - |\rootset_0(F)| + |\onodes_{\geq 2}| + |\bnodes_{2}| + 2|\bnodes_{\geq 3}| - ( |\eleaves| + |\gleaves| + |\tileaves| + |\tiileaves|+ |\tiiileaves|). 
\end{align}
Recalling the formula for $\exponent(F,\pi)$ in \eref{exponent-new-formula}, we have,
\begin{align*}
    &\exponent(F,\pi) = \frac{|\nullleaves{F,\pi}|}{4} + \sum_{v \in V}  \effect{v}    \\
    & \explain{(a)}{\geq} \frac{|\rootset(F)| - |\rootset_0(F)|}{4} + \frac{|\onodes_{\geq 2}| + |\bnodes_{2}| + 2|\bnodes_{\geq 3}|}{4} - \frac{(|\eleaves| + |\gleaves| + |\tileaves| + |\tiileaves| + |\tiiileaves|)}{4} +\sum_{v \in V} \effect{v}  \\
    & \explain{(b)}{=}\frac{|\rootset(F)| - |\rootset_0(F)|}{4} + \frac{|\onodes_{\geq 2}| + |\bnodes_{2}| + 2|\bnodes_{\geq 3}|}{4} - \frac{(|\eleaves| + |\gleaves| + |\tileaves| + |\tiileaves| + |\tiiileaves|)}{4} - \frac{|\bnodes|}{2} + \sum_{v \in \onodes \cup \gnodes \cup \enodes} \effect{v} \\
    & \explain{(c)}{=} \frac{|\rootset(F)| - |\rootset_0(F)|}{4} + \frac{\onodes_{\geq 2}}{4} - \frac{(|\eleaves| + |\gleaves| + |\tileaves| + |\tiileaves| + |\tiiileaves|+ |\bnodes_{2}|)}{4} - \frac{|\bnodes_1|}{2} + \sum_{v \in \onodes \cup \gnodes \cup \enodes} \effect{v}.
\end{align*}
In the above display, step (a) relies on \eref{nleaves-lb}, step (b) uses the fact (cf. \defref{vertex-classification}) that $\effect{v}= -1/2 \; \forall \; v \; \in \; \bnodes$. Finally in step (c) we used the fact that $\bnodes_0, \bnodes_1, \; \bnodes_2, \bnodes_{\geq 3}$ form a partition of $\bnodes$ and hence $|\bnodes| = |\bnodes_0| +|\bnodes_1| + |\bnodes_2| +  |\bnodes_{\geq 3}|$. Furthermore $|\bnodes_0| = 0$ since nodes with zero children are either trivial roots or leaves, but, there are no bad roots (cf. \defref{vertex-classification}) and no bad leaves (cf. \defref{leaf-classification} and \lemref{leaf-classification}).
\end{proof}
We observe that in the lower bound derived in \lemref{exponent-lb}, vertices in the sets $\enodes, \gnodes, \onodes_{\geq 2}$ seem to have a positive effect whereas vertices in the sets $\eleaves, \gleaves, \tileaves, \tiileaves, \tiiileaves, \bnodes_1, \bnodes_2$ have a negative effect. We will argue that the positive effect is sufficient to cancel out the negative effect. In order to do this sytematically, we will find it helpful to construct an \emph{injective} map:
\begin{align*}
    \map: \bnodes_1 \cup \bnodes_2 \cup \tiileaves \cup \tiiileaves \rightarrow \enodes \cup \onodes_{\geq 2} \backslash \rootset(F). 
\end{align*}
We will use this map to show that for each node $v \in \bnodes_1 \cup \bnodes_2 \cup \tiileaves \cup \tiiileaves$, there is a node $\map(v)$ whose positive effect is sufficient to cancel out the negative effect of $v$. In order to construct $\map$, we will find the following structural property of \universal configurations useful.

\begin{lemma}\label{lem:structural-prop} Consider a \universal configuration $(F,\pi)$ with $F = (V, E, \height{}, \pweight{}, \qweight{})$ and $\pi=\{B_1, B_2, \dotsc, B_{|\pi|}\}$ such that $\rootset(F) \subset B_1$. Let $v_0 \in V \backslash \rootset(F)$ be a non-root vertex such that $\pweight{v_0} \geq 2$. Then there is a $t \in \N_0$ and a path $v_0 \rightarrow v_1 \rightarrow v_2 \rightarrow \dotsb \rightarrow v_t$ with the following properties:
\begin{enumerate}
    \item $v_i \in V \backslash B_1, \; \pweight{v_i} = 2, \; |B_{\pi(v_i)}| = 1, \; \chrn{v_i} = 1$ for all $0 \leq i \leq t-1$.
    \item $v_t \in V \backslash \rootset(F)$ with $\pweight{v_t} \geq 2$. 
    \item Exactly one of the following is true:
    \begin{enumerate}
        \item $\effect{v_t} \geq 1/2$ 
        \item $v_t \in V\backslash B_1$ and $\pweight{v_t} = 2, |B_{\pi(v_t)}| = 1, \chrn{v_t} \geq 2$.
    \end{enumerate}
\end{enumerate}
\end{lemma}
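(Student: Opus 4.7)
My plan is to construct the path by descending greedily from $v_0$: at each step I would check whether the current vertex $v_i$ already satisfies one of the termination criteria in (3); if not, I would pass to its unique child $v_{i+1}$. Throughout this construction the only invariant I need to maintain is $\pweight{v_i} \geq 2$, which holds at $v_0$ by hypothesis.

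First I would unpack what failure of (3) forces at a non-root vertex $v$ with $\pweight{v} \geq 2$. If $v \in B_1 \backslash \rootset(F)$ then $\effect{v} = \pweight{v}/2 \geq 1$, so (3a) holds; if instead $v \in V \backslash B_1$ and $\pweight{v} \geq 3$ then $\effect{v} \geq 3/2 - 1 = 1/2$, so (3a) again holds. Hence (3) can fail only when $v \in V \backslash B_1$, $\pweight{v} = 2$, $|B_{\pi(v)}| = 1$, and $\chrn{v} < 2$. The \ref{singleton-leaf-property} excludes $\chrn{v} = 0$, since a singleton-block leaf must have $\pweight{v} \geq 4$. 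Thus the only way the construction continues is when $v$ has exactly the profile required by condition (1): $v \in V \backslash B_1$, $\pweight{v} = 2$, $|B_{\pi(v)}| = 1$, and $\chrn{v} = 1$. This pins down all intermediate vertices of the path and shows that the terminal vertex $v_t$ must satisfy (3a) or (3b).

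The main (and essentially only) technical step is to verify that the invariant propagates to the unique child $v^{\prime}$ of such a $v_i$. The conservation equation \eqref{eq:conservation-eq} gives $\qweight{v_i} = \pweight{v^{\prime}}$, so it suffices to show $\qweight{v_i} \geq 2$. Applying the \ref{forbidden-weights-property} to $v_i$ (with $\pweight{v_i} = 2$ and $|B_{\pi(v_i)}| = 1$) rules out $\qweight{v_i} = 0$. Applying the \ref{parity-property} to the singleton block $\{v_i\}$---which consists of a vertex that is neither a root (since $v_i \in V \backslash B_1 \subseteq V \backslash \rootset(F)$) nor a leaf (since $\chrn{v_i} = 1$)---forces $\pweight{v_i} + \qweight{v_i} = 2 + \qweight{v_i}$ to be even, so $\qweight{v_i}$ is even. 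The two together give $\qweight{v_i} \geq 2$, so $\pweight{v^{\prime}} \geq 2$. Since $F$ is a finite forest, the descent terminates after finitely many steps at some $v_t$ where (3) holds, which yields (2) and (3); condition (1) is exactly the failure-profile identified above. I expect this parity-plus-forbidden-weights upgrade of $\qweight{v_i} \neq 0$ to $\qweight{v_i} \geq 2$ to be the only delicate point; the rest is a direct reading of \defref{universal-configuration}.
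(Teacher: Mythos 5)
Your proposal is correct and takes essentially the same approach as the paper: descend greedily through the forest, use the \ref{singleton-leaf-property} to rule out the current vertex being a singleton-block leaf, and combine the \ref{forbidden-weights-property} with the \ref{parity-property} (applied to the singleton block $\{v_i\}$) to upgrade $\qweight{v_i}\neq 0$ to $\qweight{v_i}\geq 2$, which via the conservation equation propagates the invariant $\pweight{\cdot}\geq 2$ to the unique child. The paper isolates this case analysis as a standalone ``Claim'' and then invokes it recursively, but the content and the key parity-plus-forbidden-weights step are identical to what you wrote.
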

\fref{path} illustrates the claim of this lemma. 
\begin{figure}[ht]
\centering
\includegraphics[width=0.8\textwidth,right]{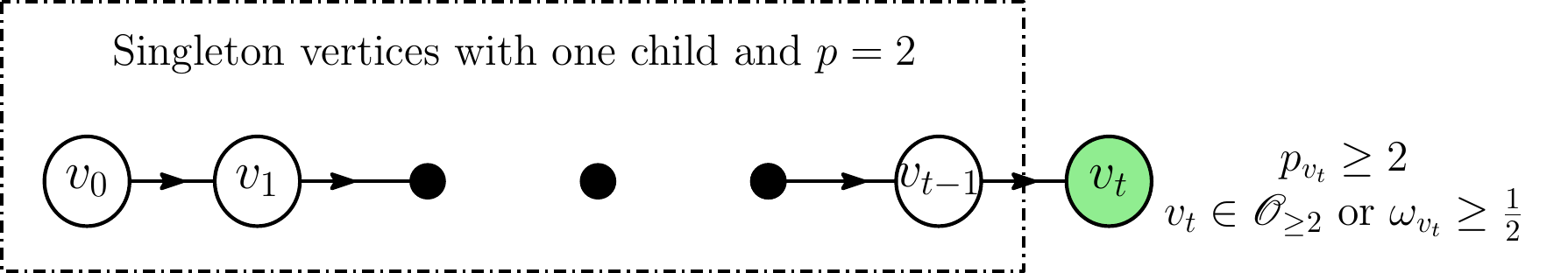}
\caption{A vertex $v_0 \in V \backslash \rootset(F)$ with $\pweight{v_0} \geq 2$ is connected to a vertex $v_t$ with the properties $(\pweight{v_t} \geq 2)$ and $(v_t \in \onodes_{\geq 2} \text{ or } \effect{v_t} \geq 1/2)$ via a path $v_0 \rightarrow v_1 \rightarrow \dotsb \rightarrow v_{t-1}$ consisting of singleton vertices $(|B_{\pi(v_i)}| = 1)$ with $\pweight{v_i} = 2$ that have exactly one child. Vertex colors do not represent the blocks of $\pi$.}  
\label{fig:path}
\end{figure}

\begin{proof}
We begin by making the following claim.
\paragraph{Claim.}  Let $u \in V \backslash \rootset(F)$ be a non-root vertex with $\pweight{u} \geq 2$. Then exactly one of the following is true:
\begin{description}
    \item [Case 1.] $\effect{u} \geq 1/2$
    \item [Case 2.] $u \in V \backslash B_1$ with $\pweight{u}=2, |B_{\pi(u)}| = 1$ and  $u$ has at least two children.
    \item [Case 3.] $u \in V \backslash B_1$ with $\pweight{u}=2, |B_{\pi(u)}| = 1$ and $u$ has exactly one child $v \in V\backslash \rootset(F)$ which satisfies $\pweight{v} \geq 2$. 
\end{description}
Before we present the proof of this claim, we use it to prove the lemma. Consider the following procedure which constructs the required path starting from the given vertex $v_0 \in V \backslash \rootset(F)$ with $\pweight{v_0} \geq 2$. By the claim, the following cases are exhaustive:
\begin{enumerate}
    \item If $v_0$ satisfies Case 1 of the claim, we have  $\effect{v_0} \geq 1/2$, we can terminate the procedure and set $t = 0$ and return trivial path $v_0$ (consisting of a single vertex). This path has no edges and satisfies the claimed properties. 
    \item If $v_0$ satisfies Case 2 of the claim, then $v_0 \in V \backslash B_1$ with $\pweight{v_0}=2, |B_{\pi(v_0)}| = 1$ and  $v_0$ has at least two children. We can again terminate the procedure and set $t = 0$ and return trivial path $v_0$ (consisting of a single vertex). This path has no edges and satisfies the claimed properties. 
    \item Otherwise $v_0$ must satisfy Case 3 of the claim: $v_0 \in V \backslash B_1$ with $\pweight{v_0}=2, |B_{\pi(v_0)}| = 1$ and  has a single child, which we label $v_1$, with the properties $v_1 \in V \backslash \rootset(F)$ and $\pweight{v_1} \geq 2$. In particular, the claim can now be applied to $v_1$ to continue the procedure recursively:
    \begin{enumerate}
            \item If $v_1$ satisfies Case 1 of the claim, we terminate the procedure and set $t = 1$ and return the path $v_0 \rightarrow v_1$.
            \item If $v_1$ satisfies Case 2 of the claim, we terminate the procedure and set $t = 1$ and return the path $v_0 \rightarrow v_1$.
            \item If not, then the claim guarantees that $v_1$ must have exactly one child, which we label $v_2$, which satisfies $v_2 \in V \backslash \rootset(F)$ and  $\pweight{v_2}\geq 2$. We continue this procedure by applying the claim to $v_2$.
    \end{enumerate}
\end{enumerate}
As we execute the above procedure we construct a path $v_0 \rightarrow v_1 \rightarrow v_2 \rightarrow \dotsb$. Since the forest is finite, the above procedure must terminate at some $t$ and we would have a path $v_0 \rightarrow v_1 \rightarrow v_2 \dotsb \rightarrow v_t$. Observe that $\pweight{v_i} \geq 2$ for each node on the path. In particular claim (2) of the lemma is verified. Furthermore, since the path terminated at $t$, either item (3a) or (3b) in the statement of the lemma must hold since these are the only conditions under which the procedure terminates. Furthermore since the procedure did not terminate at step $i \leq t-1$, applying the claim to $v_i$, we must have $v_i$ satisfies Case 3 of the claim which verifies item (1) of the lemma. This concludes the proof of the lemma. We now provide the proof of the claim.
\paragraph{Proof of Claim. } Consider a non-root vertex $u \in V \backslash \rootset(F)$ with $\pweight{u} \geq 2$. Consider the two exhaustive cases:
\begin{enumerate}
    \item $\effect{u} \geq 1/2$. This is Case 1 of the claim. 
    \item $\effect{u} < 1/2$. Observe that $u \not \in B_1 \backslash \rootset(F)$ since otherwise by \eqref{eq:effect-def}, $\effect{u} \geq 1/2$. Hence $u \in V \backslash B_1$. This means that $\effect{u} = \pweight{u}/2 - 1/|B_{\pi(u)}| < 1/2$. However since $\pweight{u} \geq 2$, it must be that $\pweight{u} = 2$ and $|B_{\pi(u)}| = 1$. Observe that $u$ cannot be a leaf (otherwise, the \ref{singleton-leaf-property} would imply $\pweight{u} \geq 4$). Hence $\chrn{u} \geq 1$. Furthermore, by the \ref{parity-property} $\qweight{u}$ must be even and by \ref{forbidden-weights-property}, $\qweight{u} \neq 0$. Hence $\qweight{u} \geq 2$. Consider the following 3 exhaustive sub-cases. 
    \begin{enumerate}
        \item $u$ has two or more children: this leads to Case 2 of the claim. 
        \item $u$ has one child denoted by $v$. Hence, by the conservation equation (cf. \eqref{eq:conservation-eq} in \defref{decorated-forests}) $\pweight{v} = \qweight{u} \geq 2$. This leads to Case 3 of the claim. 
    \end{enumerate}
\end{enumerate}
Since the above case analysis is exhaustive, the claim has been proved.
\end{proof}
We are now ready to construct the map $\map$.

\begin{lemma}\label{lem:map-construction} For any \universal configuration $(F,\pi)$ with $F = (V, E, \height{}, \pweight{}, \qweight{})$, $\pi=\{B_1, B_2, \dotsc, B_{|\pi|}\}$, and $\rootset(F) \subset B_1$ there is a map $\map: \bnodes_1 \cup \bnodes_2 \cup \tiileaves \cup \tiiileaves \rightarrow \enodes \cup \onodes_{\geq 2} \backslash \rootset(F)$ with the following properties:
\begin{enumerate}
    \item If $u \in \bnodes_1$, then $v \bydef \map(u)$ satisfies $v \in \enodes$ with $\pweight{v} \geq 3$. 
    \item If $u \in \bnodes_2 \cup \tiileaves $, then $v \bydef \map(u)$ satisfies $v \in V \backslash \rootset(F)$ with $\pweight{v} \geq 2$ and exactly one of the following properties:
    \begin{enumerate}
        \item $v \in \enodes$ with $\effect{v} \geq 1/2$.
        \item $v \in \onodes_{\geq 2} \backslash B_1$ with $\pweight{v} = 2, \; |B_{\pi(v)}| = 1, \chrn{v} \geq 2$.
    \end{enumerate}
    \item If $u \in \tiiileaves$, then $v \bydef \map(u)$ satisfies $v \in \onodes_{\geq 2} \backslash B_1$ with $\pweight{v} = 1, |B_{\pi(v)}| = 2$ and $\chrn{v} \geq 2$.
    \item The map $\map$ is injective. 
\end{enumerate}
\end{lemma}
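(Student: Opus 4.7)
The plan is to define $\map$ separately on each of the four source classes $\bnodes_1$, $\tiiileaves$, $\bnodes_2$, and $\tiileaves$, exploiting the parity and forbidden-weights properties of \universal configurations together with the structural \lemref{structural-prop}. Then I verify injectivity by observing that the target subsets produced by the four cases are essentially disjoint and that each piece is invertible.

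First I would handle the easy endpoints. For $u \in \bnodes_1$, the vertex $u$ is a non-root, non-leaf singleton-block vertex with $\pweight{u} = 1$. The \ref{forbidden-weights-property} rules out $\qweight{u}=1$ and the \ref{parity-property} applied to the block $\{u\}$ forces $\pweight{u}+\qweight{u}$ to be even, so $\qweight{u}\geq 3$. Its unique child $v$ then satisfies $\pweight{v}=\qweight{u}\geq 3$ by the conservation equation \eqref{eq:conservation-eq}, and a direct computation of $\effect{v}$ (splitting on whether $v\in B_1$ or not) gives $\effect{v}\geq 1/2$, so $v\in\enodes$ with $\pweight{v}\geq 3$. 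Set $\map(u)=v$. For $u \in \tiiileaves$, its partner $v'$ in the size-$2$ block $B_{\pi(u)}$ satisfies $v'\in V\setminus B_1$, $\pweight{v'}=1$, $|B_{\pi(v')}|=2$, and $\chrn{v'}\geq 2$ by \defref{leaf-classification}. Hence $\effect{v'}=0$, so $v'\in\onodes_{\geq 2}\setminus B_1$. Set $\map(u)=v'$.

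Next I would treat $\bnodes_2\cup\tiileaves$ using \lemref{structural-prop}. For $u\in\bnodes_2$, the parity argument (analogous to above) gives $\qweight{u}\geq 3$, so $\qweight{u}=\pweight{v_1}+\pweight{v_2}\geq 3$, and we can canonically select a child $x$ of $u$ with $\pweight{x}\geq 2$ (say the one with the smallest index). For $u\in\tiileaves$, its partner $v'$ has $\pweight{v'}=\chrn{v'}=1$, and the parity property on the block $\{u,v'\}$ gives $\pweight{u}+\pweight{v'}+\qweight{v'}=2+\qweight{v'}$ even, hence $\qweight{v'}\geq 2$; the unique child $x$ of $v'$ then satisfies $\pweight{x}=\qweight{v'}\geq 2$. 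In either subcase, apply \lemref{structural-prop} to $x$ to obtain a terminal vertex $v_t$ satisfying $\pweight{v_t}\geq 2$ and either $\effect{v_t}\geq 1/2$ (in which case $v_t\in\enodes$) or $v_t\in V\setminus B_1$ with $\pweight{v_t}=2$, $|B_{\pi(v_t)}|=1$, $\chrn{v_t}\geq 2$ (in which case $v_t\in\onodes_{\geq 2}\setminus B_1$). Set $\map(u)=v_t$.

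The main obstacle is verifying injectivity, since a single target vertex $w$ might conceivably arise from several different sources. The key observation is that the images of the four classes live in disjoint subsets of the target, distinguished by the pair $(\pweight{w},|B_{\pi(w)}|)$: $\map(\bnodes_1)$ has $\pweight{w}\geq 3$; $\map(\bnodes_2\cup\tiileaves)$ has $\pweight{w}=2$ (since by \lemref{structural-prop}(3) the terminal vertex has either $\effect{v_t}\geq 1/2$ forcing nothing additional, or $\pweight{v_t}=2$; a cleaner splitting can be obtained by subdividing the excellent case by whether $\pweight=2$ or $\geq 3$ and routing to appropriate target buckets); and $\map(\tiiileaves)$ has $\pweight{w}=1$. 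Within each class, the map is inverted by walking the parent chain backwards: if $w$ is the image of some $u\in\bnodes_2\cup\tiileaves$, then from \lemref{structural-prop}(1) the path $x=v_0\to\dots\to v_t=w$ consists of unique-child singleton vertices, so $x$ is recovered by traversing parents from $w$ until one hits a vertex not of that form. The parent of $x$ then has properties $(\pweight,|B_\pi|,\chrn)=(1,1,2)$ if $u\in\bnodes_2$ or $(1,2,1)$ if $u\in\tiileaves$, identifying both the class and the source $u$ uniquely. The canonical choice of child in the $\bnodes_2$ case ensures that each source maps to a unique seed. For $\bnodes_1$ the inversion is trivial since $w$ determines its unique parent, and for $\tiiileaves$ the inversion recovers $u$ as the unique leaf in the size-$2$ block $B_{\pi(w)}$. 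Combining these inversions yields injectivity of $\map$.
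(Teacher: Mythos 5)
Your proposal is essentially correct and follows the same route as the paper: you use the parity and forbidden-weights properties to force $\qweight{} \geq 3$ (resp.\ $\geq 2$), route $\bnodes_1$ to its unique child and $\tiiileaves$ to its block partner, feed $\bnodes_2$ and $\tiileaves$ through \lemref{structural-prop}, and invert by walking the parent chain up through vertices in $\mathcal{S} = \{w : \pweight{w}=2, |B_{\pi(w)}|=1, \chrn{w}=1\}$ until a vertex outside $\mathcal{S}$ is reached, then classifying by its $(\pweight{}, |B_{\pi(\cdot)}|, \chrn{})$ triple. This is exactly the paper's inversion algorithm.

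One small correction: your opening claim in the injectivity paragraph, that the four image classes are pairwise separated simply by the value of $\pweight{w}$ (with $\map(\bnodes_2 \cup \tiileaves)$ landing only on $\pweight{w}=2$), is false. In case (3a) of \lemref{structural-prop}, the terminal vertex $v_t$ can have $\pweight{v_t} \geq 3$, so the images of $\bnodes_1$ and of $\bnodes_2 \cup \tiileaves$ can both contain vertices with $\pweight{} \geq 3$. You partially acknowledge this in a parenthetical, but as stated the sentence is misleading. Fortunately your subsequent ancestor-walk argument does not depend on that claim: a target with $\pweight{} \geq 2$ is routed to the correct source class by inspecting the first non-$\mathcal{S}$ ancestor, which has type $(1,1,1)$ for $\bnodes_1$, $(1,1,2)$ for $\bnodes_2$, and $(1,2,1)$ for $\tiileaves$, and these are pairwise distinct. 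That is the distinguishing criterion the paper uses and the one your proof should emphasize; the $\pweight{}$-bucketing claim should be dropped or restricted to separating $\tiiileaves$ (images with $\pweight{}=1$) from the rest (images with $\pweight{} \geq 2$).
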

\begin{figure}
    \begin{subfigure}[b]{0.49\textwidth}
         \centering
         \includegraphics[width=0.5\textwidth]{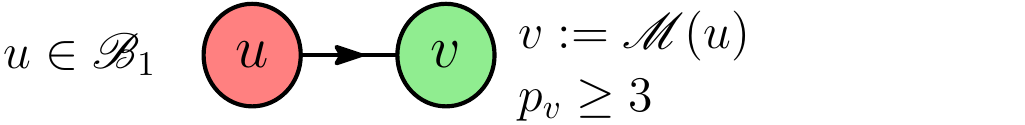}
         \caption{$u \in \bnodes_1$ (Case 1).}
         \label{fig:map-b1}
     \end{subfigure}
     \begin{subfigure}[b]{0.49\textwidth}
         \centering
         \includegraphics[width=0.5\textwidth]{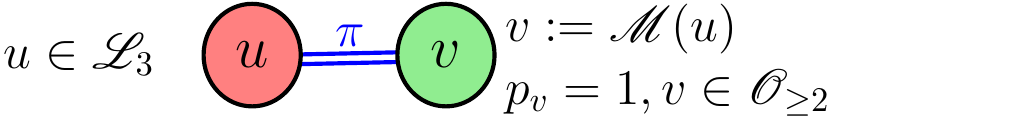}
         \caption{$u \in \tiiileaves$ (Case 4).}
         \label{fig:map-l3}
     \end{subfigure}
      \par\bigskip
     \begin{subfigure}[b]{0.49\textwidth}
         \includegraphics[width=\textwidth]{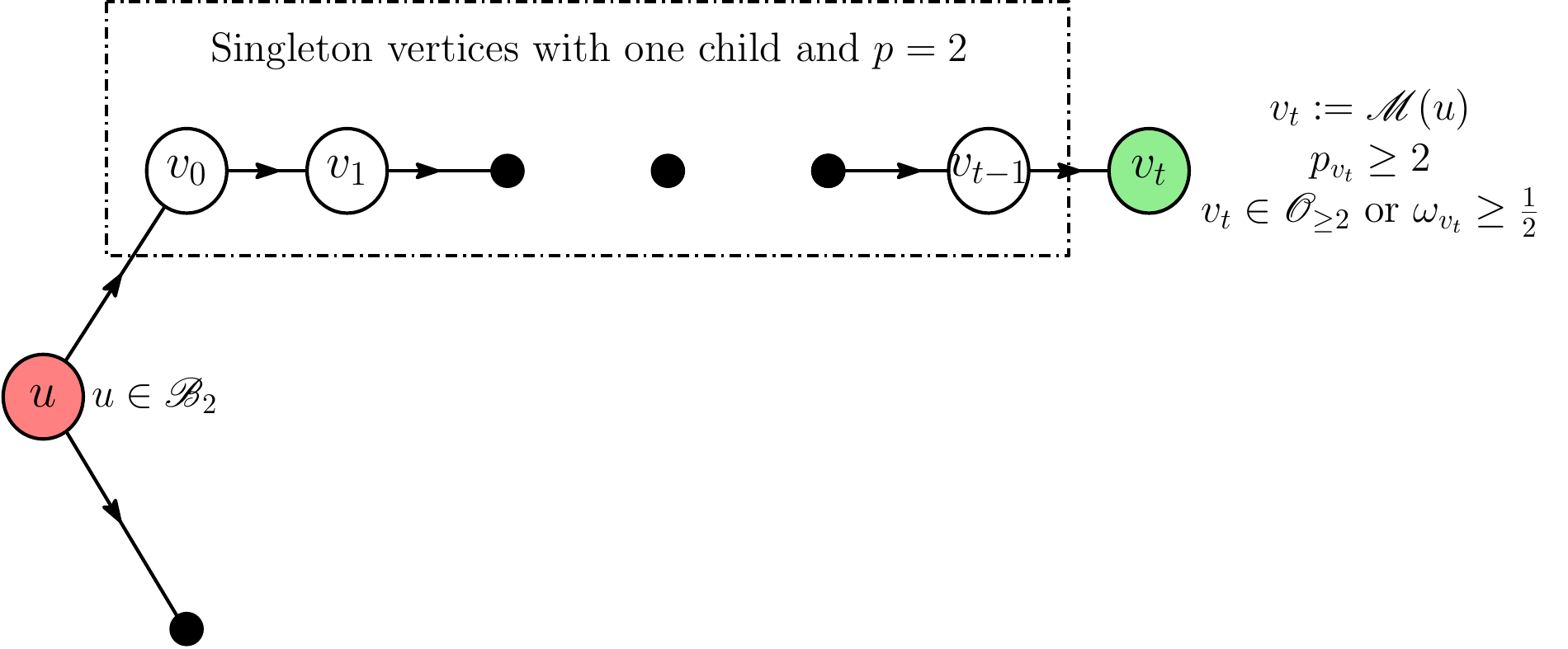}
         \caption{$u \in \bnodes_2$ (Case 2).}
         \label{fig:map-b2}
     \end{subfigure}
    \begin{subfigure}[b]{0.49\textwidth}
         \centering
         \includegraphics[width=\textwidth]{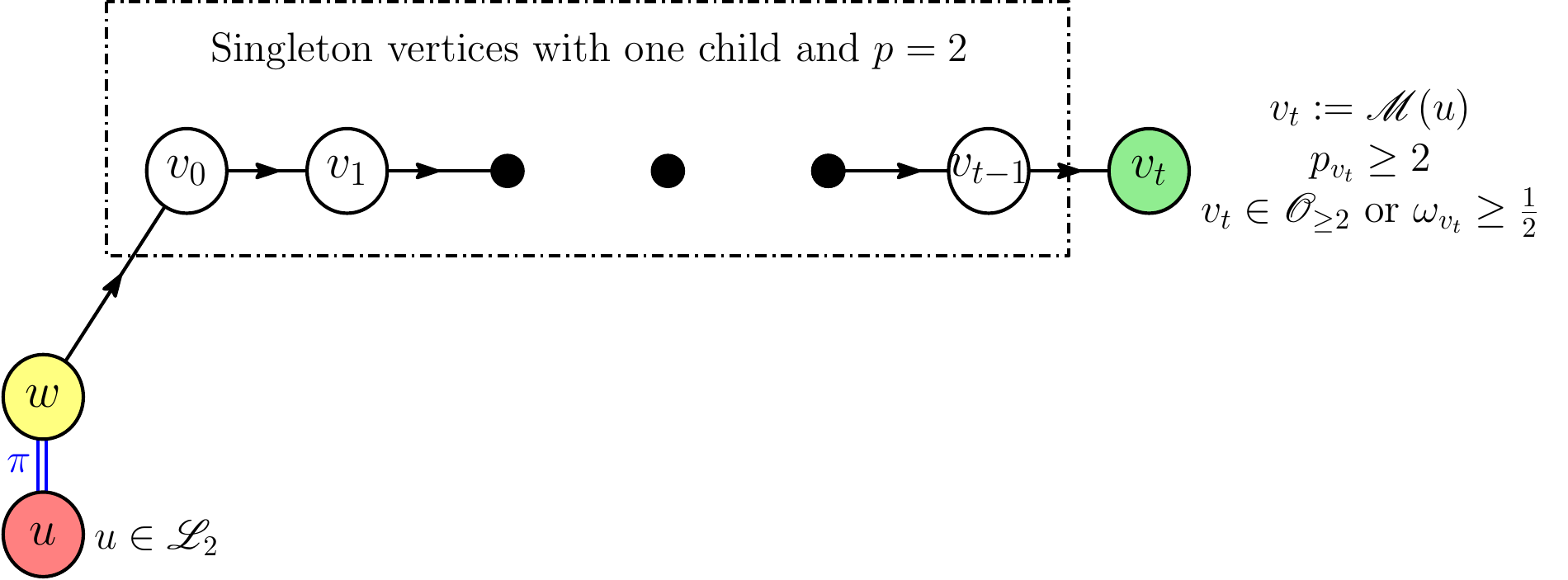}
         \caption{$u \in \tiileaves$ (Case 3).}
         \label{fig:map-l2}
     \end{subfigure}
        \caption{Construction of $\map$. Vertices in the same block of $\pi$ are connected with blue double edges $(\explain{$\pi$}{=\joinrel=})$. Vertex colors do not represent blocks of $\pi$.}
        \label{fig:map}
\end{figure}
\begin{proof}
We first describe the construction of $\map$. We consider 4 cases $u \in \bnodes_1$, $u \in \bnodes_2$, $u \in \tiileaves$ and $u \in \tiiileaves$. 
\begin{enumerate}
    \item $u \in \bnodes_1$: In this case we know that $u \in V \backslash B_1$ with $\pweight{u} = 1$, $\chrn{u} = 1$ and $|B_{\pi(u)}| = 1$. By the \ref{parity-property}, we know that $\qweight{u} \notin \{0,2\}$. Furthermore by \ref{forbidden-weights-property}, $\qweight{u} \neq 1$. Hence, $\qweight{u} \geq 3$.  Let $v$ denote the unique child of $u$. By the conservation constraint \eref{conservation-eq}, we must have $\pweight{v} \geq 3$. This also means that $\effect{v} \geq 1/2$ (cf. \eqref{eq:effect-def}), and in particular $v\in \enodes$. We set $\map(u):= v$, which verifies claim (1). This case is depicted in \fref{map-b1}. 
    \item $u \in \bnodes_2$: In this case we know that $u \in V \backslash B_1$ with $\pweight{u} = 1, |B_{\pi(u)}| = 1$ and $\chrn{u} = 2$. By the same argument as the previous case, we must have $\qweight{u} \not\in \{0,1,2\}$. Hence $\qweight{u} \geq 3$. Since $u$ has exactly two children, by the conservation constraint \eref{conservation-eq}, it has a child $v_0$ with $\pweight{v_0} \geq 2$. Now by \lemref{structural-prop}, there is a $t \in \N_0$ and a path $v_0 \rightarrow v_1 \rightarrow \dotsb \rightarrow v_t$ with the following properties:
    \begin{enumerate}
    \item $v_i \in V \backslash B_1, \; \pweight{v_i} = 2, \; |B_{\pi(v_i)}| = 1, \; \chrn{v_i} = 1$ for all $0 \leq i \leq t-1$.
    \item $v_t \in V \backslash \rootset(F)$ with $\pweight{v_t} \geq 2$. 
    \item Exactly one of the following is true:
    \begin{enumerate}
        \item $\effect{v_t} \geq 1/2$ 
        \item $v_t \in V\backslash B_1$ and $\pweight{v_t} = 2, |B_{\pi(v_t)}| = 1, \chrn{v_t} \geq 2$.
    \end{enumerate}
\end{enumerate}
We set $\map(u):= v_t$ and observe that $\pweight{v_t} \geq 2$ and either $v_t \in \enodes$ with $\effect{v_t} \geq 1/2$ or $v_t \in \onodes_{\geq 2} \backslash B_1$ with $\pweight{v_t} = 2, |B_{\pi(v_t)}| = 1, \chrn{v_t} \geq 2$, as claimed. This case is depicted in \fref{map-b2}. 
\item $u \in \tiileaves$: In this case, we know that $u \in \leaves{F} \backslash B_1$ with $\pweight{u} = 1, B_{\pi(u)} = \{u,w\}$ where $w \in V \backslash B_1$ satisfies $\pweight{w} = 1, \chrn{w} = 1$. By the conservation constraint $\qweight{w} \neq 0$. By the \ref{parity-property}, $\qweight{w} \neq 1$ (since $\pweight{u} + \pweight{w} + \qweight{w}$ has even parity). Hence we must $\qweight{w} \geq 2$. Let $v_0$ be the unique child of $w$. By the conservation constraint \eref{conservation-eq}, $\pweight{v_0} \geq 2$. Now by \lemref{structural-prop}, there is a $t \in \N_0$ and a path $v_0 \rightarrow v_1 \rightarrow \dotsb \rightarrow v_t$ with the following properties:
     \begin{enumerate}
    \item $v_i \in V \backslash B_1, \; \pweight{v_i} = 2, \; |B_{\pi(v_i)}| = 1, \; \chrn{v_i} = 1$ for all $0 \leq i \leq t-1$.
    \item  $v_t \in V \backslash \rootset(F)$ with $\pweight{v_t} \geq 2$. 
    \item Exactly one of the following is true:
    \begin{enumerate}
        \item $\effect{v_t} \geq 1/2$ 
        \item $v_t \in V\backslash B_1$ and $\pweight{v_t} = 2, |B_{\pi(v_t)}| = 1, \chrn{v_t} \geq 2$.
    \end{enumerate}
    \end{enumerate}
We set $\map(u):= v_t$ and observe that $\pweight{v_t} \geq 2$ and either $v_t \in \enodes$ with $\effect{v_t} \geq 1/2$ or $v_t \in \onodes_{\geq 2} \backslash B_1$ with $\pweight{v_t} = 2, |B_{\pi(v_t)}| = 1, \chrn{v_t} \geq 2$, as claimed. This case is depicted in \fref{map-l2}. 
\item If $u \in \tiiileaves$, then we know that $u \in \leaves{F} \backslash B_1$ with $\pweight{u} = 1, B_{\pi(u)} = \{u,v\}$ with $v\in V \backslash B_1$ and $\pweight{v} = 1$ and $\chrn{v} \geq 2$. In particular observe that $v \in \onodes_{\geq 2}$ and hence we can set $\map{(u)}:= v$. This case is depicted in \fref{map-l3}. 
\end{enumerate}
Next, we need to show the map defined above is injective. Define the following subset of vertices:
\begin{align*}
    \mathcal{S} \explain{def}{=} \{w \in V \backslash B_1: \pweight{w} = 2, |B_{\pi(w)}| = 1, \chrn{w} = 1\}.
\end{align*}
Observe that in \fref{map}, the vertices in $\mathcal{S}$ are exactly the white vertices. From the construction of $\map$  defined (refer to \fref{map}) above we observe the following claims are true:
\begin{enumerate}
    \item If $v= \map{(u)}$ for some $u \in \tiiileaves$ then $\pweight{v} = 1$ and $B_{\pi(v)} = \{u,v\}$.  On the other hand, if $v = \map{(u)}$ for some $u \in \bnodes_1 \cup \bnodes_2 \cup \tiileaves$, then $\pweight{v} \geq 2$. 
    \item If $v = \map{(u)}$ for some $u \in \bnodes_1$, then the parent of $v$ is $u$ and $u \not\in \mathcal{S}$. In particular, the closest ancestor of $v$ which is not in the set $\mathcal{S}$ is precisely $u$. 
    \item If $v = \map{(u)}$ for some $u \in \bnodes_2$, then the closest ancestor of $v$ which is not in the set $\mathcal{S}$ is precisely $u$. 
    \item If $v = \map{(u)}$ for some $u \in \tiileaves$, then the closest ancestor of $v$ which is not in the set $\mathcal{S}$ is an internal node $w$ with the property $\pweight{w} =1, \chrn{w} = 1, B_{\pi(w)} = \{w, u\}$. Furthermore $w \not\in \bnodes_1$ and $w \not\in \bnodes_2$. 
\end{enumerate}
The above three claims provide an algorithm to compute $\map^{-1}(v)$: 
\begin{enumerate}
    \item If $\pweight{v} = 1$, then it must be that $B_{\pi(v)} = \{u,v\}$ and $\map^{-1}(v):= u$. 
    \item If $\pweight{v} \geq 2$, find the closest ancestor of $v$ that is not in the set $\mathcal{S}$. Let this ancestor be $w$. 
    \begin{itemize}
    \item If $w \in \bnodes_1$ then, $\map^{-1}(v) := w$. 
    \item If $w \in \bnodes_2$ then, $\map^{-1}(v) : = w$.
    \item If none of the above conditions are met, then it must be that $B_{\pi(w)} = \{u,w\}$ and $\map^{-1}(v): = u$.
    \end{itemize}
\end{enumerate}
Since we have constructed an inverse for $\map$, $\map$ is injective. 
\end{proof}

\subsection{Proof of \propref{universal-config-estimate}}\label{sec:universal-forest-final-proof}
\begin{proof}[Proof of \propref{universal-config-estimate}]
We are now ready to prove for \propref{universal-config-estimate}. It relies on several results proved previously, which we reproduce below for convenience.
\begin{enumerate}
    \item In \lemref{exponent-lb}, we showed that,
    \begin{align} \label{eq:exponent-lb-recall}
         \exponent(F,\pi) & \geq \frac{|\rootset(F)| - |\rootset_0(F)|}{4} + \frac{|\onodes_{\geq 2}|}{4} - \frac{(|\eleaves| + |\gleaves| + |\tileaves| + |\tiileaves| + |\tiiileaves|+ |\bnodes_{2}|)}{4} - \frac{|\bnodes_1|}{2} + \sum_{v \in \onodes \cup \gnodes \cup \enodes} \effect{v},
    \end{align}
    where,
    \begin{align}\label{eq:effect-def-recall}
    \effect{v} \explain{def}{=} \begin{cases} \frac{\pweight{v}}{2} - \frac{1}{|B_{\pi(v)}|} &: v \in V \backslash B_1  \\ \frac{\pweight{v}}{2} &: v \in B_1 \backslash \rootset(F) \\ 0 &: v \in \rootset(F)  \end{cases}
\end{align}
    \item In \lemref{map-construction}, we constructed an \emph{injective} map $\map: \bnodes_1 \cup \bnodes_2 \cup \tiileaves \cup \tiiileaves \rightarrow \enodes \cup \onodes_{\geq 2} \backslash \rootset(F)$ with the following properties:
\begin{enumerate}
  \item If $u \in \bnodes_1$, then $v \bydef \map(u)$ satisfies $v \in \enodes$ with $\pweight{v} \geq 3$. 
    \item If $u \in \bnodes_2 \cup \tiileaves $, then $v \bydef \map(u)$ satisfies $v \in V \backslash \rootset(F)$ with $\pweight{v} \geq 2$ and exactly one of the following properties:
    \begin{enumerate}
        \item $v \in \enodes$ with $\effect{v} \geq 1/2$.
        \item $v \in \onodes_{\geq 2} \backslash B_1$ with $\pweight{v} = 2, \; |B_{\pi(v)}| = 1, \chrn{v} \geq 2$.
    \end{enumerate}
    \item If $u \in \tiiileaves$, then $v \bydef \map(u)$ satisfies $v \in \onodes_{\geq 2} \backslash B_1$ with $\pweight{v} = 1, |B_{\pi(v)}| = 2$ and $\chrn{v} \geq 2$.
\end{enumerate}
\end{enumerate}
As mentioned previously, the rational of constructing the map $\map$ is to cancel the negative contribution of a vertex $u \in \bnodes_1 \cup \bnodes_2 \cup \tiileaves \cup \tiiileaves$ in the lower bound in \eqref{eq:exponent-lb-recall} by the positive effect of $\map({u}) \in \onodes_{\geq 2} \cup \enodes$. To this end, we make the following observations. First, we observe that we can partition the sets $\bnodes_2$ and $\tiileaves$ as $\bnodes_2 = \bnodes_2^{o} \cup \bnodes_2^e$ and $\tiileaves = \tiileaves^{o} \cup \tiileaves^{e}$ where:
    \begin{align*}
        \bnodes_2^o & = \{u \in \bnodes_2 : \map(u) \in \onodes_{\geq 2}\}, \; \bnodes_2^e = \{u \in \bnodes_2 : \map(u) \in \enodes\}, \\
         \tiileaves^o  &= \{u \in \tiileaves : \map(u) \in \onodes_{\geq 2}\}, \; \tiileaves^e = \{u \in \tiileaves : \map(u) \in \enodes\}
    \end{align*}
    By the injectivity of $\map$, $|\bnodes_2^o| + |\tiileaves^o| + |\tiiileaves| \leq |\onodes_{\geq 2}|$. Hence,
    \begin{align*}
        \exponent(F,\pi) & \geq \frac{|\rootset(F)| - |\rootset_0(F)|}{4}  - \frac{(|\eleaves| + |\gleaves| + |\tileaves| + |\tiileaves^e| + |\bnodes_{2}^e|)}{4} - \frac{|\bnodes_1|}{2} + \sum_{v \in \onodes \cup \gnodes \cup \enodes} \effect{v}
    \end{align*}
    \item Next, we transfer the negative contribution of a vertex $u \in \bnodes_2^e \cup \tiileaves^e \cup \bnodes_1$ to $\map(u)$ by defining weights $\mu_v$ for each $v \in V$ as follows:
    \begin{align*}
    \mu_v & = \begin{cases} 2 &: v \in \map(\bnodes_1) \\
    1 &: v \in \map(\bnodes_2 \cup \tiileaves) \cap \enodes \\ 0&: \text{ otherwise}\end{cases}. 
\end{align*}
The injectivity of $\map$ again guarantees:
\begin{align}
    \exponent(F,\pi) & \geq \frac{|\rootset(F)| - |\rootset_0(F)|}{4} + \sum_{v \in \onodes \cup \gnodes \cup \enodes} \left(\effect{v} - \frac{\mu_v}{4} \right)  -  \frac{(|\eleaves| + |\gleaves| + |\tileaves|)}{4}. \label{eq:exponent-lb-intermediate}
\end{align}
 
We also record the following implications that we use several times in the proof: 
\begin{subequations}\label{eq:mu-implication}
\begin{align} 
    \mu_v = 1 &\implies \effect{v} \geq 1/2   &\text{[follows from  \lemref{map-construction} recalled in item (2b) above]},\\
    \mu_v = 2 &\implies \pweight{v} \geq 3 \implies \effect{v} \geq 3/2 - 1 = 1/2  &\text{[follows from  \lemref{map-construction} recalled in (2a) and \eqref{eq:effect-def-recall}]}.
\end{align}
\end{subequations}
We further rewrite \eqref{eq:exponent-lb-intermediate} by defining the weights:
\begin{align*}
    \lambda_v & = \begin{cases} 1 &: v \in \eleaves \cup \gleaves \cup \tileaves  \\ 0&: \text{ otherwise}\end{cases},
\end{align*}
Since $\eleaves \cup \gleaves \cup \tileaves \subset \onodes \cup \gnodes \cup \enodes$ we can rexpress \eqref{eq:exponent-lb-intermediate} as:
\begin{align*}
     \exponent(F,\pi) & \geq \frac{|\rootset(F)| - |\rootset_0(F)|}{4} + \sum_{v \in \onodes \cup \gnodes \cup \enodes} \left( \effect{v} - \frac{\mu_v}{4} - \frac{\lambda_v}{4} \right).  
\end{align*}
In order to show that $\exponent(F,\pi) \geq (|\rootset(F)| - |\rootset_0(F)|)/4$, we will show that,
\begin{align*}
    \sum_{v \in \onodes \cup \gnodes \cup \enodes} \left( \effect{v} - \frac{\mu_v}{4} - \frac{\lambda_v}{4} \right) \geq 0.
\end{align*}
In order to show the above inequality, for each node $u \in  \onodes \cup \gnodes \cup \enodes$, we will show that either,
\begin{align*}
    \effect{u} - \frac{\mu_u}{4} - \frac{\lambda_u}{4} \geq 0,
\end{align*}
or for $B: = B_{\pi(u)}$,
\begin{align*}
    \sum_{v \in (\onodes \cup \gnodes \cup \enodes)\cap B} \left( \effect{v} - \frac{\mu_v}{4} - \frac{\lambda_v}{4} \right) \geq 0.
\end{align*}
We consider the following 4 exhaustive cases for $u$: $\lambda_u = 0$ or $\lambda_u = 1, \mu_u = 0$ or $\lambda_u = 1, \mu_u = 1$ or $\lambda_u = 1, \mu_u = 2$.
\begin{enumerate}
    \item $\lambda_u = 0:$ In this case if $\mu_u = 0$, we know by \defref{vertex-classification}, that for any $u \in \onodes \cup \gnodes \cup \enodes$,
    \begin{align*}
          \effect{u} \geq 0 \implies  \effect{u} - \frac{\mu_u}{4} - \frac{\lambda_u}{4} \geq 0. 
    \end{align*}
    On the other hand if $\mu_u \geq 1$, \eqref{eq:mu-implication} guarantees that,
    \begin{align*}
          \effect{u} \geq 1/2  \implies  \effect{u} - \frac{\mu_u}{4} - \frac{\lambda_u}{4} \geq 0.
    \end{align*}
    \item $\lambda_u = 1, \mu_u = 1$: In this case, \eqref{eq:mu-implication} yields:
    \begin{align*}
        \effect{u} \geq 1/2 \implies \effect{u} - \frac{\mu_u}{4} - \frac{\lambda_u}{4} \geq 0.
    \end{align*}
    \item $\lambda_u = 1, \mu_u = 2$: In this case, we know that $u$ is a leaf and furthermore by \eqref{eq:mu-implication} $\pweight{u} \geq 3$. We consider the following two sub-cases:
    \begin{enumerate}
        \item $|B_{\pi(u)}| = 1$. In this case, by the \ref{parity-property} we cannot have $\pweight{u} = 3$. Hence, we must have, $\pweight{u} \geq 4$. And hence,
        \begin{align*}
             \effect{u} - \frac{\mu_u}{4} - \frac{\lambda_u}{4} \geq \frac{\pweight{u}}{2} - \frac{1}{|B_{\pi(u)}|} - \frac{\mu_u}{4} - \frac{\lambda_u}{4} \geq 2 - 1 - \frac{2}{4} - \frac{1}{4} = \frac{1}{4}. 
        \end{align*}
        \item $|B_{\pi(u)}| \geq  2$. In this case, we have,
        \begin{align*}
            \effect{u} - \frac{\mu_u}{4} - \frac{\lambda_u}{4} \geq \frac{\pweight{u}}{2} - \frac{1}{|B_{\pi(u)}|} - \frac{\mu_u}{4} - \frac{\lambda_u}{4} \geq \frac{3}{2} - \frac{1}{2} - \frac{2}{4} - \frac{1}{4} = \frac{1}{4}.
        \end{align*}
    \end{enumerate}
    \item $\lambda_u = 1, \mu_u = 0$: In this case, we already know that $u \in \eleaves  \cup \tileaves \cup \gleaves$. We consider 3 exhaustive sub-cases corresponding to whether $u \in \eleaves$ or $u \in \tileaves$ or $u \in \gleaves$.
    \begin{enumerate}
        \item $u \in \eleaves$: In this case the definition of excellent leaves (\defref{leaf-classification}) guarantees:
        \begin{align*}
            \effect{u} \geq 1/4 \implies 
            \effect{u} - \frac{\mu_u}{4} - \frac{\lambda_u}{4} \geq 0. 
        \end{align*}
        \item $u \in \tileaves$: In this case, we know that $u \in \leaves{F} \backslash B_1$ with $\pweight{u} = 1, B_{\pi(u)} = \{u,v\}$ for some node $v \in V \backslash B_1$ with $\pweight{v} \geq 2$. In this case we will show that for $B = B_{\pi(u)} = \{u,v\}$ we have,
        \begin{align*}
           \left( \effect{u} - \frac{\mu_u}{4} - \frac{\lambda_u}{4} \right) +  \left( \effect{v} - \frac{\mu_v}{4} - \frac{\lambda_v}{4} \right) &=\left( \frac{\pweight{u}}{2} - \frac{1}{|B_{\pi(u)}|} - \frac{\mu_u}{4} - \frac{\lambda_u}{4} \right) +  \left( \frac{\pweight{v}}{2} - \frac{1}{|B_{\pi(v)}|} - \frac{\mu_v}{4} - \frac{\lambda_v}{4} \right) \\ &= \frac{\pweight{v}}{2} - \frac{3}{4} - \frac{\lambda_v + \mu_v}{4} 
        \end{align*}We consider two sub-cases depending on whether $v$ is a leaf or not.
        \begin{enumerate}
            \item $v$ is a leaf: Then by the \ref{parity-property}, $\pweight{u} + \pweight{v}$ must be even and so infact, $\pweight{v} \geq 3$. Since $\lambda_v + \mu_v \leq 3$, we have,
            \begin{align*}
                 \frac{\pweight{v}}{2} - \frac{3}{4} - \frac{\lambda_v + \mu_v}{4} \geq 0.
            \end{align*}
            \item $v$ is not a leaf: Note that $\lambda_v = 0$. If $\mu_v \leq 1$, then again since we know $\pweight{v} \geq 2$,
            \begin{align*}
                 \frac{\pweight{v}}{2} - \frac{3}{4} - \frac{\lambda_v + \mu_v}{4} \geq 0.
            \end{align*}
            On the other hand if $\mu_v = 2$, then \eqref{eq:mu-implication} guarantees $\pweight{v} \geq 3$. Hence,
            \begin{align*}
                 \frac{\pweight{v}}{2} - \frac{3}{4} - \frac{\lambda_v + \mu_v}{4} \geq \frac{1}{4}. 
            \end{align*}
        \end{enumerate}
        \item $u \in \gleaves:$ In this case, from the definition of good leaves (\defref{leaf-classification}), we know that $u \in \leaves{F} \backslash B_1$ with $\pweight{u} = 1$ and $B_{\pi(u)} = \{u, v, w\}$. Since $B_1$ and $B_{\pi(u)}$ are both blocks of a partition, they must either be identical or disjoint. Since $u \not \in B_1$, $B_1$ and $B_{\pi(u)}$ must be disjoint. Hence, $v,w \in V \backslash B_1$. Consequently, recalling the definition of $\effect{}$ from \eqref{eq:effect-def-recall} in this case, we need to show,
        \begin{align*}
         &\left( \effect{u} - \frac{\mu_u}{4} - \frac{\lambda_u}{4} \right) +  \left( \effect{v} - \frac{\mu_v}{4} - \frac{\lambda_v}{4} \right)  + \left( \effect{w} - \frac{\mu_w}{4} - \frac{\lambda_w}{4} \right) = \\
            &\left( \frac{\pweight{u}}{2} - \frac{1}{|B_{\pi(u)}|} - \frac{\mu_u}{4} - \frac{\lambda_u}{4} \right) +  \left( \frac{\pweight{v}}{2} - \frac{1}{|B_{\pi(v)}|} - \frac{\mu_v}{4} - \frac{\lambda_v}{4} \right)  + \left( \frac{\pweight{w}}{2} - \frac{1}{|B_{\pi(w)}|} - \frac{\mu_w}{4} - \frac{\lambda_w}{4} \right) \geq 0. 
        \end{align*}
        Since we know $\pweight{u} = 1, B_{\pi(u)} = \{u, v, w\}, \lambda_u = 1, \mu_u = 0$ we can simplify the above formula and we need to show:
        \begin{align} \label{eq:final-case-goal}
            \frac{1+ \pweight{v} + \pweight{w}}{2} - 1 - \frac{1 + \lambda_v + \lambda_w}{4} - \frac{\mu_v + \mu_w}{4} \geq 0.
        \end{align}
        We will consider the following 4 exhaustive cases: 
        \begin{enumerate}
            \item $(\mu_v \geq 1, \mu_w \geq 1)$: Observe that since $\mu_v \geq 1$, the construction of $\map$ guarantees (cf. \eqref{eq:mu-implication}):
            \begin{align*}
               \effect{v} =  \frac{\pweight{v}}{2} - \frac{1}{|B_{\pi(v)}|} \geq \frac{1}{2} \implies \pweight{v} \geq \frac{5}{3} \implies \pweight{v} \geq 2. 
            \end{align*}
            Analogously, we can argue that $\pweight{w} \geq 2$. If there are at most $2$ leaves among $\{u,v,w\}$ (and hence $\lambda_u + \lambda_v + \lambda_w \leq 2)$, we can lower bound \eref{final-case-goal} as follows:
            \begin{align*}
                 \frac{1+ \pweight{v} + \pweight{w}}{2} - 1 - \frac{1 + \lambda_v + \lambda_w}{4} - \frac{\mu_v + \mu_w}{4} \geq \frac{5}{2} - 1 - \frac{2}{4} - \frac{4}{4} \geq 0. 
            \end{align*}
            On the other hand, if all 3 of $\{u,v,w\}$ are leaves, by the \ref{parity-property}, we must have $\pweight{u}  + \pweight{v} + \pweight{w}$ is even. Since we know that $\pweight{u} + \pweight{v} + \pweight{w} \geq 5$ we must in fact have $\pweight{u} + \pweight{v} + \pweight{w} \geq 6$. We can now lower bound \eref{final-case-goal} as follows:
            \begin{align*}
                 \frac{1+ \pweight{v} + \pweight{w}}{2} - 1 - \frac{1 + \lambda_v + \lambda_w}{4} - \frac{\mu_v + \mu_w}{4} \geq \frac{6}{2} - 1 - \frac{3}{4} - \frac{4}{4} \geq \frac{1}{4}. 
            \end{align*}
            \item $(\mu_v = 2, \mu_w = 0)$ or $(\mu_v = 0, \mu_w = 2)$: Observe that these cases are symmetric. Hence we only need to consider the case $(\mu_v = 2, \mu_w = 0)$. In this situation, the construction of $\map$ guarantees $\pweight{v} \geq 3$ (cf. \eqref{eq:mu-implication}). Additionally we have, $\pweight{w} \geq 1$ and $\lambda_v + \lambda_w \leq 2$ and $\mu_v + \mu_w = 2$. Hence, we can now lower bound \eref{final-case-goal} as follows:
            \begin{align*}
                 \frac{1+ \pweight{v} + \pweight{w}}{2} - 1 - \frac{1 + \lambda_v + \lambda_w}{4} - \frac{\mu_v + \mu_w}{4} \geq \frac{5}{2} - 1 - \frac{3}{4} - \frac{2}{4} \geq \frac{1}{4}. 
            \end{align*}
            \item $(\mu_v = 1, \mu_w = 0)$ or $(\mu_v = 0, \mu_w = 1)$: Observe that these cases are symmetric. Hence we only need to consider the case $(\mu_v = 1, \mu_w = 0)$. In this situation, since $\mu_v = 1$, the construction of $\map$ guarantees (cf. \eqref{eq:mu-implication}):
            \begin{align*}
                \effect{v} =\frac{\pweight{v}}{2} - \frac{1}{|B_{\pi(v)}|} \geq \frac{1}{2} \implies \pweight{v} \geq \frac{5}{3} \implies \pweight{v} \geq 2. 
            \end{align*}
            Hence, by observing $p_w\geq 1, \lambda_v + \lambda_w \leq 2$ and $\mu_v + \mu_w = 1$ we can now lower bound \eref{final-case-goal} as follows:
            \begin{align*}
                 \frac{1+ \pweight{v} + \pweight{w}}{2} - 1 - \frac{1 + \lambda_v + \lambda_w}{4} - \frac{\mu_v + \mu_w}{4} \geq \frac{4}{2} - 1 - \frac{3}{4} - \frac{1}{4} \geq 0.
            \end{align*}
            \item $(\mu_v = 0, \mu_w = 0)$: Observe that since $\pweight{v} \geq 1$ and $\pweight{w} \geq 1$ we have $\pweight{u} + \pweight{v} + \pweight{w} \geq 3$. Furthermore, $\mu_v + \mu_w = 0$.  Now if there are at most $2$ leaves among $\{u, v, w\}$, we have $\lambda_u + \lambda_v + \lambda_w \leq 2$ and hence we can lower bound \eref{final-case-goal} as follows:
            \begin{align*}
                 \frac{1+ \pweight{v} + \pweight{w}}{2} - 1 - \frac{1 + \lambda_v + \lambda_w}{4} - \frac{\mu_v + \mu_w}{4} \geq \frac{3}{2} - 1 - \frac{2}{4} - \frac{0}{4} \geq 0.
            \end{align*}
            On the other hand, if all 3 of $\{u,v,w\}$ are leaves, by the \ref{parity-property} $\pweight{u} + \pweight{v} + \pweight{w}$ is even. Since $\pweight{u} + \pweight{v} + \pweight{w} \geq 3$, we infact have $\pweight{u} + \pweight{v} + \pweight{w} \geq 4$. Hence we obtain the following lower bound on \eref{final-case-goal}:
            \begin{align*}
                 \frac{1+ \pweight{v} + \pweight{w}}{2} - 1 - \frac{1 + \lambda_v + \lambda_w}{4} - \frac{\mu_v + \mu_w}{4} \geq \frac{4}{2} - 1 - \frac{3}{4} - \frac{0}{4} \geq \frac{1}{4}.
            \end{align*}
        \end{enumerate}
    \end{enumerate}
\end{enumerate}
This concludes the proof. 
\end{proof}

\section{Proof of  \thref{SE-final}}
\label{proof_SE-final} 

In this section, we provide a proof for \thref{SE-final}. Using a simple rescaling argument, we can without loss of generality assume that the matrix ensemble $\mM$ is semi-random with $\sigpsi=1$ (\defref{matrix-ensemble-relaxed}), the initialization $\iter{\vz}{0} \sim \gauss{0}{1}$ (that is, \assumpref{initialization} holds with $\sigma_0^2 = 1$) and the non-linearities satisfy $\E \nonlin_t^2(Z) = 1$ for $t \; \in \; [T]$ and $Z\sim \gauss{0}{1}$. We record this rescaling argument in the lemma below and defer its proof to \appref{rescaling} for the reader interested in the full details. 

\begin{lemma}[Rescaling]\label{lem:rescaling} It is sufficient to prove \thref{SE-final} under the additional assumptions: (a) the matrix ensemble $\mM$ is semi-random with $\sigpsi=1$ (\defref{matrix-ensemble-relaxed}), (b) the initialization $\iter{\vz}{0} \sim \gauss{0}{1}$ (that is, \assumpref{initialization} holds with $\sigma_0^2 = 1$) and, (c) the non-linearities satisfy $\E \nonlin_t^2(Z) = 1$ for $t \; \in \; [T]$ and $Z\sim \gauss{0}{1}$.
\end{lemma}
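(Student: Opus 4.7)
The plan is a coordinate-wise rescaling that simultaneously normalizes $\sigma_0^2$, $\sigpsi$, and the $L^2$-norms of the non-linearities, carefully checking that every hypothesis of \thref{SE-final} is preserved under the rescaling and that the conclusion then transfers back. First I would note that under \assumpref{divergence-free} the recursion in \eqref{eq:SE-memory-free} simplifies to $\sigma_{t+1}^2 = \sigpsi \cdot \E[\nonlin_{t+1}^2(\sigma_t Z)]$ with $Z \sim \gauss{0}{1}$, and the non-degeneracy assumption on $\nonlin_{1:T}$ ensures $\sigma_t > 0$ for all $t \in \{0, 1, \dotsc, T\}$. This positivity is what makes the rescaling below well-defined.

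Define the rescaled quantities
\begin{align*}
\iter{\widetilde{\vz}}{t} \;\explain{def}{=}\; \iter{\vz}{t}/\sigma_t, \qquad \widetilde{\mM} \;\explain{def}{=}\; \mM/\sqrt{\sigpsi}, \qquad \widetilde{\nonlin}_{t+1}(x) \;\explain{def}{=}\; \frac{\sqrt{\sigpsi}}{\sigma_{t+1}} \cdot \nonlin_{t+1}(\sigma_t x).
\end{align*}
A direct computation using $\iter{\vz}{t+1} = \mM \nonlin_{t+1}(\iter{\vz}{t})$ shows that the rescaled iterates satisfy $\iter{\widetilde{\vz}}{t+1} = \widetilde{\mM} \, \widetilde{\nonlin}_{t+1}(\iter{\widetilde{\vz}}{t})$, initialized at $\iter{\widetilde{\vz}}{0} = \iter{\vz}{0}/\sigma_0 \sim \gauss{\vzero}{\mI_\dim}$. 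Writing $\widetilde{\mM} = \mS \widetilde{\mPsi} \mS$ with $\widetilde{\mPsi} = \mPsi/\sqrt{\sigpsi}$, the requirements (2a)--(2d) of \defref{matrix-ensemble-relaxed} are inherited from $\mPsi$ with the new constant $\widetilde{\sigpsi} = 1$. The state evolution recursion gives $\E \widetilde{\nonlin}_{t+1}^2(Z) = (\sigpsi/\sigma_{t+1}^2)\cdot \E \nonlin_{t+1}^2(\sigma_t Z) = 1$, and the divergence-free property is preserved since $\E[Z \widetilde{\nonlin}_{t+1}(Z)] = (\sqrt{\sigpsi}/\sigma_{t+1}) \cdot \E[Z \nonlin_{t+1}(\sigma_t Z)]/\sigma_t$ vanishes. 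Continuous differentiability and the Lipschitz property are obviously preserved under affine rescaling.

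By definition, the state evolution recursion for the rescaled iteration produces variances $\widetilde{\sigma}_t^2 = 1$ and covariances $\widetilde{\rho}_{s,t+1} = \rho_{s,t+1}/(\sigma_s \sigma_{t+1})$. Equivalently, if $(Z_0, \dotsc, Z_T) \sim \gauss{\vzero}{\mSigma_T}$ then $(\widetilde{Z}_0, \dotsc, \widetilde{Z}_T) \explain{def}{=} (Z_0/\sigma_0, \dotsc, Z_T/\sigma_T) \sim \gauss{\vzero}{\widetilde{\mSigma}_T}$. Granting the normalized version of \thref{SE-final}, we conclude $(\iter{\widetilde{\vz}}{0}, \dotsc, \iter{\widetilde{\vz}}{T}) \explain{\pw}{\longrightarrow} (\widetilde{Z}_0, \dotsc, \widetilde{Z}_T)$.

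To transfer this convergence back to the original iterates, fix any test function $h$ satisfying the growth bound \eqref{eq:P-conv} and set $\widetilde{h}(\widetilde{z}_0, \dotsc, \widetilde{z}_T) \explain{def}{=} h(\sigma_0 \widetilde{z}_0, \dotsc, \sigma_T \widetilde{z}_T)$. Since $\sigma_{0:T}$ are constants independent of $\dim$, $\widetilde{h}$ also satisfies a bound of the form \eqref{eq:P-conv} (with a constant depending on $L$ and $\max_t \sigma_t$), so the $\pw$-convergence of the rescaled iterates applied to $\widetilde{h}$ yields
\begin{align*}
\frac{1}{\dim} \sum_{i=1}^\dim h(\iter{z}{0}_i, \dotsc, \iter{z}{T}_i) \;=\; \frac{1}{\dim} \sum_{i=1}^\dim \widetilde{h}(\iter{\widetilde{z}}{0}_i, \dotsc, \iter{\widetilde{z}}{T}_i) \;\explain{P}{\longrightarrow}\; \E \widetilde{h}(\widetilde{Z}_0, \dotsc, \widetilde{Z}_T) \;=\; \E h(Z_0, \dotsc, Z_T),
\end{align*}
which is the desired conclusion for the original, unnormalized iteration. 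No step of this argument looks substantive---the only mild subtlety is verifying that $\widetilde{\mPsi}$ inherits each of the four semi-random properties verbatim and that the covariance structure matches under the $\widetilde{Z}_t = Z_t/\sigma_t$ correspondence, both of which reduce to arithmetic with the constants $\sqrt{\sigpsi}$ and $\sigma_{0:T}$.
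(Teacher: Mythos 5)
Your proof is correct and takes essentially the same route as the paper: normalize $\mM$ by $\sqrt{\sigpsi}$, normalize the iterates by $\sigma_t$, absorb the compensating factors into the non-linearities, verify that the semi-random conditions, the Gaussian initialization, the divergence-free property, and the unit second moment all hold for the rescaled system, and transfer the conclusion back via the test function $\widetilde{h}(\vx) = h(\sigma_0 x_0, \dotsc, \sigma_T x_T)$. (Minor slip: in the divergence-free check you have a spurious extra factor of $1/\sigma_t$ --- it should read $\E[Z \widetilde{\nonlin}_{t+1}(Z)] = (\sqrt{\sigpsi}/\sigma_{t+1}) \cdot \E[Z \nonlin_{t+1}(\sigma_t Z)]$ --- but since the quantity is zero either way, this does not affect the argument.)
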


\thref{SE-normalized} will play a critical role in our proof. 
We first complement \thref{SE-normalized} with the following concentration estimate.

\begin{theorem}\label{thm:concentration} Under the assumptions of \thref{SE-normalized}, for any polynomial test function $h: \R^{t+1} \rightarrow \R$ with degree at most $\degree$, we have, 
\begin{align*}
    \Var\left[ \frac{1}{\dim} \sum_{i=1}^\dim h(\iter{\vz}{0}_i, \iter{\vz}{1}_i, \dotsc, \iter{\vz}{t}_i) \right] & \lesssim \dim^{-1 + \epsilon}.
\end{align*}
\end{theorem}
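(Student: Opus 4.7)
The plan is to apply the Efron--Stein inequality to the $2\dim$ independent random variables that determine the statistic $F_\dim := \tfrac{1}{\dim}\sum_{i=1}^\dim h(\iter{z}{0}_i, \dotsc, \iter{z}{t}_i)$---namely, the $\dim$ standard Gaussian coordinates of $\iter{\vz}{0}$ and the $\dim$ Rademacher signs $s_1, \dotsc, s_\dim$ constituting $\mS$. This yields
\begin{align*}
\Var[F_\dim] \;\leq\; \tfrac{1}{2}\sum_{j=1}^\dim \E\bigl[(F_\dim - F_\dim^{(z,j)})^2\bigr] + \tfrac{1}{2}\sum_{j=1}^\dim \E\bigl[(F_\dim - F_\dim^{(s,j)})^2\bigr],
\end{align*}
where $F_\dim^{(z,j)}$ replaces $z_j^{(0)}$ by an independent Gaussian and $F_\dim^{(s,j)}$ flips $s_j$. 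The goal is to show that each of the two sums is $O(\dim^{-1+\epsilon})$ by parallel arguments.

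For the Gaussian part, I would use the fundamental theorem of calculus to reduce $(F_\dim - F_\dim^{(z,j)})^2$ to a squared integral of $\partial_{z_j^{(0)}} F_\dim$ along the linear interpolation, and then compute this partial derivative via the chain rule. Unrolling $\iter{\vz}{s} = \mM \nonlin_s(\iter{\vz}{s-1})$ gives $\partial_{z_j^{(0)}}\iter{\vz}{s} = J_s \ve_j$ with $J_s := \mM D_s \mM D_{s-1}\cdots\mM D_1$ (and $J_0 = \mI_\dim$), where $D_\tau := \diag(\nonlin'_\tau(\iter{\vz}{\tau-1}))$. Setting $(\vg_s)_i := \partial_s h(\iter{z}{0}_i, \dotsc, \iter{z}{t}_i)$, the column sum collapses via Cauchy--Schwarz:
\begin{align*}
\sum_{j=1}^\dim \bigl(\partial_{z_j^{(0)}} F_\dim\bigr)^2 = \frac{1}{\dim^2}\Bigl\|\sum_{s=0}^t J_s^\top \vg_s\Bigr\|^2 \;\leq\; \frac{t+1}{\dim^2}\sum_{s=0}^t \|J_s\|_\op^2 \|\vg_s\|^2.
\end{align*}
Using $\|\mM\|_\op \lesssim 1$ (item 2b of \defref{matrix-ensemble-relaxed}), one has $\|J_s\|_\op \lesssim \prod_{\tau=1}^s \|D_\tau\|_\op$. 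Since $h$ and the iterates are polynomials of bounded degree in the underlying independent variables, Gaussian--Rademacher hypercontractivity gives $\E[\|\vg_s\|^4] \lesssim \dim^2$, while $\|D_\tau\|_\op = \max_k |\nonlin'_\tau(\iter{z}{\tau-1}_k)|$ admits the moment bound $\E[\|D_\tau\|_\op^p] \lesssim \dim^\epsilon$ for any fixed $p$ via a union bound applied to the uniform polynomial moment estimates $\E[|\iter{z}{\tau-1}_k|^q] \lesssim 1$ on individual coordinates. Assembling these via Cauchy--Schwarz yields $\sum_j \E[(F_\dim - F_\dim^{(z,j)})^2] \lesssim \dim^{-1+\epsilon}$.

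For the sign part, the key observation is that flipping $s_j$ induces the rank-$2$ (up to a diagonal correction) perturbation $\mM - \mM^{(s,j)} = 2s_j(\ve_j \ve_j^\top \mPsi \mS + \mS \mPsi \ve_j \ve_j^\top) - 4\psi_{jj}\ve_j\ve_j^\top$, whose squared Frobenius norm summed over $j$ satisfies
\begin{align*}
\sum_{j=1}^\dim \|\mM - \mM^{(s,j)}\|^2 \;\lesssim\; \Tr(\mPsi \mPsi^\top) + \Tr(\mPsi^\top \mPsi) \;\lesssim\; \dim,
\end{align*}
using the normalization $(\mPsi\mPsi^\top)_{jj} = 1$ and $\|\mPsi\|_\op \lesssim 1$. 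Expanding $F_\dim - F_\dim^{(s,j)}$ as a telescope across the $t$ iterations at which the perturbed matrix is applied, and bounding each telescoped term by operator-norm times polynomial-moment factors exactly as in the Gaussian part, gives the matching $\dim^{-1+\epsilon}$ bound. The main anticipated obstacle is the rigorous control of the random operator norms $\|D_\tau\|_\op$ and allied max-of-iterate quantities in expectation; this reduces to establishing uniform-in-$i$ polynomial moment bounds $\E[|\iter{z}{s}_i|^{2p}] \lesssim 1$ for every fixed $p$, which should follow from the polynomial-in-$(\iter{\vz}{0},\vs)$ representation of the iterates given by \lemref{unrolling_1}, combined with Gaussian--Rademacher hypercontractivity for bounded-degree polynomials.
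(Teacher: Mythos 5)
Your high-level strategy matches the paper's: the paper uses Gaussian Poincar\'{e} for the $\vg$-contribution and Efron--Stein for the $\mS$-contribution, which differs only cosmetically from your uniform Efron--Stein treatment, and your Gaussian-part gradient collapse $\sum_j(\partial_j F_\dim)^2 = \frac{1}{\dim^2}\|\sum_s J_s^\top \vg_s\|^2$ is exactly the content of the paper's Lemma~\ref{lem:gradient-estimate}. Where you are imprecise is the sign part, which is the technical crux.

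The observation $\sum_j \|\mM-\mM^{(s,j)}\|_{\fr}^2 \lesssim \dim$ is correct but is \emph{not} by itself enough, and the phrase ``bounding each telescoped term $\dots$ exactly as in the Gaussian part'' conceals a real issue: there is no analog of the $\ell^2$ identity $\sum_j(\partial_j F)^2 = \|\nabla F\|^2$ for discrete differences. What one actually needs is the following two-part estimate on $\iter{\vDelta}{j;t} := \iter{\vz}{t}(\vg,\iter{\mS}{j})-\iter{\vz}{t}(\vg,\mS)$: a per-$j$ bound $\|\iter{\vDelta}{j;t}\| \lesssim \mathrm{poly}(\rho_T)$, \emph{and}, crucially, the operator-norm bound
\begin{align*}
\Big\|\sum_{j=1}^\dim \iter{\vDelta}{j;t}\,{\iter{\vDelta}{j;t}}\tran\Big\|_{\op} \lesssim \mathrm{poly}(\rho_T),
\end{align*}
which is what turns $\sum_j\ip{\partial_t h(\iter{\mZ}{T})}{\iter{\vDelta}{j;t}}^2$ into $\|\partial_t h(\iter{\mZ}{T})\|^2\cdot \mathrm{poly}(\rho_T)\lesssim \dim\cdot\mathrm{poly}(\rho_T)$ and hence, after dividing by $\dim^2$, gives $\dim^{-1+\epsilon}$. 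A naive per-$j$ bound on its own only yields $\sum_j(F_\dim-F_\dim^{(s,j)})^2\lesssim \mathrm{poly}(\rho_T) = O(\dim^{\epsilon})$, which misses by a whole factor of $\dim$. The operator-norm bound is the content of Lemma~\ref{lem:perturbation-estimate-efron-stein} in the paper, proved by a recursion that tracks how the $j$-localized rank-2 perturbation (a linear combination of $\ve_j$ and $\mS\mPsi\ve_j$, each forming a family whose outer-product sum is $\mI_\dim$ or $\mS\mPsi\mPsi\tran\mS$ with $\|\cdot\|_{\op}\lesssim 1$) propagates through the nonlinear iteration, with careful control of the Taylor remainders. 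Your telescope can be made to give the same thing, but you must exploit the near-orthonormality of $\{\mM_{\cdot j}\}_j$ and $\{\ve_j\}_j$, not the crude Frobenius tally.

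One smaller gap: you propose Gaussian--Rademacher hypercontractivity for the uniform moment bound $\E|\iter{z}{s}_i|^{2p}\lesssim 1$. Hypercontractivity gives $\|P\|_{2p}\lesssim\|P\|_2$ for bounded-degree polynomials, but it does not furnish the baseline $\|\iter{z}{s}_i\|_2$ estimate, which is a nontrivial consequence of the combinatorial bounds on $\mPsi$; the paper instead takes a more pedestrian route (Lemma~\ref{lem:rho-moment-estimates}) that cites \thref{SE-normalized} for the averaged moment $\E[\frac{1}{\dim}\sum_j|\iter{z}{s}_j|^{2q\ell}]\to\E Z^{2q\ell}$ and then applies Jensen plus a union bound, paying $\dim^{2/q}$ which is absorbed into $\dim^\epsilon$. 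You would still need to invoke \thref{SE-normalized} (or redo its tree-counting) to seed your hypercontractivity argument, so the claimed alternative is not actually independent of it.
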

\begin{proof}
This variance bound is derived using the Efron-Stein Inequality \citep{efron1981jackknife}. The complete derivation is provided in \appref{concentration} for the interested reader. 
\end{proof}

In order to deduce \thref{SE-final} from \thref{SE-normalized}, we will rely on the following approximation result, which shows that an iteration which satisfies the assumptions of \thref{SE-final} can always be approximated by an iteration which satisfies the assumptions of \thref{SE-normalized}. 

\begin{proposition}[Approximation]\label{prop:approx} Fix a non-negative integer $T \in \W$ and functions $\nonlin_1, \nonlin_2, \dotsc, \nonlin_T : \R \rightarrow \R$ and a test function $h:\R^{T+1} \rightarrow \R$. Consider the iteration:
\begin{align*}
    \iter{\vz}{t+1} = \mM \nonlin_{t+1}(\iter{\vz}{t}),
\end{align*}
initialized at $\iter{\vz}{0}$ that satisfies the assumptions of \thref{SE-final} with constants $\sigpsi = 1$ and $\sigma_0^2 = 1$. Suppose that the test function $h$ satisfies the following continuity estimate for some fixed constant $L \geq 0$:
    \begin{align*}
     |h(\vx) - h(\vx^\prime)| & \leq L \cdot  \|\vx - \vx^\prime \| \cdot (1+ \|\vx\| + \|\vx^\prime\|),
    \end{align*}
Then, there is a matrix ensemble $\hat{\mM}$ and sequences of polynomial approximating functions  $\iter{\nonlin}{k}_1, \iter{\nonlin}{k}_2, \dotsc, \iter{\nonlin}{k}_T : \R \rightarrow \R$ and $\iter{h}{k}:\R^{T+1} \rightarrow \R$ indexed by $k \in \N$ such that for each $k \in \N$, the iteration:
\begin{align*}
     \iter{\hat{\vz}}{t+1;k} = \hat{\mM} \iter{\nonlin}{k}_{t+1}(\iter{\hat{\vz}}{t;k}),
\end{align*}
initialized at $ \iter{\hat{\vz}}{0;k} = \iter{\vz}{0}$ satisfies the assumptions of \thref{SE-normalized} and,
\begin{align*}
    \lim_{k \rightarrow \infty}  \limsup_{\dim \rightarrow \infty} \E\left[ \left| \frac{1}{\dim} \sum_{i=1}^\dim h(\iter{z}{0}_i, \iter{z}{1}_i, \dotsc, \iter{z}{T}_i) - \frac{1}{\dim} \sum_{i=1}^\dim \iter{h}{k}(\iter{\hat{z}}{0;k}_i, \iter{\hat{z}}{1;k}_i, \dotsc, \iter{\hat{z}}{T;k}_i)  \right| \right] & = 0.
\end{align*}
\end{proposition}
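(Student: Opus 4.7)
The plan is to construct the proxy objects explicitly and then bound the difference between the original and proxy iterations via coupling, with the main technical point being the non-Lipschitz nature of the polynomial approximations. Expanding each $\nonlin_t$ in the Hermite basis of $L^2(\gauss{0}{1})$ yields $\nonlin_t(z)=\sum_{j\ge 0}\vartheta_{t,j}\hermite{j}(z)$; under \lemref{rescaling} and \assumpref{divergence-free} we have $\vartheta_{t,1}=0$ and $\sum_j\vartheta_{t,j}^2=1$. Define $\iter{\nonlin}{k}_t$ as the Hermite truncation at degree $k$ (the $\hermite{1}$ coefficient is already absent), renormalized to unit $L^2(\gauss{0}{1})$ norm; this is a polynomial of degree at most $k$ satisfying \assumpref{divergence-free} and \assumpref{normalization}(3), and $\iter{\nonlin}{k}_t\to\nonlin_t$ in $L^2(\gauss{0}{1})$ as $k\to\infty$. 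Similarly, take $\iter{h}{k}$ to be a multivariate Hermite truncation of $h$ so that $\E|h(\vec Z)-\iter{h}{k}(\vec Z)|\to 0$ for $\vec Z\sim\gauss{\vzero}{\mSigma_T}$. For the matrix, set $\mD:=\diag\bigl((\mPsi\mPsi\tran)_{11}^{1/2},\dots,(\mPsi\mPsi\tran)_{\dim\dim}^{1/2}\bigr)$ and $\hat{\mPsi}:=\mD^{-1}\mPsi$, $\hat{\mM}:=\mS\hat{\mPsi}\mS$. Since $\|\mD-\mI_{\dim}\|_{\op}=o(1)$ by \defref{matrix-ensemble-relaxed}(2d), $\hat{\mPsi}$ inherits (2a)-(2c) and additionally satisfies $(\hat{\mPsi}\hat{\mPsi}\tran)_{ii}=1$, meeting \assumpref{normalization}(2); moreover $\|\mM-\hat{\mM}\|_{\op}=o(1)$.

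\textbf{Coupling and inductive bound.} Set $\iter{\hat{\vz}}{0;k}=\iter{\vz}{0}$ and bound $\Delta_t:=\dim^{-1}\E\|\iter{\vz}{t}-\iter{\hat{\vz}}{t;k}\|^2$ by induction in $t$. The decomposition
\[
\iter{\vz}{t+1}-\iter{\hat{\vz}}{t+1;k}=(\mM-\hat{\mM})\nonlin_{t+1}(\iter{\vz}{t})+\hat{\mM}\bigl[\nonlin_{t+1}-\iter{\nonlin}{k}_{t+1}\bigr](\iter{\vz}{t})+\hat{\mM}\bigl[\iter{\nonlin}{k}_{t+1}(\iter{\vz}{t})-\iter{\nonlin}{k}_{t+1}(\iter{\hat{\vz}}{t;k})\bigr]
\]
produces three error terms. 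The first is $O\bigl(\|\mM-\hat{\mM}\|_{\op}^2\cdot\dim^{-1}\E\|\nonlin_{t+1}(\iter{\vz}{t})\|^2\bigr)=o_{\dim}(1)$, using a uniform bound on $\dim^{-1}\E\|\iter{\vz}{t}\|^2$ obtained inductively from $\|\mM\|_{\op}\lesssim 1$ and Lipschitz growth of $\nonlin_s$. The second is $O\bigl(\dim^{-1}\E\|\nonlin_{t+1}(\iter{\vz}{t})-\iter{\nonlin}{k}_{t+1}(\iter{\vz}{t})\|^2\bigr)$, which tends to zero as $k\to\infty$ from $L^2(\gauss{0}{1})$-closeness combined with uniform-in-$\dim$ control on the tails of the empirical distribution of the coordinates of $\iter{\vz}{t}$ (proved inductively using the delocalization $\|\mPsi\|_{\infty}\lesssim\dim^{-1/2+\epsilon}$).

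\textbf{Main obstacle and conclusion.} The third contribution is the main technical difficulty, because the polynomial $\iter{\nonlin}{k}_{t+1}$ is not globally Lipschitz. The plan is to use $|\iter{\nonlin}{k}_{t+1}(x)-\iter{\nonlin}{k}_{t+1}(y)|\le C_k(1+|x|^{k-1}+|y|^{k-1})|x-y|$, apply Cauchy--Schwarz, and close using uniform-in-$\dim$ bounds on the $(2k-2)$-th empirical moments of $\iter{\vz}{t}$ and $\iter{\hat{\vz}}{t;k}$; these high-moment bounds are themselves obtained inductively from $\|\mM\|_{\op},\|\hat{\mM}\|_{\op}\lesssim 1$ and coordinate-wise concentration arising from the delocalization of $\mPsi,\hat{\mPsi}$, with constants depending on $k$ but not on $\dim$. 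Assembling the three pieces yields $\Delta_{t+1}\le o_{\dim}(1)+o_k(1)+C_k\Delta_t$, so $\limsup_{\dim\to\infty}\Delta_T\to 0$ as $k\to\infty$. The pseudo-Lipschitz bound on $h$ and Cauchy--Schwarz then give
\[
\E\Bigl|\dim^{-1}\textstyle\sum_i h(\iter{z}{0}_i,\dots,\iter{z}{T}_i)-\dim^{-1}\textstyle\sum_i h(\iter{\hat{z}}{0;k}_i,\dots,\iter{\hat{z}}{T;k}_i)\Bigr|\to 0,
\]
and replacing $h$ by $\iter{h}{k}$ on the proxy side incurs a residual error handled via \thref{concentration} together with $\E|h(\vec Z)-\iter{h}{k}(\vec Z)|\to 0$, completing the argument.
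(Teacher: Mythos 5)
Your construction of $\hat{\mM}$ (rescaling $\mPsi$ by $\mD^{-1/2}$ where $\mD_{ii}=(\mPsi\mPsi\tran)_{ii}$) and of $\iter{\nonlin}{k}_t$ (renormalized Hermite truncation) coincide with the paper's. The small wrinkle in your choice of $\iter{h}{k}$ — you approximate $h$ in $L^2$ relative to $\gauss{\vzero}{\mSigma_T}$, whereas the limiting law of the \emph{proxy} iterates is $\gauss{\vzero}{\iter{\mSigma}{k}_T}$ determined by the truncated non-linearities; the paper deliberately chooses $\iter{h}{k}$ to approximate $h$ with respect to the proxy limit furnished by \corref{SE-normalized} — is repairable (it would require showing $\iter{\mSigma}{k}_T\to\mSigma_T$), but it is an unnecessary complication.

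The genuine gap is in your three-term decomposition and the way you close the third term. By writing
\[
\iter{\vz}{t+1}-\iter{\hat{\vz}}{t+1;k}
=(\mM-\hat{\mM})\nonlin_{t+1}(\iter{\vz}{t})
+\hat{\mM}\bigl[\nonlin_{t+1}-\iter{\nonlin}{k}_{t+1}\bigr](\iter{\vz}{t})
+\hat{\mM}\bigl[\iter{\nonlin}{k}_{t+1}(\iter{\vz}{t})-\iter{\nonlin}{k}_{t+1}(\iter{\hat{\vz}}{t;k})\bigr],
\]
you are forced to control the increment of the polynomial $\iter{\nonlin}{k}_{t+1}$, whose pseudo-Lipschitz growth rate and the attendant $(2k-2)$-th moments give a contraction constant $C_k$ that grows \emph{super-exponentially} in $k$ (for instance $\E Z^{2k-2}=(2k-3)!!$ for $Z\sim\gauss{0}{1}$). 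Unrolling $\Delta_{t+1}\le o_\dim(1)+o_k(1)+C_k\Delta_t$ with $\Delta_0=0$ gives $\limsup_{\dim\to\infty}\Delta_T\le o_k(1)\cdot O\bigl(\sum_{i<T}C_k^i\bigr)$, and there is no reason $o_k(1)\cdot C_k^{T-1}\to 0$: for a merely Lipschitz $\nonlin_t$, the Hermite tail $\E[(\nonlin_t-\iter{\nonlin}{k}_t)^2(Z)]$ can decay as slowly as $1/k$, while $C_k$ explodes. Your recursion therefore does not close. Separately, your claimed \emph{uniform-in-$\dim$} bounds on the $(2k-2)$-th empirical moments of the \emph{original} iterates $\iter{\vz}{t}$ (driven by Lipschitz, non-polynomial $\nonlin_t$) are not available from the paper's machinery — \thref{SE-normalized} and \corref{SE-normalized} only cover polynomial non-linearities — and deriving them from $\|\mM\|_{\op}\lesssim 1$ and delocalization alone is exactly the kind of control the paper is trying to establish in the first place.

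The paper sidesteps this by introducing an intermediate iteration $\iter{\tilde{\vz}}{t+1}=\hat{\mM}\nonlin_{t+1}(\iter{\tilde{\vz}}{t})$ that shares the \emph{original Lipschitz} non-linearities with $\iter{\vz}{t}$ but the perturbed matrix $\hat{\mM}$ with $\iter{\hat{\vz}}{t;k}$. Comparing $\iter{\vz}{t}$ to $\iter{\tilde{\vz}}{t}$ isolates the matrix perturbation (handled with the fixed Lipschitz constant $L$ of $\nonlin_t$ and $\|\mM-\hat{\mM}\|_{\op}\ll 1$). Comparing $\iter{\tilde{\vz}}{t}$ to $\iter{\hat{\vz}}{t;k}$ uses the split $\nonlin_t(\iter{\tilde{\vz}}{t-1})-\iter{\nonlin}{k}_t(\iter{\hat{\vz}}{t-1;k})=\bigl(\nonlin_t(\iter{\tilde{\vz}}{t-1})-\nonlin_t(\iter{\hat{\vz}}{t-1;k})\bigr)+\bigl(\nonlin_t(\iter{\hat{\vz}}{t-1;k})-\iter{\nonlin}{k}_t(\iter{\hat{\vz}}{t-1;k})\bigr)$; the contraction step applies the Lipschitz bound to the first bracket (constant $L$, independent of $k$), and the second bracket involves \emph{only} the polynomial proxy iterates $\iter{\hat{\vz}}{t-1;k}$, where \corref{SE-normalized} directly yields $\dim^{-1}\E\|\nonlin_t(\iter{\hat{\vz}}{t-1;k})-\iter{\nonlin}{k}_t(\iter{\hat{\vz}}{t-1;k})\|^2\to\E[(\nonlin_t(Z)-\iter{\nonlin}{k}_t(Z))^2]\to 0$ with $k$-independent prefactors. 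This intermediate-iteration device is not a stylistic choice but the essential idea that makes the $k\to\infty$ limit work.
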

The proof of this result is based on routine approximation arguments and is presented in \appref{approximation}. We now present the proof of \thref{SE-final}. \begin{proof}[Proof of \thref{SE-final}] Consider an iteration that satisfies the assumptions of \thref{SE-final} and the additional assumptions described in \lemref{rescaling}:
\begin{align*}
     \iter{\vz}{t+1} = \mM \nonlin_{t+1}(\iter{\vz}{t}).
\end{align*}
with initialization $\iter{\vz}{0} \sim \gauss{0}{1}$. In order to understand the dynamics of the above iteration, we will compare it to another iteration which uses a rotationally invariant matrix instead of $\mM$. The dynamics of such iterations are well-understood from prior work. Specifically, we consider the iteration:
\begin{align*}
     \iter{\vw}{t+1} = \mQ \nonlin_{t+1}(\iter{\vw}{t}),
\end{align*}
with
\begin{align}\label{eq:Q-def}
    \mQ = \mS \mU \mP \mLambda \mP\tran \mU\tran \mS, 
\end{align}
where:
\begin{enumerate}
    \item ${\mS} = \diag(s_1, s_2, \dotsc, s_\dim)$ with $s_i \explain{i.i.d.}{\sim} \unif{\{\pm 1\}}$
    \item $\mU \sim \unif{\mathbb{O}(\dim)}$.
    \item $\mP$ is a uniformly random $\dim \times \dim$ permutation matrix.
    \item $\mLambda = \diag(\lambda_1, \lambda_2, \dotsc, \lambda_\dim)$ with $\lambda_i \explain{i.i.d.}{\sim} \unif{\{\pm 1\}}$. 
\end{enumerate}
This choice of $\mQ$ has two useful properties:
\begin{enumerate}
    \item $\mQ$ satisfies is semi-random with constant $\sigpsi = 1$ in the sense of \defref{matrix-ensemble-relaxed} (cf. \lemref{sign-perm-inv-ensemble}). 
    \item Since the Haar measure on the orthogonal group is invariant to left and right multiplication by orthogonal matrices, $\mQ \explain{d}{=} \mU \mLambda \mU\tran$. Matrices of the form with $\mU \mLambda \mU\tran$ are called rotationally invariant matrices and the dynamics of approximate message passing algorithms for rotationally invariant ensembles has been studied in prior work. In particular, \citet[Corollary 4.4]{fan2020approximate} yields the result (see for e.g. \citet[Appendix A]{fan2021replica} for additional details):
    \begin{align*}
    \frac{1}{\dim} \sum_{i=1}^\dim h(\iter{w}{0}_i, \iter{w}{1}_i, \dotsc, \iter{w}{T}_i) \explain{P}{\rightarrow} \E h(Z_0, Z_1, \dotsc, Z_T), 
\end{align*}
where the law of the random vector $(Z_0, Z_1, \dotsc Z_T)$ is as defined in \thref{SE-final}.
\end{enumerate}
As a consequence, \thref{SE-final} follows if we can show that the random variables:
\begin{align*}
    X_\dim &\explain{def}{=}  \frac{1}{\dim} \sum_{i=1}^\dim h(\iter{z}{0}_i, \iter{z}{1}_i, \dotsc, \iter{z}{T}_i), \\
    Y_\dim &\explain{def}{=} \frac{1}{\dim} \sum_{i=1}^\dim h(\iter{w}{0}_i, \iter{w}{1}_i, \dotsc, \iter{w}{T}_i) 
\end{align*}
satisfy $X_{\dim} - Y_{\dim} \explain{P}{\rightarrow 0}$. In order to show this we use \propref{approx} to construct two sequences of approximating iterations $\iter{\hat{\vz}}{t;k}, \iter{\hat{\vw}}{t;k}$ and a sequence of approximating polynomial test functions $\iter{h}{k}: \R^{T+1} \rightarrow \R$ indexed by $k \in \N$ such that the random variables:
\begin{align*}
     \iter{X}{k}_\dim &\explain{def}{=}  \frac{1}{\dim} \sum_{i=1}^\dim \iter{h}{k}(\iter{\hat{z}}{0;k}_i, \iter{\hat{z}}{1;k}_i, \dotsc, \iter{\hat{z}}{T;k}_i), \\
    \iter{Y}{k}_\dim &\explain{def}{=} \frac{1}{\dim} \sum_{i=1}^\dim \iter{h}{k}(\iter{\hat{w}}{0;k}_i, \iter{\hat{w}}{1;k}_i, \dotsc, \iter{\hat{w}}{T;k}_i),
\end{align*}
approximate $X_\dim, Y_\dim$ in the sense:
\begin{align}\label{eq:appx-guarantee}
    \lim_{k \rightarrow \infty } \limsup_{\dim \rightarrow \infty} \E \;  |X_N - \iter{X}{k}_\dim| = \lim_{k \rightarrow \infty } \limsup_{\dim \rightarrow \infty} \E \;  |Y_N - \iter{Y}{k}_\dim| = 0. 
\end{align}
Furthermore, since \propref{approx} guarantees that for each $k \in \N$, the two iterations $\iter{\hat{\vz}}{t;k}, \iter{\hat{\vw}}{t;k}$ satisfy the requirements of \thref{SE-normalized} and \thref{concentration} and since $\iter{h}{k}$ is a polynomial,
\begin{align*}
    \lim_{\dim \rightarrow \infty} \E[\iter{X}{k}_\dim]  &= \lim_{\dim \rightarrow \infty} \E[\iter{Y}{k}_\dim] \; \forall \; k \; \in \N, \\
   \lim_{\dim \rightarrow \infty} \Var[\iter{X}{k}_\dim]  &= \lim_{\dim \rightarrow \infty} \Var[\iter{Y}{k}_\dim]  = 0 \; \forall \; k \; \in \N.
\end{align*}
By Chebychev's Inequality, we have,
\begin{align} \label{eq:chebychev-conclusion}
    \iter{X}{k}_\dim - \iter{Y}{k}_\dim \explain{P}{ \rightarrow} 0 \; \forall \; k \; \in \N.
\end{align}
With these observations in hand, we can conclude the claim of the proposition. For any $\epsilon > 0$ we have,
\begin{align*}
    \lim_{\dim \rightarrow \infty} \; \P( |X_\dim - Y_\dim| > 3\epsilon) & \leq  \lim_{k \rightarrow \infty} \lim_{\dim \rightarrow \infty} \left( \P( |X_\dim - \iter{X}{k}_\dim| > \epsilon) + \P( |Y_\dim - \iter{Y}{k}_\dim| > \epsilon) + \P( |\iter{X}{k} - \iter{Y}{k}_\dim| > \epsilon) \right) \\
    & \explain{\eqref{eq:chebychev-conclusion}}{=} \lim_{k \rightarrow \infty} \lim_{\dim \rightarrow \infty} \left( \P( |X_\dim - \iter{X}{k}_\dim| > \epsilon) + \P( |Y_\dim - \iter{Y}{k}_\dim| > \epsilon) \right) \\
    & \leq \lim_{k \rightarrow \infty} \lim_{\dim \rightarrow \infty} \left( \E \; |X_\dim - \iter{X}{k}_\dim|  +  \E \; |Y_\dim - \iter{Y}{k}_\dim|  \right)/\epsilon \\
    &\explain{\eqref{eq:appx-guarantee}}{=} 0. 
\end{align*}
This concludes the proof of \thref{SE-normalized}.
\end{proof}

\subsection*{Acknowledgements}
We are grateful to Giorgio Cipolloni, Jiaoyang Huang, Benjamin Landon, and Dominik Schröder for helpful discussions regarding the local law for Wigner matrices. The work of YML is supported by a Harvard FAS Dean's competitive fund award for promising scholarship, and by the US National Science Foundation under grant CCF-1910410.  SS gratefully acknowledges support from a Harvard FAS Dean's competitive fund award. 
\bibliographystyle{plainnat}
\bibliography{refs}

\appendix

\section{Omitted Proofs from \sref{matrix-ensemble-eg}}
\label{appendix:examples}
\subsection{Proof of \lemref{sign-perm-inv-ensemble}}
\label{appendix:sign-perm-inv-ensemble}
\begin{proof}[Proof of \lemref{sign-perm-inv-ensemble}] Let $\mS = \diag{(\vs)}, \vs \sim \unif{\{\pm 1\}^\dim}$ be a uniformly random signed diagonal matrix and $\mP$ be a uniformly random $\dim \times \dim$ permutation matrix independent of each other and independent of $\mF$.  By the assumptions of the lemma, we have, $\mM \explain{d}{=} \widetilde{\mM} \explain{d}{=} \mS \mF \mP \mLambda \mP\tran \mF \tran \mS$.  Let $\mPsi$ denote the matrix:
\begin{align*}{\mPsi \explain{def}{=} \mF \mP\cdot  \mLambda \cdot  \mP\tran \mF}.
\end{align*}
As a consequence of the distributional equivalence of $\mM$ and $\widetilde{\mM} \explain{def}{=} \mS \mPsi \mS$, it is sufficient to verify that $\widetilde{\mM}$ satisfies the requirements of \defref{matrix-ensemble-relaxed} with probability 1. Note that $\widetilde{\mM} \explain{def}{=} \mS \mPsi \mS$ satisfies item (1) of  \defref{matrix-ensemble-relaxed} by construction. In order to verify item (2) of  \defref{matrix-ensemble-relaxed} observe that we can write:
\begin{align*}
   {\mPsi} & = \E[ {\mPsi}] + ({\mPsi} -  \E[ {\mPsi}]), \\
     {{\mPsi}}^2 & = \E[ {\mPsi}^2] + ({{\mPsi}}^2- \E[ {{\mPsi}}^2]).
\end{align*}
Next, we compute $\E[ {\mPsi}]$ and $\E[ {\mPsi}^2]$, where the expectation is with respect to the randomness in $\mP$.  Let $\tau$ be the uniformly random permutation associated with $\mP$ and let $\lambda_{1:\dim}$ denote the diagonal entries of $\mLambda$. We have,
\begin{subequations}\label{eq:random-permutation-expectation}
\begin{align}
    \E[\psi_{ij}] & = \sum_{k=1}^\dim f_{ik} f_{jk} \E[\lambda_{\tau(k)}] = \frac{\Tr(\mLambda)}{\dim} \cdot (\mF \mF\tran)_{ij}. \\
    \E[(\mPsi^2)_{ij}] & = \sum_{k=1}^\dim f_{ik} f_{jk} \E[\lambda_{\tau(k)}^2] = \frac{\Tr(\mLambda^2)}{\dim} \cdot (\mF \mF\tran)_{ij}.
\end{align}
\end{subequations}
Next, we obtain a concentration estimate for $\|{\mPsi} -  \E[ {\mPsi}]\|_\infty$ and $\|{\mPsi^2} -  \E[ {\mPsi^2}]\|_\infty$. For any matrix $\mA \in \R^{\dim \times \dim}$, \citet[Theorem 4.3]{bercu2015concentration} show that the statistic :
\begin{align*}
    X_\dim(\mA) \bydef \sum_{\ell=1}^\dim A_{\ell, \tau(\ell)} ,
\end{align*}
constructed using a uniformly random permutation $\tau$ satisfies the following concentration estimate for any $\delta \in (0,1/4)$:
\begin{align*}
    \P\left( |X_\dim(\mA) - \E X_\dim(\mA) | > \frac{C\cdot \|\mA\|_{\fr}}{\sqrt{\dim}} \cdot \sqrt{\ln\left( \frac{1}{\delta}\right)} + C \cdot \|\mA\|_\infty \ln\left( \frac{1}{\delta}\right) \right) & \leq 4\delta,
\end{align*}
for a suitable constant $0<C<\infty$. Fix any $i,j \in [\dim]$. We apply the above concentration inequality with the choice $\delta = 1/\dim^4$ and $\mA = \iter{\mA}{ij}$ where the entries of $\iter{\mA}{ij}$ are $ \iter{A}{ij}_{k\ell} \explain{def}{=} f_{ik} f_{jk} \lambda_\ell $. Observe that,
\begin{align*}
    X_\dim(\iter{\mA}{ij}) &= \psi_{ij}, \\
    \|\iter{\mA}{ij}\|_{\fr}^2 & = \left(\sum_{k=1}^\dim f_{ik}^2 f_{jk}^2 \right) \cdot \left( \sum_{\ell = 1}^\dim \lambda_\ell^2 \right) \leq \dim^2 \|\mF\|_\infty^4 \|\mLambda\|_{\op}^2, \\
    \|\iter{\mA}{ij}\|_{\infty} & \leq \|\mF\|_\infty^2 \|\mLambda\|_{\op}.
\end{align*}
Hence,
\begin{align*}
  \P\left( |\psi_{ij} - \E \psi_{ij}| > 2C \cdot \|\mF\|_\infty^2 \cdot \|\mLambda\|_{\op} \cdot \left( \sqrt{\dim \ln(\dim)} + 2\ln(\dim) \right) \right)  & \leq 1/\dim^4.
\end{align*}
Using a union bound,
\begin{align*}
     \P\left( \|\mPsi - \E \mPsi\|_\infty > 2C \cdot \|\mF\|_\infty^2 \cdot \|\mLambda\|_{\op} \cdot \left( \sqrt{\dim \ln(\dim)} + 2\ln(\dim) \right) \right)  & \leq 1/\dim^2.
\end{align*}
Using the assumption $\|\mF\|_{\infty} \lesssim \dim^{-1/2+\epsilon}$ and the observation $\|\mLambda\|_{\op} \explain{\eqref{eq:op-norm-bound}}{\lesssim} 1$ along with the Borel-Cantelli lemma we obtain,
\begin{subequations}\label{eq:random-permutation-concentration}
\begin{align}
    \P\left(  \|{\mPsi} - \E {\mPsi}\|_\infty \lesssim  \dim^{-\frac{1}{2} + \epsilon} \; \forall \; \epsilon > 0\ \right) & = 1. 
\end{align}
By an analogous argument, we obtain,
\begin{align}
    \P\left(  \|{\mPsi}^2 - \E {{\mPsi}}\|_\infty \lesssim  \dim^{-\frac{1}{2} + \epsilon} \; \forall \; \epsilon > 0\ \right) & = 1.
\end{align}
\end{subequations}
We can now verify requirements (2a), (2b), (2c) and (2d) of \defref{matrix-ensemble-relaxed}.
\begin{enumerate}
    \item To verify (2a), we observe:
    \begin{align*}
        \|{\mPsi}\|_\infty &\leq \|{\mPsi} - \E{\mPsi}|_\infty + \|\E[\mPsi]\|_\infty \\
        & \explain{(a)}{\lesssim} \|\mPsi - \E[\mPsi]\|_\infty + \dim^{-\frac{1}{2} + \epsilon}\\
        & \explain{(b)}{\lesssim} \dim^{-\frac{1}{2} + \epsilon}.
    \end{align*}
    \item To verify (2b), observe that $\|\mPsi\|_{\op} = \|\mLambda\|_{\op} \explain{}{\lesssim} 1$. 
    \item To verify (2c), we observe:
    \begin{align*}
        \max_{i\neq j}|({\mPsi}^2)_{ij}| &\leq \|\mPsi^2 - \E{\mPsi}^2\|_\infty + \max_{i\neq j}|\E({\mPsi}^2)_{ij}| \\
        & \explain{(a)}{=} \|{\mPsi}^2 - \E{\mPsi}^2\|_\infty \\
        & \explain{(b)}{\lesssim} \dim^{-\frac{1}{2} + \epsilon}.
    \end{align*}
    \item  To verify (2d), we observe:
    \begin{align*}
        \max_{i\in [\dim]}|({\mPsi}^2)_{ii} - \sigpsi| &\leq \|{\mPsi}^2 - \E{\mPsi}^2\|_\infty + \max_{i\in [\dim]}|\E({\mPsi}^2)_{ii} -   \sigpsi| \\
        & \explain{(a)}{=} \|{\mPsi}^2 - \E{\mPsi}^2\|_\infty + \left| \frac{\Tr(\mLambda^2)}{\dim} - \sigpsi \right| \\
        & \explain{(b)}{\ll} 1.
    \end{align*}
\end{enumerate}
In each of the above displays, we used the formulae for $\E[\mPsi]$ and $\E[\mPsi^2]$ from \eref{random-permutation-expectation} in steps marked (a) and the concentration estimates from \eref{random-permutation-concentration} in the steps marked (b). This proves the claim of the lemma. 
\end{proof}
\subsection{Proof of \lemref{wigner}}
\label{appendix:local-law}
Recall that $\mM(\lambda)$ was given by:
\begin{align*}
    \mM(\lambda) = (\lambda \mI_{\dim} - \mJ )^{-1} - \frac{\Tr((\lambda \mI_{\dim} - \mJ )^{-1})}{\dim} \cdot \mI_{\dim}.
\end{align*}
In the above display, $\mJ$ be a Wigner matrix with symmetric entries, that is, $\mJ = \mW/\sqrt{\dim}$ for a symmetric matrix $\mW$ whose entries $W_{ij}$ are i.i.d. symmetric ($W_{ij} \explain{d}{=} - W_{ij})$ random variables with $\E W_{ij} = 0$, $\E W_{ij}^2 = 1 + \delta_{ij}$ and finite moments of all orders. Our goal is to verify that $\mM(\lambda)$ satisfies \defref{matrix-ensemble-relaxed}.  Since the entries of $\mW$ are assumed to be symmetric, $\mM(\lambda)$ satisfies item (1) of \defref{matrix-ensemble-relaxed}. The remaining requirements can be verified using the local law for Wigner matrices, as stated in \citet{benaych2016lectures}. In particular, \lemref{wigner} follows immediately from the following result. 

\begin{lemma}[{\citet[Theorem 10.3 and Remark 2.7]{benaych2016lectures}}]\label{lem:local-law} Under the above hypotheses, for any fixed $\lambda > 2$ (independent of $\dim$), $\mR(\lambda) \explain{def}{=} (\lambda \mI_{\dim} - \mJ)^{-1}$ satisfies:
\begin{subequations}
\begin{align}
    \P \left( \| \mR(\lambda) - G_{\mathrm{sc}}(\lambda) \cdot \mI_{\dim} \|_{\infty} \leq \dim^{-1/2 + \epsilon}\right) \geq 1 - \dim^{-D}, \label{eq:local-law-resolvent} \\
    \P \left( \| \mR^2(\lambda) + G_{\mathrm{sc}}^\prime(\lambda) \cdot \mI_{\dim} \|_{\infty} \leq \dim^{-1/2 + \epsilon}\right) \geq 1 - \dim^{-D}. \label{eq:local-law-resolvent-sq}
\end{align}
\end{subequations}
for any fixed constants $\epsilon > 0, \; D \in \N$ (independent of $\dim$) and $\dim \geq \dim_0(\epsilon,D, \lambda)$.  In the above display, $G_{\mathrm{sc}}$ denotes the Cauchy Transform of the semi-circle distribution supported on $[-2,2]$ and $\dim_0(\epsilon, D, \lambda) \in \N$ is a finite integer that depends only on $\epsilon, D, \lambda$.
\end{lemma}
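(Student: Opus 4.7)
The plan is to invoke the entrywise local law for Wigner matrices at the real spectral parameter $\lambda > 2$, and then to pass from $\mR(\lambda)$ to $\mR^2(\lambda)$ by a contour integral argument. The two estimates then come out of a single high-probability event.

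First, I would import the entrywise local law of Benaych-Georges--Knowles in the form suited to our hypotheses. Their Theorem 10.3, combined with the extension in Remark 2.7 that upgrades the probability bound to $1 - N^{-D}$ for any $D$ under the finite moment assumption on the entries of $\mW$, yields for any fixed $z = E + i\eta$ with $E \in \mathbb{R}$, $\eta > 0$, and $|E| - 2$ bounded away from zero, the bound
\begin{align*}
\max_{i,j \in [\dim]} \bigl| R_{ij}(z) - \delta_{ij}\, G_{\mathrm{sc}}(z) \bigr| \;\leq\; N^{-1/2 + \epsilon},
\end{align*}
on an event of probability at least $1 - N^{-D}$. Since $\mR(z)$ is analytic in a complex neighborhood of any fixed real $\lambda > 2$ (using rigidity of the largest eigenvalue, $\lambda_1(\mJ) \to 2$ with overwhelming probability, so $\lambda$ lies in the resolvent set of $\mJ$ on the relevant event), the estimate extends to real $\lambda$ by letting $\eta \downarrow 0$ through a fixed sequence and using continuity. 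This establishes \eqref{eq:local-law-resolvent}.

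Next, I would deduce \eqref{eq:local-law-resolvent-sq} from \eqref{eq:local-law-resolvent} by Cauchy's integral formula. Set $r = (\lambda - 2)/2 > 0$ and let $\Gamma$ be the positively oriented circle $\{w : |w - \lambda| = r\}$. On the event that $\lambda_1(\mJ) < 2 + r/2$, every $w \in \Gamma$ satisfies $\operatorname{dist}(w, \mathrm{spec}(\mJ)) \geq r/2$, so $\|\mR(w)\|_{\mathrm{op}} \leq 2/r$ uniformly on $\Gamma$. Using $\frac{d}{dz}\mR(z) = -\mR^2(z)$ together with Cauchy's formula, we have the entrywise identity
\begin{align*}
\mR^2(\lambda) + G_{\mathrm{sc}}'(\lambda)\, \mI_\dim \;=\; -\frac{1}{2\pi i} \oint_\Gamma \frac{\mR(w) - G_{\mathrm{sc}}(w)\, \mI_\dim}{(w - \lambda)^2}\, dw.
\end{align*}
I would bound the right-hand side in the entrywise $\ell_\infty$ norm by $r^{-1} \sup_{w \in \Gamma} \| \mR(w) - G_{\mathrm{sc}}(w)\mI_\dim \|_\infty$. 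The first local law applied to a finite $N^{-1}$-net of $\Gamma$, together with a union bound and the uniform Lipschitz estimate $\|\partial_w \mR(w)\|_\infty \leq \|\mR(w)\|_{\mathrm{op}}^2 \leq 4/r^2$ to interpolate, yields $\sup_{w \in \Gamma} \| \mR(w) - G_{\mathrm{sc}}(w)\mI_\dim \|_\infty \leq N^{-1/2 + \epsilon}$ on an event of probability $1 - N^{-D}$ (after enlarging $D$ by a constant to absorb the net). This gives \eqref{eq:local-law-resolvent-sq}.

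The main obstacle in this plan is the transition from the complex-parameter local law, which is the standard statement in the literature, to the real-parameter statement used here: one must verify that the probability bound survives the limit $\eta \downarrow 0$ and simultaneously holds at every point of a finite net on $\Gamma$. Both issues are handled by the rigidity of $\lambda_1(\mJ)$ together with a union bound over polynomially many points, so no new ideas are required beyond carefully stating what is proved in the source references.
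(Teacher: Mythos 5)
Your proposal is correct, but it takes a genuinely different route from the paper. The paper follows the template of \citet[Theorem 10.3, Case 2]{benaych2016lectures}: after a polarization reduction to quadratic forms $\vv\tran \mR^2(\lambda)\vv$ for $2$-sparse unit vectors $\vv$, it introduces a smooth cutoff $\chi$ supported near $[-2,2]$ and applies the Helffer--Sj\"ostrand formula to $f_\lambda \cdot \chi$, converting the problem into a $2$-dimensional Lebesgue integral over $\C$ weighted by $\partial_{\overline{w}}\chi$, on the support of which the pointwise local law is invoked. You instead work directly with the matrix-valued analytic function $\mR(z) - G_{\mathrm{sc}}(z)\mI_\dim$ and extract $\mR^2(\lambda) + G_{\mathrm{sc}}'(\lambda)\mI_\dim$ by Cauchy's integral formula for the derivative over a fixed circle $\Gamma$ around $\lambda$; this avoids both the polarization step and the Helffer--Sj\"ostrand machinery. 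Your approach is legitimately simpler for this particular statement because $f_\lambda(x) = (\lambda - x)^{-2}$ is analytic away from $\lambda$, so no non-analytic cutoff is ever needed: the Helffer--Sj\"ostrand formula is the right tool for general $C^2$ test functions, which is why Benaych-Georges--Knowles use it, but it is unnecessary overhead here. Both routes ultimately rest on the same input --- the entrywise local law of \citet{benaych2016lectures} uniformly over a bounded region away from $[-2,2]$, upgraded to probability $1 - \dim^{-D}$ via their Remark 2.7 --- and both handle the passage to real $\lambda$ via rigidity of $\lambda_1(\mJ)$. One minor point: in the paper's cited form, the supremum in the local law is already over a $2$-dimensional region $\{z : \mathrm{dist}(A,z) \in [\delta/6,\delta]\}$, so the paper does not need your $\dim^{-1}$-net-plus-Lipschitz-interpolation step to get uniformity along $\Gamma$; if you instead cite only a fixed-$z$ version, your net argument is the correct and standard patch, and the polynomial loss in the union bound is indeed absorbed by the freedom in $D$.
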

\begin{proof}  Define the constant $\delta \explain{def}{=} (\lambda - 2) > 0$ and let $A = [-2,2]$ denote the support of the semicircle law.  A direct consequence of   \citet[Theorem 10.3, Remark 2.7]{benaych2016lectures} is that for any fixed $\epsilon > 0 , D \in \N$, there is a finite integer $N_0(\epsilon, D, \delta) \in \N$ such that,
\begin{align}\label{eq:local-law}
    \P \Bigg( \sup_{\substack{z \in \C \; :\;  \mathrm{dist}(A, z) \in [\delta/6,\delta] }}  \|\mR(z)   - G_{\mathrm{sc}}(z) \mI_{\dim}\|_{\infty} \leq \dim^{-1/2 + \epsilon} \Bigg) \geq 1 - \dim^{-D} \; \forall \; \dim \geq N_0(\epsilon, D, \delta).
\end{align}
Since $\mathrm{dist}(A, \lambda) = \delta$, the first claim \eqref{eq:local-law-resolvent} of the lemma follows immediately. In order to obtain the second claim \eqref{eq:local-law-resolvent-sq}, we will follow the argument employed in the proof of \citet[Theorem 10.3, Case 2]{benaych2016lectures}. By a polarization argument, \eqref{eq:local-law-resolvent-sq} follows if we show:
\begin{align}\label{eq:local-law-goal}
     \P \Bigg(   |\vv\tran \mR^2(\lambda) \vv  + G_{\mathrm{sc}}^\prime(\lambda)| \leq C\dim^{-1/2 + \epsilon} \Bigg) \geq 1 - 2\dim^{-D} \; \forall \; \dim \geq N_0(\epsilon, D, \delta),
\end{align}
for a fixed unit vector $\vv \in \R^{\dim}$ with at most $2$ non-zero coordinates and fixed finite constant $C$ (independent of $\dim$).  Let $\{(\lambda_i(\mJ), \vu_i): i \in [\dim]\}$ denote the eigenvalues and eigenvectors of $\mJ$. Following \citep{benaych2016lectures} we define the signed measure:
\begin{align*}
    \widetilde{\xi}_{\dim} & \explain{def}{=} \sum_{i=1}^{\dim} |\ip{\vv}{\vu_i}|^2 \cdot  \delta_{\lambda_i} - \xi_{\mathrm{sc}}.
\end{align*}
We also define the holomorphic function $f_{\lambda} : \C \backslash \{\lambda\} \rightarrow \C$ as $f_\lambda(x) \explain{def}{=} 1/(\lambda - x)^2$. Observe that:
\begin{align}
    \vv\tran \mR^2(\lambda) \vv  + G_{\mathrm{sc}}^\prime(\lambda) & = \int_{\R} f_{\lambda}(x) \; \widetilde{\xi}_{\dim}(\diff x) ,\label{eq:local-law-sq-integral} \\
    \vv\tran \mR(w) \vv  - G_{\mathrm{sc}}(w) & = \int_{\R} \frac{1}{x-w} \; \widetilde{\xi}_{\dim}(\diff x). \label{eq:local-law-integral}
\end{align}
In order to upper bound the RHS of the above display, we will use the Helffer-Sjostrand formula (see for e.g. \citep[Appendix C, Proposition C.1]{benaych2016lectures}). Let $\chi: \C \rightarrow [0,1]$ be a infinitely differentiable cutoff function such that:
\begin{align*}
    \chi(z)  = 1  \text{ if } \mathrm{dist}(A,z) \leq \delta/6, \;   \chi(z)  = 0  \text{ if } \mathrm{dist}(A,z) \geq \delta/3.
\end{align*}
By the Helffer-Sjostrand formula, we have,
\begin{align*}
    f_{\lambda}(x) \chi(x) & = \frac{1}{\pi} \int_{\C} \frac{\partial_{\overline{w}}[ f_{\lambda}(w) \chi(w)] }{x-w} \diff^2 w,
\end{align*}
where $\partial_{\overline{w}}$ is the Wirtinger derivative with respect to  $\overline{w}$ and the integral is with respect to the Lebesgue measure on $\C$. Since $f_{\lambda}$ is holomorphic on the domain where $\chi \neq 0$, $\chi \cdot \partial_{\overline{w}} f_{\lambda} = 0$. Hence,
\begin{align}\label{eq:HS-representation}
     f_{\lambda}(x) \chi(x) & = \frac{1}{\pi} \int_{\C} \frac{ f_{\lambda}(w) \partial_{\overline{w}}[\chi(w)] }{x-w} \diff^2 w.
\end{align}
With this formula, we can upper bound \eqref{eq:local-law-sq-integral}:
\begin{align*}
    |\vv\tran \mR^2(\lambda) \vv  + G_{\mathrm{sc}}^\prime(\lambda)| & =  \left|\int_{\R} f_{\lambda}(x) \; \widetilde{\xi}_{\dim}(\diff x) \right| \\
    & \explain{(a)}{=} \left|\int_{\R} f_{\lambda}(x) \cdot \chi(x) \; \widetilde{\xi}_{\dim}(\diff x) \right| \\
    &  \explain{\eqref{eq:HS-representation}}{=} \frac{1}{\pi}\left| \int_{\R}  \widetilde{\xi}_{\dim}(\diff x) \;  \int_{\C} \frac{ f_{\lambda}(w) \partial_{\overline{w}}[\chi(w)] }{x-w} \diff^2 w   \right| \\
    & \explain{\eqref{eq:local-law-integral}}{=} \frac{1}{\pi}\left| \int_{\C} f_{\lambda}(w) \cdot (\vv\tran \mR(w) \vv  - G_{\mathrm{sc}}(w)) \cdot  \partial_{\overline{w}}[\chi(w)]  \diff^2 w \right| \\
    & \explain{(b)}{\leq} \left( \frac{1}{\pi} \int_{\C} |f_{\lambda}(w)| |\partial_{\overline{w}}[\chi(w)]|  \diff^2 w \right) \cdot  2\dim^{-1/2+\epsilon} \\
    & \explain{(c)}{\leq} 2C(\lambda,\delta) \dim^{-1/2+\epsilon}.
\end{align*}
In the above display, in the step marked (a), we used the fact that $\widetilde{\xi}_{\dim}$ is supported on the set $$\{x \in \R: \mathrm{dist}(x,A) \leq \delta/6\}$$ with probability $1- \dim^{-D}$ (see for e.g. \citep[Theorem 2.9]{benaych2016lectures}). In the step marked (b), we relied on the fact that on the domain where $\partial_{\overline{w}}[\chi(w)]  \neq 0$, \eqref{eq:local-law} guarantees that $|\vv\tran \mR(w) \vv  - G_{\mathrm{sc}}(w)| \leq 2\dim^{-1/2+\epsilon}$ with probability $1-\dim^{-D}$ for any unit vector $\vv$ with at most two non-zero coordinates. Finally in the step marked (c) we observed that the integral in step (b) can be bounded by a finite and fixed constant $C(\lambda,\delta)$ since $\partial_{\overline{w}}[\chi(w)] \neq 0$ on a compact subset of $\C$ and on this subset, $f_{\lambda}(w)$ is holomorphic and hence uniformly bounded. This proves the desired claim \eqref{eq:local-law-goal}. 
\end{proof}

\subsection{Proof of \lemref{wishart}}\label{appendix:local-law-mp}
Recall that $\mM(\lambda)$ was given by:
\begin{align*}
    \mM(\lambda) = (\lambda \mI_{\dim} - \mJ )^{-1} - \frac{\Tr((\lambda \mI_{\dim} - \mJ )^{-1})}{\dim} \cdot \mI_{\dim}.
\end{align*}
In the above display, $\mJ$  is a covariance matrix of the form $\mJ = \mX \tran \mX / \sqrt{M \dim}$ for a $M \times \dim$ matrix $\mX$ with a converging aspect ratio $M/N \rightarrow \phi \in (0, \infty)$ whose entries $X_{ij}$ are i.i.d. symmetric ($X_{ij} \explain{d}{=} - X_{ij}$) random variables with $\E X_{ij} = 0$, $\E X_{ij}^2 = 1$ and finite moments of all orders. Our goal is to verify that $\mM(\lambda)$ satisfies \defref{matrix-ensemble-relaxed}. Indeed, since the entries of $\mX$ are assumed to be symmetric, $\mM(\lambda)$ satisfies requirement (1) of \defref{matrix-ensemble-relaxed}. The remaining requirements follow immediately from the local law for sample covariance matrices obtained by \citet{alex2014isotropic}, stated in the lemma given below. 

\begin{lemma}[\citet{alex2014isotropic}] Under the above hypotheses, for any fixed $\lambda > \lambda_{+}^{\mathrm{MP}}$ (independent of $\dim$), $\mR(\lambda) \explain{def}{=} (\lambda \mI_{\dim} - \mJ)^{-1}$ satisfies:
\begin{subequations}
\begin{align}
    \P \left( \| \mR(\lambda) - G_{\mathrm{MP}}(\lambda) \cdot \mI_{\dim} \|_{\infty} \leq \dim^{-1/2 + \epsilon}\right) \geq 1 - \dim^{-D}, \label{eq:local-law-resolvent-mp} \\
    \P \left( \| \mR^2(\lambda) + G_{\mathrm{MP}}^\prime(\lambda) \cdot \mI_{\dim} \|_{\infty} \leq \dim^{-1/2 + \epsilon}\right) \geq 1 - \dim^{-D}. \label{eq:local-law-resolvent-sq-mp}
\end{align}
\end{subequations}
for any fixed constants $\epsilon > 0, \; D \in \N$ (independent of $\dim$) and $\dim \geq \dim_0(\epsilon,D, \lambda)$. In the above display, $G_{\mathrm{MP}}$ denotes the Cauchy Transform of the Marchenko-Pastur distribution and $\dim_0(\epsilon, D, \lambda) \in \N$ is a finite integer that depends only on $\epsilon, D, \lambda$.
\end{lemma}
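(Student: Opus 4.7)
The proof will mirror, almost line for line, the argument given for the Wigner local law in Lemma~\ref{lem:local-law}. The starting point is the isotropic local law of \citet{alex2014isotropic}, which asserts that for any fixed $\lambda > \lambda_+^{\mathrm{MP}}$ and any fixed unit vectors $\vu, \vv \in \R^\dim$,
\begin{align*}
\P\bigl( |\vu\tran \mR(\lambda) \vv - G_{\mathrm{MP}}(\lambda)\ip{\vu}{\vv}| \leq \dim^{-1/2+\epsilon} \bigr) \geq 1 - \dim^{-D},
\end{align*}
and in fact this holds uniformly for all spectral parameters $w \in \C$ whose distance to the MP support lies in a fixed compact interval $[\delta/6,\delta]$ with $\delta \explain{def}{=} \lambda - \lambda_+^{\mathrm{MP}}$. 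The first claim \eqref{eq:local-law-resolvent-mp} follows from this by polarization: applying the isotropic law with $\vu = \ve_i, \vv = \ve_j$ and with $\vu = \vv = (\ve_i \pm \ve_j)/\sqrt{2}$, and then taking a union bound over the $O(\dim^2)$ pairs $(i,j)$ (absorbing the union-bound loss into the arbitrary $\epsilon$, at the cost of reducing the exponent $D$ to $D-2$).

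For the second claim \eqref{eq:local-law-resolvent-sq-mp}, I will reduce to controlling bilinear forms $\vv\tran \mR^2(\lambda) \vv$ for unit vectors $\vv$ with at most two non-zero entries, again by polarization. Introducing the signed measure
\begin{align*}
\widetilde{\xi}_\dim \explain{def}{=} \sum_{i=1}^\dim |\ip{\vv}{\vu_i}|^2 \delta_{\lambda_i(\mJ)} - \xi_{\mathrm{MP}},
\end{align*}
I have $\vv\tran \mR^2(\lambda)\vv + G_{\mathrm{MP}}'(\lambda) = \int f_\lambda(x)\,\widetilde{\xi}_\dim(\diff x)$ with $f_\lambda(x) = 1/(\lambda-x)^2$, and similarly $\vv\tran \mR(w)\vv - G_{\mathrm{MP}}(w) = \int (x-w)^{-1} \widetilde{\xi}_\dim(\diff x)$. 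Using an eigenvalue-rigidity estimate of \citet{alex2014isotropic} for MP ensembles (i.e., that with overwhelming probability the spectrum of $\mJ$ lies in a $\delta/6$ neighborhood of $\mathrm{Supp}(\xi_{\mathrm{MP}})$), I can multiply $f_\lambda$ by an infinitely differentiable cutoff $\chi$ equal to $1$ on a $\delta/6$-neighborhood of the support and vanishing at distance $\delta/3$, without changing the integral. The Helffer--Sjöstrand representation then gives
\begin{align*}
f_\lambda(x)\chi(x) = \frac{1}{\pi}\int_{\C} \frac{f_\lambda(w)\,\partial_{\overline{w}}\chi(w)}{x-w}\diff^2 w,
\end{align*}
since $f_\lambda$ is holomorphic on the support of $\chi$ (this is where the condition $\lambda > \lambda_+^{\mathrm{MP}}$ is used to keep the pole of $f_\lambda$ outside the enlarged support).

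Swapping the order of integration yields
\begin{align*}
\vv\tran \mR^2(\lambda)\vv + G_{\mathrm{MP}}'(\lambda) = \frac{1}{\pi}\int_\C f_\lambda(w) \bigl(\vv\tran \mR(w)\vv - G_{\mathrm{MP}}(w)\bigr)\,\partial_{\overline{w}}\chi(w)\,\diff^2 w.
\end{align*}
The $w$-integrand is supported on a fixed compact set on which $|f_\lambda(w)|$ is bounded by a constant depending only on $\lambda$ and $\delta$, and on which the isotropic local law gives $|\vv\tran\mR(w)\vv - G_{\mathrm{MP}}(w)| \leq 2\dim^{-1/2+\epsilon}$ with probability at least $1 - \dim^{-D}$. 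Combining these bounds produces \eqref{eq:local-law-resolvent-sq-mp}, and a final union bound over the $O(\dim^2)$ index pairs converts this into the entry-wise statement.

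The main step that needs genuine care (rather than routine copying from the Wigner proof) is verifying that the uniform spectral-parameter version of the MP isotropic local law in \citet{alex2014isotropic} indeed holds on the annular region $\{w : \mathrm{dist}(w, \mathrm{Supp}(\xi_{\mathrm{MP}})) \in [\delta/6,\delta]\}$ with the stated probability bounds, particularly when $\phi > 1$ where the MP distribution has an atom at $0$ in addition to its absolutely continuous part; one must choose $\chi$ as a cutoff around the full support (continuous part union $\{0\}$ if $\phi > 1$), and confirm that $\partial_{\overline{w}}\chi$ is supported away from both the spectrum and the evaluation point $\lambda$, which is guaranteed by $\lambda > \lambda_+^{\mathrm{MP}}$. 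All other steps are verbatim analogues of the Wigner argument.
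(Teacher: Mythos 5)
Your proposal is correct and takes exactly the approach the paper intends: it invokes the isotropic local law of \citet{alex2014isotropic} for the first bound, and for the second bound it replays verbatim the Helffer--Sj\"ostrand argument used for the Wigner case in Appendix~\ref{appendix:local-law}, which is precisely what the paper's proof says to do ("follow the same argument of \citet{benaych2016lectures}\ldots we omit the details"). One small correction to your closing remark: the Marchenko--Pastur distribution has an atom at $0$ when $\phi = M/N < 1$ (so $\mX\tran\mX$ is rank-deficient), not when $\phi > 1$; the rest of your observation about choosing the cutoff around the full support stands.
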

\begin{proof} The claim \eqref{eq:local-law-resolvent-mp} is a direct consequence \citet[Theorem 2.5 and Remark 2.7]{alex2014isotropic}. \eqref{eq:local-law-resolvent-sq-mp} can be obtained from the results of \citet[Theorem 2.5]{alex2014isotropic} by following the same argument of \citet{benaych2016lectures} used to obtain \lemref{local-law} (described in \appref{local-law}). We omit the details. 
\end{proof}

\section{Equivalence of \thref{SE-alt} and \thref{SE-final}}
\label{appendix:equivalence}
This section is devoted to the proof of \lemref{equivalence}. 
\begin{proof}[Proof of \lemref{equivalence}]
Let us assume that \thref{SE-final} holds. Consider the iteration:
\begin{align*}
     \iter{\vz}{t+1} & = \mM \cdot \left( \nonlin_{t+1}(\iter{\vz}{t}) - \frac{\langle{\nonlin_{t+1}(\iter{\vz}{t})}\; , \; {\iter{\vz}{t}} \rangle}{\|\iter{\vz}{t}\|^2} \cdot \iter{\vz}{t} \right),
\end{align*}
initialized at $\iter{\vz}{0}$, which satisfies the assumptions of \thref{SE-alt}. Since the non-linearities $\nonlin_t$ need not be divergence free, we center them suitably to make them divergence free. Define $\ctr{\nonlin}_t$ as:
\begin{align*}
    \ctr{\nonlin}_t(x) = {\nonlin}_t(x) - \frac{\E[Z \nonlin_t(\sigma_{t-1}Z)]}{\sigma_{t-1}} \cdot x, \; Z \sim \gauss{0}{1},
\end{align*}
where $\sigma_t^2$ is as defined in the state evolution recursion \eqref{eq:SE-memory-free}. In particular $\E[Z\ctr{\nonlin}_t(\sigma_{t-1} Z)] = 0$ for $Z \sim \gauss{0}{1}$ and hence the non-linearities $\ctr{\nonlin}_t$ are divergence-free (\assumpref{divergence-free}). Hence, by \thref{SE-final}, the iteration:
\begin{align*}
    \iter{\vz}{t+1} & = \mM  \ctr{\nonlin}_{t+1}(\iter{\vz}{t}),
\end{align*}
initialized at $\iter{\ctr{\vz}}{0} = \iter{\vz}{0}$ satisfies:
\begin{align*}
    \frac{1}{\dim} \sum_{i=1}^\dim h(\iter{\ctr{z}}{0}_i, \iter{\ctr{z}}{1}_i, \dotsc, \iter{\ctr{z}}{T}_i) \explain{P}{\rightarrow} \E h(Z_0, Z_1, \dotsc, Z_T),
\end{align*}
where $(Z_0, Z_1, \dotsc, Z_T) \sim \gauss{\vzero}{\mSigma_T}$ with $\mSigma_T$ as defined in the state evolution recursion \eqref{eq:SE-memory-free}. Hence, if we show that:
\begin{align} \label{eq:equivalence} 
     \frac{1}{\dim} \sum_{i=1}^\dim h(\iter{\ctr{z}}{0}_i, \iter{\ctr{z}}{1}_i, \dotsc, \iter{\ctr{z}}{T}_i) -  \frac{1}{\dim} \sum_{i=1}^\dim h(\iter{{z}}{0}_i, \iter{{z}}{1}_i, \dotsc, \iter{{z}}{T}_i) \explain{P}{\rightarrow}  0,
\end{align}
\thref{SE-alt} is immediate. By the continuity hypothesis on $h$, we have,
\begin{align}
   &\frac{1}{\dim} \sum_{i=1}^\dim |h(\iter{\ctr{z}}{0}_i, \iter{\ctr{z}}{1}_i, \dotsc, \iter{\ctr{z}}{T}_i) - h(\iter{{z}}{0}_i, \iter{{z}}{1}_i, \dotsc, \iter{{z}}{T}_i) | \nonumber\\ &\leq  \frac{L}{\dim} \sum_{i=1}^\dim (1 + ( |\iter{\ctr{z}}{0}_i|^2 + \dotsb + |\iter{\ctr{z}}{T}_i|^2)^{1/2} + ( |\iter{{z}}{0}_i|^2 + \dotsb + |\iter{{z}}{T}_i|^2)^{1/2}) \cdot (|\iter{\ctr{z}}{0}_i-\iter{{z}}{0}_i|^2 + \dotsb + |\iter{\ctr{z}}{T}_i-\iter{{z}}{T}_i|^2  )^{1/2} \nonumber \\
   & \leq \sqrt{3} L \cdot \left(1 + \sum_{t=0}^T \frac{\|\iter{\vz}{t}\|^2 +\|\iter{\ctr{\vz}}{t}\|^2 }{\dim}   \right)^{1/2} \cdot \left( \sum_{t=0}^T \frac{\|\iter{\vz}{t}-\iter{\ctr{\vz}}{t}\|^2 }{\dim}  \right)^{1/2} \nonumber \\
   & \leq \sqrt{3} L \cdot \left(1 + \sum_{t=0}^T \frac{\|\iter{\vz}{t}-\iter{\ctr{\vz}}{t}\|^2 +3\|\iter{\ctr{\vz}}{t}\|^2 }{\dim}   \right)^{1/2} \cdot \left( \sum_{t=0}^T \frac{\|\iter{\vz}{t}-\iter{\ctr{\vz}}{t}\|^2 }{\dim}  \right)^{1/2} \label{eq:pl-2-estimate}
\end{align}
Since $\|\iter{\ctr{\vz}}{t}\|^2/\dim \explain{P}{\rightarrow} \E Z_t^2$ by \thref{SE-final}, in order to conclude \eqref{eq:equivalence}, it is sufficient to show that for any $t \in [T]$,
\begin{align*}
    \frac{\|\iter{\vz}{t}-\iter{\ctr{\vz}}{t}\|^2 }{\dim}  \explain{P}{\rightarrow} 0.
\end{align*}
We will show this by induction. As our induction hypothesis, we assume that the above claim holds. In order to show this claim for $t+1$ observe that:
\begin{align*}
    &\frac{\|\iter{\vz}{t+1}-\iter{\ctr{\vz}}{t+1}\|^2 }{\dim}  \leq \frac{\|\mM\|_{\op}^2}{\dim} \cdot \left\| \nonlin_{t+1}(\iter{\vz}{t}) - \nonlin_{t+1}(\iter{\ctr{\vz}}{t}) + \frac{\E[Z \nonlin_{t+1}(\sigma_t Z)]}{\sigma_t} \cdot \iter{\ctr{\vz}}{t}  -  \alpha_t \cdot \iter{\vz}{t}  \right\|^2 \\
    & \hspace{1.5cm}\leq \frac{3\|\mM\|_{\op}^2}{\dim} \cdot \left( \left\|\nonlin_{t+1}(\iter{\vz}{t}) - \nonlin_{t+1}(\iter{\ctr{\vz}}{t}) \right\|^2+ \left( \alpha_t - \frac{\E[Z \nonlin_{t+1}(\sigma_t Z)]}{\sigma_t}  \right)^2 \|\iter{\ctr{\vz}}{t}\|^2 + \alpha_t^2 \cdot \|\iter{\vz}{t}-\iter{\ctr{\vz}}{t}\|^2\right) \\
    & \hspace{1.5cm}\leq \frac{3\|\mM\|_{\op}^2}{\dim} \cdot \left( \|\nonlin_{t+1}^\prime\|_{\infty}^2\left\|\iter{\vz}{t} - \iter{\ctr{\vz}}{t} \right\|^2+ \left( \alpha_t - \frac{\E[Z \nonlin_{t+1}(\sigma_t Z)]}{\sigma_t}  \right)^2 \|\iter{\ctr{\vz}}{t}\|^2 + \alpha_t^2 \cdot \|\iter{\vz}{t}-\iter{\ctr{\vz}}{t}\|^2\right)
\end{align*}
where,
\begin{align*}
    \alpha_t &\explain{def}{=} \frac{\langle{\nonlin_{t+1}(\iter{\vz}{t})}\; , \; {\iter{\vz}{t}} \rangle}{\|\iter{\vz}{t}\|^2}.
\end{align*}
Observe that to complete the induction step, we need to show that \begin{align} \label{eq:alpha-plim}
    \alpha_t &\explain{def}{=} \frac{\langle{\nonlin_{t+1}(\iter{\vz}{t})}\; , \; {\iter{\vz}{t}} \rangle}{\|\iter{\vz}{t}\|^2} \explain{P}{\rightarrow} \frac{\E[Z \nonlin_{t+1}(\sigma_t Z)]}{\sigma_t}.
\end{align}
In order to conclude this, we observe that by the induction hypothesis, $\|\iter{\vz}{t}\|^2/\dim - \|\iter{\ctr{\vz}}{t}\|^2/\dim \explain{P}{\rightarrow} 0$. On the other hand, \thref{SE-final} shows that $\|\iter{\ctr{\vz}}{t}\|^2/\dim \explain{P}{\rightarrow} \E Z_t^2 = \sigma_t^2$. Hence, $\|\iter{\vz}{t}\|^2/\dim  \explain{P}{\rightarrow}\sigma_t^2$. Similarly, by repeating the argument used to obtain \eqref{eq:pl-2-estimate} and using the induction hypothesis, we can show that $\langle{\nonlin_{t+1}(\iter{\vz}{t})}\; , \; {\iter{\vz}{t}} \rangle/\dim - \langle{\nonlin_{t+1}(\iter{\ctr{\vz}}{t})}\; , \; {\iter{\ctr{\vz}}{t}} \rangle/\dim \explain{P}{\rightarrow} 0$. By \thref{SE-final}, $\langle{\nonlin_{t+1}(\iter{\ctr{\vz}}{t})}\; , \; {\iter{\ctr{\vz}}{t}} \rangle/\dim \explain{P}{\rightarrow}  \E[ Z_t \nonlin_{t+1}(Z_t)] = \sigma_t \cdot \E[Z \nonlin_{t+1}(\sigma_t Z)]$. Hence, $\langle{\nonlin_{t+1}(\iter{\vz}{t})}\; , \; {\iter{\vz}{t}} \rangle/\dim  \explain{P}{\rightarrow}  \sigma_t  \cdot \E[Z \nonlin_{t+1}(\sigma_t Z)]$. \eqref{eq:alpha-plim} now follows from the continuous mapping theorem. This concludes the proof of this lemma. 
\end{proof}

\section{Proof of \lemref{rescaling}} \label{appendix:rescaling}
\begin{proof}[Proof of \lemref{rescaling}]
Suppose that \thref{SE-final} holds under the additional assumptions (a-c) stated in the claim of the lemma. We need to show that \thref{SE-final} also holds without these additional assumptions. Consider an iteration:
\begin{align*}
    \iter{\vz}{t+1} = \mM \nonlin_{t+1}(\iter{\vz}{t}),
\end{align*}
initialized at $\iter{\vz}{0}$ that satisfies all the assumptions of \thref{SE-final}, but not necessarily the additional assumptions (a-c) in the statement of the lemma:
\begin{enumerate}
    \item $\iter{\vz}{0} \sim \gauss{\vzero}{\sigma_0^2 \mI_\dim}$ (\assumpref{initialization}),
    \item The matrix ensemble $\mM = \mS \mPsi\mS$ is semi-random with constant $\sigpsi$ in the sense of  \defref{matrix-ensemble-relaxed}.
    \item  The non-linearities $\nonlin_t$ are Lipschitz functions that satisfy \assumpref{divergence-free}.  
\end{enumerate}
In order to show \thref{SE-final} holds for this iteration, we need to show that any test function $h:\R^{T+1} \rightarrow \R$ which satisfies:
\begin{align*}
    |h(\vx) - h(\vx^\prime)| & \leq L \cdot  \|\vx - \vx^\prime \| \cdot (1+ \|\vx\| + \|\vx^\prime\|),
\end{align*}
for some fixed constant $L \geq 0$, we have,
\begin{align}\label{eq:rescaling-goal}
    \frac{1}{\dim} \sum_{i=1}^\dim h(\iter{z}{0}_i, \iter{z}{1}_i, \dotsc, \iter{z}{T}_i) \explain{P}{\rightarrow} \E h(Z_0, Z_1, \dotsc, Z_T),
\end{align}
where $(Z_0, Z_1, \dotsc, Z_T) \sim \gauss{\vzero}{\mSigma_T}$. The matrix $\mSigma_T$ is defined using the state evolution recursions:
\begin{align}\label{eq:SE-memory-free-rescaling}
    \sigma_{t+1}^2 & = \sigpsi \cdot \E[\nonlin_{t+1}^2(Z_t)] \\
    \rho_{{s,t+1}} & = \sigpsi \cdot \E[ \nonlin_{s}(Z_{s-1})\nonlin_{t+1}(Z_t) ] \; \forall \; s \leq t.
\end{align}
initialized at $\sigma_0^2: = \sigma_0^2$ (the variance parameter of the initialization), $\rho_{0,t} := 0$ for any $t \in [T]$ and the matrix $\mSigma_T \in \R^{(T+1) \times (T+1)}$ is given by:
\begin{align}
    \mSigma_T & \explain{def}{=} \begin{bmatrix} \sigma_0^2 & \rho_{0,1} & \rho_{0,2} & \hdots  & \rho_{0,T}  \\ \rho_{0,1} & \sigma_1^2 & \rho_{1,2} & \hdots &\rho_{1,T} \\ \vdots & \vdots & \vdots & \vdots & \vdots \\ \rho_{0,T} & \rho_{1,T} & \rho_{2,T} & \hdots & \sigma_T^2 \end{bmatrix}.
\end{align}
In order to show \eqref{eq:rescaling-goal}, we will define a rescaled iteration which satisfies the additional assumptions (a-c) along with all the assumptions required by \thref{SE-final}. We define the normalized iterates:
\begin{align*}
    \iter{\widehat{\vz}}{t} = \frac{\iter{\vz}{t}}{{\sigma}_t},
\end{align*}
where ${\sigma}_t$ are given by SE recursion \eref{SE-memory-free-rescaling}.
Observe that the normalized iterates follow the dynamics:
\begin{align*}
    \iter{\hat{\vz}}{t+1} &= \hat{\mM} \hat{f}_{t+1}({\iter{\hat{\vz}}{t}}), \\
    \iter{\hat{\vz}}{0} &\sim \gauss{\vzero}{\mI_\dim}.
\end{align*}
where,
\begin{align*}
    \hat{\mM} &= \mS \widehat{\mPsi} \mS, \; \widehat{\mPsi} = \frac{1}{\sigma_{\psi}} \cdot \mPsi, \\
    \widehat{f}_{t}(z) &= \sigma_{\psi} \cdot  \frac{f_t({\sigma}_{t-1} z)}{{\sigma}_t} =   \frac{f_t( {\sigma}_{t-1} z)}{\sqrt{ \E[ \nonlin_{t}^2({\sigma}_{t-1} Z)]}}.
\end{align*}
For this iteration observe that:
\begin{enumerate}
    \item $\hat{\mM}$ is semi-random with constant $\hat{\sigma}_{\psi}^2 =1$ in the sense of \defref{matrix-ensemble-relaxed}, as required by the additional assumption (a). 
    \item The intialization $\iter{\widehat{\vz}}{0} \sim \gauss{\vzero}{\mI_\dim}$. In particular, \assumpref{initialization} and the additional assumption (b) are satisfied.
    \item $\E[ \widehat{\nonlin}_{t}^2(Z)] = 1$ and $\E[ \widehat{f}_{t}(Z)Z] = 0$ for $Z \sim \gauss{0}{1}$. Hence \assumpref{divergence-free} and additional assumption (c) are fulfilled. 
\end{enumerate}
This means that, for any test function $\widehat{h}:\R^{T+1} \rightarrow \R$ which satisfies:
\begin{align*}
    |\widehat{h}(\vx) - \widehat{h}(\vx^\prime)| & \leq L \cdot  \|\vx - \vx^\prime \| \cdot (1+ \|\vx\| + \|\vx^\prime\|),
\end{align*}
we have,
\begin{align*}
     \frac{1}{\dim} \sum_{i=1}^\dim \widehat{h}(\iter{\hat{z}}{0}_i, \iter{\hat{z}}{1}_i, \dotsc, \iter{\hat{z}}{T}_i) \explain{P}{\rightarrow} \E \widehat{h}(\hat{Z}_0, \hat{Z}_1, \dotsc, \hat{Z}_T),
\end{align*}
where $(\hat{Z}_0, \hat{Z}_1, \dotsc, \hat{Z}_T) \sim \gauss{\vzero}{\widehat{\mSigma}_T}$. The matrix $\widehat{\mSigma}_T$ is defined using the state evolution recursions corresponding to the rescaled iterations:
\begin{align*}
    \widehat{\sigma}_{t}^2 & = 1 \; \forall \; t \; \in \; \{0, 1, \dotsc, T\} \\
    \widehat{\rho}_{{s,t+1}} & = \E[ \widehat{\nonlin}_{s}(\hat{Z}_{s-1})\widehat{\nonlin}_{t+1}(\hat{Z}_t) ].
\end{align*}
initialized at $\sigma_0^2: = 1$ and $\rho_{0,t} := 0$ for any $t \in [T]$ and the matrix $\widehat{\mSigma}_T \in \R^{(T+1) \times (T+1)}$ is given by:
\begin{align*}
    \widehat{\mSigma}_T & \explain{def}{=} \begin{bmatrix} 1 & \widehat{\rho}_{0,1} & \widehat{\rho}_{0,2} & \hdots  & \widehat{\rho}_{0,T}  \\ \widehat{\rho}_{0,1} & 1 & \widehat{\rho}_{1,2} & \hdots &\widehat{\rho}_{1,T} \\ \vdots & \vdots & \vdots & \vdots & \vdots \\ \widehat{\rho}_{0,T} & \widehat{\rho}_{1,T} & \widehat{\rho}_{2,T} & \hdots & 1 \end{bmatrix}.
\end{align*}
Applying the above result to the test function $\widehat{h}(x_0, x_1, \dotsc, x_T) = h(\sigma_0 x_0, \sigma_1 x_1, \dotsc, \sigma_T x_T)$ we obtain:
\begin{align*}
      \frac{1}{\dim} \sum_{i=1}^\dim h(\iter{z}{0}_i, \iter{z}{1}_i, \dotsc, \iter{z}{T}_i) =   \frac{1}{\dim} \sum_{i=1}^\dim h(\sigma_0\iter{\hat{z}}{0}_i, \sigma_1\iter{\hat{z}}{1}_i, \dotsc, \sigma_T\iter{\hat{z}}{T}_i) \explain{P}{\rightarrow} \E h(\sigma_0 \hat{Z}_0, \sigma_1 \hat{Z}_1, \dotsc, \sigma_T \hat{Z}_T)
\end{align*}
It is straightforward to check by induction that,
\begin{align*}
    (\sigma_0 \hat{Z}_0, \sigma_1 \hat{Z}_1, \dotsc, \sigma_T \hat{Z}_T) \explain{d}{=} (Z_0, Z_1, \dotsc, Z_T),
\end{align*}
which yields the desired result \eqref{eq:rescaling-goal}. 
\end{proof}

\section{Concentration Analysis}
\label{appendix:concentration} 
In this section, we study the concentration properties of the iterates: $\iter{\vz}{t} = \mM \nonlin_t(\iter{\vz}{t-1})$ and provide a proof of \thref{concentration}. We define the following notation, which we use through out this section. Let $\iter{\mZ}{T}$ be a $\dim \times (T+1)$ matrix constucted by arranging the iterates $\iter{\vz}{0:T}$ along the columns. Call the rows of $\iter{\mZ}{T}$ as $\iter{\mZ}{T}_1, \iter{\mZ}{T}_2, \dotsc, \iter{\mZ}{T}_\dim$.
\begin{align*}
    \iter{\mZ}{T} = \begin{bmatrix} \iter{\vz}{0} & \iter{\vz}{1} & \hdots & \iter{\vz}{T} \end{bmatrix} = \begin{bmatrix} {\iter{\mZ}{T}_1}\tran \\ {\iter{\mZ}{T}_2}\tran \\ \vdots \\{\iter{\mZ}{T}_\dim}\tran  \end{bmatrix}
\end{align*}
Note that under the assumptions of \thref{concentration}, there are two sources of randomness in the iterates $\iter{\vz}{t}$: the random diagonal sign matrix $\mS$ used to construct the matrix $\mM = \mS \mPsi \mS$ (recall \defref{matrix-ensemble-relaxed}) and the standard Gaussian vector $\vg \sim \gauss{\vzero}{\mI_\dim}$ used to construct the initialization $\iter{\vz}{0} =  \vg$ (recall \assumpref{initialization}). In this section, we will find it convenient to use the notation $\iter{\vz}{t}(\vg, \mS)$ to make the dependence of $\iter{\vz}{t}$ on these two sources of randomness explicit. Analogously, we will use the notations $\iter{\mZ}{T}(\vg, \mS)$ to denote the matrix formed by arranging the iterates along the columns and refer to the rows of this matrix using $\iter{\mZ}{T}_{j}(\vg, \mS) \; j \in [\dim]$.

The proof of \thref{concentration} follows from an application of the Efron-Stein Inequality (in order to control the variance contribution from the random signs $\mS$) and the Gaussian Poincare Inequality ((in order to control the variance contribution from the Gaussian initialization $\iter{\vz}{0} = \vg$). In order to apply the Efron-Stein Inequality, one needs to control the perturbation introduced in the iterates if a single sign in $\mS$ is flipped. Hence, we introduce the following setup. Let $\mS = \diag(s_{1:\dim})$ and $\mS^\prime = \diag({s}_{1:\dim}^\prime)$ be two diagonal sign matrices. For each $i \in [\dim]$, define the perturbation vectors:
\begin{align*}
    \iter{\vDelta}{i ; t}(\vg, \mS, \mS^\prime) \explain{def}{=}  \iter{\vz}{t}(\vg, \iter{\mS}{i}) - \iter{\vz}{t}(\vg, \mS),
\end{align*}
where,
\begin{align*}
    \iter{\mS}{i} \explain{def}{=} \diag(s_1, s_2, \dotsc, s_{i-1}, s_i^\prime, s_{i+1}, \dotsc, s_\dim). 
\end{align*}
We will find the following estimates useful while applying the Efron-Stein Inequality.

\begin{lemma}\label{lem:perturbation-estimate-efron-stein} For any fixed $T \in \N$, there is a finite constant $C$ that depends only on $T$, $\degree$, $\nonlin_{1:T}$ and  $\|\mPsi\|_{\op}$, such that, we have,
\begin{align*}
   \max_{t \leq T} \max_{i \in [\dim]} \|\iter{\vDelta}{i;t}(\vg, \mS, \mS^\prime)\| &\leq C \cdot (1+ \rho_T^{T\degree}(\vg, \mS, \mS^\prime)),  \\
    \max_{t \leq T} \left\| \sum_{i=1}^\dim \iter{\vDelta}{i;t}(\vg, \mS, \mS^\prime) \cdot \iter{\vDelta}{i;t}(\vg, \mS, \mS^\prime)\tran  \right\|_{\op} & \leq C\cdot(1 + \rho_T^{2\degree T(T+1)}(\vg, \mS, \mS^\prime)),
\end{align*}
where,
\begin{align}\label{eq:rho-def}
    \rho_T(\vg, \mS, \mS^\prime) \explain{def}{=} \max_{t \leq T} \max_{i \in [\dim]} \|\iter{\vz}{t}(\vg, \iter{\mS}{i})\|_\infty
\end{align}
\end{lemma}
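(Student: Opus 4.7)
The plan is to prove both bounds by induction on $t$, using the perturbation recursion for $\iter{\vDelta}{i;t}$. Without loss of generality, assume $s_i^\prime \neq s_i$ (otherwise $\iter{\mS}{i} = \mS$ and the perturbation is identically zero). Then $\iter{\mS}{i} = \mS - 2 s_i \ve_i \ve_i\tran$, and a direct computation gives the rank-at-most-three decomposition
\begin{align*}
\iter{\mM}{i} - \mM = -2 s_i (\mS \mPsi \ve_i) \ve_i\tran - 2 s_i \ve_i (\ve_i\tran \mPsi \mS) + 4 \psi_{ii} \ve_i \ve_i\tran.
\end{align*}
This is the essential source of algebraic simplification; I would use it together with the recursion $\iter{\vDelta}{i;t+1} = A_i^{(t)} + B_i^{(t)}$ with
\begin{align*}
A_i^{(t)} &= \mM \bigl[\nonlin_{t+1}(\iter{\vz}{t}(\vg, \iter{\mS}{i})) - \nonlin_{t+1}(\iter{\vz}{t}(\vg, \mS))\bigr], \\
B_i^{(t)} &= (\iter{\mM}{i} - \mM) \nonlin_{t+1}(\iter{\vz}{t}(\vg, \iter{\mS}{i})).
\end{align*}

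For the first bound, the term $A_i^{(t)}$ contracts: since $\nonlin_{t+1}$ is a polynomial of degree $\le \degree$, pointwise estimates give $\|A_i^{(t)}\| \le \|\mM\|_{\op} \cdot L(1+\rho_T^{\degree-1}) \|\iter{\vDelta}{i;t}\|$. For $B_i^{(t)}$, the rank-three structure yields
\begin{align*}
B_i^{(t)} = -2 s_i a_i^{(t)} \mS \mPsi \ve_i + c_i^{(t)} \ve_i,
\end{align*}
where $a_i^{(t)} = \nonlin_{t+1}(\iter{z}{t}_i(\vg, \iter{\mS}{i}))$ is bounded by $C(1+\rho_T^\degree)$, and $c_i^{(t)}$ involves the $i$-th coordinate of $\mPsi \mS \nonlin_{t+1}(\iter{\vz}{t}(\vg, \iter{\mS}{i}))$. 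The key observation that keeps this bounded is that $(\mPsi \iter{\mS}{i} \nonlin_{t+1}(\iter{\vz}{t}(\vg, \iter{\mS}{i})))_i = \pm \iter{z}{t+1}_i(\vg, \iter{\mS}{i})$, so it contributes only $O(\rho_T + |\psi_{ii}|(1+\rho_T^\degree))$. Thus $\|B_i^{(t)}\| \le C(1+\rho_T^\degree)$, and unrolling the recursion from the base case $\iter{\vDelta}{i;0} = \vzero$ yields $\|\iter{\vDelta}{i;t}\| \le C(1+\rho_T^{t\degree})$.

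For the second bound, collect the perturbations into the matrix $\iter{\mD}{t} = [\iter{\vDelta}{1;t}\,|\,\cdots\,|\,\iter{\vDelta}{\dim;t}]$, so that $\|\sum_i \iter{\vDelta}{i;t}(\iter{\vDelta}{i;t})\tran\|_{\op} = \|\iter{\mD}{t}\|_{\op}^2$. Expanding the polynomial $\nonlin_{t+1}$ via Taylor's theorem (exact for polynomials) gives
\begin{align*}
\iter{\mD}{t+1} = \iter{\mB}{t+1} + \sum_{k=1}^{\degree} \mM \cdot \diag\!\left(\frac{\nonlin_{t+1}^{(k)}(\iter{\vz}{t}(\vg,\mS))}{k!}\right) \cdot \iter{\mD}{t,k},
\end{align*}
where $\iter{\mD}{t,k}$ has entries $(\iter{\Delta}{i;t}_j)^k$. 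The structural form from Step 2 gives $\iter{\mB}{t+1} = \mS \mPsi \diag(\vu^{(t)}) + \diag(\vc^{(t)})$ with $\|\vu^{(t)}\|_\infty, \|\vc^{(t)}\|_\infty \le C(1+\rho_T^\degree)$, hence $\|\iter{\mB}{t+1}\|_{\op} \le C(1+\rho_T^\degree)$.

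The main obstacle is bounding $\|\iter{\mD}{t,k}\|_{\op}$ for Hadamard powers $k \ge 2$ in terms of $\|\iter{\mD}{t}\|_{\op}$. Naive Frobenius-norm estimates would yield $O(\sqrt{\dim})$, which is too loose. The trick is the Riesz--Thorin-type inequality $\|\mat{A}\|_{\op}^2 \le \|\mat{A}\|_{1\to 1}\,\|\mat{A}\|_{\infty\to\infty}$ combined with the following two observations: the column sum $\|\iter{\mD}{t,k}\|_{1\to 1} = \max_i \sum_j |\iter{\Delta}{i;t}_j|^k \le \max_i \|\iter{\vDelta}{i;t}\|_\infty^{k-2} \|\iter{\vDelta}{i;t}\|^2$ is bounded by a polynomial in $\rho_T$ via the first claim; and the row sum $\|\iter{\mD}{t,k}\|_{\infty\to\infty} = \max_j \sum_i |\iter{\Delta}{i;t}_j|^k \le \|\iter{\mD}{t}\|_\infty^{k-2}\cdot \max_j \|(\iter{\mD}{t})_{j\cdot}\|^2 \le C(1+\rho_T^{t\degree(k-2)}) \|\iter{\mD}{t}\|_{\op}^2$ because row norms are bounded by the operator norm. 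Combining these yields $\|\iter{\mD}{t,k}\|_{\op} \le C(1+\rho_T^{t\degree(k-1)}) \|\iter{\mD}{t}\|_{\op}$. Plugging in and using $\|\nonlin_{t+1}^{(k)}(\iter{\vz}{t})\|_\infty \le C(1+\rho_T^{\degree-k})$ gives the recursion $\|\iter{\mD}{t+1}\|_{\op} \le C(1+\rho_T^{t\degree(\degree-1)}) \|\iter{\mD}{t}\|_{\op} + C(1+\rho_T^\degree)$. Starting from $\|\iter{\mD}{1}\|_{\op} \le C(1+\rho_T^\degree)$ (since $\iter{\vDelta}{i;0}=\vzero$ makes $A_i^{(0)}=0$ and $\iter{\mD}{1}$ has the explicit structured form $\mS\mPsi\diag(\vu^{(0)}) + \diag(\vc^{(0)})$), iteration up to $t \le T$ produces the claimed polynomial bound in $\rho_T$.
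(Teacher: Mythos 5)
Your first bound is essentially the paper's own argument: the explicit rank-at-most-three expansion of $\iter{\mM}{i} - \mM$ collapses (after the $4\psi_{ii}$ term cancels) to exactly the paper's two pieces $\iter{\valpha}{i;t}$ (a multiple of $\ve_i$, bounded by $2\rho_T$) and $\iter{\vbeta}{i;t}$ (a multiple of $\mS\mPsi\ve_i$), followed by the same triangle-inequality unrolling. Your observation that the $\ve_i$-coefficient is controlled by $\iter{z}{t+1}_i(\vg,\iter{\mS}{i})$ rather than by an a-priori large inner product is the same trick the paper uses in deriving its formula for $\iter{\valpha}{i;t}$.

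For the second bound you take a genuinely different route. The paper truncates Taylor at first order, writes the recursion as $\iter{\mD}{t+1} = [\iter{\valpha}{\cdot;t+1}] + [\iter{\vbeta}{\cdot;t+1}] + \mS\mPsi\mS\mD_{t+1}\iter{\mD}{t} + \mS\mPsi\mS[\iter{\vepsilon}{\cdot;t+1}]$, and controls the remainder matrix $\sum_i \iter{\vepsilon}{i;t+1}{\iter{\vepsilon}{i;t+1}}\tran$ via Gershgorin (the $\ell^1$ row sum of a symmetric psd matrix). You instead expand Taylor to full order (exact since $\nonlin_{t+1}$ is a polynomial), which generates matrices $\iter{\mD}{t,k}$ of Hadamard powers, and you bound $\|\iter{\mD}{t,k}\|_{\op}$ via the interpolation inequality $\|\mA\|_{\op} \le \|\mA\|_{1\to1}^{1/2}\|\mA\|_{\infty\to\infty}^{1/2}$ combined with the fact that the row norms of $\iter{\mD}{t}$ are dominated by $\|\iter{\mD}{t}\|_{\op}$. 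Both routes avoid the lossy Frobenius-norm bound and both close the recursion; yours is arguably cleaner in that it avoids introducing the Taylor remainder $\iter{\vepsilon}{i;t}$ at all. One thing worth flagging: as written, your per-step multiplicative factor is $C(1+\rho_T^{t\degree(\degree-1)})$, which grows with $t$ and has a $\degree^2$ dependence, so the unrolled exponent you get is of order $\degree^2 T^2$, strictly larger than the stated $2\degree T(T+1)$. The source of the slack is your bound $\|\iter{\mD}{t}\|_\infty \le C(1+\rho_T^{t\degree})$ (pulled from the first claim); the definition of $\rho_T$ gives the much sharper $\|\iter{\mD}{t}\|_\infty \le 2\rho_T$ for free, and with that replacement your exponent drops to match the stated one. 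Either way the bound remains a fixed polynomial in $\rho_T$, so the downstream application through Lemma~\ref{lem:rho-moment-estimates} is unaffected, but if you want the lemma exactly as stated you should use the sharper entrywise estimate.
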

In order to control the variance contribution from the Gaussian initialization $\iter{\vz}{0} = \vg$ using the Gaussian Poincare Inequality, we will rely on the following gradient estimate. 
\begin{lemma}\label{lem:gradient-estimate} For any polynomial $h : \R^{T+1} \rightarrow \R$ of degree atmost $\degree$, we have,
\begin{align*}
    \left\|\nabla_{\vg} \left( \frac{1}{\dim} \sum_{j=1}^\dim h(\iter{\mZ}{T}_j(\vg,\mS))\right) \right\| & \leq \frac{C \cdot (1+\rho_T(\vg, \mS, \mS)^{(T+1)(\degree -1)})}{\sqrt{\dim}},
\end{align*}
where $C$ is a suitably large constant that depends on $T,\degree$, $\|\mPsi\|_{\op}$ and the coefficients of the polynomials $\nonlin_{1:T}, h$.
\end{lemma}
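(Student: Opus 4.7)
The plan is to apply the chain rule, reduce the gradient to a sum of $T+1$ Jacobian-times-vector products, and bound each factor using elementary polynomial estimates together with the orthogonal invariance of the sign matrix $\mS$. Concretely, I will let $\iter{J}{t} = \partial \iter{\vz}{t}/\partial \vg \in \R^{\dim \times \dim}$ denote the Jacobian of the $t$-th iterate with respect to the Gaussian initialization. Since $\iter{\vz}{0} = \vg$ and $\iter{\vz}{t} = \mM \nonlin_t(\iter{\vz}{t-1})$, differentiating yields the recursion $\iter{J}{0} = \mI_\dim$ and $\iter{J}{t} = \mM \, \diag\bigl(\nonlin_t'(\iter{\vz}{t-1})\bigr) \, \iter{J}{t-1}$. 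Unrolling this recursion and using $\|\mM\|_{\op} = \|\mPsi\|_{\op}$ (because $\mM = \mS \mPsi \mS$ with $\mS$ orthogonal) will give the operator-norm bound
\begin{align*}
    \|\iter{J}{t}\|_{\op} \;\leq\; \|\mPsi\|_{\op}^t \prod_{s=1}^t \|\nonlin_s'(\iter{\vz}{s-1})\|_\infty.
\end{align*}

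Next, since each $\nonlin_s$ is a polynomial of degree at most $\degree$, its derivative has degree at most $\degree - 1$, so there is a constant $C_1$ depending only on the coefficients of $\nonlin_{1:T}$ with $\|\nonlin_s'(\iter{\vz}{s-1})\|_\infty \leq C_1 \bigl(1 + \rho_T^{\degree-1}\bigr)$, where $\rho_T = \rho_T(\vg, \mS, \mS) = \max_{t \leq T} \|\iter{\vz}{t}(\vg, \mS)\|_\infty$. Multiplying over $s \leq t$ and using the elementary inequality $(1+x)^t \leq 2^{t-1}(1+x^t)$ gives $\|\iter{J}{t}\|_{\op} \leq C_2 \bigl(1 + \rho_T^{t(\degree-1)}\bigr) \leq C_2\bigl(1 + \rho_T^{T(\degree-1)}\bigr)$ for a constant $C_2$ depending only on $T$, $\degree$, $\|\mPsi\|_{\op}$, and $\nonlin_{1:T}$.

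For the gradient of the empirical average, the chain rule gives
\begin{align*}
    \nabla_\vg \left(\frac{1}{\dim} \sum_{j=1}^\dim h(\iter{\mZ}{T}_j)\right) \;=\; \frac{1}{\dim} \sum_{t=0}^T (\iter{J}{t})\tran \vec{h}_t, \qquad (\vec{h}_t)_j \explain{def}{=} (\partial_t h)(\iter{\mZ}{T}_j).
\end{align*}
Because $\partial_t h$ is a polynomial of degree at most $\degree - 1$ in $T+1$ variables, each coordinate satisfies $|(\vec h_t)_j| \leq C_3\bigl(1 + \rho_T^{\degree - 1}\bigr)$, so $\|\vec h_t\| \leq C_3 \sqrt{\dim}\bigl(1 + \rho_T^{\degree-1}\bigr)$ for a constant $C_3$ depending on $T$, $\degree$ and the coefficients of $h$. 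Applying $\|(\iter{J}{t})\tran \vec h_t\| \leq \|\iter{J}{t}\|_{\op}\|\vec h_t\|$, combining the two bounds via $(1+x^a)(1+x^b) \leq 2(1+x^{a+b})$, and summing over $t = 0, \dotsc, T$ yields
\begin{align*}
    \left\|\nabla_\vg \left(\frac{1}{\dim} \sum_{j=1}^\dim h(\iter{\mZ}{T}_j)\right) \right\| \;\leq\; \frac{2(T+1)C_2 C_3}{\sqrt{\dim}}\bigl(1 + \rho_T^{(T+1)(\degree-1)}\bigr),
\end{align*}
which is the desired bound. I don't anticipate a substantive obstacle here: the argument is a routine chain-rule calculation with polynomial growth bookkeeping. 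The only point requiring care is tracking the exponent of $\rho_T$, where the Jacobian contributes up to $T(\degree-1)$ through the product of $T$ derivative factors and $\vec h_t$ contributes one additional $(\degree-1)$, summing to the stated $(T+1)(\degree-1)$; the dimension-independence of $\|\mM\|_{\op}$ via the orthogonality of $\mS$ is also essential to keep the constant free of $\dim$.
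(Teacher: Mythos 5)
Your proof is correct and takes essentially the same approach as the paper: the paper bounds $|H_T(\vg, \mS) - H_T(\vg', \mS)|$ by unrolling a recursion on $\|\iter{\vz}{t}(\vg, \mS) - \iter{\vz}{t}(\vg', \mS)\|$ and then lets $\vg' \to \vg$, which is exactly the finite-difference form of your Jacobian recursion $\iter{J}{t} = \mM \diag\bigl(\nonlin_t'(\iter{\vz}{t-1})\bigr) \iter{J}{t-1}$. Both arguments accumulate the same $\|\mPsi\|_{\op}$ and polynomial-growth factors and reach the identical exponent $(T+1)(\degree-1)$.
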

Finally, since the above perturbation estimates are stated in terms of $\rho_T(\vg, \mS, \mS^\prime)$ defined in \eref{rho-def}, we will need to estimate the moments of $\rho_T(\vg, \mS, \mS^\prime)$, which is the content of the following lemma.

\begin{lemma}\label{lem:rho-moment-estimates} 
For any fixed $\ell, T \in \N$ and $\epsilon \in (0,1)$, we have,
\begin{align*}
   \E \left[ \rho_T^{2\ell}(\vg, \mS, \mS)\right] \leq \E \left[ \rho_T^{2\ell}(\vg, \mS, \mS^\prime)\right]  & \lesssim \dim^\epsilon,
\end{align*}
where $\vg \sim \gauss{\vzero}{\mI_\dim}$ and $\mS = \diag(s_{1:\dim}), \mS^\prime = \diag(s^\prime_{1:\dim})$ with $s_{1:\dim}, s_{1:\dim}^\prime \explain{i.i.d.}{\sim} \unif{\{\pm 1\}}$.
\end{lemma}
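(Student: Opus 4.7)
The first inequality $\E \rho_T^{2\ell}(\vg,\mS,\mS) \leq \E \rho_T^{2\ell}(\vg,\mS,\mS^\prime)$ holds at the level of expectation because each $\mS^{(i)}$ has the same marginal law as $\mS$ (since $\mS^\prime$ is independent and uniform), so $\E \|\iter{\vz}{t}(\vg,\mS^{(i)})\|_\infty^{2\ell} = \E \|\iter{\vz}{t}(\vg,\mS)\|_\infty^{2\ell}$, and the max over $(t,i)$ on the left-hand side is at least the max over $t$ for any fixed $i$. The substance of the lemma is therefore the bound $\E \rho_T^{2\ell}(\vg,\mS,\mS^\prime) \lesssim N^\epsilon$, which I would reduce to a uniform pointwise moment estimate: for every fixed $K \in \N$, $t \leq T$, and $j \in [N]$,
\[
\sup_N \E \bigl(z_j^{(t)}(\vg,\mS)\bigr)^{2K} \leq C(t,K) < \infty,
\]
with the constant independent of $j$ and $N$. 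Granted this bound, Jensen's inequality and the distributional identity $\mS^{(i)} \stackrel{d}{=} \mS$ yield, for any $K \geq \ell$,
\[
\E \rho_T^{2\ell} \leq \bigl(\E \rho_T^{2K}\bigr)^{\ell/K} \leq \Bigl(\sum_{t=0}^T \sum_{i=1}^N \sum_{j=1}^N \E |z_j^{(t)}(\vg,\mS^{(i)})|^{2K}\Bigr)^{\ell/K} \leq \bigl(T N^2 C(T,K)\bigr)^{\ell/K}.
\]
Choosing $K := \lceil 2\ell/\epsilon \rceil$ makes the right-hand side $O(N^\epsilon)$.

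The uniform pointwise moment bound I would prove by combining two ingredients. First, since the $f_t$ are polynomials of degree at most $D$, the iterate $z_j^{(t)}$ is a polynomial in the Gaussian vector $\vg$ and the Rademacher signs $\vs$ of total degree $O(D^t)$, with coefficients depending on $\mPsi$ but of bounded degree independent of $N$. By Gaussian--Rademacher hypercontractivity (Nelson, Bonami--Beckner, combined by tensorization of the two product semigroups), one has the dimension-free reduction
\[
\E \bigl(z_j^{(t)}\bigr)^{2K} \leq c(K,t,D) \cdot \bigl(\E (z_j^{(t)})^2\bigr)^K.
\]
Second, I would establish the uniform second-moment bound $\sup_{N,j} \E (z_j^{(t)})^2 \leq C(t)$ by induction on $t$. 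Expanding
\[
\E (z_j^{(t+1)})^2 = \sum_{k_1, k_2} \psi_{jk_1} \psi_{jk_2}\, \E\bigl[s_{k_1} s_{k_2}\, f_{t+1}(z_{k_1}^{(t)}) f_{t+1}(z_{k_2}^{(t)})\bigr]
\]
and applying the tree-expansion machinery of Lemmas \ref{lem:unrolling_1}--\ref{lem:key-formula} to the inner expectation, with the root color fixed at $j$ rather than averaged, every resulting polynomial in $\mPsi$ is controlled using the structural assumptions $\|\mPsi\|_\infty \lesssim N^{-1/2+\epsilon'}$, $(\mPsi\mPsi\tran)_{jj} = 1$, and $\|\mPsi\mPsi\tran - \mI_N\|_\infty \lesssim N^{-1/2+\epsilon'}$.

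The main technical obstacle is precisely this uniform second-moment estimate. The combinatorics is parallel in spirit to that of Proposition \ref{prop:key-estimate}, but because we do not average over the root color $j$ we lose one factor of $N$ in the naive counting of free indices; the cancellations afforded by $\|\mPsi\mPsi\tran - \mI_N\|_\infty \lesssim N^{-1/2+\epsilon'}$ must absorb this deficit. Equivalently, one needs to verify single-root analogues of the simplification and decomposition arguments of Propositions \ref{prop:decomposition} and \ref{prop:universality-universal-config} to check that every surviving configuration in the expansion contributes $O_N(1)$ rather than a positive power of $N$. Heuristically, state evolution predicts $z_j^{(t)} \approx \gauss{0}{\sigma_t^2}$ so that $\E (z_j^{(t)})^2 \to \sigma_t^2$; the difficulty is extracting this as a finite-$N$ bound that is uniform in $j$, rather than as a weak-convergence statement.
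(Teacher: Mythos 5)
Your skeleton is right and matches the paper's: bound $\E\rho_T^{2\ell}$ by Jensen against $\E\rho_T^{2q\ell}$, bound the max by the triple sum over $(t,i,j)$, use $\iter{\mS}{i}\eqd\mS$, and choose $q$ large so that the residual $\dim$-factor becomes $\dim^{O(1/q)}\lesssim\dim^\epsilon$. The first-inequality argument (distributional equivalence plus monotonicity of the max under restriction to a fixed $i$) is also fine.

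Where you go astray is in reducing to a \emph{uniform pointwise} moment bound $\sup_{N,j}\E(z_j^{(t)})^{2K}\le C(t,K)$ and then flagging its proof as the main obstacle. That bound is not needed, and the machinery you invoke to attack it (Gaussian--Rademacher hypercontractivity, a single-root variant of the decomposition of Propositions~\ref{prop:decomposition}--\ref{prop:universality-universal-config}) is a substantial detour. After bounding the max by the triple sum and using $\iter{\mS}{i}\eqd\mS$, the inner sum is
\[
\sum_{j=1}^{\dim}\E\bigl[(\iter{z}{t}_j(\vg,\mS))^{2K}\bigr]=\dim\cdot\E\Bigl[\tfrac{1}{\dim}\sum_{j=1}^\dim(\iter{z}{t}_j(\vg,\mS))^{2K}\Bigr],
\]
and the averaged quantity on the right is exactly what \thref{SE-normalized} controls: $z^{2K}$ is a single-variable polynomial, expandable in Hermite polynomials, so the expectation of this empirical average converges to $\E Z^{2K}<\infty$ and in particular is $O(1)$. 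You therefore get $\E\rho_T^{2q\ell}\lesssim\dim^2$ directly, with no per-coordinate uniformity required and no additional combinatorics. The paper's proof is precisely this; your proposal, as written, is incomplete because it stalls on a hard intermediate claim that the argument never actually needs.
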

We postpone the proof of \lemref{perturbation-estimate-efron-stein}, \lemref{gradient-estimate} and \lemref{rho-moment-estimates} to the end of this section and present the proof of \thref{concentration}.
\begin{proof}[Proof of \thref{concentration}]
Let $\iter{\vz}{0} = \vg$ with $\vg \sim \gauss{\vzero}{\mI_\dim}$ (cf. \assumpref{initialization} and \assumpref{normalization}) and $\mM = \mS \mPsi \mS$ with $\mS = \diag(s_{1:\dim})$ where $s_{1:\dim} \explain{i.i.d.}{\sim} \unif{\{\pm 1\}}$.  As discussed previously, we make the dependence of $\iter{\vz}{t}$ on the random variables $\vg, \mS$ explicit by using the notation $\iter{\vz}{t}(\vg, \mS)$. For convenience, we define,
\begin{align*}
    H_T(\vg, \mS) \explain{def}{=}  \frac{1}{\dim} \sum_{j=1}^\dim h(\iter{\mZ}{T}_j(\vg, \mS)).
\end{align*}
By the law of total variance,
\begin{align*}
    \Var[H_T(\vg, \mS)] & = \E[\Var[ H_T(\vg, \mS)  \big|  \vg]] + \Var[ \E[H_T(\vg, \mS) \big| \vg]].
\end{align*}
We control each of the terms on the RHS. In order to control the expected conditional variance, we use the Efron-Stein Inequality. Let $\mS^\prime = \diag(s_{1:\dim}^\prime)$ be an independent copy of $\mS$ and $$\iter{\mS}{i} = \diag(s_1, \dotsc, s_{i-1}, s_i^\prime, s_{i+1}, \dotsc, s_\dim).$$ By the Efron-Stein Inequality,
\begin{align*}
    2 \E[\Var[ H_T(\vg, \mS)  \big|  \vg]] & \leq \sum_{i=1}^\dim \E[(H_T(\vg, \iter{\mS}{i}) - H_T(\vg, \mS))^2]
\end{align*}
For any $t \in \{0, 1, 2, \dotsc, T\}$, define $\iter{\vDelta}{i;t} = \iter{\vz}{t}(\vg, \iter{\mS}{i}) - \iter{\vz}{t}(\vg, \mS)$. By Taylor's theorem, 
\begin{align} \label{eq: taylor-expansion}
    H_T(\vg, \iter{\mS}{i}) - H_T(\vg, \mS) & \explain{(a)}{=} \frac{1}{\dim} \sum_{j=1}^\dim \sum_{t=1}^T \partial_t h(\iter{\mZ}{T}_j(\vg, \mS)) \cdot \iter{\Delta}{i;t}_j + \frac{1}{\dim}\sum_{j=1}^\dim \iter{\epsilon}{i}_j \nonumber \\
    & \explain{(b)}{=} \frac{1}{\dim} \sum_{t=1}^T \ip{\partial_t h(\iter{\mZ}{T}(\vg, \mS))}{\iter{\vDelta}{i;t}} + \frac{1}{\dim}\sum_{j=1}^\dim \iter{\epsilon}{i}_j.
\end{align}
In the above display, in the step marked (a), $\partial_t h(x_0, x_1, \dotsc, x_T)$ denotes the partial derivative of $h(x_0, \dotsc, x_T)$ with respect to $x_t$. In the step marked (b), $\partial_t h(\iter{\mZ}{T}(\vg,\mS)) \in \R^{\dim}$ denotes the vector with coordinates $(\partial_t h(\iter{\mZ}{T}(\vg,\mS)))_j \explain{def}{=} \partial_t h(\iter{\mZ}{T}_j(\vg,\mS))$. Since $h$ is a polynomial of degree at most $\degree$, the second order error term in the Taylor's expansion can be controlled by:
\begin{align*} 
    |\iter{\epsilon}{i}_j| & \leq C \cdot \left(1 + \|\iter{\mZ}{T}_j(\vg,\mS)\|^{D} + \|\iter{\mZ}{T}_j(\vg,\iter{\mS}{i})\|^{D} \right)\cdot \left(\sum_{t=1}^T |\iter{\Delta}{i;t}_j|^2 \right), \\
\end{align*}
where the constant $C$ depends on $\degree, T$ and the coefficients of the polynomial $h$. Recalling the definition of $\rho_T(\vg, \mS, \mS^\prime)$:
\begin{align*}
    \rho_T(\vg, \mS, \mS^\prime) \explain{def}{=} \max_{t \leq T} \max_{i \in [\dim]} \|\iter{\vz}{t}(\vg, \iter{\mS}{i})\|_\infty,
\end{align*}
the above bound can be expressed as:
\begin{align}\label{eq:taylor-thm-remainder-bound}
     |\iter{\epsilon}{i}_j| & \leq C \cdot(1+ \rho_T(\vg, \mS, \mS^\prime)^D) \cdot \left(\sum_{t=1}^T |\iter{\Delta}{i;t}_j|^2 \right).
\end{align}
Hence,
\begin{align*}
     \E[\Var[ H_t(\vg, \mS)  \big|  \vg]]  &\leq \underbrace{\E\left[ \sum_{i=1}^\dim \left(\frac{1}{\dim} \sum_{t=1}^T \ip{\partial_t h(\iter{\mZ}{T}(\vg, \mS))}{\iter{\vDelta}{i;t}} \right)^2 \right]}_{(\star)} + \underbrace{\E\left[\sum_{i=1}^\dim \left( \frac{1}{\dim}\sum_{j=1}^\dim \iter{\epsilon}{i}_j\right)^2 \right]}_{(\dagger)}.
\end{align*}
In order to upper bound the term $(\star)$, we observe that,
\begin{align*}
    (\star) & \explain{def}{=} \E\left[ \sum_{i=1}^\dim \left(\frac{1}{\dim} \sum_{t=1}^T \ip{\partial_t h(\iter{\mZ}{T}(\vg, \mS))}{\iter{\vDelta}{i;t}} \right)^2 \right] \\
    & \explain{(a)}{\leq} \frac{T}{\dim^2} \cdot \sum_{t=1}^T \E\left[ \sum_{i=1}^\dim \ip{\partial_t h(\iter{\mZ}{T}(\vg, \mS))}{\iter{\vDelta}{i;t}}^2  \right] \\
    & \leq \frac{T}{\dim^2} \cdot \sum_{t=1}^T \E\left[  \left\| \sum_{i=1}^\dim \iter{\vDelta}{i;t}{\iter{\vDelta}{i;t}}\tran \right\|_{\op} \cdot \|\partial_t h(\iter{\mZ}{T}(\vg, \mS)) \|^2  \right] \\
    &\explain{(b)}{\leq} \frac{T}{\dim^2} \cdot \sum_{t=1}^T \E\left[  \left\| \sum_{i=1}^\dim \iter{\vDelta}{i;t}{\iter{\vDelta}{i;t}}\tran \right\|_{\op} \cdot C \cdot\dim \cdot  (1+ \rho_T^\degree(\vg, \mS, \mS^\prime))    \right] \\
    & \explain{(c)}{\leq} \frac{C}{\dim} \sum_{t=1}^T \E\left[  (1 + \rho_T^{2\degree T(T+1)}(\vg, \mS, \mS^\prime)) \cdot  (1+ \rho_T^\degree(\vg, \mS, \mS^\prime))    \right] \\
    & \explain{(d)}{\lesssim} \dim^{-1+\epsilon}.
\end{align*}
In the above display, step (a) is obtained by the Cauchy-Schwarz Inequality, step (b) uses the fact that each entry of  $\partial_t h(\iter{\mZ}{T}(\vg, \mS)$ is a polynomial of degree at most $D-1$, step (c) relied on the estimates on the operator norm from \lemref{perturbation-estimate-efron-stein} and finally step (d) relied on the moment estimates from \lemref{rho-moment-estimates}. 
In order to upper bound the term $(\dagger)$, we observe that,
\begin{align*}
    (\dagger) &\explain{def}{=}  \E\left[\sum_{i=1}^\dim \left( \frac{1}{\dim}\sum_{j=1}^\dim \iter{\epsilon}{i}_j\right)^2 \right] \\
    & \leq \E\left[\sum_{i=1}^\dim \left( \frac{1}{\dim}\sum_{j=1}^\dim |\iter{\epsilon}{i}_j|\right)^2 \right] \\
    & \explain{(a)}{\leq} \frac{C^2}{\dim^2} \sum_{i=1}^\dim \E\left[ (1+ \rho_T(\vg, \mS, \mS^\prime)^D)^2 \cdot \left(\sum_{t=1}^T \|\iter{\vDelta}{i;t}\|^2 \right)^2 \right] \\
    & \explain{(b)}{\leq} \frac{C^2 T}{\dim^2} \sum_{t=1}^T\sum_{i=1}^\dim \E\left[ (1+ \rho_T(\vg, \mS, \mS^\prime)^D)^2 \cdot  \|\iter{\vDelta}{i;t}\|^4 \right] \\
    & \explain{(c)}{\leq} \frac{C}{\dim} \sum_{t=1}^T \E\left[ (1+ \rho_T(\vg, \mS, \mS^\prime)^D)^2 \cdot (1 + \rho_T^{2\degree T(T+1)}(\vg, \mS, \mS^\prime))^4 \right]
    \;  \explain{(d)}{\lesssim} \;  \dim^{-1+\epsilon}.
\end{align*}
In the above display, step (a) uses the estimate from \eqref{eq:taylor-thm-remainder-bound}, step (b) is obtained using Cauchy-Schwarz Inequality, step (c) uses the norm estimate from \lemref{perturbation-estimate-efron-stein} and step (d) relies on the moment estimates from \lemref{rho-moment-estimates}.
Hence, the estimates on $(\star)$ and $\dagger$ yield:
\begin{align*}
    \E[\Var[ H_t(\vg, \mS)  \big|  \vg]]  & \lesssim  \dim^{-1+\epsilon}.
\end{align*}
This concludes our analysis of the expected conditional variance and we have shown that,
\begin{align*}
     \Var[H_t(\vg, \mS)] & \lesssim \dim^{-1+\epsilon} + \Var[ \E[H_t(\vg, \mS) \big| \vg]].
\end{align*}
In order to control the variance of the conditional expectation, we will rely on the Gaussian Poincare Inequality which states that,
\begin{align*}
    \Var[ \E[H_t(\vg, \mS) \big| \vg]] & \leq \E[ \|\nabla_{\vg} \E[H_t(\vg, \mS) \big| \vg]\|^2].
\end{align*}
Since $\vg, \mS$ are independent, we have $\nabla_{\vg} \E[H_t(\vg, \mS) \big| \vg] =  \E[ \nabla_{\vg} H_t(\vg, \mS) \big| \vg]$. By Jensen's Inequality,
\begin{align*}
     \Var[ \E[H_t(\vg, \mS) \big| \vg]]  &\leq  \E[ \| \E[\nabla_{\vg} H_t(\vg, \mS) \big| \vg]\|^2] \leq \E[  \E[\|\nabla_{\vg} H_t(\vg, \mS)\|^2 \big| \vg]] = \E[\|\nabla_{\vg} H_t(\vg, \mS)\|^2] 
\end{align*}
Using the gradient estimates from \lemref{gradient-estimate} and the moment bounds from \lemref{rho-moment-estimates}, we obtain,
\begin{align*}
    \Var[ \E[H_t(\vg, \mS) \big| \vg]] & \leq  \frac{C^2 \cdot \E[(1+\rho_T(\vg, \mS, \mS)^{(T+1)(\degree -1)})^2]}{\dim} \lesssim \dim^{-1+\epsilon}.
\end{align*}
This concludes the proof of the claim $\Var[H_t(\vg, \mS)] \lesssim \dim^{-1+\epsilon}$. 
\end{proof}

\subsection{Proof of \lemref{perturbation-estimate-efron-stein}}
\begin{proof}[Proof of \lemref{perturbation-estimate-efron-stein}] For ease of notation, we will not make the dependence of $\iter{\vDelta}{i;t}, \rho_T$ on $\vg, \mS, \mS^\prime$ explicit. We begin by considering the decomposition:
\begin{align*}
    &\iter{\vDelta}{i;t} \explain{def}{=} \iter{\vz}{t}(\vg, \iter{\mS}{i}) - \iter{\vz}{t}(\vg, {\mS}) \\
    & = \iter{\mS}{i} \mPsi \iter{\mS}{i} \nonlin_t(\iter{\vz}{t-1}(\vg, \iter{\mS}{i})) - \mS \mPsi \mS \nonlin_t(\iter{\vz}{t-1}(\vg, \mS)) \\
    & = \iter{\mS}{i} \mPsi \iter{\mS}{i} \nonlin_t(\iter{\vz}{t-1}(\vg, \iter{\mS}{i})) - \mS \mPsi \mS \nonlin_t(\iter{\vz}{t-1}(\vg, \iter{\mS}{i})) + \mS \mPsi \mS \nonlin_t(\iter{\vz}{t-1}(\vg, \iter{\mS}{i}))  - \mS \mPsi \mS \nonlin_t(\iter{\vz}{t-1}(\vg, \mS)).
\end{align*}
We define,
\begin{align}\label{eq:alpha-beta-def}
    \iter{\valpha}{i;t} &\explain{def}{=} \iter{\mS}{i} \mPsi \iter{\mS}{i} \nonlin_t(\iter{\vz}{t-1}(\vg, \iter{\mS}{i})) -  {\mS} \mPsi \iter{\mS}{i} \nonlin_t(\iter{\vz}{t-1}(\vg, \iter{\mS}{i})) = (1- s_i^\prime s_i) \cdot \iter{z}{t}_i(\vg, \iter{\mS}{i}) \cdot \ve_i, \\
    \iter{\vbeta}{i;t} &\explain{def}{=} {\mS} \mPsi \iter{\mS}{i} \nonlin_t(\iter{\vz}{t-1}(\vg, \iter{\mS}{i})) - {\mS} \mPsi {\mS} \nonlin_t(\iter{\vz}{t-1}(\vg, \iter{\mS}{i})) = (s_i^\prime- s_i) \cdot \nonlin_t(\iter{z}{t-1}_i(\vg, \iter{\mS}{i}))) \cdot \mS \mPsi \ve_i. 
\end{align}
where, $\ve_i$ denotes the i-th coordinate vector. Hence,
\begin{align}\label{eq:delta-decomp}
    \iter{\vDelta}{i;t} & = \iter{\valpha}{i;t} + \iter{\vbeta}{i;t} + \mS \mPsi \mS \cdot( \nonlin_t(\iter{\vz}{t-1}(\vg, \iter{\mS}{i}))  -  \nonlin_t(\iter{\vz}{t-1}(\vg, \mS))).
\end{align}
We now consider each of the claims of the lemma one-by-one:
\begin{enumerate}
    \item Applying the triangle inequality on \eref{delta-decomp} yields:
    \begin{align*}
         \|\iter{\vDelta}{i;t}\| & \leq \|\iter{\valpha}{i;t}\| + \|\iter{\vbeta}{i;t}\| + \| \mPsi\|_{\op}\| \nonlin_t(\iter{\vz}{t-1}(\vg, \iter{\mS}{i}))  -  \nonlin_t(\iter{\vz}{t-1}(\vg, \mS))\|.
    \end{align*}
    We estimate each of the norms in the above display. Observe that:
    \begin{align*}
        \|\iter{\valpha}{i;t}\| & \leq 2 \cdot |\iter{z}{t}_i(\vg, \iter{\mS}{i})| \leq 2 \rho_T, \\
        \| \iter{\vbeta}{i;t}\| &\leq 2 \cdot |\nonlin_t(\iter{z}{t-1}_i(\vg, \iter{\mS}{i}))| \cdot \| \mPsi \|_{\op}
    \end{align*}
    \begin{subequations}\label{eq:poly-PL}
    Since $\nonlin_t$ is a polynomial of degree at most $\degree$, we have, for any $x,y \in \R$:
    \begin{align}
        |\nonlin_t(x)| & \leq C \cdot (1+|x|^\degree), \\
        |\nonlin_t(x) - \nonlin_t(y)| & \leq C \cdot |x-y| \cdot (1+ |x|^{\degree-1} + |y|^{\degree-1}),
    \end{align}
    \end{subequations}
    where $C$ is a suitably constant that is determined by $D$ and the particular polynomial $\nonlin_t$. Hence,
    \begin{align*}
         \| \iter{\vbeta}{i;t}\| & \leq 2C \cdot \|\mPsi\|_{\op} \cdot (1 + |\iter{z}{t-1}_i(\vg, \iter{\mS}{i})|^\degree) \\&\leq 2C \cdot \|\mPsi\|_{\op} \cdot (1 + \rho_T^\degree), \\
         \| \nonlin_t(\iter{\vz}{t-1}(\vg, \iter{\mS}{i}))  -  \nonlin_t(\iter{\vz}{t-1}(\vg, \mS))\| &\leq C\cdot (1 + \|\iter{\vz}{t-1}(\vg, \iter{\mS}{i})\|_\infty^{\degree -1} +  \|\iter{\vz}{t-1}(\vg, \mS)\|_{\infty}^{\degree -1}) \cdot  \|\iter{\vDelta}{i;t-1}\| \\
         & \leq 2 \cdot C \cdot (1+\rho_T^{\degree-1}) \cdot \|\iter{\vDelta}{i;t-1}\|.
    \end{align*}
    Hence,
    \begin{align*}
         \|\iter{\vDelta}{i;t}\| & \leq  C \cdot (1+\rho_T^\degree) \cdot ( 1+ \|\iter{\vDelta}{i;t}\|),
    \end{align*}
    where $C$ is a suitably large constant determined by $D$, the particular polynomial $\nonlin_t$ and $\|\mPsi\|_{\op}$.
    Unrolling the above recursion, we obtain,
    \begin{align*}
        \max_{t \leq T}  \|\iter{\vDelta}{i;t}(\vg, \mS, \mS^\prime)\| &\leq C \cdot (1+ \rho_T^{T\degree}),
    \end{align*}
    where $C$ is another suitably large constant that depends on $T, \degree, \nonlin_{1:T},  \|\mPsi\|_{\op}$. This proves the first estimate claimed in the lemma. 
\item In order to prove the second claim of the lemma, we further develop the decomposition in \eref{delta-decomp} using the Taylor Series expansion of $\nonlin_t$:
\begin{subequations}\label{eq:delta-decomp-improved}
\begin{align}
    \iter{\vDelta}{i;t} & = \iter{\valpha}{i;t} + \iter{\vbeta}{i;t} + \mS \mPsi \mS \cdot \mD_t \cdot  \iter{\vDelta}{i;t-1} + \mS \mPsi \mS \cdot \iter{\vepsilon}{i;t}.
\end{align}
In the above display, $\mD_t$ is a diagonal matrix given by:
\begin{align}
    \mD_t \explain{def}{=} \diag(\nonlin_t^\prime(\iter{z}{t-1}_{1}(\vg, \mS)), \nonlin_t^\prime(\iter{z}{t-1}_{2}(\vg, \mS)), \dotsc, \nonlin_t^\prime(\iter{z}{t-1}_{\dim}(\vg, \mS))),
\end{align}
and $\vepsilon_t$ is the Taylor's remainder. Since
\begin{align}
    \iter{\vepsilon}{i;t} \explain{def}{=} \nonlin_t(\iter{\vz}{t-1}(\vg, \iter{\mS}{i}))  -  \nonlin_t(\iter{\vz}{t-1}(\vg, \mS)) - \mD_t \cdot  \iter{\vDelta}{i;t-1}.
\end{align}
Since $\nonlin_t$ was a polynomial of degree at most $\degree$, this remainder can be bounded entry-wise by:
\begin{align}
     |\iter{\epsilon}{i;t}_j| & \leq C \cdot (1+|\iter{z}{t-1}_j(\vg, \iter{\mS}{i})|^{\degree-2} + |\iter{z}{t-1}_j(\vg, \mS)|^{\degree-2})  \cdot |\iter{\Delta}{i;t-1}_j|^2, \\
     & \leq C \cdot (1 + \rho_T^{\degree -2}) \cdot |\iter{\Delta}{i;t-1}_j|^2,
\end{align}
where $C$ is a constant that depends on $\nonlin_t, D$. 
\end{subequations}
With this recursive decomposition of $\iter{\vDelta}{i;t}$, we can prove the second estimate in the lemma. For convenience, let us define:
\begin{align*}
    L_t \explain{def}{=}  \left\| \sum_{i=1}^\dim \iter{\vDelta}{i;t} \cdot {\iter{\vDelta}{i;t}}\tran  \right\|_{\op}. 
\end{align*}
For a collection of vectors $\iter{\vv}{1:\dim}$ we use the notation $[\iter{\vv}{1:\dim}]$ to denote the matrix whose columns are $\iter{\vv}{1:\dim}$. We observe that:
\begin{align}
    L_t & = \|[\iter{\vDelta}{1:\dim; t}]\|_{\op}^2 \nonumber\\
    & \explain{}{\leq} 4 \cdot  \left( \|[\iter{\valpha}{1:\dim; t}]\|_{\op}^2 +  \|[\iter{\vbeta}{1:\dim; t}]\|_{\op}^2 +  \|\mS \mPsi \mS \mD_t [\iter{\vDelta}{1:\dim; t-1}]\|_{\op}^2 + \|\mS \mPsi \mS  [\iter{\vepsilon}{1:\dim; t}]\|_{\op}^2 \right). \label{eq:L-recursion-initial}
\end{align}
In the above display, we used the recursive decomposition in \eref{delta-decomp-improved} and the triangle inequality to obtain the final inequality. Next, we estimate each of terms appearing in the above inequality. Recalling the definitions of $\iter{\valpha}{i;t}, \iter{\vbeta}{i;t}$, we obtain:
\begin{align}
     \|[\iter{\valpha}{1:\dim; t}]\|_{\op}^2 & \leq 4 \max_{i \in [\dim]} |\iter{z}{t}_i(\vg, \iter{\mS}{i})|^2 \leq 4 \rho_T^2, \label{eq:alpha-norm-ub} \\
      \|[\iter{\vbeta}{1:\dim; t}]\|_{\op}^2 & \leq 4 \cdot \|\mPsi\|_{\op}^2 \cdot \max_{i \in \dim} |\nonlin_t(\iter{\vz}{t-1}(\vg, \iter{\mS}{i}))|^2  \nonumber \\ 
      & \explain{\eqref{eq:poly-PL}}{\leq} 4 \cdot C^2 \cdot (1+\rho_T^\degree)^2.  \label{eq:beta-norm-ub}
\end{align}
Notice that:
\begin{align}
    \|\mS \mPsi \mS \mD_t [\iter{\vDelta}{1:\dim; t-1}]\|_{\op}^2 & = \left\| \mS \mPsi \mS \mD_t \cdot \left(\sum_{i=1}^\dim \iter{\vDelta}{i;t-1} \cdot {\iter{\vDelta}{i;t-1}}\tran  \right) \cdot \mD_t \mS  \mPsi\tran \mS \right\|_{\op} \nonumber\\
    & \leq \|\mD_t\|_{\op}^2 \cdot \|\mPsi\|_{\op}^2 \cdot L_{t-1}\nonumber \\
    & \leq \|\mPsi\|_{\op}^2 \cdot L_{t-1} \cdot \max_{i \in [\dim]} |\nonlin_t^\prime(\iter{z}{t-1}_{i}(\vg, \mS))|^2 \nonumber \\
    & \explain{\eqref{eq:poly-PL}}{\leq} C^2 \cdot \|\mPsi\|_{\op}^2 \cdot (1 + \rho_T^{\degree-1})^2 \cdot L_{t-1}.\label{eq:delta-norm-ub}
\end{align}
Lastly, we control:
\begin{align*}
    \|\mS \mPsi \mS  [\iter{\vepsilon}{1:\dim; t}]\|_{\op}^2 & = \left\| \mS \mPsi \mS \cdot \left( \sum_{i=1}^\dim \iter{\vepsilon}{i;t} {\iter{\vepsilon}{i;t}}\tran\right) \mS \mPsi\tran \mS \right\|_{\op} \leq \|\mPsi\|_{\op}^2 \cdot \left\| \sum_{i=1}^\dim \iter{\vepsilon}{i;t} {\iter{\vepsilon}{i;t}}\tran \right\|_{\op}.
\end{align*}
By Gershgorin's Circle theorem, the spectral norm of a symmetric matrix can be bounded by the largest row $\ell_1$ norm. Hence,
\begin{align*}
    \|\mS \mPsi \mS  [\iter{\vepsilon}{1:\dim; t}]\|_{\op}^2 &  \leq \|\mPsi\|_{\op}^2 \cdot  \max_{j\in [\dim]} \left( \sum_{k=1}^{\dim}\left| \sum_{i=1}^\dim \iter{\epsilon}{i;t}_j \iter{\epsilon}{i;t}_k \right| \right) \leq \|\mPsi\|_{\op}^2 \cdot  \max_{j\in [\dim]} \left( \sum_{k=1}^{\dim} \sum_{i=1}^\dim |\iter{\epsilon}{i;t}_j| \cdot  |\iter{\epsilon}{i;t}_k|  \right).
\end{align*}
Recalling the estimates on $|\iter{\epsilon}{i;t}_j|$ from \eref{delta-decomp-improved}, we obtain,
\begin{align}
    \|\mS \mPsi \mS  [\iter{\vepsilon}{1:\dim; t}]\|_{\op}^2 & \leq C^2 \cdot (1 + \rho_T^{\degree -2})^2 \cdot \|\mPsi\|_{\op}^2 \cdot \max_{j\in [\dim]} \left( \sum_{k=1}^{\dim} \sum_{i=1}^\dim |\iter{\Delta}{i;t-1}_j|^2 \cdot  |\iter{\Delta}{i;t-1}_k|^2  \right) \nonumber\\
    & = C^2 \cdot (1 + \rho_T^{\degree -2})^2 \cdot \|\mPsi\|_{\op}^2 \cdot \max_{j\in [\dim]} \left(  \sum_{i=1}^\dim |\iter{\Delta}{i;t-1}_j|^2 \cdot  \|\iter{\vDelta}{i;t-1}\|^2  \right) \nonumber\\
    & \explain{(a)}{\leq}  C^4 \cdot (1 + \rho_T^{\degree -2})^2 \cdot \|\mPsi\|_{\op}^2 \cdot (1+\rho_T^{T\degree})^2 \cdot \max_{j\in [\dim]} \left(  \sum_{i=1}^\dim |\iter{\Delta}{i;t-1}_j|^2   \right) \nonumber\\
    & \leq C^4 \cdot (1 + \rho_T^{\degree -2})^2 \cdot \|\mPsi\|_{\op}^2 \cdot (1+\rho_T^{T\degree})^2  \cdot  \left\|  \sum_{i=1}^\dim \iter{\vDelta}{i;t-1}{\iter{\vDelta}{i;t-1}}\tran   \right\|_{\op} \nonumber\\
    & =C^4 \cdot (1 + \rho_T^{\degree -2})^2 \cdot \|\mPsi\|_{\op}^2 \cdot (1+\rho_T^{T\degree})^2  \cdot  L_{t-1}. \label{eq:epsilon-norm-ub}
\end{align}
In the above display, the inequality marked (a) follows from the first estimate claimed in this lemma. Plugging in the estimates in \eref{alpha-norm-ub}, \eref{beta-norm-ub}, \eref{delta-norm-ub} and \eref{epsilon-norm-ub} into \eref{L-recursion-initial} gives the following recursive estimate for $L_t$:
\begin{align*}
    L_t & \leq C \cdot (1+ \rho_T^{2\degree(T+1)}) \cdot (1+L_{t-1})\; \forall \; t \leq T,
\end{align*}
where $C$ is a suitably large constant that depends on $T$, $D$, $\nonlin_{1:T}$  and  $\|\mPsi\|_{\op}$. Unrolling this recursive upper bound gives:
\begin{align*}
    \max_{t \leq T} L_t & \leq C(1 + \rho_T^{2\degree T(T+1)}),
\end{align*}
for another suitably large constant $C$ that depends on $T$, $D$, $\nonlin_{1:T}$  and  $\|\mPsi\|_{\op}$.
\end{enumerate}
\end{proof}

\subsection{Proof of \lemref{gradient-estimate}}
\begin{proof}[Proof of \lemref{gradient-estimate}] For convenience, we define,
\begin{align*}
    H_T(\vg, \mS) \explain{def}{=}  \frac{1}{\dim} \sum_{j=1}^\dim h(\iter{\mZ}{T}_j(\vg, \mS)).
\end{align*}
We will estimate $\nabla_{\vg} H_T(\vg, \mS)$ by estimating $|H_T(\vg, \mS) -H_T(\vg^\prime, \mS)|$ in terms of $\|\vg - \vg^\prime\|$ for any two vectors $\vg, \vg^\prime \in \R^\dim$:
\begin{align*}
   |H_T(\vg, \mS)  - H_T(\vg^\prime, \mS)| & = \left| \frac{1}{\dim} \sum_{j=1}^\dim (h(\iter{\mZ}{T}_j(\vg, \mS) - h(\iter{\mZ}{T}_j(\vg^\prime, \mS))\right|. 
\end{align*}
Since $h: \R^{T+1} \rightarrow \R$ is a polynomial of degree at most $D$, we have,
\begin{align*}
    |h(\vx) - h(\vx^\prime)| & \leq C \cdot \|\vx - \vx\| \cdot (1 + \|\vx\|^{\degree - 1} + \|\vx\|^{\degree -1}),
\end{align*}
where $C$ is a suitably large constant that depends on $\degree, T$ and the coefficients of the polynomial $h$.  Hence,
\begin{align*}
    |h(\iter{\mZ}{T}_j(\vg, \mS) - h(\iter{\mZ}{T}_j(\vg^\prime, \mS)| & \leq C \cdot \|\iter{\mZ}{T}_j(\vg, \mS) - \iter{\mZ}{T}_j(\vg^\prime, \mS) \| \cdot (1 + \|\iter{\mZ}{T}_j(\vg, \mS)\|^{\degree -1} + \|\iter{\mZ}{T}_j(\vg^\prime, \mS)\|^{\degree -1}).
\end{align*}
Recalling that,
\begin{align*}
    \rho_T(\vg, \mS, \mS) & \explain{def}{=}  \max_{t \leq T} \|\iter{\vz}{t}(\vg, \mS)\|_\infty,
\end{align*}
the above bound can be written as:
\begin{align*}
     |h(\iter{\mZ}{T}_j(\vg, \mS) - h(\iter{\mZ}{T}_j(\vg^\prime, \mS)| & \leq C \cdot ( 1 + \rho_T(\vg, \mS, \mS)^{\degree -1} + \rho_T(\vg^\prime, \mS, \mS)^{\degree -1}) \cdot \sum_{t=0}^T |\iter{z}{t}_i(\vg, \mS) - \iter{z}{t}_i(\vg^\prime, \mS)|. 
\end{align*}
Hence we obtain,
\begin{align*}
    |H_T(\vg, \mS)  - H_T(\vg^\prime, \mS)| & \leq \frac{ C \cdot ( 1 + \rho_T(\vg, \mS, \mS)^{\degree -1} + \rho_T(\vg^\prime, \mS, \mS)^{\degree -1})}{\dim} \cdot \sum_{t=0}^T \|\iter{\vz}{t}(\vg, \mS) - \iter{\vz}{t}(\vg^\prime, \mS)\|_1 \\
    & \leq \frac{ C \cdot ( 1 + \rho_T(\vg, \mS, \mS)^{\degree -1} + \rho_T(\vg^\prime, \mS, \mS)^{\degree -1})}{\sqrt{\dim}} \cdot \sum_{t=0}^T \|\iter{\vz}{t}(\vg, \mS) - \iter{\vz}{t}(\vg^\prime, \mS)\|.
\end{align*}
Next, we develop a recursive upper bound on $\|\iter{\vz}{t}(\vg, \mS) - \iter{\vz}{t}(\vg^\prime, \mS)\|$. We have,
\begin{align*}
    \|\iter{\vz}{t}(\vg, \mS) - \iter{\vz}{t}(\vg^\prime, \mS)\| & = \|\mS \mPsi \mS \cdot (\nonlin_t(\iter{\vz}{t-1}(\vg, \mS)) - \nonlin_t(\iter{\vz}{t-1}(\vg^\prime, \mS)))\| \\
    & \leq \|\mPsi\|_{\op} \cdot \| \nonlin_t(\iter{\vz}{t-1}(\vg, \mS)) - \nonlin_t(\iter{\vz}{t-1}(\vg^\prime, \mS))\| \\
    & \explain{(a)}{\leq}  C \cdot  \|\mPsi\|_{\op} \cdot (1+\rho_T(\vg, \mS, \mS)^{\degree -1} + \rho_T(\vg^\prime,\mS, \mS)^{\degree -1}) \cdot  \|\iter{\vz}{t-1}(\vg, \mS) - \iter{\vz}{t-1}(\vg^\prime, \mS)\|,
\end{align*}
where step (a) follows because $\nonlin_t$ is a polynomial of degree atmost $\degree$.  Unrolling the above recursion, we obtain,
\begin{align*}
        \|\iter{\vz}{t}(\vg, \mS) - \iter{\vz}{t}(\vg^\prime, \mS)\| &  \leq  C \cdot (1+\rho_T(\vg, \mS, \mS)^{T(\degree -1)} + \rho_T(\vg^\prime,\mS, \mS)^{T(\degree -1)}) \cdot \|\vg - \vg^\prime\|   \; \forall \; t \leq T,
\end{align*}
where $C$ is a suitably large constant that depends on $T,\degree$, $\|\mPsi\|_{\op}$ and the coefficients of the polynomials $\nonlin_{1:T}, h$. Hence we have obtained,
\begin{align*}
      |H_T(\vg, \mS)  - H_T(\vg^\prime, \mS)| & \leq \frac{C \cdot (1+\rho_T(\vg, \mS, \mS)^{(T+1)(\degree -1)} + \rho_T(\vg^\prime,\mS, \mS)^{(T+1)(\degree -1)})}{\sqrt{\dim}} \cdot  \|\vg - \vg^\prime\|.
\end{align*}
Taking $\vg^\prime \rightarrow \vg$ in the above display yields:
\begin{align*}
    \|\nabla_{\vg} \; H_T(\vg, \mS)\| & \leq \frac{C \cdot (1+\rho_T(\vg, \mS, \mS)^{(T+1)(\degree -1)})}{\sqrt{\dim}},
\end{align*}
as claimed.
\end{proof}

\subsection{Proof of \lemref{rho-moment-estimates}}
\begin{proof}[Proof of \lemref{rho-moment-estimates}]
Define $q \explain{def}{=} \lceil 2/\epsilon \rceil$. By Jensen's Inequality,
\begin{align*}
    \left( \E \left[ \rho_T^{2\ell}(\vg, \mS, \mS^\prime)\right] \right)^{q} & \leq \E \left[ \rho_T^{2q\ell}(\vg, \mS, \mS^\prime)\right].
\end{align*}
Recall that,
\begin{align*}
    \rho_T^{2q\ell}(\vg, \mS, \mS^\prime) &\explain{def}{=} \max_{t  \leq T} \max_{i \in [\dim]} \max_{j \in [\dim]} |\iter{z}{t}_j(\vg, \iter{\mS}{i})|^{2q\ell} \\
    & \leq \sum_{t=0}^T \sum_{i=1}^\dim \sum_{j=1}^\dim \iter{z}{t}_j(\vg, \iter{\mS}{i})^{2q\ell}.
\end{align*}
Since $\iter{\mS}{i} \explain{d}{=} \mS$, we have $\iter{\vz}{t}(\vg, \iter{\mS}{i}) \explain{d}{=} \iter{\vz}{t}(\vg, \mS)$. Hence,
\begin{align*}
    \E[\rho_T^{2q\ell}(\vg, \mS, \mS^\prime)] & \leq \dim^2 \cdot \left( \sum_{t=0}^T \E \left[ \frac{1}{\dim} \sum_{j=1}^\dim \iter{z}{t}_j(\vg, \mS)^{2q\ell} \right] \right).
\end{align*}
By \thref{SE-normalized}, for any $t \in \{0, 1, \dotsc, T\}$,
\begin{align*}
   \lim_{\dim \rightarrow \infty} \E \left[ \frac{1}{\dim} \sum_{j=1}^\dim \iter{z}{t}_j(\vg, \mS)^{2q\ell} \right] & = \E[Z^{2q\ell}] < \infty, \; Z \sim \gauss{0}{1}. 
\end{align*}
Hence,
\begin{align*}
    \E[\rho_T^{2q\ell}(\vg, \mS, \mS^\prime)] & \lesssim \dim^2 \implies   \E \left[ \rho_T^{2\ell}(\vg, \mS, \mS^\prime)\right] \lesssim \dim^{\frac{2}{q}} \lesssim \dim^\epsilon,
\end{align*}
as claimed. 
\end{proof}

\section{Approximation Analysis}
\label{appendix:approximation}

We prove \propref{approx} in this section. The following lemma provides the construction of the matrix $\hat{\mM}$ used to prove the above proposition. 

\begin{lemma}\label{lem:matrix-approx} Let ${\mM}{} = {\mS}{} {\mPsi}{} {\mS} \in \R^{\dim \times \dim}$ be semi-random (\defref{matrix-ensemble-relaxed}) with constant $\sigpsi =1$. Then, there is a deterministic matrix ${\hat{\mPsi}}$ such that ${\hat{\mM}} \explain{def}{=} {\mS} {\hat{\mPsi}} {\mS}$ is semi-random with constant $\sigpsi=1$ and satisfies \assumpref{normalization}, and $\|{\mM} - {\hat{\mM}}\|_{\op}  \ll 1$.
\end{lemma}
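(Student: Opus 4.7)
\textbf{Proof proposal for \lemref{matrix-approx}.}

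The plan is to normalize the row norms of $\mPsi$ by a diagonal rescaling. Set $d_i \explain{def}{=} (\mPsi \mPsi^\top)_{ii}$ and $\mD \explain{def}{=} \diag(d_1, d_2, \dotsc, d_\dim)$. By hypothesis $\mM$ is semi-random with $\sigpsi = 1$, so condition (2d) of \defref{matrix-ensemble-relaxed} gives $\max_i |d_i - 1| \ll 1$. In particular, for all sufficiently large $\dim$, each $d_i$ lies in, say, $[1/2, 2]$, so $\mD^{-1/2}$ is well-defined with $\|\mD^{-1/2}\|_{\op} = 1 + o(1)$ and $\|\mI_\dim - \mD^{-1/2}\|_{\op} \ll 1$ (for the handful of small $\dim$ where this fails, we can just take $\hat{\mPsi} = \mPsi$ without affecting the asymptotic statement). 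I then define
\[
    \hat{\mPsi} \explain{def}{=} \mD^{-1/2} \mPsi, \qquad \hat{\mM} \explain{def}{=} \mS \hat{\mPsi} \mS.
\]
Since $\mPsi$ and $\mD$ are deterministic, so is $\hat{\mPsi}$.

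The next step is to verify that $\hat{\mM}$ is semi-random with $\sigpsi = 1$ and satisfies \assumpref{normalization}. For the normalization property, a direct computation gives
\[
    (\hat{\mPsi} \hat{\mPsi}^\top)_{ii} = (\mD^{-1/2} \mPsi \mPsi^\top \mD^{-1/2})_{ii} = d_i^{-1} (\mPsi \mPsi^\top)_{ii} = 1,
\]
which is exactly what \assumpref{normalization} demands. For the remaining bounds in \defref{matrix-ensemble-relaxed}, each follows from the corresponding bound on $\mPsi$ up to the harmless factor $d_i^{-1/2} = 1 + o(1)$: we have $\|\hat{\mPsi}\|_\infty \leq \max_i d_i^{-1/2} \cdot \|\mPsi\|_\infty \lesssim \dim^{-1/2+\epsilon}$ (verifying (2a)), $\|\hat{\mPsi}\|_{\op} \leq \|\mD^{-1/2}\|_{\op} \|\mPsi\|_{\op} \lesssim 1$ (verifying (2b)), and for $i \neq j$,
\[
    \bigl| (\hat{\mPsi} \hat{\mPsi}^\top)_{ij}\bigr| = (d_i d_j)^{-1/2} \bigl|(\mPsi \mPsi^\top)_{ij}\bigr| \lesssim \dim^{-1/2+\epsilon},
\]
which gives (2c), while (2d) with $\sigpsi = 1$ now holds trivially and exactly.

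Finally, the operator-norm approximation follows by factoring out $\mS$ and $\mPsi$:
\[
    \|\mM - \hat{\mM}\|_{\op} = \bigl\| \mS (\mI_\dim - \mD^{-1/2}) \mPsi \mS \bigr\|_{\op} \leq \| \mI_\dim - \mD^{-1/2}\|_{\op} \cdot \|\mPsi\|_{\op} \leq \max_i \bigl|1 - d_i^{-1/2}\bigr| \cdot \|\mPsi\|_{\op}.
\]
The first factor is $o(1)$ since $\max_i |d_i - 1| \ll 1$ (here I use the elementary bound $|1 - d^{-1/2}| \leq 2|d - 1|$ for $d \in [1/2, 2]$), and the second factor is $O(1)$ by (2b) for $\mPsi$. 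Thus $\|\mM - \hat{\mM}\|_{\op} \ll 1$, as required.

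There is no real obstacle here: the construction is a straightforward diagonal rescaling, and each verification is a one-line consequence of the corresponding property of $\mPsi$ together with the uniform bound $\max_i |d_i - 1| \ll 1$. The only mild subtlety is that one must invoke (2d) at the outset to guarantee that $\mD^{-1/2}$ is bounded, so that all subsequent multiplicative factors $d_i^{-1/2}$ can be treated as $1 + o(1)$ and absorbed into the $\lesssim$ notation.
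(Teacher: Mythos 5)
Your proposal is correct and follows essentially the same approach as the paper's proof: define $\hat{\mPsi} = \mD^{-1/2}\mPsi$ with $\mD = \diag\bigl((\mPsi\mPsi^\top)_{11},\dotsc,(\mPsi\mPsi^\top)_{\dim\dim}\bigr)$, then verify the four conditions of the semi-random definition and the normalization by propagating each bound on $\mPsi$ through the $1+o(1)$ factors $d_i^{-1/2}$, and finally bound $\|\mM-\hat{\mM}\|_{\op}$ by $\|\mI_\dim-\mD^{-1/2}\|_{\op}\|\mPsi\|_{\op}$. (Your write-up is in fact slightly cleaner in one spot: for (2b) you correctly use $\|\mD^{-1/2}\|_{\op}\|\mPsi\|_{\op}$, whereas the paper has a harmless squared exponent there.)
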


The following two lemmas provide constructions of polynomial functions that approximate the non-linearities $\nonlin_t$ and the test function $h$. 

\begin{lemma}\label{lem:nonlin-approx} Let $\nonlin: \R \rightarrow \R$ be a function such that:
\begin{align*}
    \E[\nonlin^2(Z)] = 1, \; \E[Z \nonlin(Z)] = 0, \; Z \sim \gauss{0}{1}. 
\end{align*}
Then, there is a sequence of approximating functions $\iter{\nonlin}{k}: \R \rightarrow \R$ indexed by $k \in \N$ such that $\iter{\nonlin}{k}$ is a polynomial of degree at most $k$ for each $k \in \N$ and the sequence $\iter{\nonlin}{k}$ satisfies:
\begin{align*}
    \E[{\iter{\nonlin}{k}}(Z)^2] = 1, \; \E[Z \iter{\nonlin}{k}(Z)] = 0 \; \forall \; k \; \in \N, \; \lim_{k \rightarrow \infty} \E[(\iter{\nonlin}{k}(Z) - \nonlin(Z))^2] = 0, \; Z \sim \gauss{0}{1}.
\end{align*}
\end{lemma}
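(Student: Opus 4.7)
The plan is to use the expansion of $f$ in the orthonormal basis of Hermite polynomials, followed by truncation and a renormalization to preserve the unit-$L^2$ norm.

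Since $\E[f^2(Z)] = 1 < \infty$, the function $f$ lies in $L^2(\gamma)$ where $\gamma$ is the standard Gaussian measure on $\R$. Because $\{H_i\}_{i\geq 0}$ form a complete orthonormal basis of $L^2(\gamma)$ (see \citet[Chapter 11]{o2014analysis}), we have the Hermite expansion
\begin{align*}
   f = \sum_{i=0}^\infty \vartheta_i H_i, \quad \vartheta_i \explain{def}{=} \E[f(Z) H_i(Z)], \quad Z \sim \gauss{0}{1},
\end{align*}
which converges in $L^2(\gamma)$. Parseval's identity gives $\sum_{i=0}^\infty \vartheta_i^2 = \E[f^2(Z)] = 1$, and since $H_1(z) = z$, the divergence-free hypothesis reads $\vartheta_1 = \E[Z f(Z)] = 0$.

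The natural construction is to truncate and rescale. Define $\sigma_k^2 \explain{def}{=} \sum_{i=0}^{k} \vartheta_i^2$, which converges to $1$ as $k \to \infty$ by Parseval. In particular $\sigma_k > 0$ for all sufficiently large $k$; for smaller $k$, just define $\iter{\nonlin}{k} \equiv 1$ (which is a degree-$\leq k$ polynomial trivially satisfying both normalization conditions). Otherwise set
\begin{align*}
    \iter{\nonlin}{k}(z) \explain{def}{=} \frac{1}{\sigma_k} \sum_{i=0}^k \vartheta_i H_i(z).
\end{align*}
This is a polynomial of degree at most $k$ by construction. By orthonormality of the $H_i$ under $\gamma$, $\E[\iter{\nonlin}{k}(Z)^2] = \sigma_k^{-2} \sum_{i=0}^k \vartheta_i^2 = 1$, while $\E[Z \iter{\nonlin}{k}(Z)] = \sigma_k^{-1} \vartheta_1 = 0$ since $Z = H_1(Z)$ is orthogonal to all $H_i$ with $i \neq 1$ and $\vartheta_1 = 0$.

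It remains to verify the $L^2$ convergence. Writing
\begin{align*}
    \iter{\nonlin}{k} - \nonlin = \left(\frac{1}{\sigma_k}-1\right) \sum_{i=0}^k \vartheta_i H_i - \sum_{i>k} \vartheta_i H_i
\end{align*}
and using orthonormality once more,
\begin{align*}
    \E[(\iter{\nonlin}{k}(Z) - \nonlin(Z))^2] = (1-\sigma_k)^2 + (1-\sigma_k^2),
\end{align*}
and both terms vanish as $k \to \infty$ since $\sigma_k \to 1$. There is no serious obstacle here: the only thing to keep an eye on is that the truncation alone would not preserve $\E[f^2]=1$ exactly, which is why the rescaling by $\sigma_k$ is essential; and one must handle the degenerate indices $k$ where $\sigma_k$ might vanish separately, as above.
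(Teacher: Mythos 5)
Your proof is correct and takes essentially the same route as the paper: expand $f$ in the orthonormal Hermite basis, truncate at degree $k$, and renormalize by the $L^2(\gamma)$ norm of the truncation so that the unit-norm and divergence-free conditions are preserved exactly. Your final expression $(1-\sigma_k)^2 + (1-\sigma_k^2) = 2(1-\sigma_k)$ agrees with the paper's $2(1-\sqrt{\sum_{\ell\leq k}\alpha^2(\ell)})$. The one small thing you do beyond the paper is to explicitly handle the degenerate indices where $\sigma_k = 0$ (which can genuinely occur when $\vartheta_0 = 0$, since $\vartheta_1 = 0$ always); the paper's writeup glosses over this, and your fix of setting $\iter{\nonlin}{k}\equiv 1$ there is a clean way to close the gap.
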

\begin{lemma}\label{lem:test-func-approx} Let $T \in \N$ and let $\mZ = (Z_0, Z_1, \dotsc, Z_T)$ be a random vector with standard Gaussian marginals:
\begin{align*}
    Z_i \sim \gauss{0}{1} \; \forall \; i \; \in \; \{0, 1, \dotsc, T\}.
\end{align*}
Let $h: \R^{T+1} \rightarrow \R$ be a  function such that $\E[h^2(Z_0, Z_1, \dotsc, Z_T)]<\infty$. Then, there is a sequence of approximating functions $\iter{h}{k} : \R^{T+1} \rightarrow \R$ indexed by $k \in \N$ such that $\iter{h}{k}$ is a polynomial with degree at most $k$ for each $k \in \N$ and the sequence $\iter{h}{k}$ satisfies:
\begin{align*}
    \lim_{k \rightarrow \infty} \E[(h(Z_0, Z_1, \dotsc, Z_T) - \iter{h}{k}(Z_0, Z_1, \dotsc, Z_T))^2] & = 0.
\end{align*}
\end{lemma}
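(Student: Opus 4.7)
The plan is to reduce the claim to establishing that polynomials are dense in $L^2(\mu)$, where $\mu$ denotes the joint law of $\mZ = (Z_0, \ldots, Z_T)$ on $\R^{T+1}$. Granted this density, I would define $\iter{h}{k}$ to be the $L^2(\mu)$-orthogonal projection of $h$ onto the finite-dimensional subspace of polynomials of total degree at most $k$; since this projection realizes the best $L^2(\mu)$-approximation within that subspace, and density implies such approximations become arbitrarily accurate as $k \to \infty$, the desired convergence $\E[(h(\mZ) - \iter{h}{k}(\mZ))^2] \to 0$ follows immediately.

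The key step is therefore to prove density of polynomials in $L^2(\mu)$. First, I would record a moment bound on $\mu$: since each marginal satisfies $Z_i \sim \gauss{0}{1}$, we have $\E[e^{c|Z_i|}] < \infty$ for every $c \geq 0$, and applying H\"older's inequality (with exponents $p_i = T+1$) to the factorization $e^{c \sum_i |Z_i|} = \prod_{i=0}^T e^{c|Z_i|}$ together with $\|\mZ\| \leq \sum_i |Z_i|$ yields $\E_\mu[e^{c\|\mZ\|}] < \infty$ for every $c \geq 0$. This sub-exponential moment control is the only feature of the (possibly non-product) joint law $\mu$ that the argument will use.

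Given this moment bound, I would follow the classical Fourier-analytic route for determinate moment problems. Suppose for contradiction that a nonzero $f \in L^2(\mu)$ is $L^2(\mu)$-orthogonal to every polynomial. Define $\varphi: \C^{T+1} \to \C$ by $\varphi(\vt) = \E_\mu[f(\mZ) \, e^{\vt \cdot \mZ}]$. Cauchy-Schwarz and the above moment bound give $\E_\mu[|f(\mZ)| \, e^{\|\vt\| \|\mZ\|}] \leq \|f\|_{L^2(\mu)} \sqrt{\E_\mu[e^{2\|\vt\| \|\mZ\|}]} < \infty$ for every $\vt \in \C^{T+1}$, which simultaneously justifies absolute convergence of the defining integral and termwise integration of the exponential power series, yielding
\begin{align*}
\varphi(\vt) = \sum_{\vk \in \W^{T+1}} \frac{\vt^\vk}{\vk!} \cdot \E_\mu\bigl[f(\mZ) \, Z_0^{k_0} \cdots Z_T^{k_T}\bigr].
\end{align*}
By the orthogonality hypothesis, every coefficient of this series vanishes, so $\varphi \equiv 0$ on $\C^{T+1}$. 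Restricting to the purely imaginary axis $\vt = i\vs$ shows that the Fourier transform of the finite signed measure $f \, d\mu$ is identically zero, and Fourier uniqueness then forces $f = 0$ in $L^2(\mu)$, contradicting the nontriviality of $f$.

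The principal technical point is the justification of the termwise integration (equivalently, the analytic extension of $\varphi$ and the interchange of differentiation and integration), which is routine once the moment estimate $\E_\mu[e^{c\|\mZ\|}] < \infty$ is in hand; no further obstacles are anticipated, and the construction of the sequence $\iter{h}{k}$ together with the verification of $L^2(\mu)$-convergence is then immediate from Hilbert space geometry.
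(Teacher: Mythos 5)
Your proof is correct and follows the same essential strategy as the paper: reduce the claim to density of polynomials in $L^2(\mu)$ where $\mu$ is the joint law of $\mZ$, and deduce that density from the existence of all exponential moments $\E_\mu[e^{c\|\mZ\|}] < \infty$ (obtained, as in the paper, by dominating the joint exponential by a product of one-dimensional Gaussian exponentials). The only difference is that the paper outsources the density statement to a citation of \citet[Corollary 14.24]{schmudgen2017moment}, whereas you supply a self-contained proof of it via the standard entire-function/Fourier-uniqueness argument; that argument is sound and is one textbook way to establish the cited result.
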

Finally, the last ingredient in the proof of \propref{approx} will be the following corollary of \thref{SE-normalized}.

\begin{corollary}\label{cor:SE-normalized}  Fix non-negative integers $T \in \W$, $D \in \W$ and functions $\nonlin_1, \nonlin_2, \dotsc, \nonlin_T : \R \rightarrow \R$. Consider the iteration:
\begin{align*}
    \iter{\vz}{t+1} = \mM \nonlin_{t+1}(\iter{\vz}{t}),
\end{align*}
initialized at $\iter{\vz}{0}$. Suppose that:
\begin{enumerate}
    \item $\iter{\vz}{0}$ satisfies \assumpref{initialization} with $\sigma_0^2 = 1$,
    \item The matrix ensemble $\mM = \mS \mPsi\mS$ is semi-random (\defref{matrix-ensemble-relaxed}) with $\sigpsi = 1$ and satisfies \assumpref{normalization},
    \item  The non-linearities $\nonlin_t$ are polynomials of degree at most $\degree$ and satisfy \assumpref{divergence-free} and $\E[\nonlin_t^2(Z)] = 1$ for each $t \in [T]$ where $Z \sim \gauss{0}{1}$.
\end{enumerate}
Then, there is a random vector $(Z_0, Z_1, \dotsc, Z_T)$ with standard Gaussian marginals $Z_i \sim \gauss{0}{1} \; \forall \; i \; \in \; \{0, 1, 2, \dotsc, T\}$, whose law is determined by $T, f_{1:T}$, such that for any continuous test function $h: \R^{T+1} \rightarrow \R$ which satisfies $$|h(\vx)| \leq L(1+\|\vx\|^D) \; \forall \; \vx \; \in \; \R^{T+1}$$ for some fixed constant $L$, we have,
\begin{align*}
   \lim_{\dim \rightarrow \infty} \E\left[ \frac{1}{\dim} \sum_{i=1}^\dim h(\iter{z}{0}_i, \iter{z}{1}_i, \dotsc, \iter{z}{T}_i)  \right] & = \E h(Z_0, Z_1, \dotsc, Z_T).
\end{align*}
\end{corollary}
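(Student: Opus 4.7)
My plan is to derive the corollary from \thref{SE-normalized} in three stages: (i) extend the conclusion to arbitrary polynomial test functions via the tensorized Hermite expansion, (ii) identify the resulting universal limit functional as expectation against a Gaussian vector by specializing to a rotationally invariant reference ensemble, and (iii) pass from polynomial to continuous polynomial-growth test functions through a Weierstrass/Markov-truncation argument. For stage (i), every polynomial $p:\R^{T+1}\to\R$ admits a finite expansion $p(\vx)=\sum_{\vk}a_{\vk}\hermite{k_0}(x_0)\hermite{k_1}(x_1)\cdots\hermite{k_T}(x_T)$ in the tensorized Hermite basis. Applying \thref{SE-normalized} term-by-term and invoking linearity gives $\lim_{\dim}\E[\tfrac{1}{\dim}\sum_i p(\iter{z}{0}_i,\dotsc,\iter{z}{T}_i)]=\Phi(p)\explain{def}{=}\sum_{\vk}a_{\vk}\,c(T,\vk,\nonlin_{1:T})$, and the $\mPsi$-independence of $c(T,\vk,\nonlin_{1:T})$ makes $\Phi$ a well-defined linear functional on polynomials depending only on $T$ and $\nonlin_{1:T}$.

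For stage (ii), I instantiate $\Phi$ via the rotationally invariant matrix $\mQ=\mS\mU\mP\mLambda\mP\tran\mU\tran\mS$ of \eqref{eq:Q-def}: by \lemref{sign-perm-inv-ensemble} it is semi-random with $\sigpsi=1$, and \lemref{matrix-approx} furnishes a small perturbation $\hat{\mQ}$ satisfying \assumpref{normalization} exactly with $\|\mQ-\hat{\mQ}\|_{\op}\ll 1$. Applying stage (i) to $\hat{\mQ}$ and using the operator-norm closeness to show that the $\mQ$- and $\hat{\mQ}$-driven iterates differ in $\ell_2$-norm by $o(\sqrt{\dim})$, it suffices to identify the limit for the original rotationally invariant $\mQ\explain{d}{=}\mU\mLambda\mU\tran$. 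For this case the results of \citet{fan2020approximate,fan2021replica}, combined with \thref{concentration} to promote in-probability convergence to convergence in expectation, give the limit $\E[p(Z_0,\dotsc,Z_T)]$ with $(Z_0,\dotsc,Z_T)\sim\gauss{\vzero}{\mSigma_T}$, so $\Phi(p)=\E[p(Z_0,\dotsc,Z_T)]$. Under \assumpref{normalization} the state evolution enforces $\sigma_t^2=1$ for every $t$, so the marginals are $\gauss{0}{1}$.

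For stage (iii), applying the polynomial case to $\vx\mapsto 1+\|\vx\|^{2\degree+2}$ produces a uniform moment bound $\sup_\dim\E[\tfrac{1}{\dim}\sum_i(1+\|\iter{\mZ}{T}_i\|^{2\degree+2})]\le C_0<\infty$. Given $\epsilon>0$, I choose $K$ so that both $C_0/K^{\degree+1}$ and $\P(\|Z\|>K)$ are at most $\epsilon$, and by Weierstrass pick a polynomial $p$ satisfying $\sup_{\|\vx\|\le K}|h(\vx)-p(\vx)|<\epsilon$. The triangle decomposition
\begin{align*}
\Bigl|\E\bigl[\tfrac{1}{\dim}\textstyle\sum_i h(\iter{\mZ}{T}_i)\bigr]-\E[h(Z)]\Bigr|\le \E\bigl[\tfrac{1}{\dim}\textstyle\sum_i|h-p|(\iter{\mZ}{T}_i)\bigr]+\bigl|\E\bigl[\tfrac{1}{\dim}\textstyle\sum_i p(\iter{\mZ}{T}_i)\bigr]-\E[p(Z)]\bigr|+\E|h-p|(Z)
\end{align*}
has middle term vanishing by stage (ii); I bound the first and third terms by splitting each into a bulk contribution on $\{\|\vx\|\le K\}$ (at most $\epsilon$ via Weierstrass) and a tail, the tail being controlled by Cauchy--Schwarz together with the polynomial upper bound $(h-p)^2(\vx)\le 2L^2(1+\|\vx\|^{\degree})^2+2p^2(\vx)$, Markov's inequality $\charfn_{\{\|\vx\|>K\}}\le\|\vx\|^{2\degree+2}/K^{2\degree+2}$, and the moment bound $C_0$.

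The main obstacle is stage (iii): because $h-p$ is not a polynomial, \thref{SE-normalized} does not apply to $(h-p)^2$ directly. The workaround above exploits the polynomial growth of $h$ to dominate $(h-p)^2$ by an explicit polynomial whose empirical average is controlled by stages (i)--(ii), and uses the Markov tail trick to trade bulk accuracy against a polynomial tail that shrinks as $K\to\infty$. Sending $\epsilon\to 0$ then closes the argument.
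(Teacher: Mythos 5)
Your proposal takes a genuinely different route from the paper's, and one part of it has a real gap. The paper proves the corollary in a self-contained way purely from \thref{SE-normalized} via the classical moment method: it observes that the mixed moments of the expected empirical measure converge, shows by tightness that the limiting moment sequence is realised by some random vector, uses the \emph{second} claim of \thref{SE-normalized} to conclude the marginals are $\gauss{0}{1}$ (hence moment-determinate), invokes Petersen's theorem so that the joint law is moment-determinate, deduces weak convergence, and finally passes to general polynomial-growth $h$ by the continuous mapping theorem plus uniform integrability. Nowhere does it identify the limit law as jointly Gaussian, invoke \citet{fan2020approximate}, or compare iterations driven by different matrices. Your stages (i) and (iii) are compatible with the paper's proof (stage (iii) is a Weierstrass-plus-Markov-tail version of the same uniform-integrability passage; that part is fine), but your stage (ii) replaces the abstract moment-method construction with an explicit identification via a rotationally invariant reference ensemble. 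This makes the corollary's proof depend on Fan's theorem and on a matrix-perturbation argument that the paper's proof deliberately avoids.

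The gap is in your claim that the $\mQ$- and $\hat{\mQ}$-driven iterates differ in $\ell_2$-norm by $o(\sqrt{\dim})$ ``using the operator-norm closeness.'' Here the non-linearities $\nonlin_t$ are polynomials of degree at most $\degree$, so they are not globally Lipschitz, and a bound $\|\mQ-\hat{\mQ}\|_{\op}\ll 1$ does not propagate through the iteration by a naive Gr\"onwall argument. One instead needs to control $\|\iter{\vz}{t}\|_\infty$ (or some high moment thereof) at each step, which requires the very moment estimates one is trying to establish, and one also needs a quantitative rate on $\|\mQ-\hat{\mQ}\|_{\op}$ (the abstract \lemref{matrix-approx} gives only $o(1)$). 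Note that the paper's \propref{approx} is structured precisely to dodge this: it only ever compares two matrices when the non-linearities are the original Lipschitz functions, and only ever compares two polynomial non-linearities when the matrix is the same, so the polynomial case never has to absorb a matrix perturbation. Your stage (ii) compares different matrices with polynomial non-linearities, which is exactly the configuration the paper takes pains to avoid. Either supply the quantitative $\ell_\infty$ control and the $\dim^{-1/2+\epsilon}$ rate for the diagonal normalization of $\mQ$ (both are available but need to be proved), or, more in keeping with the paper's economy, drop stage (ii) altogether: the corollary as stated does \emph{not} assert joint Gaussianity of $(Z_0,\dotsc,Z_T)$, only that a limiting law with $\gauss{0}{1}$ marginals exists and is determined by $T,\nonlin_{1:T}$, and this is exactly what the tightness/Petersen argument delivers without any reference to a rotationally invariant ensemble.
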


With these intermediate results at hand, we first finish the proof of \propref{approx}. The proof of the intermediate results quoted above will appear at the end of this section. 

\begin{proof}[Proof of \propref{approx}] First, we specify our choice of the approximating functions and matrices:
\begin{enumerate}
    \item We choose $\hat{\mM}$ to be the matrix ensemble constructed in \lemref{matrix-approx}. In particular,  $\iter{\hat{\mM}}{\dim}$ satisfies \defref{matrix-ensemble-relaxed} with constant $\sigpsi=1$, \assumpref{normalization} and, \begin{align}\label{eq:matrix-approximation}
   \|\hat{\mM}\|_{\op} \lesssim 1, \;  \|\hat{\mM}- \mM\|_{\op} \ll 1.
    \end{align}
    \item For each $t \in [T]$, we choose $\{\iter{f}{k}_t, \; k \in \N\}$ to be the approximating sequence for $f_t$ constructed in \lemref{nonlin-approx}. For each $k \in \N$, 
    $\iter{\nonlin}{k}$ is a polynomial of degree at most $k$ with $\E[{\iter{\nonlin}{k}}(Z)^2] = 1, \; \E[Z \iter{\nonlin}{k}(Z)] = 0$  for $Z \sim \gauss{0}{1}$ and the sequence $\iter{\nonlin}{k}$ satisfies:
\begin{align} \lim_{k \rightarrow \infty} \label{eq:nonlin-approximation} \E[(\iter{\nonlin}{k}(Z) - \nonlin(Z))^2] = 0, \; Z \sim \gauss{0}{1}.
\end{align}
\end{enumerate}
Next, we construct the approximating sequence for the test function $h$. Consider the sequence of iterations (indexed by $k \in \N$) generated by the choices made above:
\begin{align*}
     \iter{\hat{\vz}}{t+1;k} = \hat{\mM} \iter{\nonlin}{k}_{t+1}(\iter{\hat{\vz}}{t;k}),
\end{align*}
initialized at $ \iter{\hat{\vz}}{0;k} = \iter{\vz}{0}$. For each $k \in \N$, let $(\iter{Z}{0; k}_\dim, \iter{Z}{1; k}_\dim, \dotsc, \iter{Z}{T; k}_\dim)$ denote the sequence of random vectors with law $\iter{\mu}{k}_\dim$:
\begin{align*}
   \iter{\mu}{k}_\dim & = \E \left[ \frac{1}{\dim} \sum_{i=1}^\dim \delta_{\iter{\hat{z}}{0;k}_i, \iter{\hat{z}}{1;k}_i, \dotsc, \iter{\hat{z}}{T;k}_i} \right].
\end{align*}
By \corref{SE-normalized}, for each $k \in \N$, there is a random vector $(\iter{Z}{k}_0, \iter{Z}{k}_1, \dotsc,  \iter{Z}{k}_T)$ with $\gauss{0}{1}$ marginals such that,
\begin{align*}
    (\iter{Z}{0; k}_\dim, \iter{Z}{1; k}_\dim, \dotsc, \iter{Z}{T; k}_\dim) \explain{d}{\rightarrow} (\iter{Z}{k}_0, \iter{Z}{k}_1, \dotsc,  \iter{Z}{k}_T).
\end{align*}
By \lemref{test-func-approx}, for each $k \in \N$, there is a sequence of approximating functions $\{\iter{\hat{h}}{\ell}_k: \ell \in \N\}$ with the property that $\iter{\hat{h}}{\ell}_k$ is polynomial of degree $\ell$ and,
\begin{align*}
    \lim_{\ell \rightarrow \infty } \E[(\iter{\hat{h}}{\ell}_k(\iter{Z}{k}_0, \iter{Z}{k}_1, \dotsc,  \iter{Z}{k}_T) - h(\iter{Z}{k}_0, \iter{Z}{k}_1, \dotsc,  \iter{Z}{k}_T))^2] & = 0.
\end{align*}
\begin{enumerate}
  \setcounter{enumi}{2}
    \item We choose $\iter{h}{k} : = \iter{\hat{h}}{\ell_k}_k$ where $\ell_k \in \N$ is large enough to guarantee:
    \begin{align}\label{eq:testfunc-approx}
        \E[(\iter{h}{k}(\iter{Z}{k}_0, \iter{Z}{k}_1, \dotsc,  \iter{Z}{k}_T) - h(\iter{Z}{k}_0, \iter{Z}{k}_1, \dotsc,  \iter{Z}{k}_T))^2] & \leq 1/k.
    \end{align}
\end{enumerate}
Now that we have constructed the desired approximations, we can provide a proof for the claim of this proposition. We define  another intermediate iteration:
\begin{align*}
    \iter{\tilde{\vz}}{t+1} & = \hat{\mM} \nonlin_{t}(\iter{\tilde{\vz}}{t}), \\
    \iter{\tilde{\vz}}{0} &:= \iter{{\vz}}{0}.
\end{align*}
Using the Triangle Inequality and Jensen's Inequality we obtain the following decomposition:
\begin{align*}
    &\E\left[ \left| \frac{1}{\dim} \sum_{i=1}^\dim h(\iter{z}{0}_i, \iter{z}{1}_i, \dotsc, \iter{z}{T}_i) - \frac{1}{\dim} \sum_{i=1}^\dim \iter{h}{k}(\iter{\hat{z}}{0;k}_i, \dotsc, \iter{\hat{z}}{T;k}_i)  \right| \right] \\& \hspace{6cm}\leq \E\left[ \frac{1}{\dim} \sum_{i=1}^\dim \left|h(\iter{z}{0}_i, \dotsc, \iter{z}{T}_i) -  \iter{h}{k}(\iter{\hat{z}}{0;k}_i, \dotsc, \iter{\hat{z}}{T;k}_i) \right|   \right] \\
    & \hspace{6cm}\leq  \sqrt{(i)} + (ii) + (iii),
\end{align*}
where the terms ($i$-$iii$) are defined as follows:
\begin{align*}
    (i) &\explain{def}{=} \E\left[ \frac{1}{\dim} \sum_{i=1}^\dim \left(h(\iter{\hat{z}}{0;k}_i, \dotsc, \iter{\hat{z}}{T;k}_i) -  \iter{h}{k}(\iter{\hat{z}}{0;k}_i, \dotsc, \iter{\hat{z}}{T;k}_i) \right)^2   \right] \\
    (ii) &\explain{def}{=} \E\left[ \frac{1}{\dim} \sum_{i=1}^\dim \left|h(\iter{\tilde{z}}{0}_i, \dotsc, \iter{\tilde{z}}{T}_i) -  h(\iter{\hat{z}}{0;k}_i, \dotsc, \iter{\hat{z}}{T;k}_i) \right|   \right] \\
    (iii) & \explain{def}{=} \E\left[ \frac{1}{\dim} \sum_{i=1}^\dim \left|h(\iter{{z}}{0}_i, \dotsc, \iter{{z}}{T}_i) - h(\iter{\tilde{z}}{0}_i, \dotsc, \iter{\tilde{z}}{T}_i)\right|   \right]
\end{align*}
Since the term $(iii)$ does not depend on $k$, observe that the claim of the proposition follows if we show:
\begin{align*}
      \lim_{k \rightarrow \infty} \lim_{\dim \rightarrow \infty} \;  (i) & = 0, \;   \lim_{k \rightarrow \infty} \limsup_{\dim \rightarrow \infty} \;  (ii)  = 0, \;   \lim_{\dim \rightarrow \infty} \;  (iii)  = 0.
\end{align*}
The remainder of the proof is devoted to showing each of the above claims. 
\paragraph{Analysis of $(i)$:} 
Observe that:
\begin{align*}
    (i) & = \E\left[ \left(h(\iter{Z}{0; k}_\dim, \iter{Z}{1; k}_\dim, \dotsc, \iter{Z}{T; k}_\dim) -  \iter{h}{k}(\iter{Z}{0; k}_\dim, \iter{Z}{1; k}_\dim, \dotsc, \iter{Z}{T; k}_\dim)\right)^2 \right].
\end{align*}
Using the continuity hypothesis on $h$, it is easy to check that the function $(h(z_0, z_1, \dotsc, z_T) - \iter{h}{k}(z_0, z_1, \dotsc, z_T))^2$ can be bounded by a polynomial and hence by \corref{SE-normalized}, 
\begin{align*}
    &\lim_{\dim \rightarrow \infty} \E\left[ \left(h(\iter{Z}{0; k}_\dim, \iter{Z}{1; k}_\dim, \dotsc, \iter{Z}{T; k}_\dim) -  \iter{h}{k}(\iter{Z}{0; k}_\dim, \iter{Z}{1; k}_\dim, \dotsc, \iter{Z}{T; k}_\dim)\right)^2 \right]  \\&\hspace{5cm}= \E\left[ \left(h(\iter{Z}{k}_0, \iter{Z}{k}_1, \dotsc, \iter{Z}{k}_T) -  \iter{h}{k}(\iter{Z}{k}_0, \iter{Z}{k}_1, \dotsc, \iter{Z}{k}_T)\right)^2 \right] \explain{\eqref{eq:testfunc-approx}}{\leq} 1/k.
\end{align*}
Hence,
\begin{align*}
     \lim_{k \rightarrow \infty} \lim_{\dim \rightarrow \infty} \;  (i) & = 0.
\end{align*}
\paragraph{Analysis of $(ii)$.} Using the continuity hypothesis on $h$,
\begin{align*}
    &|h(\iter{\tilde{z}}{0}_i, \dotsc, \iter{\tilde{z}}{T}_i) -  h(\iter{\hat{z}}{0;k}_i, \dotsc, \iter{\hat{z}}{T;k}_i)| \\&\hspace{3cm}\leq \left((\iter{\hat{z}}{0;k}_i-\iter{\tilde{z}}{0}_i)^2 + \dotsb + (\iter{\hat{z}}{T;k}_i-\iter{\tilde{z}}{T}_i)^2  \right)^{1/2} \\ &\hspace{5cm} \times \left( 1 + \left((\iter{\hat{z}}{0;k}_i)^2 + \dotsb + (\iter{\hat{z}}{T;k}_i)^2 \right)^{1/2} + \left((\iter{\tilde{z}}{0}_i)^2 + \dotsb + (\iter{\tilde{z}}{T}_i)^2 \right)^{1/2} \right).
\end{align*}
Using Jensen's and Cauchy-Schwarz Inequality, we obtain,
\begin{subequations} \label{eq:recycle-approx-argument}
\begin{align}
    (ii) &\explain{def}{=}  \E\left[ \frac{1}{\dim} \sum_{i=1}^\dim \left|h(\iter{\tilde{z}}{0}_i, \dotsc, \iter{\tilde{z}}{T}_i) -  h(\iter{\hat{z}}{0;k}_i, \dotsc, \iter{\hat{z}}{T;k}_i) \right|   \right] \leq 3 \sqrt{(ii.a)} \sqrt{(ii.b)},
\end{align}
where the terms $(ii.a), (ii.b)$ are defined as:
\begin{align}
    (ii.a) &\explain{def}{=} \frac{1}{\dim} \sum_{t=1}^T \E[ \|\iter{\hat{\vz}}{t;k} - \iter{\tilde{\vz}}{t}\|^2] \\
    (ii.b) &\explain{def}{=} 1 + \frac{1}{\dim} \sum_{t=0}^T \E[\|\iter{\hat{\vz}}{t;k}\|^2] + \frac{1}{\dim} \sum_{t=0}^T \E[\|\iter{\tilde{\vz}}{t}\|^2].
\end{align}
\end{subequations}
In order to control $(ii.a)$, we develop a recursive upper bound on $\|\iter{\hat{\vz}}{t;k} - \iter{\tilde{\vz}}{t}\|$:
\begin{align*}
    \|\iter{\tilde{\vz}}{t} - \iter{\hat{\vz}}{t;k}\| & = \|\hat{\mM} \cdot (\nonlin_t(\iter{\tilde{\vz}}{t-1}) - \iter{\nonlin}{k}_{t}(\iter{\hat{\vz}}{t-1;k}))\| \\
    & \leq\|\hat{\mM}\|_{\op} \cdot \| \nonlin_t(\iter{\tilde{\vz}}{t-1}) - \iter{\nonlin}{k}_{t}(\iter{\hat{\vz}}{t-1;k})\| \\
    & \leq \|\hat{\mM}\|_{\op} \cdot \| \nonlin_t(\iter{\tilde{\vz}}{t-1}) - \nonlin_{t}(\iter{\hat{\vz}}{t-1;k})\| + \|\hat{\mM}\|_{\op} \cdot \| \nonlin_t(\iter{\hat{\vz}}{t-1;k}) - \iter{\nonlin}{k}_{t}(\iter{\hat{\vz}}{t-1;k})\| \\
    & \explain{(a)}{\leq} L\cdot\|\hat{\mM}\|_{\op}   \cdot  \|\iter{\tilde{\vz}}{t-1} - \iter{\hat{\vz}}{t-1;k}\| + \|\hat{\mM}\|_{\op} \cdot \| \nonlin_t(\iter{\hat{\vz}}{t-1;k}) - \iter{\nonlin}{k}_{t}(\iter{\hat{\vz}}{t-1;k})\|.
\end{align*}
In the step marked (a) we used the assumption that $\nonlin_t$ was $L$-Lipschitz. Unrolling the above inequality, we obtain,
\begin{align*}
    \|\iter{\tilde{\vz}}{t} - \iter{\hat{\vz}}{t;k}\| &\leq \sum_{i=1}^t L^{i-1} \cdot \|\hat{\mM}\|_{\op}^i \cdot \| \nonlin_{t-i+1}(\iter{\hat{\vz}}{t-i;k}) - \iter{\nonlin}{k}_{t-i+1}(\iter{\hat{\vz}}{t-i;k})\|.
\end{align*}
Hence,
\begin{align*}
    \frac{\E\|\iter{\tilde{\vz}}{t} - \iter{\hat{\vz}}{t;k}\|^2}{\dim} & \leq t \cdot \sum_{i=1}^t L^{2(i-1)} \cdot \|\hat{\mM}\|_{\op}^{2i} \cdot  \frac{\E\|\nonlin_{t-i+1}(\iter{\hat{\vz}}{t-i;k}) - \iter{\nonlin}{k}_{t-i+1}(\iter{\hat{\vz}}{t-i;k})\|^2}{\dim} \\
    & = t \cdot \sum_{i=1}^t L^{2(i-1)} \cdot \|\hat{\mM}\|_{\op}^{2i} \cdot \E[(\nonlin_{t-i+1}(\iter{Z}{t-i;k}_\dim) - \iter{\nonlin}{k}_{t-i+1}(\iter{Z}{t-i;k}_\dim))^2].
\end{align*}
Since $\nonlin_{1:T}$ were assumed to be Lipchitz, the functions $z \mapsto (\nonlin_{i}(z) - \iter{\nonlin}{k}_{i}(z))^2$ can be bounded by polynomials and hence, by \corref{SE-normalized},
\begin{align*} 
    \limsup_{\dim \rightarrow \infty} \frac{\E\|\iter{\tilde{\vz}}{t} - \iter{\hat{\vz}}{t;k}\|^2}{\dim} & \leq t \cdot \sum_{i=1}^t C^{2i} \cdot L^{2(i-1)}   \cdot \E[(\nonlin_{t-i+1}(Z) - \iter{\nonlin}{k}_{t-i+1}(Z))^2] < \infty.
\end{align*}
where $Z \sim \gauss{0}{1}$ and $C = \limsup_{\dim \rightarrow \infty} \|\hat{\mM}\|_{\op} < \infty$ (cf. \eqref{eq:matrix-approximation}). Combining the above display with \eqref{eq:nonlin-approximation}, we obtain,
\begin{align} \label{eq:iterate-norm-error-estimate}
    \lim_{k \rightarrow \infty} \limsup_{\dim \rightarrow \infty} \frac{\E\|\iter{\tilde{\vz}}{t} - \iter{\hat{\vz}}{t;k}\|^2}{\dim} & = 0. 
\end{align}
Hence, 
\begin{align*}
   \lim_{k \rightarrow \infty} \limsup_{\dim \rightarrow \infty} \;  (ii.a) &\explain{def}{=} \lim_{k \rightarrow \infty} \limsup_{\dim \rightarrow \infty} \frac{1}{\dim} \sum_{t=1}^T \E[ \|\iter{\hat{\vz}}{t;k} - \iter{\tilde{\vz}}{t}\|^2] = 0.
\end{align*}
In order to control $(ii.b)$ we observe that:
\begin{align*}
    \limsup_{k \rightarrow \infty} \limsup_{\dim \rightarrow \infty} \; (ii.b) &\explain{def}{=} \limsup_{k \rightarrow \infty} \limsup_{\dim \rightarrow \infty} \left(1 + \frac{1}{\dim} \sum_{t=0}^T \E[\|\iter{\hat{\vz}}{t;k}\|^2] + \frac{1}{\dim} \sum_{t=0}^T \E[\|\iter{\tilde{\vz}}{t}\|^2] \right) \\ & \leq \limsup_{k \rightarrow \infty} \limsup_{\dim \rightarrow \infty} \left(1 + \frac{2}{\dim} \sum_{t=0}^T \E[\|\iter{\hat{\vz}}{t;k}-\iter{\tilde{\vz}}{t}\|^2] +  \frac{3}{\dim} \sum_{t=0}^T \E[\|\iter{\tilde{\vz}}{t}\|^2] \right)  \\ & \explain{\eqref{eq:iterate-norm-error-estimate}}{=} 1 + \limsup_{\dim \rightarrow \infty} \left( \frac{3}{\dim} \sum_{t=0}^T \E[\|\iter{\tilde{\vz}}{t}\|^2]\right).
\end{align*}
Observe that since $\nonlin_t$ are Lipschitz with constant $L$, we can control  $\|\iter{\tilde{\vz}}{t}\|$ as follows:
\begin{align}\label{eq:recycle-norm-bound}
    \|\iter{\tilde{\vz}}{t}\| & \leq  \| \hat{\mM} \nonlin_t(\iter{\tilde{\vz}}{t-1})\| \leq \|\hat{\mM}\|_{\op}\|\nonlin_t(\iter{\tilde{\vz}}{t-1})\| \leq\|\hat{\mM}\|_{\op} \cdot (\|\nonlin_t(\vzero)\| + L\cdot \|\iter{\tilde{\vz}}{t-1}\|).
\end{align}
Unrolling the above recursion yields:
\begin{align*}
    \|\iter{\tilde{\vz}}{t}\| & \leq \sum_{i=1}^t \|\hat{\mM}\|_{\op}^{i} \cdot L^{i-1} \cdot \|\nonlin_{t-i+1}(\vzero)\| + \|\hat{\mM}\|_{\op}^{t} \cdot L^{t} \cdot  \|\iter{\vz}{0}\|.
\end{align*}
Since $\|\hat{\mM}\|_{\op} \lesssim 1$, $\|\nonlin_{i}(\vzero)\|^2 = \dim\cdot \nonlin_i(0)^2$ and $\E \|\iter{\vz}{0}\|^2= \dim$, we obtain $ \limsup_{k \rightarrow \infty} \limsup_{\dim \rightarrow \infty} \; (ii.b)  < \infty$ which yields:
\begin{align*}
     \limsup_{k \rightarrow \infty} \limsup_{\dim \rightarrow \infty} \; (ii) & \leq 3 \limsup_{k \rightarrow \infty} \limsup_{\dim \rightarrow \infty} \sqrt{(ii.a)} \sqrt{(ii.b)} = 0.
\end{align*}
\paragraph{Analysis of $(iii)$.} 
By repeating the argument used to obtain \eqref{eq:recycle-approx-argument} we obtain:
\begin{subequations} \label{eq:recycled-approx-argument}
\begin{align}
    (iii) & \explain{def}{=} \E\left[ \frac{1}{\dim} \sum_{i=1}^\dim \left|h(\iter{{z}}{0}_i, \dotsc, \iter{{z}}{T}_i) - h(\iter{\tilde{z}}{0}_i, \dotsc, \iter{\tilde{z}}{T}_i)\right|   \right] \leq 3 \sqrt{(iii.a)} \sqrt{(iii.b)},
\end{align}
where the terms $(iii.a), (iii.b)$ are defined as:
\begin{align}
    (iii.a) &\explain{def}{=} \frac{1}{\dim} \sum_{t=1}^T \E[ \|\iter{{\vz}}{t} - \iter{\tilde{\vz}}{t}\|^2] \\
    (iii.b) &\explain{def}{=} 1 + \frac{1}{\dim} \sum_{t=0}^T \E[\|\iter{{\vz}}{t}\|^2] + \frac{1}{\dim} \sum_{t=0}^T \E[\|\iter{\tilde{\vz}}{t}\|^2].
\end{align}
\end{subequations}
We can control $\| \iter{{\vz}}{t}-\iter{\tilde{\vz}}{t} \|$ as follows:
\begin{align*}
    \| \iter{{\vz}}{t}-\iter{\tilde{\vz}}{t} \| & = \|\mM \nonlin_t(\iter{{\vz}}{t-1}) - \hat{\mM} \nonlin_t(\iter{\tilde{\vz}}{t-1})\| \\
    & \leq \|\mM \nonlin_t(\iter{{\vz}}{t-1}) - {\mM} \nonlin_t(\iter{\tilde{\vz}}{t-1})\| + \|\mM \nonlin_t(\iter{\tilde{\vz}}{t-1}) - \hat{\mM} \nonlin_t(\iter{\tilde{\vz}}{t-1})\| \\
    & \leq \|\mM\|_{\op} \cdot L \cdot  \| \iter{{\vz}}{t-1}-\iter{\tilde{\vz}}{t-1} \| + \|\mM - \hat{\mM}\|_{\op} \cdot \|\nonlin_t(\iter{\tilde{\vz}}{t-1})\|.
\end{align*}
Unrolling the above inequality yields,
\begin{align*}
     \| \iter{{\vz}}{t}-\iter{\tilde{\vz}}{t} \| & \leq  \|\mM - \hat{\mM}\|_{\op} \cdot \sum_{i=1}^t \|\mM\|_{\op}^{i-1} \cdot L^{i-1} \cdot \|\nonlin_{t-i+1}(\iter{\tilde{\vz}}{t-i})\| \\
     & \leq \|\mM - \hat{\mM}\|_{\op} \cdot \sum_{i=1}^t \|\mM\|_{\op}^{i-1} \cdot L^{i-1} \cdot (\sqrt{\dim} \cdot |\nonlin_{t-i+1}(0)| + L \cdot \|\iter{\tilde{\vz}}{t-i}\|).
\end{align*}
From our analysis of the term $(ii)$, we know that $\E[\|\iter{\tilde{\vz}}{t-i}\|^2] \lesssim \dim$. Since $\|\mM - \hat{\mM}\|_{\op}  \ll 1$, we immediately obtain:
\begin{align*}
   \limsup_{\dim \rightarrow \infty} \;  (iii.a) & =   \sum_{t=1}^T  \limsup_{\dim \rightarrow \infty}  \; \frac{\E[ \|\iter{{\vz}}{t} - \iter{\tilde{\vz}}{t}\|^2]}{\dim} = 0.
\end{align*}
In order to upper bound $(iii.b)$, as we recalled previously, we had already showed that $\E[\|\iter{\tilde{\vz}}{t}\|^2] \lesssim \dim$ in our analysis of the term $(ii)$. Furthermore the same argument (cf. \eqref{eq:recycle-norm-bound}) yields  $\E[\|\iter{{\vz}}{t}\|^2] \lesssim \dim$. This shows that $(iii.b) \lesssim 1$. Hence,
\begin{align*}
     \lim_{\dim \rightarrow \infty} \; (iii) = 0.
\end{align*}
This completes the proof of the proposition.
\end{proof}

\subsection{Proof of \lemref{matrix-approx}}
\begin{proof}[Proof of \lemref{matrix-approx}]
Since ${\mM}{}$ is semi-random (\defref{matrix-ensemble-relaxed}), $\mM = \mS \mPsi \mS$. Define the matrix $\hat{\mM} = \mS \hat{\mPsi} \mS$ where,
\begin{align*}
    \hat{\mPsi} \explain{def}{=} \mD^{-\frac{1}{2}} \mPsi, \; \mD = \diag\left((\mPsi \mPsi\tran)_{11}, (\mPsi \mPsi\tran)_{22}, \dotsc,(\mPsi \mPsi\tran)_{\dim \dim} \right). 
\end{align*}
First, we check that $\hat{\mM}$ satisfies all the requirements of \defref{matrix-ensemble-relaxed} and \assumpref{normalization} 
\begin{enumerate}
    \item The requirement (1) of \defref{matrix-ensemble-relaxed} is satisfied by construction. 
    \item Notice that,
    \begin{align*}
        \|\hat{\mPsi}\|_\infty & \leq \left(\max_{i \in [\dim]}  \frac{1}{(\mPsi \mPsi\tran)_{ii}}  \right)^{\frac{1}{2}} \cdot \|\mPsi\|_\infty. 
    \end{align*}
    \defref{matrix-ensemble-relaxed} guarantees that,
    \begin{align*}
        \|\mPsi\|_\infty \lesssim \dim^{-\frac{1}{2} + \epsilon}, \; \max_{i \in [\dim]} \left| \frac{1}{(\mPsi \mPsi\tran)_{ii}} \right| & = 1 + o_\dim(1),
    \end{align*}
    which yields $\|\hat{\mPsi}\|_\infty\lesssim \dim^{-\frac{1}{2} + \epsilon}$ for any $\epsilon > 0$. This verifies requirement (2a) in \defref{matrix-ensemble-relaxed}. 
     \item To we verify requirement (2b) in \defref{matrix-ensemble-relaxed} we observe that
    \begin{align*}
        \|\hat{\mPsi}\|_{\op} & \leq \|\mD^{-\frac{1}{2}}\|_{\op}^2 \|\mPsi\|_{\op} \leq \left(\max_{i \in [\dim]} \left| \frac{1}{(\mPsi \mPsi\tran)_{ii}} \right| \right) \cdot \|\mPsi\|_{\op} \lesssim 1, 
    \end{align*}
    as required.
    \item To verify requirement (2c) in in \defref{matrix-ensemble-relaxed}, we note that $\hat{\mPsi} \hat{\mPsi}\tran = \mD^{-\frac{1}{2}} {\mPsi} {\mPsi}\tran \mD^{-\frac{1}{2}}$. 
    Hence,
    \begin{align*}
        \max_{i \neq j} |(\hat{\mPsi} \hat{\mPsi}\tran)_{ij}|  & \leq \left( \max_{i \in [\dim]} \frac{1}{(\mPsi \mPsi\tran)_{ii}}  \right) \cdot  \max_{i \neq j} |{\mPsi} {\mPsi}\tran|. 
    \end{align*}
     \defref{matrix-ensemble-relaxed} guarantees that,
    \begin{align*}
        \max_{i \neq j} |({\mPsi} {\mPsi}\tran)_{ij}| \lesssim \dim^{-\frac{1}{2} + \epsilon}, \; \max_{i \in [\dim]} \left| \frac{1}{(\mPsi \mPsi\tran)_{ii}} \right| & = 1 + o_\dim(1).
    \end{align*}
    Hence, $\max_{i \neq j} |\hat{\mPsi} \hat{\mPsi}\tran| \lesssim \dim^{-\frac{1}{2} + \epsilon}$ for any $\epsilon > 0$, which verifies item (2b) of \defref{matrix-ensemble-relaxed}. 
    \item Observe that,
    \begin{align*}
         (\hat{\mPsi}\hat{\mPsi}\tran)_{ii} & =  \sum_{j=1}^\dim  \hat{\Psi}_{ij}^2 = \left( \sum_{j=1}^\dim  \frac{{\Psi}_{ij}^2}{(\mPsi \mPsi\tran)_{ii}} \right) = \frac{(\mPsi \mPsi\tran)_{ii}}{(\mPsi \mPsi\tran)_{ii}}   = 1. 
    \end{align*}
    This verifies requirements (2d) in \defref{matrix-ensemble-relaxed} and (2) in \assumpref{normalization}.
\end{enumerate}
Finally, we control $ \|{\mM}{} -{\hat{\mM}}{}\|_{\op}$ as follows:
\begin{align*}
     \|{\mM}{} - {\hat{\mM}}{}\|_{\op} & =  \|{\mPsi}{} - {\hat{\mPsi}}{}\|_{\op} = \|(\mD^{-\frac{1}{2}} - \mI_\dim) \cdot \mPsi \|_{\op} \leq \|\mPsi\|_{\op} \cdot \max_{i \in [\dim]} \left| \sqrt{\frac{1}{(\mPsi \mPsi\tran)_{ii}}} - 1 \right|. 
\end{align*}
\defref{matrix-ensemble-relaxed} guarantees that,
\begin{align*}
    \|\mPsi\|_{\op} \lesssim 1, \;  \max_{i \in [\dim]} \left| \sqrt{\frac{1}{(\mPsi \mPsi\tran)_{ii}}} - 1 \right| \ll 1,
\end{align*}
and hence, we have $\|{\mM}{} - {\hat{\mM}}{}\|_{\op} \ll 1$, as claimed. This concludes the proof of the lemma. 
\end{proof}

\subsection{Proof of \lemref{nonlin-approx}}
\begin{proof}[Proof of \lemref{nonlin-approx}]
Since the Hermite polynomials form a complete orthonormal basis for the Gaussian Hilbert space $L^2(\gauss{0}{1})$, we can write down the Hermite decomposition of $\nonlin$:
\begin{align*}
    \nonlin(w) & = \sum_{\ell=0}^\infty \alpha(\ell) \cdot \hermite{\ell}(w),
\end{align*}
for square-summable coefficients $\alpha(\ell) = \E[\hermite{\ell}(Z) \nonlin_i(Z)], \; Z \sim \gauss{0}{1}$ which satisfy:
\begin{align*}
    \sum_{\ell = 1} \alpha^2(\ell) & = \E \nonlin^2(Z) = 1.
\end{align*}
Define the polynomial:
\begin{align*}
    \iter{\hat{f}}{k}(w) = \sum_{\ell=0}^k \alpha(\ell) \cdot  \hermite{\ell}(w). 
\end{align*}
The polynomial with the desired properties is:
\begin{align*}
     \iter{{f}}{k}(w) =  \frac{\iter{\hat{f}}{k}(w)}{\sqrt{\E[\iter{\hat{f}}{k}(Z)^2]}}.
\end{align*}
It is straight forward to check that:
\begin{align*}
    \E\left[|\iter{{f}}{k}(Z) - \nonlin(Z)|^2\right] & = 2 \cdot \left( 1 - \sqrt{\sum_{\ell=0}^k \alpha^2(\ell)} \right) \rightarrow 0, \text{ as } k \rightarrow \infty.
\end{align*}
Also notice that:
\begin{align*}
    \E[\iter{f}{k}(Z)^2] = 1 , \; \E[Z \iter{f}{k}(Z)] = \frac{\alpha(1)}{(\sum_{\ell=0}^k \alpha^2(\ell))^{1/2}} = \frac{\E[Z \nonlin(Z)]}{(\sum_{\ell=0}^k \alpha^2(\ell))^{1/2}}  = 0.
\end{align*}
This constructs the desired approximating sequence for $\nonlin$.
\end{proof}

\subsection{Proof of \lemref{test-func-approx}}
\begin{proof}[Proof of \lemref{test-func-approx}]
In order to an approximating sequence for $h$, we observe that for any $\lambda \geq 0$, the vector $\mZ$ satisfies:
\begin{align*}
    \E \exp(\lambda \|\mZ\|_\infty) & \leq  \sum_{i=0}^{T+1}\E \exp(\lambda |Z_i|) <\infty.
\end{align*}
Hence, by \citet[Corollary 14.24, Definition 14.1]{schmudgen2017moment}  polynomials are dense in $L^2(\mu)$ where $\mu$ is the law of $\mZ$. Hence, there is a sequence of polynomials $\iter{h}{k}$ with the desired properties. 
\end{proof}

\subsection{Proof of \corref{SE-normalized}}
\begin{proof}[Proof of \corref{SE-normalized}]
Let $(\iter{Z}{0}_\dim, \iter{Z}{1}_\dim, \dotsc, \iter{Z}{T}_\dim)$ denote the sequence of random vectors with law ${\mu}_\dim$:
\begin{align*}
   {\mu}_\dim & \explain{def}{=} \E \left[ \frac{1}{\dim} \sum_{i=1}^\dim \delta_{\iter{\hat{z}}{0}_i, \iter{\hat{z}}{1}_i, \dotsc, \iter{\hat{z}}{T}_i} \right].
\end{align*}
Since the ordinary monomials $z_0^{k_0} z_1^{k_1} \dotsb z_T^{k_T}$ can be expressed as a finite linear combination of the Hermite basis polynomials, \thref{SE-normalized} guarantees that for any $k_0, k_1, \dotsc, k_T \in \W$ the limits:
\begin{align*}
    m(k_0, k_1, \dotsc, k_T) \explain{def}{=} \lim_{\dim \rightarrow \infty} \E[(\iter{Z}{0}_\dim)^{k_0} \cdot (\iter{Z}{1}_\dim)^{k_1} \cdot \dotsb \cdot (\iter{Z}{T}_\dim)^{k_T}] =  \E\left[ \frac{1}{\dim} \sum_{i=1}^\dim(\iter{z}{0}_i)^{k_0} \cdot (\iter{z}{1}_i)^{k_1}, \dotsb \cdot (\iter{z}{T}_i)^{k_T}  \right]
\end{align*}
exist and are determined by $T, \nonlin_{0:T}$. We claim that $ m(k_0, k_1, \dotsc, k_T)$ is the moment sequence of some random vector $(Z_0, Z_1, \dotsc, Z_T)$.  This is follows because since all moments of the sequence $(\iter{Z}{0}_\dim, \iter{Z}{1}_\dim, \dotsc, \iter{Z}{T}_\dim)$ are bounded, this sequence of random vectors is tight and has a subsequence that converges in distribution. Let  $(Z_0, Z_1, \dotsc, Z_T)$ denote the limit of this convergent subsequence. By the continuous mapping theorem, $(\iter{Z}{0}_\dim)^{k_0} \cdot (\iter{Z}{1}_\dim)^{k_1} \cdot \dotsb \cdot (\iter{Z}{T}_\dim)^{k_T}$ converges weakly to ${Z}_0^{k_0} \cdot {Z}_1^{k_1} \cdot \dotsb \cdot {Z}_T^{k_T}$ along this subsequence. Since all moments of the random vector $(\iter{Z}{0}_\dim, \iter{Z}{1}_\dim, \dotsc, \iter{Z}{T}_\dim)$ are bounded, $(\iter{Z}{0}_\dim)^{k_0} \cdot (\iter{Z}{1}_\dim)^{k_1} \cdot \dotsb \cdot (\iter{Z}{T}_\dim)^{k_T}$ is uniformly integrable and $\E[\iter{Z}{0}_\dim)^{k_0} \cdot (\iter{Z}{1}_\dim)^{k_1} \cdot \dotsb \cdot (\iter{Z}{T}_\dim)^{k_T}]$ converges along this subsequence to $\E[{Z}_0^{k_0} \cdot {Z}_1^{k_1} \cdot \dotsb \cdot {Z}_T^{k_T}]$. Hence,
\begin{align*}
     m(k_0, k_1, \dotsc, k_T) \explain{def}{=} \lim_{\dim \rightarrow \infty} \E[(\iter{Z}{0}_\dim)^{k_0} \cdot (\iter{Z}{1}_\dim)^{k_1} \cdot \dotsb \cdot (\iter{Z}{T}_\dim)^{k_T}] = \E[{Z}_0^{k_0} \cdot {Z}_1^{k_1} \cdot \dotsb \cdot {Z}_T^{k_T}].
\end{align*}
Hence, $m(k_0, k_1, \dotsc, k_T)$ is the moment sequence of the random vector $(Z_0, Z_1, \dotsc, Z_T)$. In particular, we have shown that,
\begin{align*}
    (\iter{Z}{0}_\dim, \iter{Z}{1}_\dim, \dotsc, \iter{Z}{T}_\dim) \rightarrow (Z_0, Z_1, \dotsc, Z_T) \text{ in moments}.
\end{align*}
By the second claim in \thref{SE-normalized} for any $k \in \N$,
\begin{align*}
    m(k, 0, 0, \dotsc, 0) = m(0, k, 0, \dotsc, 0) = \dotsb = m(0, 0, 0, \dotsc, 0, k) = \E Z^k, \; Z \sim \gauss{0}{1}.
\end{align*}
Since the Gaussian distribution is uniquely determined by its moments, $(Z_0, Z_1, \dotsc, Z_T)$ must have $\gauss{0}{1}$ marginals. Since the marginal distribution of each coordinate of $(Z_0, Z_1, \dotsc, Z_T)$ is uniquely determined by its moments, the joint law of $(Z_0, Z_1, \dotsc, Z_T)$ is also uniquely determined by its joint moments by Petersen's Theorem (see for e.g. \citep[Theorem 14.6]{schmudgen2017moment}). Since convergence in moments to a random variable uniquely determined by its moments implies weak convergence (see for e.g. \citep[Theorem 30.2]{billingsley2008probability}), we have,
\begin{align*}
    (\iter{Z}{0}_\dim, \iter{Z}{1}_\dim, \dotsc, \iter{Z}{T}_\dim) \explain{d}{\rightarrow} (Z_0, Z_1, \dotsc, Z_T).
\end{align*}
By the continuous mapping theorem, $h(\iter{Z}{0}_\dim, \iter{Z}{1}_\dim, \dotsc, \iter{Z}{T}_\dim) \explain{d}{\rightarrow} h(Z_0, Z_1, \dotsc, Z_T)$. Furthermore, since $h$ is bounded by a polynomial function and all moments of $(\iter{Z}{0}_\dim, \iter{Z}{1}_\dim, \dotsc, \iter{Z}{T}_\dim)$ remain bounded as $\dim \rightarrow \infty$, $h(\iter{Z}{0}_\dim, \iter{Z}{1}_\dim, \dotsc, \iter{Z}{T}_\dim)$ is uniformly integrable and,
\begin{align*}
    \lim_{\dim \rightarrow \infty} \E[h(\iter{Z}{0}_\dim, \iter{Z}{1}_\dim, \dotsc, \iter{Z}{T}_\dim)] = \E[h(Z_0, Z_1, \dotsc, Z_T)],
\end{align*}
as claimed. 
\end{proof}

\section{Proof of \factref{hermite-property}}\label{appendix:hermite-property}
\begin{proof}[Proof of \factref{hermite-property}]
This formula is derived using the generating function of Hermite polynomials. The generating function of Hermite polynomials (see e.g. \citet[Section 11.2]{o2014analysis}) is given by:
\begin{align} \label{eq:generating}
    e^{\xi \lambda - \frac{\lambda^2}{2}} & = \sum_{i=0}^\infty \frac{\lambda^i}{\sqrt{i!}} \cdot \hermite{q}(\xi).
\end{align}
Applying this formula to $\xi = \ip{\vu}{\vx}$ we obtain:
\begin{align} \label{eq:formula-2-hermite}
     e^{\ip{\vu}{\vx} \lambda - \frac{\lambda^2}{2}}  & = \sum_{q=0}^\infty \frac{\lambda^q}{\sqrt{q!}} \cdot \hermite{q}(\ip{\vu}{\vx}).
\end{align}
On the other hand, we can obtain an alternate formula for $e^{\ip{\vu}{\vx} \lambda - \frac{\lambda^2}{2}}$ by observing:
\begin{align}\label{eq:formula-1-hermite}
    e^{\ip{\vu}{\vx} \lambda - \frac{\lambda^2}{2}}  \explain{(a)}{=} e^{\ip{\vu}{\vx} \lambda - \frac{\lambda^2 \|\vu\|^2}{2}} & = \prod_{i=1}^\dim e^{ x_i u_i \lambda  - \frac{u_i^2 \lambda^2}{2}} 
     \explain{(b)}{=} \prod_{i=1}^\dim \sum_{a_i = 0}^\infty \frac{ (\lambda u_i)^{a_i}}{\sqrt{a_i!}} \cdot \hermite{a_i}(x_i) 
    = \sum_{\va \in \W^\dim} \frac{\lambda^{\|\va\|_1}}{\sqrt{\va !}} \cdot \vu^{\va} \cdot \hermite{\va}(\vx). 
\end{align}
In the above display equality (a) follows from $\|\vu\|^2 = 1$ and equality (b) follows from the generating function formula for Hermite polynomials given in \eqref{eq:generating}. Comparing the coefficient of $\lambda^q$ in \eqref{eq:formula-2-hermite} and the \eqref{eq:formula-1-hermite} gives the claimed formula.  
\end{proof}

\end{document}